	\newcommand{\Ext}{\ensuremath{\operatorname{Ext}}}
	\newcommand{\multialg}[1]{\mathcal{M}(#1)\xspace}
	\newcommand{\Prim}{\ensuremath{\operatorname{Prim}}}
	\renewcommand{\span}{\mathrm{span}}
	\newcommand{\Ad}{\ensuremath{\operatorname{Ad}}\xspace}
	\renewcommand{\min}{\mathrm{min}}
	\newcommand{\Hom}{\mathrm{Hom}}
	\newcommand{\cpct}{\Subset}
	\newcommand{\as}{\mathrm{as}}
	\newcommand{\wb}{{< \!\!\! <}}
	\theoremstyle{plain}
	\newtheorem{thm}{Theorem}[section]
	\newtheorem{lemma}[thm]{Lemma}
	\newtheorem{theorem}[thm]{Theorem}
	\newtheorem{proposition}[thm]{Proposition}
	\newtheorem{corollary}[thm]{Corollary}
	\newtheorem{theoremintro}{Theorem}
	\theoremstyle{definition}
	\newtheorem{definition}[thm]{Definition}
	\newtheorem{remark}[thm]{Remark}
	\newtheorem{notation}[thm]{Notation}
	\newtheorem{question}[thm]{Question}
	\newtheorem{observation}[thm]{Observation}
	\numberwithin{equation}{section}
	\numberwithin{figure}{section}
\begin{document}
	\title{A new proof of Kirchberg's $\mathcal O_2$-stable classification}
	\author{James Gabe}
	\address{Current address: School of Mathematics and Statistics, University of Glasgow, University Place, Glasgow G12 8SQ, Scotland}
        \address{Mathematical Sciences, University of Southampton, SO17 1BJ, United Kingdom}
        \email{jamiegabe123@hotmail.com}
        \subjclass[2010]{46L05, 46L35}
        \keywords{$\mathcal O_2$-stability, classification, ideal lattice of $C^\ast$-algebras}
	\thanks{This work was partially funded by the Carlsberg Foundation through an Internationalisation Fellowship.}

\begin{abstract}
I present a new proof of Kirchberg's $\mathcal O_2$-stable classification theorem: two separable, nuclear, stable/unital, $\mathcal O_2$-stable $C^\ast$-algebras are isomorphic if and only if their ideal lattices are order isomorphic, or equivalently, their primitive ideal spaces are homeomorphic.
Many intermediate results do not depend on pure infiniteness of any sort.
\end{abstract}

\maketitle

\section{Introduction}
After classifying all $A\mathbb T$-algebras of real rank zero \cite{Elliott-classrr0}, Elliott initiated a highly ambitious programme of classifying separable, nuclear $C^\ast$-algebras by $K$-theoretic and tracial invariants. During the past three decades much effort has been put into verifying such classification results. In the special case of simple $C^\ast$-algebras the classification has been verified under the very natural assumptions that the $C^\ast$-algebras are separable, unital, simple, with finite nuclear dimension and in the UCT class of Rosenberg and Schochet \cite{RosenbergSchochet-UCT}. The purely infinite case is due to Kirchberg \cite{Kirchberg-simple} and Phillips \cite{Phillips-classification}, and the stably finite case was recently solved by the work of many hands; in particular work of Elliott, Gong, Lin, and Niu \cite{GongLinNiu-classZ-stable}, \cite{ElliottGongLinNiu-classfindec}, and by Tikuisis, White, and Winter \cite{TikuisisWhiteWinter-QDnuc}. This makes this the right time to gain a deeper understanding of the classification of non-simple $C^\ast$-algebras which is the main topic of this paper.

The Cuntz algebra $\mathcal O_2$ plays a special role in the classification programme as this has the properties of being separable, nuclear, unital, simple, purely infinite, and is $KK$-equivalent to zero. Hence if $A$ is any separable, nuclear $C^\ast$-algebra then $A\otimes \mathcal O_2$ has no $K$-theoretic nor tracial data to determine potential classification, and one may ask if such $C^\ast$-algebras are classified by their primitive ideal space alone.

Predating the Kirchberg--Phillips theorem an important special case of this question was whether $A\otimes \mathcal O_2 \cong \mathcal O_2$ for any separable, nuclear, unital, simple $C^\ast$-algebra $A$. Some of the first major breakthroughs for verifying this were Elliott's (unpublished) proof that $\mathcal O_2 \otimes \mathcal O_2 \cong \mathcal O_2$, as well as Rørdam's characterisation of when $A \otimes \mathcal O_2 \cong \mathcal O_2$, see \cite{Rordam-O2tensorO2} or \cite[Theorem 7.2.2]{Rordam-book-classification}.

In Genève 1994 Kirchberg announced the $\mathcal O_2$-embedding theorem; that any separable exact $C^\ast$-algebra embeds into the Cuntz algebra $\mathcal O_2$. As an important consequence one gets that $A\otimes \mathcal O_2 \cong \mathcal O_2$ for any separable, nuclear, unital, simple $C^\ast$-algebra $A$. The $\mathcal O_2$-embedding theorem also played an important role in the proof of the Kirchberg--Phillips theorem \cite{Kirchberg-simple}, \cite{Phillips-classification}, i.e.~the classification of separable, nuclear, simple, purely infinite $C^\ast$-algebras. The first published proof of the $\mathcal O_2$-embedding theorem appeared in \cite{KirchbergPhillips-embedding}.

Kirchberg and Rørdam initiated the study of not necessarily simple (weakly/strongly) purely infinite $C^\ast$-algebras in \cite{KirchbergRordam-purelyinf} and \cite{KirchbergRordam-absorbingOinfty}. Such $C^\ast$-algebras arise from many natural constructions. For instance, there are natural characterisations of when crossed products \cite{KirchbergSierakowski-spicrossed}, groupoids \cite{BrownClarkSierakowski-purelyinfgroupoids}, and Fell bundles \cite{KwasniewskiSzymanski-pureinffell} are (strongly) purely infinite and it is an intriguing problem to classify them.

In \cite{Kirchberg-non-simple} Kirchberg outlined a far reaching generalisation of the Kirchberg--Phillips theorem by classifying all (not necessarily simple) separable, nuclear, strongly purely infinite $C^\ast$-algebras by ideal related $KK$-theory. A full proof of the result will appear in an upcoming book \cite{Kirchberg-book}. As an intermediate result Kirchberg obtains an ideal related version of the $\mathcal O_2$-embedding theorem from which the following elegant result follows: any two separable, nuclear, stable/unital, $\mathcal O_2$-stable $C^\ast$-algebras are isomorphic if and only if their primitive ideal spaces are homeomorphic. 

The goal of this paper is to present an almost self-contained proof this result which is also shorter and more elementary than the proof contained in the widely distributed (far from finished) version of Kirchberg's upcoming book \cite{Kirchberg-book}. At the moment a detailed proof of Kirchberg's $\mathcal O_2$-stable classification has never been published nor been publicly available, so this is the first published proof of the result.

The $\mathcal O_2$-embedding theorem was the cornerstone of the Kirchberg--Phillips theorem. In the same way, the ideal related $\mathcal O_2$-embedding theorem - the main intermediate result of this paper - plays a fundamental role in the proof of Kirchberg's classification of strongly purely infinite $C^\ast$-algebras.

In general it is hard to determine when a strongly purely infinite $C^\ast$-algebra is $\mathcal O_2$-stable. 
Dadarlat showed in \cite[Theorem 1.3]{Dadarlat-fiberwiseKK} that if $X$ is a compact, metrisable space with finite covering dimension, and $A$ is a separable, unital $C(X)$-algebra for which each fibre of $A$ is isomorphic to $\mathcal O_2$, then $A \cong C(X) \otimes \mathcal O_2$. However, Dadarlat also gave examples \cite[Example 1.2]{Dadarlat-fiberwiseKK} of separable, unital $C(X)$-algebras $A$ for which $X$ is the Hilbert cube such that each fibre is isomorphic to $\mathcal O_2$, but for which $A \otimes \mathcal O_2 \not \cong A$. Inspired by these results I showed in \cite{Gabe-cplifting}, using Kirchberg's non-simple classification, that a separable, nuclear, strongly purely infinite $C^\ast$-algebra $A$ is $\mathcal O_2$-stable if and only if each two-sided, closed ideal in $A$ is $KK$-equivalent to zero.

\subsection{The main results}
In order to present the main results of the paper I introduce some notation.
The ideal lattice of a $C^\ast$-algebra $A$ is denoted by $\mathcal I(A)$, and whenever $A$ is separable then $\mathcal I(A)$ is considered as an object in the category of abstract Cuntz semigroups. A $Cu$-morphism $\mathcal I(A) \to \mathcal I(B)$ is a map that preserves the Cuntz semigroup structure. It is shown in Lemma \ref{l:idealmorphisms} that any $\ast$-homomorphism $\phi \colon A \to B$ induces a $Cu$-morphism $\mathcal I(\phi) \colon \mathcal I(A) \to \mathcal I(B)$ via the assignment $I \mapsto \overline{B\phi(I)B}$. More details will be given in Section \ref{s:I}.

The first main theorem, which is a special case of Corollary \ref{c:classhom}, gives a complete classification of $\ast$-homomorphisms between $A$ and $B \otimes \mathcal O_2\otimes \mathbb K$ whenever $A$ is separable, exact and $B$ is separable, nuclear.

\begin{theoremintro}
Let $A$ and $B$ be separable $C^\ast$-algebras with $A$ exact and $B$ nuclear. Then the approximate unitary equivalence classes of $\ast$-homomorphisms $A \to B \otimes \mathcal O_2 \otimes \mathbb K$ are in a natural one-to-one correspondence with $Cu$-morphisms $\mathcal I(A) \to \mathcal I(B)$ between the ideal lattices of $A$ and $B$.
\end{theoremintro}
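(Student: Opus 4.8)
The plan is to realise the correspondence as the assignment $\phi \mapsto \mathcal I(\phi)$, where one first identifies $\mathcal I(B \otimes \mathcal O_2 \otimes \mathbb K) \cong \mathcal I(B)$ canonically (tensoring by the simple nuclear algebras $\mathcal O_2$ and $\mathbb K$ leaves the ideal lattice unchanged) and then appeals to Lemma \ref{l:idealmorphisms} to see that $\mathcal I(\phi)$ is a $Cu$-morphism. The first thing to check is that this assignment is invariant under approximate unitary equivalence. If $u_n \in \mathcal M(B \otimes \mathcal O_2 \otimes \mathbb K)$ are unitaries with $u_n \phi(a) u_n^\ast \to \psi(a)$ for every $a \in A$, then for $a \in I$ each $u_n \phi(a) u_n^\ast$ lies in the conjugate $u_n \, \mathcal I(\phi)(I) \, u_n^\ast$; since conjugation by a multiplier unitary fixes every closed two-sided ideal, this conjugate is just $\mathcal I(\phi)(I)$, and passing to the (closed) limit gives $\psi(I) \subseteq \mathcal I(\phi)(I)$, hence $\mathcal I(\psi)(I) \subseteq \mathcal I(\phi)(I)$. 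By symmetry $\mathcal I(\psi) = \mathcal I(\phi)$, so the map descends to approximate unitary equivalence classes. The substance of the theorem is that the descended map is a bijection, which I would prove as an \emph{existence} (surjectivity) statement together with a \emph{uniqueness} (injectivity) statement.

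Surjectivity is exactly the ideal-related $\mathcal O_2$-embedding theorem highlighted in the introduction: given a $Cu$-morphism $\alpha \colon \mathcal I(A) \to \mathcal I(B)$, one must produce a $\ast$-homomorphism $\phi \colon A \to B \otimes \mathcal O_2 \otimes \mathbb K$ with $\mathcal I(\phi) = \alpha$. The construction exploits the universality of $\mathcal O_2 \otimes \mathbb K$: Kirchberg's $\mathcal O_2$-embedding theorem, which is where exactness of $A$ is used, embeds $A$ and its relevant subquotients into $\mathcal O_2$, while nuclearity of $B$ guarantees that the codomain $B \otimes \mathcal O_2 \otimes \mathbb K$ is itself nuclear. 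The task is to assemble such embeddings coherently in the ideal structure dictated by $\alpha$; the $Cu$-morphism axioms (preservation of addition, of suprema of increasing sequences, and of the way-below relation) are precisely what allow these pieces to be glued continuously over $\Prim(A)$ so that the generated-ideal bookkeeping reproduces $\alpha$ on the nose.

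Injectivity is the uniqueness statement: if $\phi, \psi \colon A \to B \otimes \mathcal O_2 \otimes \mathbb K$ satisfy $\mathcal I(\phi) = \mathcal I(\psi) =: \alpha$, then they are approximately unitarily equivalent. I would prove this by an $\mathcal O_2$-stable \emph{absorption theorem} combined with an Elliott-type one-sided intertwining. The idea is that because the codomain is nuclear (so every $\ast$-homomorphism into it is automatically nuclear) and $\mathcal O_2$-stable, any map whose ideal-related support is $\alpha$ absorbs, up to approximate unitary equivalence, a fixed universal representative $\phi_\alpha$ of $\alpha$; establishing the mutual absorptions $\phi \oplus \phi_\alpha \approx_u \phi$ and $\phi_\alpha \oplus \phi \approx_u \phi_\alpha$ (and the same for $\psi$) forces $\phi \approx_u \phi_\alpha \approx_u \psi$. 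The $\mathcal O_2$-stability is what annihilates the $KK$-theoretic obstructions that would otherwise have to be matched in such a uniqueness statement, leaving only the ideal-lattice data.

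I expect the uniqueness/absorption step to be the main obstacle. The delicate point is formulating and proving the correct ideal-related absorption theorem and verifying that the relevant fullness and nuclearity hypotheses hold fibrewise, then carrying the intertwining to its limit while preserving, at every finite stage, the agreement of the induced $Cu$-morphisms. Doing this uniformly over a primitive ideal space that may be non-Hausdorff and infinite-dimensional, rather than over a single simple quotient as in the Kirchberg--Phillips theorem, is where the argument becomes substantially more demanding than in the simple case.
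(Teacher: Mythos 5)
Your framing---realise the correspondence as $\phi \mapsto \mathcal I(\phi)$ after identifying $\mathcal I(B\otimes\mathcal O_2\otimes\mathbb K)\cong\mathcal I(B)$, check invariance under approximate unitary equivalence, and then split the bijectivity into an existence and a uniqueness statement---matches the paper's architecture (Theorem A is deduced from Corollary \ref{c:classhom}, whose surjectivity is Theorem \ref{t:O2embedding} and whose injectivity is Theorem \ref{t:O2HB}), and your well-definedness argument is correct. But both pillars are left as placeholders, and the mechanisms you gesture at are not ones that are shown to go through; this is where the genuine gaps lie.

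For uniqueness you propose an ideal-related absorption theorem plus mutual absorption $\phi\oplus\phi_\alpha\approx_u\phi$, but you give no route to proving the absorption, and that absorption statement is essentially the entire content of the uniqueness theorem. The paper instead argues in two concrete steps: a Hahn--Banach/Radon--Nikodym separation argument (Theorem \ref{t:HB}, where exactness of $A$ enters, to know that ideals of $A\otimes C$ are spanned by elementary tensors) shows that $\mathcal I(\psi)\leq\mathcal I(\phi)$ forces $\phi$ to approximately dominate $\psi$, which $\mathcal O_\infty$-stability upgrades to a single $v\in B_\infty$ with $v^\ast\phi(-)v=\psi$ (Theorem \ref{t:OinftyHB}); then Connes' $2\times 2$-matrix trick (Proposition \ref{p:MvNeq}) reduces the problem to Murray--von Neumann equivalence of the two corner projections in $(M_2(B)_\infty\cap(\phi\oplus\psi)(A)')/\mathrm{Ann}((\phi\oplus\psi)(A))$, settled by Cuntz's $K_0$-result for full properly infinite projections, with $\mathcal O_2$-stability killing $[1\oplus 0]_0$. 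None of these ingredients appears in your sketch. For existence, ``gluing embeddings of subquotients coherently over $\Prim A$'' is not a workable mechanism as stated: $\Prim A$ is in general non-Hausdorff and there is nothing local to glue along, and the paper does not use the classical $\mathcal O_2$-embedding theorem as an input at all. Its actual route is to lift an arbitrary generalised $Cu$-morphism to a c.p.\ map (Proposition \ref{p:Wliftcp}) by combining Michael's selection theorem with the Kirchberg--R{\o}rdam theorem that $B\otimes\mathcal O_2$ contains an ideal-separating commutative subalgebra, dilate to a $\ast$-homomorphism into $\multialg{B}$ (Corollary \ref{c:multimorphism}), and then convert this into a $\ast$-homomorphism into $B$ itself via the crossed-product trick $(C_0(0,1)\otimes A)\rtimes\mathbb Z\cong C(\mathbb T)\otimes\mathbb K\otimes A$ inside $B_\infty$ followed by the lifting criterion of Theorem \ref{t:lifthom}; this is also where the compact-containment hypothesis on the $Cu$-morphism is actually consumed, via Proposition \ref{p:idealultra}. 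Your sketch asserts that the $Cu$-axioms ``are precisely what allow these pieces to be glued'' but does not say how, and that ``how'' is the theorem.
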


As an almost immediate consequence the following classification result due to Kirchberg \cite{Kirchberg-non-simple} is obtained. A slightly more general result is provided in Theorem \ref{t:O2class}. Note that the classification is strong, i.e.~that any isomorphism on the invariant is induced by an isomorphism of the $C^\ast$-algebras.

\begin{theoremintro}
Let $A$ and $B$ be separable, nuclear $C^\ast$-algebras for which $A \cong A \otimes \mathcal O_2$ and $B \cong B \otimes \mathcal O_2$. Suppose that $A$ and $B$ are both stable or both unital, and that $f \colon \Prim A \to \Prim B$ is a homeomorphism. Then there exists an isomorphism $\phi \colon A \xrightarrow \cong B$ such that $f(I) = \phi(I)$ for every $I \in \Prim A$. 
\end{theoremintro}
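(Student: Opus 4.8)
The plan is to deduce the theorem from Theorem~A (equivalently Corollary~\ref{c:classhom}) by an Elliott approximate--intertwining argument, handling the stable and unital cases separately. The genuine content lies entirely in Theorem~A; what remains is bookkeeping, the most delicate piece being the reduction of the unital case to the stable one. First I would convert the topological datum into an algebraic one: since $A$ and $B$ are separable, $\mathcal I(A)$ is canonically the lattice of open subsets of $\Prim A$, and a homeomorphism $f\colon \Prim A \to \Prim B$ induces an order isomorphism $\alpha\colon \mathcal I(A)\to \mathcal I(B)$ which is a $Cu$-isomorphism (it respects suprema and the compact-containment relation precisely because $f$ is a homeomorphism); its inverse $\alpha^{-1}$ is the $Cu$-isomorphism induced by $f^{-1}$.

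\emph{Stable case.} Here $A\cong A\otimes\mathbb K$ and $A\cong A\otimes\mathcal O_2$, so $A\cong A\otimes\mathcal O_2\otimes\mathbb K$, and likewise $B\cong B\otimes\mathcal O_2\otimes\mathbb K$; moreover $A$ is exact and $B$ nuclear (and vice versa), so Theorem~A applies in both directions. The existence half of Theorem~A supplies $\ast$-homomorphisms $\phi\colon A\to B$ and $\psi\colon B\to A$ with $\mathcal I(\phi)=\alpha$ and $\mathcal I(\psi)=\alpha^{-1}$. By functoriality of $\mathcal I(-)$ (Lemma~\ref{l:idealmorphisms}), $\mathcal I(\psi\circ\phi)=\alpha^{-1}\circ\alpha=\mathrm{id}_{\mathcal I(A)}=\mathcal I(\mathrm{id}_A)$ and similarly $\mathcal I(\phi\circ\psi)=\mathcal I(\mathrm{id}_B)$, so by the uniqueness half of Theorem~A the composite $\psi\circ\phi$ is approximately unitarily equivalent to $\mathrm{id}_A$ and $\phi\circ\psi$ to $\mathrm{id}_B$. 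Elliott's two-sided approximate-intertwining argument then upgrades $\phi$ to an isomorphism $\Phi\colon A\xrightarrow{\cong} B$ that is approximately unitarily equivalent to $\phi$. Since approximately unitarily equivalent maps induce the same $Cu$-morphism, $\mathcal I(\Phi)=\alpha$; unwinding the identification of $\mathcal I$ with the open subsets of $\Prim$ shows that the homeomorphism $\Prim A\to \Prim B$ induced by $\Phi$ is exactly $f$, i.e. $f(I)=\Phi(I)$ for all $I\in\Prim A$.

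\emph{Unital case.} The obstacle is that $A\otimes\mathbb K$ is non-unital, so Theorem~A cannot be applied with target $B$ directly. Instead I would first produce, via Theorem~A, a full $\ast$-homomorphism $\phi_0\colon A\to B\otimes\mathcal O_2\otimes\mathbb K$ realizing $\alpha$ (fullness because a $Cu$-isomorphism sends the top element to the top element). Then $p:=\phi_0(1_A)$ is a full, properly infinite projection, and since $B\otimes\mathcal O_2$ is $KK$-equivalent to $0$ one has $K_0(B\otimes\mathcal O_2\otimes\mathbb K)=0$, so $p$ is Murray--von Neumann equivalent to $q:=1_B\otimes 1_{\mathcal O_2}\otimes e_{11}$. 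Conjugating by a partial isometry yields a \emph{unital} $\ast$-homomorphism $A\to q(B\otimes\mathcal O_2\otimes\mathbb K)q\cong B\otimes\mathcal O_2\cong B$ still inducing $\alpha$ (passing to a full corner does not change the induced lattice map). Doing the same in the other direction and rerunning the intertwining argument with unital homomorphisms and unitaries in $A$, respectively $B$, produces a unital isomorphism inducing $f$.

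The main obstacle in this last step, and the point I expect to require genuine care, is the unital uniqueness statement: two unital $\ast$-homomorphisms $A\to B$ inducing $\alpha$ must be approximately unitarily equivalent by unitaries in $B$ itself rather than merely in a stabilisation. This I would obtain by viewing both as full corners of homomorphisms into $B\otimes\mathcal O_2\otimes\mathbb K$, invoking the approximate unitary equivalence furnished by Theorem~A there, and compressing it back to the corner $q(B\otimes\mathcal O_2\otimes\mathbb K)q$; the properly infinite, $K_0$-trivial structure of $B\otimes\mathcal O_2\otimes\mathbb K$ is what makes this compression possible.
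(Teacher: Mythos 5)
Your proposal is correct and follows essentially the same route as the paper: convert the homeomorphism of primitive ideal spaces into a $Cu$-isomorphism of ideal lattices (via \cite[Theorem 4.1.3]{Pedersen-book-automorphism}), apply the existence and uniqueness results (Corollary \ref{c:classhom}, and in the unital case the corner/Murray--von Neumann argument of Corollary \ref{c:classhom1} using that the image of the unit is a full, properly infinite, $K_0$-trivial projection), and finish with an Elliott approximate intertwining. The only cosmetic difference is that your unital reduction passes through $B\otimes\mathcal O_2\otimes\mathbb K$ and compresses back, whereas the paper maps directly into $B$ and extracts proper infiniteness of $\phi_0(1_A)$ from $\mathcal O_2$-stability of the map; both yield the same conclusion.
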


The proofs are based mainly on elementary or well-known results from the literature with the main exception being a deep structural result due to Kirchberg and Rørdam \cite[Theorem 6.11]{KirchbergRordam-zero}, which shows that separable, nuclear, $\mathcal O_2$-stable $C^\ast$-algebras contain suitably well-behaved, commutative $C^\ast$-subalgebras. This will be used in the proof of Proposition \ref{p:Wliftcp}. 

The overall strategy of the proof is very classical: I provide an existence and a uniqueness result for $\ast$-homomorphisms into $\mathcal O_2$-stable $C^\ast$-algebras, using the ideal lattice of $C^\ast$-algebras as the classifying invariant. The ideal lattice is considered as a \emph{covariant} functor with target category being the category $\mathbf{Cu}$\footnote{At least when $A$ is separable, cf.~Proposition \ref{p:Csg}.} of abstract Cuntz semigroups, as first introduced in \cite{CowardElliottIvanescu-Cuntzsemigroupinv}. This is in contrast to Kirchberg's approach which is to consider $C^\ast$-algebras with actions of topological spaces, an approach which is of a more contravariant nature. My approach allows for the use of compact containment of ideals, i.e.~way-below in the Cuntz semigroup sense. Compact containment will play a crucial role, see Proposition \ref{p:idealultra}.  I will at points digress slightly from the main objective in order to present a more well-rounded theory applicable in a more general setting. For instance, certain results are proved for order zero maps, while the statements are only needed for $\ast$-homomorphisms. 

\subsection*{Outline of the paper}

The basic properties of ideal lattices are studied in Section \ref{s:I}. 

In Section \ref{s:HB}, uniqueness results for nuclear maps are presented. The first uniqueness result Theorem \ref{t:HB} is the key ingredient. It shows that nuclear $\ast$-homo\-morphisms that define the same map between ideal lattices will approximately dominate each other. I then introduce an equivalence relation on $\ast$-homomorphisms weaker than approximate and asymptotic unitary equivalence, which I call approximate and asymp\-totic Murray--von Neumann equivalence. A key feature of this equivalence relation is that it is exactly the equivalence relation for which a certain $2\times 2$-matrix trick of Connes \cite{Connes-classtypeIII} is applicable, see Proposition \ref{p:MvNeq}. Similar matrix tricks have recently appeared in the classification programme, notably in the work of Matui and Sato \cite{MatuiSato-decrankUHF} and by Bosa et al.~\cite{BBSTWW-2coloured}. This matrix trick is used to prove Theorem \ref{t:O2HB} which is a uniqueness result for so-called (strongly) $\mathcal O_2$-stable $\ast$-homomorphisms, showing that the approximate (resp.~asymptotic) Murray--von Neumann equivalence class only depends on the induced maps between the ideal lattices.

In Section \ref{s:infty}, a characterisation is given of when an approximate $\ast$-homomorphism (i.e.~a $\ast$-homomorphism going into a sequence algebra) is approximately unitary equivalent to a $\ast$-homomorphism represented point-wise by constant sequences. This technique is a discrete version of \cite[Proposition 1.3.7]{Phillips-classification}. Here no assumptions are put on our $C^\ast$-algebras.

With a uniqueness result in the utility belt I move on to study existence results for maps between ideal lattices (Section \ref{s:W}). This is done by using Michael's selection theorem to produce well-behaved c.p.~maps into commutative $C^\ast$-algebras, a method inspired by work of Blanchard \cite{Blanchard-deformations}. Similar results using the same method have previously been obtained by Harnisch and Kirchberg \cite{HarnischKirchberg-primitive}. A lot of these results are very general and have (almost) no requirements on the $C^\ast$-algebras involved, e.g.~pure infiniteness type criteria are never assumed. 

Finally, in Section \ref{s:O2} I prove the main existence theorem which is an ideal related version of the $\mathcal O_2$-embedding theorem. Kirchberg's original proof uses (non-unital, ideal related versions of) $C^\ast$-systems as introduced in \cite{Kirchberg-normalizer}. The proof presented here only uses elementary $C^\ast$-algebraic techniques inspired by (though still quite different from) the proof of the $\mathcal O_2$-embedding theorem presented by Kirchberg and Phillips in \cite{KirchbergPhillips-embedding}.
As a consequence I obtain Kirchberg's classification of $\mathcal O_2$-stable $C^\ast$-algebras.

\subsection*{Acknowledgement}
Parts of this paper were completed during a research visit at the Mittag--Leffler Institute during the programme Classification of Operator Algebras: Complexity, Rigidity, and Dynamics, and during a visit at the CRM institute during the programme IRP Operator Algebras: Dynamics and Interactions. I am thankful for their hospitality during these visits.

I am very grateful to Joan Bosa, Jorge Castillejos, Aidan Sims and Stuart White for valuable, inspiring conversations on topics of the paper. I would also like to thank the referee for many helpful comments and suggestions.

\section{The basics}\label{s:I}

  Throughout the entire paper \emph{every ideal is assumed to be two-sided and closed.} 

\subsection{Compact containment}

A very basic and somewhat overlooked property of ideal lattices of $C^\ast$-algebras is the \emph{compact containment} relation which is exactly the way-below relation in complete lattices. This relation plays a crucial role in the study of ideal lattices. My motivation for considering this relation for ideals is inspired work on the Cuntz semigroup of $C^\ast$-algebras. See Proposition \ref{p:Csg} for the connection to Cuntz semigroups.

\begin{definition}
Let $ I$ and $ J$ be ideals in a $C^\ast$-algebra $ A$. Say that \emph{$ I$ is compactly contained in $ J$}, written $I \cpct J$, if whenever $(I_\lambda)_{\lambda \in \Lambda}$ is a family of ideals in $ A$ such that $ J \subseteq \overline{\sum_{\lambda}  I_\lambda}$, then there are finitely many $\lambda_1,\dots,\lambda_n\in \Lambda$ such that $ I \subseteq \sum_{k=1}^n  I_{\lambda_n}$.
\end{definition}

Note that in the definition above if the family $(I_\lambda)$ is upwards directed then it is always possible to find a single $\lambda$ such that $I \subseteq I_\lambda$.

Given a positive element $a\in A$ and an $\epsilon>0$, $(a-\epsilon)_+:= f_\epsilon(a)$ denotes the element defined by functional calculus where $f_\epsilon (t) = \max\{ 0 , t-\epsilon\}$.

\begin{lemma}\label{l:cpct}
Let $A$ be a $C^\ast$-algebra and let $I,J$ be ideals in $A$. Then $I \cpct J$ if and only if there is a positive element $a\in J$ and an $\epsilon >0$ such that $I \subseteq \overline{A(a-\epsilon)_+ A}$.

In particular, $\overline{A(a-\epsilon)_+ A} \cpct \overline{AaA}$ for any positive $a\in A$ and $\epsilon>0$.
\end{lemma}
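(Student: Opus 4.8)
The plan is to prove both implications of the equivalence and then deduce the ``in particular'' clause as an immediate special case. Throughout I would rely on two standard facts: the algebraic sum of finitely many closed ideals in a $C^\ast$-algebra is again a closed ideal, and if $\pi \colon A \to A/K$ is a quotient map then $\pi((a-\epsilon)_+) = (\pi(a)-\epsilon)_+$ for positive $a$, so that $(a-\epsilon)_+ \in K$ whenever $\|\pi(a)\| \leq \epsilon$.

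For the implication ``$\Leftarrow$'', suppose $a \in J$ is positive, $\epsilon > 0$, and $I \subseteq \overline{A(a-\epsilon)_+A}$. Given a family $(I_\lambda)$ with $J \subseteq \overline{\sum_\lambda I_\lambda}$, I would use that $a$ is approximated to within $\epsilon$ by an element of the dense algebraic sum $\sum_\lambda I_\lambda$; since every element of this algebraic sum already lies in a finite subsum, this produces $\lambda_1, \dots, \lambda_n$ and $b \in K := \sum_{k=1}^n I_{\lambda_k}$ with $\|a - b\| < \epsilon$. As $K$ is a closed ideal, passing to $A/K$ gives $\|\pi(a)\| \leq \|a-b\| < \epsilon$, whence $(a-\epsilon)_+ \in K$ and therefore $I \subseteq \overline{A(a-\epsilon)_+A} \subseteq K = \sum_{k=1}^n I_{\lambda_k}$, which is exactly the conclusion of compact containment.

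For the implication ``$\Rightarrow$'', the first step is to realise $J$ as the closed sum $J = \overline{\sum\{\overline{A(a-\epsilon)_+A} : a \in J_+,\ \epsilon > 0\}}$; this holds because each such cut-down lies in $J$, while every positive $a \in J$ is the norm limit of $(a-\epsilon)_+$ as $\epsilon \to 0$. Applying the definition of $I \cpct J$ to this family yields finitely many pairs $(a_k, \epsilon_k)$ with $I \subseteq \sum_{k=1}^n \overline{A(a_k - \epsilon_k)_+A}$. The crux is then to absorb this finite sum into a single cut-down, and I expect this to be the main obstacle. My proposed resolution is to set $a := \sum_{k=1}^n a_k \in J_+$ and $\epsilon := \min_k \epsilon_k > 0$: passing to the quotient by $\overline{A(a-\epsilon)_+A}$ forces $\|\pi(a)\| \leq \epsilon$, and then $0 \leq \pi(a_k) \leq \pi(a)$ gives $\|\pi(a_k)\| \leq \epsilon \leq \epsilon_k$, so $(a_k - \epsilon_k)_+$ vanishes in the quotient and hence $\overline{A(a_k-\epsilon_k)_+A} \subseteq \overline{A(a-\epsilon)_+A}$. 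Summing over $k$ gives $I \subseteq \overline{A(a-\epsilon)_+A}$ as required.

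Finally, the ``in particular'' statement follows by taking $J = \overline{AaA}$, observing $a \in \overline{AaA}$ via an approximate unit, and invoking the ``$\Leftarrow$'' direction with the same $a$ and $\epsilon$, since trivially $\overline{A(a-\epsilon)_+A} \subseteq \overline{A(a-\epsilon)_+A}$.
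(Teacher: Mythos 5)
Your proof is correct. It follows the same overall skeleton as the paper's --- the same decomposition $J = \overline{\sum_{a\in J_+,\epsilon>0}\overline{A(a-\epsilon)_+A}}$ for the forward direction, and the same ``approximate $a$ to within $\epsilon$ by an element of a finite subsum'' idea for the converse --- but the technical implementation of both halves differs, and is arguably more uniform. For ``$\Leftarrow$'', the paper arranges for $b$ to be positive (so that $a - \tfrac{\epsilon}{2}1_{\tilde A} \leq b$) and verifies $(a-\epsilon)_+ \leq f(a)bf(a)$ by hand with a bump function $f$; your quotient argument ($\|\pi(a)\| < \epsilon$ forces $(\pi(a)-\epsilon)_+ = 0$) reaches the same conclusion without needing $b$ positive, at the cost of invoking that a finite sum of closed ideals is closed --- a fact the paper also uses implicitly, since the definition of $\cpct$ asks for $I \subseteq \sum_{k} I_{\lambda_k}$ with no closure and the paper concludes $\overline{AbA} \subseteq \sum_k I_{\lambda_k}$. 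For ``$\Rightarrow$'', the paper first shows the family of cut-down ideals is upwards directed (using the ``in particular'' clause applied to $a = a_1+a_2$ together with $\overline{AaA} = \overline{\bigcup_{\epsilon>0}\overline{A(a-\epsilon)_+A}}$) and then extracts a single index, whereas you apply compact containment to the undirected family and absorb the resulting finite sum afterwards. Your absorption step ($a := \sum_k a_k$, $\epsilon := \min_k \epsilon_k$, and $0 \leq \pi(a_k) \leq \pi(a)$ with $\|\pi(a)\| \leq \epsilon$ in the quotient by $\overline{A(a-\epsilon)_+A}$) is sound and carries exactly the same combinatorial content as the paper's directedness argument; a small bonus of your ordering is that the ``only if'' direction no longer depends on the ``in particular'' clause having been established first.
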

\begin{proof}
``In particular'' clearly follows from the ``if'' statement. For the ``if'' part, suppose there are $a\in J_+$, and $\epsilon>0$ such that $I \subseteq \overline{A(a-\epsilon)_+A}$, and let $(I_\lambda)$ be a family of ideals in $A$ such that $J \subseteq \overline{\sum I_\lambda}$. There are $\lambda_1,\dots,\lambda_n$ and a positive element $b\in \sum_{k=1}^n I_{\lambda_k}$ such that $\| a - b \| < \epsilon/2$. Note that $a - \tfrac{\epsilon}{2} \cdot 1_{\tilde A} \leq b$. Let $f\colon \mathbb R_+ \to \mathbb R_+$ be any continuous function which is $0$ on $[0,\epsilon/2]$, and $1$ on $[\epsilon, \infty)$. Then
\[
(a-\epsilon)_+ \leq f(a) (a- \tfrac{\epsilon}{2}\cdot 1_{\tilde A}) f(a) \leq f(a) b f(a).
\]
Thus $I \subseteq \overline{A(a-\epsilon)_+A} \subseteq \overline{AbA} \subseteq \sum_{k=1}^n I_{\lambda_k}$.

For the ``only if'' part, suppose that $I\cpct J$. The family of ideals $(\overline{A(a-\epsilon)_+ A})_{a\in J_+,\epsilon>0}$ is upwards directed. In fact, if $a_1,a_2\in J_+$, $\epsilon_1,\epsilon_2 >0$ let $a= a_1+a_2$. By the ``in particular''-part, $\overline{A(a_i -\epsilon_i)_+ A} \cpct \overline{AaA} = \overline{\bigcup_{\epsilon >0} \overline{A(a-\epsilon)_+A} }$ so there are $\epsilon'_i >0$ such that $\overline{A(a_i - \epsilon_i)_+ A} \subseteq \overline{A(a-\epsilon_i')_+A} \subseteq \overline{A(a-\epsilon)_+ A}$ where $\epsilon = \min(\epsilon_1' , \epsilon_2')$. Hence the family of ideals is upwards directed. So as $J = \overline{\sum_{a\in J_+,\epsilon>0} \overline{A(a-\epsilon)_+A}}$ it follows from compact containment that $I \subseteq \overline{A(a-\epsilon)_+ A}$ for some $a\in J_+$ and $\epsilon>0$.
\end{proof}

The above lemma implies that whether or not an ideal is generated by a single element, i.e.~contains a full element, can be determined from the ideal lattice.

\begin{corollary}\label{c:Cuntzsglemma}
Let $A$ be a $C^\ast$-algebra, and $I$ be an ideal in $A$. Then $I$ has a full element if and only if there is a sequence $I_1 \cpct I_2 \cpct \dots$ of ideals such that $I = \overline{\bigcup I_n}$.
\end{corollary}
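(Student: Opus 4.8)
The plan is to prove both implications by explicit construction, using Lemma \ref{l:cpct} as the bridge between the algebraic condition (existence of a full element) and the lattice condition (an increasing compactly-contained sequence with union dense in $I$). For the forward direction, suppose $I = \overline{AaA}$ for a positive $a \in I$, and set $I_n := \overline{A(a - 1/n)_+ A}$. Each $(a-1/n)_+$ lies in $\overline{AaA} = I$ (since $(a-1/n)_+ = a\,h(a)$ for a continuous $h$ vanishing at $0$), so $I_n \subseteq I$; and as $(a-1/n)_+ \to a$ with $(a-1/n)_+ \in I_n$, we get $a \in \overline{\bigcup_n I_n}$, whence $I = \overline{AaA} \subseteq \overline{\bigcup_n I_n} \subseteq I$. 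The compact containments $I_n \cpct I_{n+1}$ are the crux: I would use the functional-calculus identity $(a - s)_+ = \big((a-t)_+ - (s-t)\big)_+$, valid for $s > t \geq 0$, with $s = 1/n$ and $t = 1/(n+1)$, to write $(a-1/n)_+ = (b - \delta)_+$ where $b = (a - 1/(n+1))_+$ and $\delta = 1/n - 1/(n+1) > 0$. The ``in particular'' part of Lemma \ref{l:cpct} then gives $I_n = \overline{A(b-\delta)_+ A} \cpct \overline{AbA} = I_{n+1}$ directly.

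For the reverse direction, suppose $I_1 \cpct I_2 \cpct \cdots$ with $I = \overline{\bigcup_n I_n}$. Applying Lemma \ref{l:cpct} to each relation $I_n \cpct I_{n+1}$ yields a positive $a_n \in I_{n+1}$, which I normalise so that $\|a_n\| \leq 1$, and some $\epsilon_n > 0$ with $I_n \subseteq \overline{A(a_n - \epsilon_n)_+ A} \subseteq \overline{Aa_n A}$. I would then assemble a single candidate full element $a := \sum_{n=1}^\infty 2^{-n} a_n$, which converges in norm to a positive element with $a \in I$ (each $a_n \in I_{n+1} \subseteq I$ and $I$ is closed), so that $\overline{AaA} \subseteq I$. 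For the reverse inclusion it suffices to show $I_n \subseteq \overline{AaA}$ for every $n$. Since $0 \leq 2^{-n} a_n \leq a$, I would invoke the elementary fact that $0 \leq x \leq y$ implies $\overline{AxA} \subseteq \overline{AyA}$ — seen, for instance, by noting that any irreducible representation annihilating $y$ also annihilates $x$ — to obtain $\overline{Aa_n A} \subseteq \overline{AaA}$, and hence $I_n \subseteq \overline{A(a_n - \epsilon_n)_+ A} \subseteq \overline{Aa_n A} \subseteq \overline{AaA}$. Taking the closed union gives $I \subseteq \overline{AaA}$, so $a$ is full in $I$.

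The main obstacle is establishing $I_n \cpct I_{n+1}$ cleanly in the forward direction. The naive attempt records only $I_n \cpct \overline{AaA} = I$, which is strictly weaker than what the statement demands, since the large ideal must be $I_{n+1}$, not $I$. The functional-calculus rewriting $(a-1/n)_+ = \big((a-1/(n+1))_+ - \delta\big)_+$ is exactly what allows Lemma \ref{l:cpct} to be applied with $I_{n+1}$ as the larger ideal; verifying that identity is a routine case analysis on the spectrum of $a$ and is the only genuine computation. Everything else reduces to the cited lemma together with standard properties of the ideal generated by a positive element.
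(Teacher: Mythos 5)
Your proof is correct and follows essentially the same route as the paper: in the forward direction the paper likewise takes $I_n = \overline{A(a^\ast a - 1/n)_+A}$ (you merely fill in the functional-calculus identity $(a-s)_+ = ((a-t)_+-(s-t))_+$ that justifies $I_n \cpct I_{n+1}$, which the paper leaves implicit), and in the reverse direction the paper also extracts positive contractions $a_n$ with $I_n \subseteq \overline{Aa_nA} \subseteq I_{n+1}$ via Lemma \ref{l:cpct} and takes $\sum 2^{-n}a_n$ as the full element. No gaps.
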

\begin{proof}
If $a\in I$ is full then $I_n := \overline{A(a^\ast a-1/n)_+A}$ gives the desired sequence. Conversely, suppose the sequence of ideals is given. By Lemma \ref{l:cpct} we find positive contractions $a_n \in A$ such that $I_n \subseteq \overline{Aa_nA} \subseteq I_{n+1}$. It is easy to see that $\sum_{n=1}^\infty 2^{-n} a_n$ is full in $I$.
\end{proof}


\subsection{The ideal lattice}

\begin{notation}
Let $\mathcal I(A)$ denote the \emph{ideal lattice} of a $C^\ast$-algebra $A$. 

Consider $\mathcal I(A)$ as an ordered, abelian monoid (addition is addition of ideals). It has increasing suprema (closure of unions), and compact containment.
\end{notation}

Let $\mathbf{Cu}$ denote the category of abstract Cuntz semigroups, i.e.~the category of ordered, abelian monoids with countable increasing suprema and way-below such that these are suitably well-behaved. See \cite[Page 170]{CowardElliottIvanescu-Cuntzsemigroupinv} or \cite[Definition 4.1]{AraPereraToms-K-theoryclass} for the precise definition. 

One particular part of the definition which will be used below, is that for any element $x$ in an abstract Cuntz semigroup there is a sequence $x_1 \wb x_2 \wb \dots$ such that $\sup x_n = x$.

\begin{proposition}\label{p:Csg}
Let $A$ be a $C^\ast$-algebra. Then $\mathcal I(A)$ is an object in $\mathbf{Cu}$ if and only if any ideal in $A$ contains a full element. In particular, $\mathcal I(A)$ is in $\mathbf{Cu}$ whenever $A$ is separable.
\end{proposition}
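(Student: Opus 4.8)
The plan is to check the four defining axioms (O1)--(O4) of an object of $\mathbf{Cu}$ directly against the structure on $\mathcal I(A)$: order by inclusion, addition by addition of ideals, suprema of increasing sequences by $\sup_n I_n = \overline{\bigcup_n I_n}$, and way-below by compact containment $\cpct$. Before doing so I would record that $\cpct$ is genuinely the way-below relation of the complete lattice $\mathcal I(A)$: this is immediate from the definition of $\cpct$ together with the remark following it, that for an upwards directed family compact containment is witnessed by a single member.

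The key point is that among the four axioms, only (O2) is sensitive to the full-element hypothesis; the other three hold for every $C^\ast$-algebra. Axiom (O1) is immediate, since $\overline{\bigcup_n I_n}$ is the least ideal containing all $I_n$. For (O3), suppose $I' \cpct I$, $J' \cpct J$, and $I + J \subseteq \overline{\sum_\lambda K_\lambda}$; then $I, J \subseteq \overline{\sum_\lambda K_\lambda}$, so compact containment produces finite subsums $\sum_i K_{\lambda_i} \supseteq I'$ and $\sum_j K_{\mu_j} \supseteq J'$, and the union of these two finite index sets gives a finite subsum dominating $I' + J'$; hence $I' + J' \cpct I + J$. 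For (O4), the inclusion $\sup_n(I_n + J_n) \subseteq (\sup_n I_n) + (\sup_n J_n)$ is clear from monotonicity of addition, while the reverse inclusion holds because $\overline{\bigcup_n(I_n+J_n)}$ is a closed ideal containing both $\bigcup_n I_n$ and $\bigcup_n J_n$, hence contains $\sup_n I_n$, $\sup_n J_n$, and therefore their sum.

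This reduces everything to (O2), which requires that every ideal $I$ be the supremum of a sequence $I_1 \cpct I_2 \cpct \cdots$. That is precisely the condition characterised in Corollary \ref{c:Cuntzsglemma} as the existence of a full element of $I$. So (O2) holds for all elements of $\mathcal I(A)$ exactly when every ideal of $A$ contains a full element; combined with the unconditional validity of (O1), (O3), and (O4), this yields the asserted equivalence in both directions.

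For the ``in particular'' clause I would argue that, when $A$ is separable, any ideal $I$ is a separable $C^\ast$-algebra and so has a strictly positive element $h$, which generates $I$ as an ideal (that is, $\overline{AhA} = I$) and is therefore full in $I$. Thus every ideal contains a full element and the equivalence applies. I do not expect a genuine obstacle here; the only steps requiring care are the identification of $\cpct$ with way-below and the reverse inclusion in (O4), and the real content of the proposition has already been isolated in Corollary \ref{c:Cuntzsglemma}, so the proof is essentially an accounting of which $\mathbf{Cu}$-axioms do and do not depend on fullness.
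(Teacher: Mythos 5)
Your proof is correct, but it is not the route the paper takes: it is precisely the alternative that the paper flags immediately after its own proof and declines to carry out (``straightforward but tedious''). The paper proves the ``if'' direction by passing to $A \otimes \mathcal O_\infty \otimes \mathbb K$, invoking the Kirchberg--R{\o}rdam theorem that this algebra is purely infinite to identify $Cu(A\otimes\mathcal O_\infty\otimes\mathbb K)$ with $\mathcal I(A)$ as ordered semigroups, and then importing from Coward--Elliott--Ivanescu the fact that Cuntz semigroups of $C^\ast$-algebras are objects of $\mathbf{Cu}$. That argument is short but leans on nontrivial external machinery; yours is longer in principle but elementary and self-contained, resting only on Lemma \ref{l:cpct} and Corollary \ref{c:Cuntzsglemma}, and your isolation of (O2) as the unique axiom sensitive to the full-element hypothesis makes the equivalence transparent. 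The two proofs coincide on the ``only if'' direction (axiom (O2) plus Corollary \ref{c:Cuntzsglemma}) and on the ``in particular'' clause. One point merits a fuller sentence than you give it: $\cpct$ is defined via arbitrary families, whereas the way-below relation appearing in the axioms of $\mathbf{Cu}$ is tested only against increasing \emph{sequences}, so a priori $\cpct$ is the stronger relation. This is harmless for (O2), since a $\cpct$-rapidly increasing sequence is in particular rapidly increasing in the sequential sense; and once (O2) is established the two relations agree --- if $I'$ is sequentially way below $I = \sup_n I_n$ with $I_n \cpct I_{n+1}$, then $I' \subseteq I_n \cpct I_{n+1} \subseteq I$, whence $I' \cpct I$ --- which is what legitimises checking (O3) for $\cpct$ only. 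The paper makes the same identification silently in its ``only if'' direction, so this is a refinement of your write-up rather than a gap.
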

\begin{proof}
If $\mathcal I(A)$ is an object of $\mathbf{Cu}$, then (by definition) for any $I \in \mathcal I(A)$ there is a sequence $I_1 \cpct I_2 \cpct\dots$ such that $I = \overline{\bigcup I_n}$. By Corollary \ref{c:Cuntzsglemma}, $I$ contains a full element. 

Conversely, suppose any ideal in $A$ has a full element. As $\mathcal O_\infty \otimes \mathbb K$ is simple and nuclear there is a canonical order isomorphism $\mathcal I(A) \cong \mathcal I(A \otimes \mathcal O_\infty \otimes \mathbb K)$ given by $I \mapsto I \otimes \mathcal O_\infty \otimes \mathbb K$. Note that any ideal in $A \otimes \mathcal O_\infty \otimes \mathbb K$ also contains a full element.

As $A \otimes \mathcal O_\infty \otimes \mathbb K$ is purely infinite by \cite[Theorem 5.11]{KirchbergRordam-purelyinf}, for any two positive elements $a,b$ in $A \otimes \mathcal O_\infty \otimes \mathbb K$ we have $[a] \leq [b]$ in $Cu(A \otimes \mathcal O_\infty \otimes \mathbb K)$ if and only if $a$ is contained in the ideal generated by $b$. Hence the canonical map $Cu(A \otimes \mathcal O_\infty \otimes  \mathbb K) \to \mathcal I(A \otimes \mathcal O_\infty \otimes \mathbb K)$ is an order isomorphism onto its image. Moreover, as every ideal in $A\otimes \mathcal O_\infty \otimes \mathbb K$ has a full element this map is onto so $Cu(A \otimes \mathcal O_\infty \otimes \mathbb K) \xrightarrow \cong \mathcal I(A \otimes \mathcal O_\infty \otimes \mathbb K)$.

It is easy to see (by pure infiniteness) that addition in $Cu(A \otimes \mathcal O_\infty \otimes \mathbb K)$ is uniquely determined by the order. As $\sup$ and $\wb$ are also uniquely determined by the order it follows that the isomorphism $Cu(A \otimes \mathcal O_\infty \otimes \mathbb K) \xrightarrow \cong \mathcal I(A \otimes \mathcal O_\infty \otimes \mathbb K)$ preserves $(0,+,\sup, \wb)$. As $Cu(A \otimes \mathcal O_\infty \mathbb K)$ is an object of $\mathbf{Cu}$, so is $\mathcal I(A) \cong \mathcal I(A \otimes \mathcal O_\infty \otimes \mathbb K)$. 
\end{proof}

One could alternatively also prove the ``if''-statement above simply by checking that $\mathcal I(A)$ satisfies the defining criteria for objects in $\mathbf{Cu}$ instead of using that $Cu(A\otimes \mathcal O_\infty \otimes \mathbb K)$ is an object in $\mathbf{Cu}$. This is straightforward but tedious.

The above proposition motivates the following definition.

\begin{definition}\label{d:Cumorphism}
A map $\Phi \colon \mathcal I(A) \to \mathcal I(B)$ is a \emph{generalised $Cu$-morphism} if it is an ordered monoid homomorphism which preserves increasing suprema. Say that $\Phi$ is a \emph{$Cu$-morphism} if in addition it preserves compact containment.
\end{definition}

\begin{remark}
In the category $\mathbf{Cu}$ a generalised $Cu$-morphisms is an ordered monoid homomorphism which preserves \emph{countable} increasing suprema.
A $Cu$-morphism is a generalised $Cu$-morphism which preserves compact containment.

The criterion that any $x$ in a $Cu$-semigroup is the supremum of a sequence $x_1 \wb x_2 \wb \dots$ implies that whenever a (not necessarily countable) increasing net has a supremum then any (generalised) $Cu$-morphism preserves this supremum. So (generalised) $Cu$-morphisms in $\mathbf{Cu}$ preserve (not necessarily countable) increasing suprema whenever these exist. 

Hence whenever $\mathcal I(A)$ and $\mathcal I(B)$ are objects in $\mathbf{Cu}$, see Proposition \ref{p:Csg}, a map $\Phi \colon \mathcal I(A) \to \mathcal I(B)$ is a (generalised) $Cu$-morphism in the sense of Definition \ref{d:Cumorphism} if and only if it is in the sense of $\mathbf{Cu}$.
\end{remark}

\begin{remark}\label{r:sup}
A map $\Phi \colon \mathcal I(A) \to \mathcal I(B)$ is a generalised $Cu$-morphism if and only if it preserves suprema (possibly empty, and not necessarily increasing). 

In fact, as $\sup \emptyset = 0$ and as the supremum of finitely many ideals is the sum, it follows that any supremum preserving map is an ordered monoid homomorphism and thus a generalised $Cu$-morphism. Conversely, suppose $\Phi$ is a generalised $Cu$-morphism. Then $\Phi(\sup \emptyset) = \Phi(0) = 0 = \sup \emptyset$ so $\Phi$ preserves the supremum of $\emptyset$. Moreover, if $S \subseteq \mathcal I(A)$ is non-empty, let $\sum S$ denote the set $\{ \sum_{I\in S'} I : S' \subseteq S \textrm{ is finite}\}$. Clearly $\sup \sum S = \sup S$ and as $\sum S$ is upwards directed it follows that $\Phi(\sup S) = \Phi(\sup \sum S) = \sup \Phi(\sum S) = \sup \Phi(S)$ so $\Phi$ preserves the supremum of $S$.
\end{remark}

\begin{notation}
If $\phi \colon A \to B$ is a completely positive (c.p.) map then $\mathcal I(\phi) \colon \mathcal I(A) \to \mathcal I(B)$ is the map $\mathcal I(\phi)(I) = \overline{B\phi(I)B}$ for $I \in \mathcal I(A)$.
\end{notation}

\begin{remark}
Given a $\ast$-homomorphism $\phi \colon A \to B$ it is common to consider the induced map on ideal lattices $\phi^\ast \colon \mathcal I(B) \to \mathcal I(A)$ given by $\phi^\ast(I) = \phi^{-1}(I)$. In this way the ideal lattice is a \emph{contravariant} functor. I emphasise that this is \emph{not} the same approach as used in this paper where the ideal lattice is considered a \emph{covariant} functor.
\end{remark}

\begin{remark}
$\mathcal I(\phi)$ is defined for any c.p.~map $\phi$. However, $\mathcal I(-)$ is \emph{not} functorial on the category of $C^\ast$-algebras with c.p.~maps as morphisms. For instance, if $\phi \colon \mathbb C \to M_2(\mathbb C)$ is the embedding into the $(1,1)$-corner, and $\psi \colon M_2(\mathbb C) \to \mathbb C$ is the compression to the $(2,2)$-corner, then $\psi \circ \phi = 0$, but $\mathcal I(\psi) \circ \mathcal I(\phi) = id_{\mathcal I(\mathbb C)} \neq \mathcal I(0) = \mathcal I(\psi \circ \phi)$.

However, $\mathcal I(-)$ \emph{is} functorial on the category of $C^\ast$-algebras with c.p.~\emph{order zero} maps as morphisms, cf.~Proposition \ref{p:ozfunctorial} below.
\end{remark}

\begin{lemma}\label{l:idealmorphisms}
Let $\phi \colon A \to B$ be a c.p.~map. The following hold.
\begin{itemize}
\item[$(i)$] $\mathcal I(\phi)(I) = \overline{B\phi(I_+)B}$ for all $I\in \mathcal I(A)$.
\item[$(ii)$] $\mathcal I(\phi)$ is a generalised $Cu$-morphism.
\item[$(iii)$] $\mathcal I(\phi)$ is a $Cu$-morphism whenever $\phi$ is a $\ast$-homomorphism.
\end{itemize}
\end{lemma}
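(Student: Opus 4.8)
The plan is to prove the three parts in order, with (i) feeding into both (ii) and (iii), and with the only genuinely new input appearing in (iii). For (i), I would use that every element of an ideal $I$ is a linear combination of four positive elements of $I$, so that $I = \span I_+$. Since $\phi$ is linear, $\phi(I) \subseteq \span \phi(I_+)$, hence $B\phi(I)B \subseteq \span(B\phi(I_+)B)$; taking closures gives $\overline{B\phi(I)B} \subseteq \overline{B\phi(I_+)B}$, while the reverse inclusion is immediate from $I_+ \subseteq I$. This establishes $\mathcal I(\phi)(I) = \overline{B\phi(I_+)B}$.

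For (ii), by Remark \ref{r:sup} it is enough to check that $\mathcal I(\phi)$ preserves arbitrary suprema. The empty supremum is preserved since $\mathcal I(\phi)(0) = \overline{B\phi(0)B} = 0$. For a non-empty $S \subseteq \mathcal I(A)$ with supremum $\overline{\sum_{I\in S} I}$, monotonicity of $\mathcal I(\phi)$ (which follows from $I \subseteq J \Rightarrow \phi(I) \subseteq \phi(J)$) gives the inclusion $\supseteq$. For $\subseteq$ I would use that a c.p.\ map is bounded, hence continuous, together with linearity, to get $\phi(\overline{\sum_I I}) \subseteq \overline{\phi(\sum_I I)} = \overline{\sum_I \phi(I)}$, and then push the copies of $B$ through the closure using continuity of multiplication, yielding $\overline{B\phi(\overline{\sum_I I})B} \subseteq \overline{\sum_I \overline{B\phi(I)B}} = \sup_{I\in S} \mathcal I(\phi)(I)$. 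Thus $\mathcal I(\phi)$ preserves suprema and is a generalised $Cu$-morphism.

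For (iii), assume $\phi$ is a $\ast$-homomorphism and $I \cpct J$. By Lemma \ref{l:cpct} there are $a \in J_+$ and $\epsilon > 0$ with $I \subseteq \overline{A(a-\epsilon)_+A}$. The decisive point is that multiplicativity lets me factor the image: since $\phi$ is a $\ast$-homomorphism it preserves functional calculus, so $\phi((a-\epsilon)_+) = (\phi(a)-\epsilon)_+$, and $\phi(A(a-\epsilon)_+A) \subseteq \phi(A)(\phi(a)-\epsilon)_+\phi(A)$. Combining this with monotonicity and continuity exactly as in (ii), and using $B\phi(A) \subseteq B$ and $\phi(A)B \subseteq B$, I obtain
\[
\mathcal I(\phi)(I) \subseteq \mathcal I(\phi)\big(\overline{A(a-\epsilon)_+A}\big) \subseteq \overline{B(\phi(a)-\epsilon)_+B}.
\]
Finally, part (i) (or a direct approximate-unit argument) shows the positive element $\phi(a)$ lies in $\mathcal I(\phi)(J) = \overline{B\phi(J_+)B}$, so a second application of Lemma \ref{l:cpct} gives $\mathcal I(\phi)(I) \cpct \mathcal I(\phi)(J)$, as required. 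The main obstacle is precisely this last part: (i) and (ii) use only linearity, positivity, continuity, and approximate units, all available for an arbitrary c.p.\ map, whereas preservation of compact containment genuinely needs the two $\ast$-homomorphism features — multiplicativity, to split $\phi(A(a-\epsilon)_+A)$ across the outer factors, and preservation of functional calculus, to identify $\phi((a-\epsilon)_+)$ with $(\phi(a)-\epsilon)_+$ — after which everything reduces to the concrete characterisation of $\cpct$ in Lemma \ref{l:cpct}.
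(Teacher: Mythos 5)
Your proof is correct and follows essentially the same route as the paper: part (iii) in particular is, in substance, the paper's own argument (reduce via Lemma \ref{l:cpct} to an ideal of the form $\overline{A(a-\epsilon)_+A}$, then use multiplicativity and preservation of functional calculus to land inside $\overline{B(\phi(a)-\epsilon)_+B}$ with $\phi(a)\in\mathcal I(\phi)(J)$). The only cosmetic differences are that the paper deduces (i) from the Schwarz inequality $\phi(a)^\ast\phi(a)\leq\phi(a^\ast a)$ rather than from $I=\mathrm{span}\,I_+$, and verifies (ii) directly as an ordered monoid homomorphism preserving increasing suprema rather than passing through Remark \ref{r:sup}.
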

\begin{proof}
$(i)$: As $\phi(a)^\ast \phi(a) \leq \phi(a^\ast a)$ it easily follows that $\mathcal I(\phi)(I) = \overline{B\phi(I_+) B}$.

$(ii)$: Clearly $\mathcal I(\phi)$ preserves zero and order. As $(I+J)_+ = I_+ + J_+$ for ideals $I,J$ in $A$, it follows from $(i)$ that
\[
\mathcal I(\phi)(I+J) = \overline{B\phi(I_+ + J_+) B} = \overline{B(\phi(I_+) \cup \phi(J_+)) B} = \mathcal I(\phi)(I) + \mathcal I(\phi)(J).
\]
So $\mathcal I(\phi)$ is an ordered monoid homomorphism. Let $(I_\lambda)$ be an increasing net of ideals in $A$, and let $I = \overline{\bigcup I_\lambda}$. Clearly $\overline{\bigcup \mathcal I(\phi)(I_\lambda)} \subseteq \mathcal I(\phi)(I)$ as $\mathcal I(\phi)(I_\lambda) \subseteq \mathcal I(\phi)(I)$ for each $\lambda$. To show the other inclusion, let $x\in I$ and pick a sequence $(x_n)$ in $\bigcup I_\lambda$ so that $\| x_n - x\| \to 0$. Then $\| \phi(x_n) - \phi(x)\| \to 0$, and as $\phi(x_n) \in \bigcup \mathcal I(\phi)(I_\lambda)$ it follows that $\phi(x) \in \overline{\bigcup\mathcal I(\phi)(I_\lambda)}$. Hence $\mathcal I(\phi)$ preserves increasing suprema and is thus a generalised $Cu$-morphism.

$(iii)$: If $\phi$ is a $\ast$-homomorphism we want to see that $\mathcal I(\phi)$ preserves compact containment, so suppose that $I \cpct J$. Note that $\mathcal I(\phi)(\overline{AbA}) = \overline{B \phi(b) B}$ for any $b\in A_+$ as $\phi$ is multiplicative. By Lemma \ref{l:cpct} there are $a\in J_+$ and $\epsilon>0$ such that $I \subseteq \overline{A(a-\epsilon)_+A}$. As 
\[
\mathcal I(\phi)(I) \subseteq \mathcal I(\phi)(\overline{A(a-\epsilon)_+A}) = \overline{B (\phi(a)-\epsilon)_+ B} \cpct \overline{B\phi(a) B} = \mathcal I(\phi)(\overline{AaA}) \subseteq \mathcal I(\phi)(J)
\]
it follows that $\mathcal I(\phi)$ preserves compact containment.
\end{proof}


\subsection{On functoriality of $\mathcal I(-)$}

It is shown that $\mathcal I(-)$ is functorial on the category of $C^\ast$-algebras with c.p.~order zero maps as morphisms. Although the generality of order zero maps is not actually needed in this paper, I believe that this level of generality could be applicable in other contexts. A few additional lemmas on functoriality of $\mathcal I(-)$ is also provided which will be used in Section \ref{s:W}.

Recall that a c.p.~map $\phi \colon A\to B$ is called \emph{order zero} if $\phi(a)\phi(b) = 0$ whenever $a,b\in A_+$ are such that $ab=0$.

\begin{remark}[Basics of order zero maps]\label{r:WZ}
Suppose $\phi \colon A \to B$ is a c.p.~order zero map. It follows from \cite[Theorem 3.3]{WinterZacharias-orderzero} that if one lets $C= C^\ast(\phi(A))$, there is a positive element $h \in \multialg{C}\cap \phi(A)'$ and a $\ast$-homomorphism $\pi \colon A \to \multialg{C} \cap \{ h\}'$ such that $\phi = h \pi(-)$. It is easy to see (using \cite[Proposition 3.2]{WinterZacharias-orderzero} for the case when $A$ is non-unital) that the pair $(\pi,h)$ with this property is unique.

This is used to do \emph{functional calculus} of order zero maps c.f.~\cite[Corollary 4.2]{WinterZacharias-orderzero}. In fact, for $f\colon [0,\infty) \to [0, \infty)$ a continuous map for which $f(0) = 0$, one defines $f(\phi)(-) := f(h) \pi(-) \colon A \to B$ which is also a c.p.~order zero map. 

This clearly implies that c.p.~order zero maps have the following bi-module type property: for any $a,b,c\in A$
\begin{equation}\label{eq:bimodule}
\phi(abc) = \lim_{k\to \infty} \phi^{1/k}(a) \phi(b) \phi^{1/k}(c).
\end{equation}
\end{remark}

\begin{lemma}\label{l:ozideals}
Let $\phi \colon A \to B$ be a c.p.~order zero map. For any $a,b,c\in A$, $\phi(abc) \in \overline{B \phi(b) B}$.
\end{lemma}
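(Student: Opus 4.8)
The statement to prove is Lemma~\ref{l:ozideals}: for a c.p.\ order zero map $\phi\colon A\to B$ and any $a,b,c\in A$, we have $\phi(abc)\in\overline{B\phi(b)B}$.

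\medskip

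The plan is to exploit the structure theorem for order zero maps recalled in Remark~\ref{r:WZ}, and in particular the bi-module type identity~\eqref{eq:bimodule}. First I would invoke~\eqref{eq:bimodule}, which states that
\[
\phi(abc) = \lim_{k\to\infty} \phi^{1/k}(a)\,\phi(b)\,\phi^{1/k}(c).
\]
Since $\overline{B\phi(b)B}$ is a closed ideal in $B$, it suffices to show that each approximating term $\phi^{1/k}(a)\,\phi(b)\,\phi^{1/k}(c)$ lies in $\overline{B\phi(b)B}$; the limit then lies in the closed ideal as well. So the problem reduces to showing that for the order zero map $\phi^{1/k}$ (which is again c.p.\ order zero by the functional calculus of Remark~\ref{r:WZ}), the elements $\phi^{1/k}(a)$ and $\phi^{1/k}(c)$ can be absorbed into the ideal.

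\medskip

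The key point is that $\phi(b)$ is flanked by elements $\phi^{1/k}(a)\in B$ on the left and $\phi^{1/k}(c)\in B$ on the right, so the product $\phi^{1/k}(a)\,\phi(b)\,\phi^{1/k}(c)$ is manifestly of the form $x\,\phi(b)\,y$ with $x,y\in B$. Any such product belongs to $B\phi(b)B$, hence to its closure $\overline{B\phi(b)B}$. This is the entire content: the right-hand side of~\eqref{eq:bimodule} is a limit of elements of the two-sided algebraic ideal $B\phi(b)B$, and closedness of $\overline{B\phi(b)B}$ finishes the argument. I do not even need the full strength of the Winter--Zacharias decomposition here beyond the identity~\eqref{eq:bimodule}; that identity already packages the structure theorem in exactly the form required.

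\medskip

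There is essentially no serious obstacle, since~\eqref{eq:bimodule} is stated for us and does the heavy lifting. The only point requiring a moment's care is confirming that $\phi^{1/k}$ genuinely produces elements of $B$ (rather than of a multiplier algebra) so that $\phi^{1/k}(a)\,\phi(b)\,\phi^{1/k}(c)$ is literally a product of three elements of $B$ with $\phi(b)$ in the middle; this follows because $\phi^{1/k}(-) = h^{1/k}\pi(-)$ takes values in $C=C^\ast(\phi(A))\subseteq B$ by the description in Remark~\ref{r:WZ}. With that observed, the membership $x\,\phi(b)\,y\in\overline{B\phi(b)B}$ is immediate and the lemma follows by closing up the limit.
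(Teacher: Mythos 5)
Your proof is correct and is exactly the paper's argument: the paper's proof of Lemma \ref{l:ozideals} is the one-line observation that it follows from \eqref{eq:bimodule}, and you have simply filled in the (routine) details that each term $\phi^{1/k}(a)\,\phi(b)\,\phi^{1/k}(c)$ lies in $B\phi(b)B$ and that the closed ideal absorbs the limit.
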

\begin{proof}
This follows from equation \eqref{eq:bimodule}.
\end{proof}

\begin{proposition}\label{p:ozfunctorial}
Let $A,B$ and $C$ be $C^\ast$-algebras, and let $\phi \colon A \to C$ and $\psi \colon C \to B$ be c.p.~maps. If $\psi$ is order zero, then $\mathcal I(\psi \circ \phi) = \mathcal I(\psi) \circ \mathcal I(\phi)$.
\end{proposition}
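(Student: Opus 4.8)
The plan is to fix an ideal $I \in \mathcal I(A)$ and establish the equality of ideals $\mathcal I(\psi\circ\phi)(I) = \mathcal I(\psi)(\mathcal I(\phi)(I))$ by proving two inclusions. Unwinding the definitions, the left-hand side is the closed ideal $\overline{B\psi(\phi(I))B}$ generated by $\psi(\phi(I))$, whereas the right-hand side is $\mathcal I(\psi)(\overline{C\phi(I)C}) = \overline{B\psi(\overline{C\phi(I)C})B}$. So the content of the proposition is that passing through the ideal $\overline{C\phi(I)C} = \mathcal I(\phi)(I)$ generated in $C$ before applying $\psi$ produces the same ideal in $B$ as applying $\psi$ directly to $\phi(I)$.

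For the inclusion $\mathcal I(\psi\circ\phi)(I) \subseteq \mathcal I(\psi)(\mathcal I(\phi)(I))$ I would first note that $\phi(I) \subseteq \overline{C\phi(I)C} = \mathcal I(\phi)(I)$; this is immediate, for instance by applying an approximate unit of $C$ on both sides of each $\phi(a)$. Passing to $\psi$ and then generating the ideal in $B$ is a monotone operation, so $\overline{B\psi(\phi(I))B} \subseteq \overline{B\psi(\mathcal I(\phi)(I))B}$, which is exactly the desired inclusion. This half uses nothing about order zero and would hold for any c.p.\ map $\psi$.

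The reverse inclusion is where the order zero hypothesis enters, and it is the main (indeed only) obstacle; recall from the preceding remark that the statement is false for general c.p.\ maps. Here it suffices to show $\psi(\overline{C\phi(I)C}) \subseteq \overline{B\psi(\phi(I))B}$, since generating the ideal in $B$ on both sides then yields $\mathcal I(\psi)(\mathcal I(\phi)(I)) \subseteq \mathcal I(\psi\circ\phi)(I)$. As $\overline{C\phi(I)C}$ is the closed linear span of products $c\,\phi(a)\,d$ with $c,d \in C$ and $a \in I$, and as $\psi$ is linear and continuous while the target $\overline{B\psi(\phi(I))B}$ is a closed subspace, it is enough to check that each such product lands in the target. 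This is precisely Lemma \ref{l:ozideals} applied to the order zero map $\psi$, giving $\psi(c\,\phi(a)\,d) \in \overline{B\psi(\phi(a))B} \subseteq \overline{B\psi(\phi(I))B}$. Combining the two inclusions yields the claimed functoriality. Thus the entire argument reduces to the order zero bimodule property already packaged in Lemma \ref{l:ozideals}, after which everything is routine.
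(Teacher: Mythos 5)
Your proof is correct and follows essentially the same route as the paper: the easy inclusion is argued identically, and the reverse inclusion is reduced in both cases to the order zero bimodule property packaged in Lemma \ref{l:ozideals}. The only difference is cosmetic: the paper first passes to positive elements via Lemma \ref{l:idealmorphisms}$(i)$ and writes $(x-\epsilon)_+$ exactly as a finite sum $\sum_i c_i^\ast\phi(y_i)c_i$ with $y_i\in I_+$, whereas you use directly that $\mathcal I(\phi)(I)$ is the closed linear span of the products $c\,\phi(a)\,d$, which is slightly more direct and equally valid.
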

\begin{proof}
Let $I\in \mathcal I(A)$. Then
\begin{equation}\label{eq:idealcompose}
\mathcal I(\psi \circ \phi)(I) = \overline{B \psi(\phi(I)) B} \subseteq \overline{B \psi(\mathcal I(\phi)(I)) B} = \mathcal I(\psi) \circ \mathcal I(\phi) (I).
\end{equation}
To show the other inclusion, it suffices by Lemma \ref{l:idealmorphisms}$(i)$ to show that $\psi((\mathcal I(\phi)(I))_+) \subseteq  \overline{B \psi(\phi(I)) B}$. Fix a positive element $x\in \mathcal I(\phi)(I)$. As $\mathcal I(\phi)(I)$ is generated by $\phi(I_+)$, we may for any $\epsilon >0$ find $c_1,\dots, c_{n} \in C$ and $y_1, \dots, y_n\in I_+$ such that $(x-\epsilon)_+ = \sum_{i=1}^n c_i^\ast \phi(y_i) c_i$. Then
\[
\psi((x-\epsilon)_+) = \sum_{i=1}^n \psi(c_i^\ast \phi(y_i) c_i) \stackrel{\textrm{Lem.~}\ref{l:ozideals}}{\in} \sum_{i=1}^n \overline{B \psi(\phi(y_i))B} \subseteq \overline{B \psi(\phi(I)) B}.
\]
Hence $\psi(x) \in \overline{B \psi(\phi(I)) B}$.
\end{proof}

The following two lemmas about when $\mathcal I(-)$ preserves composition will be used in Section \ref{s:W}.

\begin{lemma}\label{l:idealcom}
Let $A,B$ and $C$ be $C^\ast$-algebras with $C$ commutative, and let $\phi \colon A \to C$ and $\psi \colon C \to B$ be c.p.~maps. Then $\mathcal I(\psi) \circ \mathcal I(\phi) = \mathcal I(\psi \circ \phi)$.
\end{lemma}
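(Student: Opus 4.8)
The plan is to mimic the proof of Proposition \ref{p:ozfunctorial}, replacing the order zero hypothesis on $\psi$ (which there supplied the module identity of Lemma \ref{l:ozideals}) by the commutativity of $C$. As in that proof, the inclusion $\mathcal I(\psi\circ\phi)(I)\subseteq \mathcal I(\psi)\circ\mathcal I(\phi)(I)$ comes for free from \eqref{eq:idealcompose}, since that step uses only $\phi(I)\subseteq \mathcal I(\phi)(I)$ together with positivity. So the entire content is the reverse inclusion, and by Lemma \ref{l:idealmorphisms}$(i)$ it suffices to prove that $\psi(x)\in \overline{B\psi(\phi(I))B}$ for every positive $x\in \mathcal I(\phi)(I)$.

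Fix such an $x$ and an $\epsilon>0$. Exactly as in Proposition \ref{p:ozfunctorial}, since $\mathcal I(\phi)(I)$ is the closed ideal generated by $\phi(I_+)$, I would write $(x-\epsilon)_+=\sum_{i=1}^n c_i^\ast \phi(y_i)c_i$ with $c_i\in C$ and $y_i\in I_+$. The crucial step is then to estimate each summand using commutativity: $c_i^\ast\phi(y_i)c_i=c_i^\ast c_i\,\phi(y_i)$, and since $0\le c_i^\ast c_i\le \|c_i\|^2$ in $\tilde C$ while $\phi(y_i)\ge 0$ commutes with $c_i^\ast c_i$, the product of these commuting positive elements satisfies $c_i^\ast\phi(y_i)c_i\le \|c_i\|^2\phi(y_i)$. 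Applying the positive map $\psi$ preserves this order, giving $0\le \psi(c_i^\ast\phi(y_i)c_i)\le \|c_i\|^2\psi(\phi(y_i))$.

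At this point I would invoke the elementary fact that $0\le a\le \lambda b$ forces $a\in\overline{BbB}$ (pass to the quotient $B\to B/\overline{BbB}$, in which the image of $b$, and hence of $a$, vanishes). Thus each summand $\psi(c_i^\ast\phi(y_i)c_i)$ lies in $\overline{B\psi(\phi(y_i))B}\subseteq \overline{B\psi(\phi(I))B}$, so $\psi((x-\epsilon)_+)=\sum_i\psi(c_i^\ast\phi(y_i)c_i)\in \overline{B\psi(\phi(I))B}$. Letting $\epsilon\to 0$ and using $\|\psi((x-\epsilon)_+)-\psi(x)\|\to 0$ yields $\psi(x)\in \overline{B\psi(\phi(I))B}=\mathcal I(\psi\circ\phi)(I)$, which establishes the reverse inclusion and hence the lemma.

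The only genuinely delicate point is the order estimate $c_i^\ast\phi(y_i)c_i\le\|c_i\|^2\phi(y_i)$, and this is precisely where commutativity is essential: in the noncommutative setting $c_i^\ast\phi(y_i)c_i$ need not be dominated by any multiple of $\phi(y_i)$, and it is exactly this gap that the order zero hypothesis bridges in Proposition \ref{p:ozfunctorial}. Everything else is the same bookkeeping as in that proof, so I do not anticipate further obstacles.
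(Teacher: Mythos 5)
Your proof is correct and follows essentially the same route as the paper: reduce to showing $\psi$ maps positive elements of $\mathcal I(\phi)(I)$ into $\mathcal I(\psi\circ\phi)(I)$, cut down by $\epsilon$, use commutativity of $C$ to dominate each term by a scalar multiple of $\phi(y_i)$, and then apply positivity of $\psi$ together with hereditarity of ideals. The paper phrases the cut-down via an approximation $\|f-\sum g_k\phi(a_k)\|<\epsilon$ with $g_k\in C_+$ rather than your exact decomposition $(x-\epsilon)_+=\sum c_i^\ast\phi(y_i)c_i$, but this is only a cosmetic difference.
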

\begin{proof}
Let $I\in \mathcal I(A)$. By \eqref{eq:idealcompose}, $\mathcal I(\psi \circ \phi)(I) \subseteq \mathcal I(\psi) \circ \mathcal I(\phi)(I)$ so it remains to prove the other implication. For this it is enough to prove that $\psi((\mathcal I(\phi)(I))_+) \subseteq \mathcal I(\psi \circ \phi)(I)$. To see this it suffices to show that $\psi((f-\epsilon)_+) \in \mathcal I(\psi \circ \phi)(I)$ for every positive $f\in \mathcal I(\phi)(I)$ and every $\epsilon >0$.

As $\mathcal I(\phi)(I) = \overline{C\phi(I_+)C}$, and as $C$ is commutative, we may pick $a_1,\dots, a_n \in I_+$ and $g_1,\dots, g_n \in C_+$ such that $\| f - \sum_{k=1}^n g_k \phi(a_k)\| < \epsilon$. As $C$ is commutative, this implies that 
\[
(f-\epsilon)_+ \leq \sum_{k=1}^n g_k \phi(a_k) \leq \sum_{k=1}^n \| g_k\| \phi(a_k).
\]
Thus
\[
\psi((f-\epsilon)_+) \leq \sum_{k=1}^n \| g_k\| \psi(\phi(a_k)) \in \sum_{k=1}^n \overline{B\psi(\phi(a_k)) B} \subseteq \mathcal I(\psi \circ \phi) (I). \qedhere
\]
\end{proof}

Let $\otimes$ denote the minimal tensor product. If $B$ and $D$ are $C^\ast$-algebras, and $\eta$ is a state on $D$, $\lambda_\eta \colon B \otimes D \to B$ denotes the induced (left) slice map given on elementary tensors by $\lambda_\eta(b\otimes d) = b \eta(d)$.

\begin{lemma}\label{l:slice}
Let $D$ be an exact $C^\ast$-algebra, and suppose that $\eta$ is a faithful state on $D$. For any c.p.~map $\phi \colon A \to B\otimes D$ we have $\mathcal I(\lambda_\eta \circ \phi) = \mathcal I(\lambda_\eta) \circ \mathcal I(\phi)$.
\end{lemma}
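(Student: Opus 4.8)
The plan is to prove one inclusion for free and concentrate all the work on the other. As recorded in \eqref{eq:idealcompose}, for any $I\in\mathcal I(A)$ one has $\mathcal I(\lambda_\eta\circ\phi)(I)\subseteq\mathcal I(\lambda_\eta)\circ\mathcal I(\phi)(I)$, and this used nothing about $\lambda_\eta$ beyond complete positivity. So it remains to prove the reverse inclusion, and this is where exactness of $D$ and faithfulness of $\eta$ must both enter. Writing $K:=\mathcal I(\lambda_\eta\circ\phi)(I)$ for the target ideal in $B$, I would use Lemma \ref{l:idealmorphisms}$(i)$ to reduce the desired inclusion $\mathcal I(\lambda_\eta)(\mathcal I(\phi)(I))\subseteq K$ to the statement $\lambda_\eta((\mathcal I(\phi)(I))_+)\subseteq K$ (using $\overline{BKB}=K$). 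Since $\lambda_\eta(K\otimes D)\subseteq K$ (immediate on elementary tensors and by continuity), this would in turn follow from the single containment of ideals
\[
\mathcal I(\phi)(I)\subseteq K\otimes D.
\]

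Since $\mathcal I(\phi)(I)=\overline{(B\otimes D)\phi(I_+)(B\otimes D)}$ is the ideal generated by $\phi(I_+)$, and $K\otimes D$ is an ideal of $B\otimes D$, it suffices to check $\phi(y)\in K\otimes D$ for each $y\in I_+$. Here I would invoke exactness of $D$: letting $\pi\colon B\to B/K$ be the quotient map, exactness gives $\ker(\pi\otimes\mathrm{id}_D)=K\otimes D$, so it is enough to show $(\pi\otimes\mathrm{id})(\phi(y))=0$. Now use naturality of the slice map, $\pi\circ\lambda_\eta=\lambda_\eta^{B/K}\circ(\pi\otimes\mathrm{id})$ (verified on elementary tensors), together with the fact that $\lambda_\eta(\phi(y))=(\lambda_\eta\circ\phi)(y)\in K$ by the very definition of $K$. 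This yields
\[
\lambda_\eta^{B/K}\big((\pi\otimes\mathrm{id})(\phi(y))\big)=\pi(\lambda_\eta(\phi(y)))=0.
\]
As $y\geq0$, the element $(\pi\otimes\mathrm{id})(\phi(y))$ is positive, so faithfulness of $\eta$ — through the fact that a faithful state induces a slice map that is faithful on positive elements — forces $(\pi\otimes\mathrm{id})(\phi(y))=0$, completing the argument.

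The main obstacle is supplying the two structural inputs rather than any computation. The identification $\ker(\pi\otimes\mathrm{id}_D)=K\otimes D$ is essentially the definition of exactness of $D$, namely that $-\otimes D$ carries the short exact sequence $0\to K\to B\to B/K\to0$ to a short exact sequence. The faithfulness-on-positives of $\lambda_\eta^{B/K}$ is the standard fact that slicing by a faithful state is faithful; I would either cite this or prove it via the GNS representation $(\pi_\eta,H_\eta,\xi_\eta)$ of $\eta$, starting from the observation that $\mathrm{id}\otimes\pi_\eta$ is isometric on the minimal tensor product and that $\langle\lambda_\eta(w)\zeta,\zeta\rangle=\|(\mathrm{id}\otimes\pi_\eta)(w)^{1/2}(\zeta\otimes\xi_\eta)\|^2$ for $w\geq0$. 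That exactness and faithfulness are precisely the two hypotheses of the lemma is reassuring that this is the intended route.
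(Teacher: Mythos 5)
Your proposal is correct and follows essentially the same route as the paper's proof: both reduce the nontrivial inclusion to the statement that a positive element $x$ with $\lambda_\eta(x)\in K$ must lie in $K\otimes D$, established by using exactness to identify $(B\otimes D)/(K\otimes D)$ with $(B/K)\otimes D$ and then invoking faithfulness of the descended slice map on positive elements. The only caveat is peripheral: in your GNS sketch of that faithfulness, knowing that $(\mathrm{id}\otimes\pi_\eta)(w)^{1/2}$ annihilates $H\otimes\mathbb{C}\xi_\eta$ does not by itself force $(\mathrm{id}\otimes\pi_\eta)(w)=0$ (that subspace need not be cyclic), so you should either cite the standard fact or argue as the paper does, via product functionals $\mu\otimes\eta$ and the right slice maps $\rho_\mu$.
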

\begin{proof}
First note that $\lambda_\eta$ is faithful: if $x\in B\otimes D$ is positive and non-zero, then there are positive linear functionals $\mu_1$ on $B$ and $\mu_2$ on $D$ such that $\mu_2(\rho_{\mu_1}(x))=(\mu_1\otimes\mu_2) (x) >0$\footnote{If $\pi_1 \colon B \to \mathbb B(\mathcal H_1), \pi_2 \colon D \to \mathbb B(\mathcal H_2)$ are faithful representations, then $\pi_1 \times \pi_2 \colon B \otimes D \to \mathbb B(\mathcal H_1 \otimes \mathcal H_2)$ is faithful (and well-defined, by the definition of minimal tensor products). Hence there are $\xi_i \in \mathcal H_i$ such that $(\pi_1\otimes \pi_2)(x) (\xi_1 \otimes \xi_2) \neq 0$. Letting $\mu_i = \langle \pi_i(-)\xi_i, \xi_i\rangle$ does the trick.} where $\rho_{\mu_1} \colon B \otimes D \to D$ is the induced right slice map. Hence $(\rho_{\mu_1})(x)\in D$ is positive and non-zero so $\rho_1( \lambda_\eta(x)) = \eta(\rho_{\mu_1}(x)) > 0$, and thus $\lambda_\eta(x) \neq 0$.

Let $x\in B\otimes D$ be positive and $J\in \mathcal I(B)$. We claim that $x\in J \otimes D$ if and only if $\lambda_\eta(x) \in J$. ``Only if'' is trivial, so for ``if'' we assume that $\lambda_\eta(x) \in J$. As $D$ is exact, $(B \otimes D)/(J\otimes D) = (B/J) \otimes D$, so $\lambda_\eta$ descends to a faithful slice map $\overline \lambda_\eta \colon (B/J) \otimes D \to B/J$. Thus $\overline \lambda_\eta(x + J \otimes D) = \lambda_\eta(x) + J = 0$. So $x+J\otimes D=0$, and hence $x\in J\otimes D$.

So for any subset $S\subseteq B\otimes D$ of positive elements, $\lambda_\eta(S)$ generates the ideal $J$ if and only if $J\otimes D = \bigcap_{J_0 \in \mathcal I(B): S \subseteq J_0 \otimes D} J_0 \otimes D$. Let $I\in \mathcal I(A)$, $S:= \phi(I_+)$ and $J = \overline{B \lambda_\eta(S) B}$. Then $\mathcal I(\phi)(I) \subseteq J \otimes D$ so
\[
(\mathcal I(\lambda_\eta) \circ \mathcal I(\phi))(I) \subseteq \mathcal I(\lambda_\eta)(J \otimes D) = J = \mathcal I(\lambda_\eta \circ \phi)(I).
\]
The implication $\mathcal I(\lambda_\eta \circ \phi)(I) \subseteq (\mathcal I(\lambda_\eta) \circ \mathcal I(\phi))(I)$ is easy, see \eqref{eq:idealcompose}.
\end{proof}


\section{Uniqueness of nuclear maps via ideals}\label{s:HB}

\subsection{Approximate domination}

In this section several uniqueness results for nuclear maps are presented. The main result of this subsection, Theorem \ref{t:HB}, is the key ingredient in all of these results, and it characterises when nuclear order zero maps approximately dominate each other in terms of their behaviour on ideals. 

Although the results in this paper only need the results in the generality of $\ast$-homo\-morphisms, and not order zero maps, I believe that the generality of order zero maps could potentially be applicable in other contexts.

\begin{definition}\label{d:approxdom}
Let $\phi,\psi \colon A \to B$ be c.p.~maps and suppose that $\phi$ is order zero. Say that $\phi$ \emph{approximately dominates} $\psi$ if for any finite subset $\mathcal F \subset A$ and any $\epsilon >0$, there are an $n\in \mathbb N$ and $b_1,\dots,b_n \in B$ such that
\[
\| \psi(a) - \sum_{k=1}^n b_k^\ast \phi(a) b_k\| < \epsilon, \qquad a \in \mathcal F.
\]
\end{definition}

Recall that a map is called \emph{nuclear} if it is a point-norm limit of maps factoring by c.p.~maps through matrix algebras. Note that nuclear maps are c.p.~by definition. 

Clearly any c.p.~map which is approximately dominated by a nuclear map is itself nuclear.

\begin{notation}
For generalised $Cu$-morphisms $\Phi,\Psi \colon \mathcal I(A) \to \mathcal I(B)$, write $\Phi \leq \Psi$ if $\Phi(I) \subseteq \Psi(I)$ for all $I\in \mathcal I(A)$.
\end{notation}

The following is a slight modification of similar results appearing in work of Kirchberg and Rørdam \cite[Lemma 7.18]{KirchbergRordam-absorbingOinfty}, \cite[Proposition 4.2]{KirchbergRordam-zero}, and the proof presented here is virtually identical to theirs. A similar result also appeared in \cite[Theorem 2.5]{Gabe-cplifting}. The true strength of the result is that condition $(i)$ is a \emph{uniform on compact sets} type condition whereas condition $(iii)$ is a \emph{point-wise} condition.

\begin{theorem}\label{t:HB}
Let $A$ and $B$ be $C^\ast$-algebras with $A$ exact, let $\phi, \rho \colon A \to B$ be nuclear maps and suppose that $\phi$ is order zero. The following are equivalent
\begin{itemize}
 \item[$(i)$] $\phi$ approximately dominates $\rho$,
 \item[$(ii)$] $\mathcal I(\rho) \leq \mathcal I(\phi)$,
 \item[$(iii)$] $\rho(a) \in \overline{B\phi(a) B}$ for all $a\in A_+$.
\end{itemize}
\end{theorem}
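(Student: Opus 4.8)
I want to prove the cyclic chain of implications $(i) \Rightarrow (iii) \Rightarrow (ii) \Rightarrow (i)$, since each single step looks substantially more tractable than trying to establish any equivalence directly. The two easy implications are $(i) \Rightarrow (iii)$ and $(iii) \Rightarrow (ii)$; the genuinely hard step is $(ii) \Rightarrow (i)$, which is where exactness and nuclearity must both be used in an essential way.

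\textbf{The routine implications.} For $(i) \Rightarrow (iii)$: fix $a \in A_+$ and $\epsilon > 0$. Approximate domination gives $b_1,\dots,b_n \in B$ with $\|\rho(a) - \sum_k b_k^\ast \phi(a) b_k\| < \epsilon$. Each summand $b_k^\ast \phi(a) b_k$ lies in $\overline{B\phi(a)B}$, so $\rho(a)$ is within $\epsilon$ of that ideal; letting $\epsilon \to 0$ and using that the ideal is closed gives $\rho(a) \in \overline{B\phi(a)B}$. For $(iii) \Rightarrow (ii)$: I must show $\mathcal I(\rho)(I) \subseteq \mathcal I(\phi)(I)$ for each ideal $I$. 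By Lemma \ref{l:idealmorphisms}$(i)$ it suffices to control $\rho(I_+)$. For $a \in I_+$, hypothesis $(iii)$ gives $\rho(a) \in \overline{B\phi(a)B}$; since $\phi(a) \in \mathcal I(\phi)(I) = \overline{B\phi(I_+)B}$, the whole ideal $\overline{B\phi(a)B}$ is contained in $\mathcal I(\phi)(I)$, so $\rho(a) \in \mathcal I(\phi)(I)$. Taking the closed ideal generated by all such $\rho(a)$ yields $\mathcal I(\rho)(I) \subseteq \mathcal I(\phi)(I)$, which is exactly $\mathcal I(\rho) \leq \mathcal I(\phi)$.

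\textbf{The hard implication $(ii) \Rightarrow (i)$.} This is where the real work lies, and the authors signal its difficulty by attributing the argument to Kirchberg--Rørdam. The strategy I would pursue is a Hahn--Banach separation argument applied in a dual setting. Fixing a finite set $\mathcal F$ and $\epsilon > 0$, I want to produce $b_1,\dots,b_n$ with $\|\rho(a) - \sum_k b_k^\ast \phi(a) b_k\| < \epsilon$ for all $a \in \mathcal F$. The natural approach is to consider the real-linear space of maps of the form $a \mapsto \sum_k b_k^\ast \phi(a) b_k$, take its point-norm closure in the appropriate sense, and argue by contradiction: if $\rho$ cannot be approximated, a separating functional exists, which should take the form of a state or a pair of states detecting some positive element on which $\rho$ is ``larger'' than anything dominated by $\phi$. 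Exactness of $A$ is what allows passage to quotients and tensor-product slicing to behave well (paralleling its use in Lemma \ref{l:slice}), and nuclearity of both maps is what lets one factor through matrix algebras and thereby reduce the domination condition to finite-dimensional data where the separating functional can be analysed. The key tension to resolve is translating the ideal-theoretic hypothesis $(ii)$ — an inclusion of ideals — into the required uniform-on-$\mathcal F$ operator inequality; I expect this to go through a localisation argument showing that on each primitive quotient, $\mathcal I(\rho) \leq \mathcal I(\phi)$ forces $\rho$ to be supported where $\phi$ is, and then gluing these local estimates using nuclearity.

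\textbf{Main obstacle.} The principal difficulty is step $(ii) \Rightarrow (i)$, specifically converting the static ideal inclusion into a \emph{uniform} approximation estimate. The subtlety is that $(ii)$ is a statement about closures of generated ideals, which a priori only gives approximation in a weak, element-by-element sense, whereas $(i)$ demands a single finite collection $b_1,\dots,b_n$ working simultaneously across all of $\mathcal F$ to within $\epsilon$. Bridging this gap is exactly where I anticipate invoking the order zero structure of $\phi$ (via the bi-module property in Remark \ref{r:WZ}) together with a compactness or separation argument; getting the quantifiers in the right order so that one collection of $b_k$'s handles the entire finite set is the crux, and I expect the Kirchberg--Rørdam argument the authors cite to supply precisely this mechanism.
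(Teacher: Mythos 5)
Your two routine implications are correct: $(i)\Rightarrow(iii)$ and $(iii)\Rightarrow(ii)$ go through exactly as you describe (the paper arranges the cycle as $(i)\Rightarrow(ii)\Rightarrow(iii)\Rightarrow(i)$, using the order zero property of $\phi$ via Lemma \ref{l:ozideals} to get $(ii)\Rightarrow(iii)$, but your ordering is an equally valid decomposition). The problem is that your treatment of the hard implication is a plan, not a proof. Everything after ``The strategy I would pursue'' consists of statements of intent (``should take the form of a state'', ``I expect this to go through'', ``I anticipate invoking''), and the crux of the theorem --- converting the pointwise ideal condition into a uniform estimate over a finite set --- is never actually carried out.

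Concretely, the missing mechanism is the following. One forms the convex cone $\mathscr C$ of c.p.\ maps approximately dominated by $\phi$; since it is point-norm closed it is point-weakly closed, so it suffices to approximate $\rho$ against finitely many functionals $f_1,\dots,f_n\in B^\ast$. By the Radon--Nikodym theorem for $C^\ast$-algebras these are realised as $f_i=\langle \pi(-)c_i\xi,\xi\rangle$ with $c_i$ in the commutant of a cyclic representation; setting $C=C^\ast(c_1,\dots,c_n)$, each \emph{nuclear} map $\eta$ induces a positive functional $\gamma_\eta$ on the \emph{minimal} tensor product $A\otimes C$ --- this is where nuclearity enters, not to ``reduce to finite-dimensional data''. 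One then shows the weak$^\ast$ closed cone $\mathscr K=\overline{\{\gamma_\psi:\psi\in\mathscr C\}}$ is invariant under $\gamma\mapsto\gamma(d^\ast(-)d)$, and it is precisely here that the order zero bi-module property \eqref{eq:bimodule} is needed to push elements $x_j\in A$ across $\phi$ and land back inside $\mathscr C$. A lemma of Kirchberg--R{\o}rdam reduces the claim $\gamma_\rho\in\mathscr K$ to showing $\gamma_\rho$ vanishes on the ideal $J=\{d:\gamma(d^\ast d)=0\ \forall\gamma\in\mathscr K\}$ of $A\otimes C$; exactness of $A$ is used exactly once, to guarantee $J$ is the closed span of the elementary tensors it contains, so that the pointwise hypothesis $\rho(a^\ast a)\in\overline{B\phi(a^\ast a)B}$ can be applied tensor by tensor. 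Your proposed ``localisation over primitive quotients'' and ``gluing'' play no role and I do not see how they would produce the required simultaneous $b_1,\dots,b_n$; the quantifier exchange you correctly identify as the obstacle is resolved by the Hahn--Banach reduction to single functionals together with the hereditary cone structure above, and without that chain of steps the implication remains unproved.
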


\begin{proof}
$(i)\Rightarrow (ii)$: Obvious. 

$(ii) \Rightarrow (iii)$: Assume $\mathcal I(\rho) \leq \mathcal I(\phi)$ and let $a\in A_+$ be positive. As $\rho(a) \in \mathcal I(\rho)(\overline{AaA}) \subseteq \mathcal I(\phi)(\overline{AaA})$, it suffices to show that $\mathcal I(\phi)(\overline{AaA}) \subseteq \overline{B\phi(a) B}$. However, $\phi(xay) \in \overline{B\phi(a)B}$ for all $x,y\in A$ by Lemma \ref{l:ozideals}, so this is obvious.

 $(iii) \Rightarrow (i)$: Assume $\rho(a)\in \overline{B\phi(a)B}$ for all $a\in A_+$. Let $\mathscr C$ be the convex cone of all c.p.~maps $A\to B$ which are approximately dominated by $\phi$. Clearly every map in $\mathscr C$ is nuclear. As $\mathscr C$ is point-norm closed, it follows by a Hahn--Banach separation argument that it is point-weakly closed. Hence, to show that $\rho \in \mathscr C$ it suffices to show that given $a_1,\dots,a_n\in A$, $\epsilon >0$ and $f_1,\dots,f_n\in B^\ast$ then there is a $\psi \in \mathscr C$ such that
 \[
  | f_i(\rho(a_i)) - f_i(\psi(a_i))| < \epsilon, \quad \text{for } i=1,\dots, n.
 \]
 By the Radon--Nikodym theorem for $C^\ast$-algebras, see e.g.~\cite[Lemma 7.17 (i)]{KirchbergRordam-absorbingOinfty}, we may find a cyclic representation $\pi \colon  B \to \mathbb B(\mathcal H)$ with cyclic vector $\xi \in \mathcal H$, and elements $c_1, \dots, c_n \in \pi( B)' \cap \mathbb B(\mathcal H)$ such that $f_i(b) = \langle \pi(b) c_i \xi, \xi\rangle$ for $i=1,\dots, n$. Let $C = C^\ast(c_1,\dots,c_n)$ and $\iota \colon C \hookrightarrow \mathbb B(\mathcal H)$ be the inclusion.
 For any c.p.~map $\eta \colon A \to B$ there is an induced positive linear functional on $A \otimes_{\max{}} C$ given by the composition
 \[
  A \otimes_{\max{}} C \xrightarrow{\eta \otimes id_{C}}  B \otimes_{\max{}}  C \xrightarrow{\pi \times \iota} \mathbb B(\mathcal H) \xrightarrow{ \omega_\xi} \mathbb C,
 \]
 where $\omega_\xi$ is the vector functional induced by $\xi$, i.e.~$\omega_\xi(T) = \langle T\xi , \xi\rangle$. 
 If $\eta$ is nuclear, then $\eta \otimes id_{C}$ above factors through the spatial tensor product $A \otimes C$, see e.g.~\cite[Lemma 3.6.10]{BrownOzawa-book-approx},
 so if $\eta$ is nuclear it induces a positive linear functional $\gamma_\eta$ on $A \otimes  C$. Thus, as $\rho$ and any $\psi\in \mathscr C$ are nuclear, we get induced positive linear functionals $\gamma_\rho$ and $\gamma_\psi$ on $A\otimes C$.
 
 Let $\mathscr K$ be the weak$^\ast$ closure of $\{ \gamma_{\psi} : \psi \in \mathscr C\} \subseteq ( A \otimes C)^\ast$. 
 It suffices to show that $\gamma_\rho \in \mathscr K$. In fact, if $|\gamma_\rho(a_i \otimes c_i) -  \gamma_\psi(a_i \otimes c_i)| < \epsilon$ for some $\psi \in \mathscr C$, then
 \[
  f_i(\rho(a_i)) = \langle \pi(\rho(a_i)) c_i \xi , \xi \rangle = \gamma_\rho(a_i \otimes c_i) \approx_\epsilon \gamma_\psi(a_i \otimes c_i) = \langle \pi(\psi(a_i)) c_i \xi , \xi \rangle = f_i(\psi(a_i)),
 \]
 for $i=1,\dots, n$, which is what we want to prove. It is easily verified (e.g.~by checking on elementary tensors $a\otimes c$) that $\gamma_{\psi_1} + \gamma_{\psi_2} = \gamma_{\psi_1 + \psi_2}$, 
 and that $t \gamma_\psi = \gamma_{t\psi}$ for $t \in \mathbb R_+$. Hence $\mathscr K$ is a weak$^\ast$ closed convex cone of positive linear functionals.
 
 We want to show that if $\gamma \in \mathscr K$ and $d\in  A \otimes  C$ then $d^\ast \gamma d := \gamma(d^\ast(-)d) \in \mathscr K$.
 Since $\mathscr K$ is weak$^\ast$ closed it suffices to show this for $\gamma = \gamma_\psi$ where $\psi = \sum_{l=i}^m e_i^\ast \phi(-)e_i$ for $e_1,\dots,e_m \in B$, and $d= \sum_{j=1}^k x_j \otimes y_j$ 
 where $x_1,\dots,x_k\in A$ and $y_1 , \dots,y_k\in C$. Let $\mathcal F \subset A\otimes C$ be a finite set of elementary tensors and let $\delta>0$ so that we wish to find $\psi_0\in \mathscr C$ such that $d^\ast \gamma_\psi d(a\otimes c) \approx_\delta \gamma_{\psi_0}(a\otimes c)$ for $a\otimes c\in \mathcal F$. 
 Since $\xi$ is cyclic for $\pi$ we may find $b_1,\dots,b_k\in B$ such that $\| \pi(b_j)\xi - y_j \xi \|$ is as small, and $N$ sufficiently large, such that
 \begin{eqnarray*}
  \gamma_{\psi}(d^\ast(a\otimes c)d) &=& \sum_{j,l=1}^k \gamma_{\psi}((x_j^\ast a x_l) \otimes (y_j^\ast c y_l)) \\
  &=& \sum_{j,l=1}^k \langle \pi(\psi(x_j^\ast a x_l)) c y_l \xi, y_j \xi\rangle \\ 
  &\approx_{\delta/2}& \sum_{j,l=1}^k \langle \pi(\psi(x_j^\ast a x_l)) c \pi(b_l) \xi, \pi(b_j) \xi\rangle \\
&=& \langle \pi(\sum_{j,l=1}^k b_j^\ast\psi(x_j^\ast a x_l) b_l) c \xi , \xi\rangle \\
&=& \langle \pi(\sum_{i=1}^m\sum_{j,l=1}^k b_j^\ast e_i^\ast \phi(x_j^\ast a x_l)e_i b_l) c \xi , \xi\rangle \\
&\stackrel{\eqref{eq:bimodule}}{\approx_{\delta/2}}& \langle\pi(\sum_{i=1}^m\sum_{j,l=1}^k b_j^\ast e_i^\ast \phi^{1/N}(x_j^\ast) \phi(a) \phi^{1/N} (x_l)e_i b_l) c \xi , \xi\rangle \\
&=& \gamma_{\psi_0}(a\otimes c)
 \end{eqnarray*}
 for all $a\otimes c\in \mathcal F$, where
 \[
 \psi_0(-) = \sum_{i=1}^m\sum_{j,l=1}^k b_j^\ast e_i^\ast \phi^{1/N}(x_j^\ast) \phi(-) \phi^{1/N} (x_l)e_i b_l = \sum_{i=1}^m f_i^\ast \phi(-) f_i \in \mathscr C
 \]
for $f_i := \sum_{j=1}^k \phi^{1/N} (x_j)e_i b_j$. 
 Thus $d^\ast \gamma d \in \mathscr K$ for any $\gamma \in \mathscr K$ and $d\in A \otimes  C$. 
 Let $J$ be the subset of $ A \otimes  C$ consisting of elements $d$ such that $\gamma(d^\ast d) = 0$ for all $\gamma \in \mathscr K$. 
 By \cite[Lemma 7.17 (ii)]{KirchbergRordam-absorbingOinfty} it follows that $ J$ is a closed two-sided ideal in $ A \otimes  C$, and that $\gamma_\rho \in \mathscr K$ if $\gamma_\rho(d^\ast d) = 0$ for all $d\in J$. In other words, it suffices to show that $J$ is contained in the left kernel $L:=L_{\gamma_\rho}$ of $\gamma_{\rho}$.
 
 Since $A$ is exact it follows that $J = \overline{\mathrm{span}}\{a\otimes c : a\in A, c\in C, a\otimes c \in J\}$, see e.g.~\cite[Corollary 9.4.6]{BrownOzawa-book-approx}. As the left kernel $L$ of $\gamma_\rho$ is a closed linear subspace of $A\otimes C$, it suffices to show that $a\otimes c \in L$ for all elementary tensors $a\otimes c \in J$, so fix such $a\in A$ and $c\in C$.
 
By assumption $\rho(a^\ast a) \in \overline{ B\phi(a^\ast a) B}$. Thus for any $\delta>0$ we may choose $b_1,\dots,b_m \in  B$ such that
 \[
  \| \rho(a^\ast a) - \sum_{j=1}^m b_j^\ast \phi(a^\ast a) b_j\| < \delta.
 \]
 Define $\psi = \sum_{j=1}^m b_j^\ast \phi(-) b_j$ which is in $\mathscr C$, and note that $\| \rho(a^\ast a) - \psi(a^\ast a)\| < \delta$. Since $\gamma_\psi(a^\ast a \otimes c^\ast c) = 0$ we get that
 \begin{eqnarray*}
  |\gamma_\rho(a^\ast a \otimes c^\ast c) | &=& | \gamma_\rho(a^\ast a \otimes c^\ast c) - \gamma_\psi(a^\ast a \otimes c^\ast c)| \\
  &=& | \langle \pi(\rho(a^\ast a)-\psi(a^\ast a)) c \xi, c \xi\rangle | \\
  &<& \delta \| c \xi\|^2.
 \end{eqnarray*}
 Since $\delta$ was arbitrary we get that $\gamma_\rho(a^\ast a \otimes c^\ast c) = 0$ so $a\otimes c\in L$, which finishes the proof.
\end{proof}


\subsection{Murray--von Neumann equivalence of $\ast$-homomorphisms}

It is customary to consider approximate or asymptotic unitary equivalence of $\ast$-homo\-morphisms as the correct equivalence relation for $\ast$-homomorphisms (at least when it comes to classification). However, this does not always seem to be the right framework to consider for not necessarily unital maps in general, in the same sense as unitary equivalence of projections has its downsides compared to Murray--von Neumann equivalence. 

For instance, if $B$ is a unital, infinite $C^\ast$-algebra with a non-unitary isometry $v$ and $\phi \colon A \to B$ is a unital $\ast$-homomorphism, then $\phi$ and $v \phi(-) v^\ast$ are clearly not approximately unitary equivalent as one map is unital and the other is not. However, these two maps will agree on all usual invariants used for classification (e.g.~ideal related $KK$-theory or the Cuntz semigroup), see also Corollary \ref{c:functorinv}. 

Thus, if one's hope is to classify (not necessarily unital) $\ast$-homomorphisms, approximate (or asymptotic) unitary equivalence is often too strong of an equivalence relation.  This motivates the weaker version which I call approximate/asymptotic Murray--von Neumann equivalence.\footnote{I would be surprised if this definition has never appeared before but I know of no reference to such a definition.} 

In many cases approximate/asymptotic Murray--von Neumann equivalence is the same as approximate/asymptotic unitary equivalence, cf.~Proposition \ref{p:MvNvsu}.

\begin{definition}\label{d:MvN}
Let $A$ and $B$ be $C^\ast$-algebras, and let $\phi, \psi \colon A \to B$ be two $\ast$-homo\-morphisms. 
We say that $\phi$ and $\psi$ are \emph{approximately Murray--von Neumann equivalent} if for any finite set $\mathcal F\subset A$, and any $\epsilon>0$, there is $u\in B$ such that
\[
\| u^\ast \phi(a) u - \psi(a)\| < \epsilon , \qquad \| u \psi(a) u^\ast - \phi(a) \| < \epsilon , \qquad a\in \mathcal F.
\]
When $A$ is separable we say that $\phi$ and $\psi$ are \emph{asymptotically Murray--von Neumann equivalent} if there is a contractive\footnote{A proof similar to that of Lemma \ref{l:MvNcontractive} implies that we do not need to assume that $(u_t)$ is contractive or even bounded. Contractivity will be assumed for convenience.} continuous path $(u_t)_{t\in \mathbb R_+}$ in $B$ (where $\mathbb R_+ := [0,\infty)$) such that
\[
\lim_{t\to \infty} u_t^\ast \phi(a) u_t =  \psi(a), \qquad \lim_{t\to\infty} u_t \psi(a) u_t^\ast = \phi(a), \qquad a\in A. 
\]
\end{definition}

A standard argument shows that two $\ast$-homomorphisms $\phi,\psi \colon \mathbb C \to B$ are approximately Murray--von Neumann equivalent if and only if $\phi(1)$ and $\psi(1)$ are Murray--von Neumann equivalent.

More generally, if $A$ is unital then $u$ in the definition of approximate Murray--von Neumann equivalence can be taken to be a partial isometry such that $u^\ast u = \psi(1_A)$ and $uu^\ast = \phi(1_A)$. 

Recall that maps are \emph{approximately/asymptotically unitary equivalent} if they satisfy the condition in Definition \ref{d:MvN} with $u$ (resp.~each $u_t$) a unitary in $\multialg{B}$.

\begin{lemma}\label{l:MvNcontractive}
In the definition of approximate Murray--von Neumann equivalence one may always pick $u$ to be contractive.
\end{lemma}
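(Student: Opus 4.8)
The plan is to start from an arbitrary witness $u\in B$ supplied by the definition of approximate Murray--von Neumann equivalence and replace it by a suitable two-sided compression $\phi(e^{1/2})\,u\,\psi(e^{1/2})$, where $e$ is an approximate-unit element of $A$. The point is that this compression is forced to be almost contractive, because $\phi$ and $\psi$ are contractive, while the \emph{multiplicativity} of $\phi$ and $\psi$ guarantees that the compression still implements the Murray--von Neumann relation on $\mathcal G$. Neither exactness nor nuclearity is needed, and $u$ is never assumed to be bounded independently of $\epsilon$.

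Concretely, I would fix a finite set $\mathcal G\subset A$ and $\eta>0$ for which a contractive witness is sought. First choose a positive contraction $e\in A$ from an approximate unit so that $\|eae-a\|$ is as small as we like for all $a\in\mathcal G$, and apply the hypothesis to the finite set $\mathcal F=\mathcal G\cup\{e\}\cup\{e^{1/2}ae^{1/2}:a\in\mathcal G\}$ and a small $\epsilon>0$, obtaining $u\in B$ with $\|u^\ast\phi(a)u-\psi(a)\|<\epsilon$ and $\|u\psi(a)u^\ast-\phi(a)\|<\epsilon$ for all $a\in\mathcal F$. Then set $x=\phi(e^{1/2})\,u\,\psi(e^{1/2})$ and $\tilde u=(1+\epsilon)^{-1/2}x$.

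The remaining computations are routine and rest on $\phi(e^{1/2})\phi(a)\phi(e^{1/2})=\phi(e^{1/2}ae^{1/2})$ and the analogous identity for $\psi$, valid since $\phi,\psi$ are $\ast$-homomorphisms. Applying the first inequality with $a=e$ gives $x^\ast x=\psi(e^{1/2})\,u^\ast\phi(e)u\,\psi(e^{1/2})\approx\psi(e)^2\le 1$, so $\|x\|^2\le 1+\epsilon$ and $\tilde u$ is a genuine contraction. For the defining inequalities, applying the first inequality to $e^{1/2}ae^{1/2}\in\mathcal F$ yields $x^\ast\phi(a)x=\psi(e^{1/2})\,u^\ast\phi(e^{1/2}ae^{1/2})u\,\psi(e^{1/2})\approx\psi(e^{1/2})\psi(e^{1/2}ae^{1/2})\psi(e^{1/2})=\psi(eae)\approx\psi(a)$, and symmetrically the second inequality gives $x\psi(a)x^\ast\approx\phi(eae)\approx\phi(a)$. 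Dividing by $1+\epsilon$ perturbs the right-hand sides only by $\tfrac{\epsilon}{1+\epsilon}\|\psi(a)\|$ and $\tfrac{\epsilon}{1+\epsilon}\|\phi(a)\|$, so taking $\epsilon$ and $\|eae-a\|$ small relative to $\eta$ and $\max_{a\in\mathcal G}\|a\|$ makes $\tilde u$ a contractive witness for $(\mathcal G,\eta)$.

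The only genuine subtlety, and the step I would isolate first, is the choice of compression: one wants a single element that is automatically (almost) contractive yet transparent to $\phi$ and $\psi$. The estimate $x^\ast x\approx\psi(e)^2\le 1$ is exactly what forces $\|x\|\le\sqrt{1+\epsilon}$, and the harmless rescaling by $(1+\epsilon)^{-1/2}$ upgrades ``almost contractive'' to ``contractive'' at negligible cost. Everything else is bookkeeping of errors that all tend to $0$ as $\epsilon$ and $\|eae-a\|$ shrink.
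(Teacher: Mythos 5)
Your proof is correct and takes essentially the same approach as the paper's: compress a witness $v$ by images under $\phi$ and $\psi$ of an approximate-unit element included (together with its square and its conjugates of the test set) in the finite set, so that the hypothesis forces the compression to have norm at most $\sqrt{1+\epsilon}$, then rescale. The only cosmetic difference is that the paper uses the one-sided compression $u=\phi(a)v$ where you use the two-sided $\phi(e^{1/2})u\psi(e^{1/2})$.
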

\begin{proof}
Suppose $\phi$ and $\psi$ are approximately Murray--von Neumann equivalent. Given $\mathcal F\subset A$ finite, and $\epsilon>0$, pick a positive contraction $a\in A$ such that $\| axa - x\| < \epsilon/2$ for all $x\in \mathcal F$. Find $v\in B$ such that
\[
\| v^\ast \phi(x) v - \psi(x)\| < \epsilon/2, \qquad \| v \psi(x) v^\ast - \phi(x) \| < \epsilon/2 , \qquad x\in \mathcal F \cup \cup a\mathcal F a \cup \{ a^2\}. 
\]
Let $u:= \phi(a) v$. It easily follows that
\[
\| u^\ast \phi(x) u - \psi(x)\| < \epsilon, \qquad \| u \psi(x) u^\ast - \phi(x) \| < \epsilon , \qquad x\in \mathcal F.
\]
Thus such $u$ implement an approximate Murray--von Neumann equivalence. Also, as
\[
\| u \|^2 = \| v^\ast \phi(a^2) v\| \leq \| v^\ast \phi(a^2) v - \psi(a^2)\| + \| \psi(a^2)\| \leq 1 + \epsilon/2,
\]
it clearly follows that we may pick $u$ to be contractive.
\end{proof}

\begin{remark}
For a $C^\ast$-algebra $B$ let $B_\infty := \prod_\mathbb{N} B/\bigoplus_\mathbb{N} B$ be the \emph{sequence algebra}, and let $B_\as := C_b(\mathbb R_+, B)/C_0(\mathbb R_+ ,B)$ be the \emph{path algebra} (or asymptotic corona) where $\mathbb R_+ = [0,\infty)$. Clearly $B$ embeds into both $B_\infty$ and $B_\as$ as constant sequences/paths, so we consider $B$ as a $C^\ast$-subalgebra of both $B_\infty$ and $B_\as$. 
\end{remark}

 Due to Lemma \ref{l:MvNcontractive} in the approximate case, the following observation is immediate.

\begin{observation}
If $A$ is separable then $\phi, \psi \colon A \to B$ are approximately (resp.~asymptotically) Murray--von Neumann equivalent if and only if there is a contraction $v\in B_\infty$ (resp.~$v\in B_\as$) such that $v^\ast \phi(-) v = \psi$ and $v\psi(-)v^\ast = \phi$.
\end{observation}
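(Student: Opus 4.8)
The plan is to verify both the approximate case (matching against $B_\infty$) and the asymptotic case (matching against $B_\as$) by translating the $\epsilon$-$\mathcal F$ formulation of Murray--von Neumann equivalence into an exact identity in the relevant quotient algebra. The only structural facts needed are that the embedding of $B$ as constant sequences (resp.~paths) and multiplication are compatible with the quotient maps $\prod_{\mathbb N} B \to B_\infty$ and $C_b(\mathbb R_+, B) \to B_\as$, so that $v^\ast \phi(a) v$ is represented by the pointwise product $(u_n^\ast \phi(a) u_n)$ (resp.~$t \mapsto u_t^\ast \phi(a) u_t$).

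For the approximate case I would proceed as follows. Since $A$ is separable, fix an increasing sequence of finite subsets $\mathcal F_1 \subseteq \mathcal F_2 \subseteq \cdots$ whose union is dense in $A$. If $\phi$ and $\psi$ are approximately Murray--von Neumann equivalent, Lemma \ref{l:MvNcontractive} yields, for each $n$, a \emph{contraction} $u_n \in B$ implementing the equivalence to within $1/n$ on $\mathcal F_n$. The sequence $(u_n)$ then defines a contraction $v \in B_\infty$, and since $u_n^\ast \phi(a) u_n - \psi(a) \to 0$ and $u_n \psi(a) u_n^\ast - \phi(a) \to 0$ for each $a$ in the dense union — and hence, using contractivity of the $u_n$ to pass to the closure, for every $a \in A$ — the identities $v^\ast \phi(-) v = \psi$ and $v \psi(-) v^\ast = \phi$ hold in $B_\infty$. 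Conversely, a contraction $v \in B_\infty$ with these identities lifts to a contractive sequence $(u_n)$ in $\prod_{\mathbb N} B$, after which reading off the appropriate $u_n$ for a given $\mathcal F$ and $\epsilon$ recovers the approximate equivalence.

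The asymptotic case is formally identical once continuity is handled. A contractive continuous path $(u_t)$ implementing asymptotic Murray--von Neumann equivalence defines a contraction $v \in B_\as$, and the limits $\lim_t u_t^\ast \phi(a) u_t = \psi(a)$ and $\lim_t u_t \psi(a) u_t^\ast = \phi(a)$ translate precisely into $v^\ast \phi(-) v = \psi$ and $v \psi(-) v^\ast = \phi$ in $B_\as$. For the converse, a contraction $v \in B_\as$ is represented by some $f \in C_b(\mathbb R_+, B)$ with $\limsup_{t\to\infty} \|f(t)\| \leq 1$; setting $u_t := f(t)/\max(1, \|f(t)\|)$ produces a contractive continuous path representing the same class, from which the asymptotic limits follow.

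The argument is essentially bookkeeping, and as already noted just before the statement the approximate half is immediate from Lemma \ref{l:MvNcontractive}. The one point requiring (minor) care is lifting a contraction from the quotient $B_\infty$ or $B_\as$ to an honestly contractive representative: one checks that rescaling by $\max(1, \|\cdot\|)$ alters the element by something in $\bigoplus_{\mathbb N} B$ (respectively $C_0(\mathbb R_+, B)$), using $\limsup \|\cdot\| \leq 1$, and that in the asymptotic case this rescaling preserves continuity since $t \mapsto \|f(t)\|$ is continuous and the denominator is bounded below by $1$. Separability of $A$ is used only in the forward approximate direction, to assemble the individual contractions $u_n$ into a single element of $B_\infty$ via the dense exhaustion $(\mathcal F_n)$.
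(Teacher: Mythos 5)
Your proposal is correct and is exactly the argument the paper intends: the paper states this as an Observation that is ``immediate'' from Lemma \ref{l:MvNcontractive} together with the standard diagonal/dense-exhaustion bookkeeping over $\mathcal F_1 \subseteq \mathcal F_2 \subseteq \cdots$, which is precisely what you carry out, including the correct handling of contractive lifts from the quotients $B_\infty$ and $B_\as$.
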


The above characterisation of approximate/asymptotic Murray--von Neumann equivalence will be used throughout the paper without reference. The following lemma indicates how this characterisation will be applied by replacing $D$ below with either $B_\infty$ or $B_\as$.

\begin{lemma}\label{l:conjv}
Let $\phi,\psi\colon A \to D$ be $\ast$-homomorphisms and suppose that there is a contraction $v\in D$ such that $v^\ast\phi(a) v = \psi(a)$ for all $a\in A$. Then
\begin{itemize}
\item[$(i)$] $vv^\ast \in D \cap \phi(A)'$,
\item[$(ii)$] $v^\ast v \psi(a) = \psi(a)$ for all $a\in A$,
\item[$(iii)$] $\phi(a) v = v \psi(a)$ for all $a\in A$.
\end{itemize}
\end{lemma}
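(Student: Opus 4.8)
The plan is to prove the three claims as a purely algebraic consequence of the single hypothesis that $v$ is a contraction in $D$ satisfying $v^\ast \phi(a) v = \psi(a)$ for all $a \in A$. The key technical tool will be the standard $C^\ast$-inequality: for a contraction $v$ and a positive element $x \geq 0$ one has $v^\ast x v \leq \|x\| v^\ast v \leq \|x\|$, together with the sharper observation that if $v^\ast x v = x$ for a positive contraction $x$ then $v$ must act as a ``partial isometry on the range of $x$''. The classical way to extract this is to consider the element $(1 - vv^\ast)^{1/2} \phi(a) (1-vv^\ast)^{1/2}$ or to examine expressions of the form $\|(\phi(a) v - v\psi(a))\xi\|^2$ inside a representation, but I expect the cleanest route avoids representations entirely.

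First I would prove $(iii)$, since $(i)$ and $(ii)$ will follow from it. The natural strategy is to show that the positive element $w := \phi(a) v - v \psi(a)$ (for $a \in A_+$, which suffices by linearity and the $\ast$-structure) satisfies $w^\ast w = 0$. Expanding,
\[
w^\ast w = v^\ast \phi(a)^\ast \phi(a) v - v^\ast \phi(a)^\ast v \psi(a) - \psi(a) v^\ast \phi(a) v + \psi(a) v^\ast v \psi(a).
\]
Using the hypothesis $v^\ast \phi(a) v = \psi(a)$ applied to both $a$ and (after observing $\phi$ is a $\ast$-homomorphism so $\phi(a)^\ast \phi(a) = \phi(a^\ast a)$) to $a^\ast a$, the first term becomes $\psi(a^\ast a) = \psi(a)^2$. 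The key missing piece is to control the cross terms, and here I would first establish $(ii)$: that $v^\ast v \psi(a) = \psi(a)$.

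For $(ii)$, consider that $\psi$ is a $\ast$-homomorphism, so for a positive contraction $a \in A$ we have $\psi(a) \geq 0$ and $\|\psi(a)\| \leq 1$. Since $\psi(a^2) = v^\ast \phi(a^2) v = v^\ast \phi(a)^2 v$ and also $\psi(a)^2 = (v^\ast \phi(a) v)^2$, comparing $v^\ast \phi(a)^2 v$ with $(v^\ast \phi(a) v)^2 = v^\ast \phi(a) v v^\ast \phi(a) v$ gives, by a Cauchy--Schwarz / Kadison-type inequality for the unital completely positive map $x \mapsto v^\ast x v$ (noting $v^\ast v \leq 1$), that equality in Kadison's inequality forces the multiplicative domain behaviour, i.e. $v v^\ast$ commutes with $\phi(a)$ and $v^\ast v$ acts as a unit on the range of $\psi$. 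Concretely, $v^\ast \phi(a)(1 - vv^\ast)\phi(a) v = v^\ast \phi(a^2) v - \psi(a)^2 = \psi(a^2) - \psi(a)^2 = 0$, and since $(1-vv^\ast)$ is a positive contraction this shows $(1-vv^\ast)^{1/2} \phi(a) v = 0$, hence $\phi(a) v = vv^\ast \phi(a) v = v \psi(a)$, which is exactly $(iii)$. From $\phi(a)v = v\psi(a)$ one gets $\psi(a) = v^\ast \phi(a) v = v^\ast v \psi(a)$, giving $(ii)$; and $vv^\ast \phi(a) = (\phi(a) v v^\ast)^\ast$ combined with $(iii)$ applied to the adjoint yields $[vv^\ast, \phi(a)] = 0$, giving $(i)$.

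The main obstacle I anticipate is justifying the equality-in-Kadison's-inequality step rigorously without circular reasoning: I must verify that $v^\ast \phi(a^2) v - (v^\ast \phi(a) v)^2 = v^\ast \phi(a)(1 - vv^\ast)\phi(a) v$, which is a direct computation once one inserts $\phi(a^2) = \phi(a)\phi(a)$ and uses $v v^\ast \leq 1$ only implicitly, and then conclude $(1-vv^\ast)^{1/2}\phi(a)v = 0$ from the vanishing of its ``square'' $z^\ast z = 0$ where $z = (1-vv^\ast)^{1/2}\phi(a)v$. Everything else is routine algebra; the subtle point is ordering the deductions so that $(iii)$ is derived first from the positivity computation and then $(i)$ and $(ii)$ fall out as immediate corollaries by taking adjoints and substituting back.
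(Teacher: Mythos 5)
Your proof is correct, and it rests on the same core trick as the paper's: choosing an element $z$ built from $(1-vv^\ast)^{1/2}$ and exploiting multiplicativity of $v^\ast\phi(-)v$ to show $z^\ast z=0$. The difference is organizational, and it is a genuine one. The paper proves $(i)$ first by showing $(1-vv^\ast)^{1/2}\phi(a)vv^\ast=0$, then derives $(ii)$ by factoring $a=bc$ and inserting $vv^\ast$ between $\phi(b)$ and $\phi(c)$, and only then obtains $(iii)$ from $(i)$ and $(ii)$ via a second $z^\ast z=0$ computation. You instead go straight for $(iii)$ with the single identity $v^\ast\phi(a)^\ast(1-vv^\ast)\phi(a)v=\psi(a^\ast a)-\psi(a)^\ast\psi(a)=0$, which gives $(1-vv^\ast)^{1/2}\phi(a)v=0$ and hence $\phi(a)v=vv^\ast\phi(a)v=v\psi(a)$; then $(ii)$ and $(i)$ drop out by substituting back and taking adjoints. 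This buys you two small economies: you avoid the factorization $a=bc$ entirely (and in fact, if you run your computation with $a^\ast a$ in place of $a^2$, you do not even need to reduce to self-adjoint or positive $a$), and $(i)$, $(ii)$ become one-line corollaries. The invocation of Kadison's inequality and multiplicative domains is dispensable window dressing, as you yourself note --- the direct expansion is all that is needed --- and the passing description of $\phi(a)v-v\psi(a)$ as ``positive'' is a slip, but neither affects the argument, which is complete.
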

\begin{proof}
$(i)$ is a standard trick: using that the map $v^\ast \phi(-) v$ is multiplicative one easily checks $((1-vv^\ast)^{1/2}\phi(a) vv^\ast)^\ast ((1-vv^\ast)^{1/2}\phi(a) vv^\ast) =0$ for any $a\in A$. Thus $(1-vv^\ast)^{1/2} \phi(a) vv^\ast=0$, so $\phi(a) vv^\ast = vv^\ast \phi(a) vv^\ast$ for any $a\in A$. By symmetry $\phi(a) vv^\ast = vv^\ast \phi(a)$.

$(ii)$: Given $a\in A$, pick $b,c\in A$ such that $a=bc$. Then
\[
\psi(a) = \psi(b) \psi(c) = v^\ast \phi(b) vv^\ast \phi(c) v \stackrel{(i)}{=} v^\ast v v^\ast \phi(bc) v = v^\ast v \psi(a).
\]

$(iii)$: By $(i)$, $\phi(a) vv^\ast v = vv^\ast \phi(a) v = v \psi(a)$. Also, we have
\[
(1-v^\ast v) v^\ast \phi(a^\ast) \phi( a) v (1-v^\ast v) = (1-v^\ast v) \psi(a^\ast a) (1-v^\ast v) \stackrel{(ii)}{=} 0,
\]
so $\phi(a) v(1-v^\ast v) = 0$, and thus $\phi(a) v = \phi(a) v v^\ast v = v \psi(a)$.
\end{proof}

\begin{remark}
If $\phi \colon A \to B$ is a $\ast$-homomorphism let 
\[
\mathrm{Ann}(\phi(A)) := \{ b \in B_\infty : b  \phi(A) + \phi(A) b = \{0\}\}
\]
be the annihilator of $\phi(A)$ in $B_\infty$. Then $\mathrm{Ann}(\phi(A))$ is an ideal in the relative commutant $B_\infty \cap \phi(A)'$. If $A$ is $\sigma$-unital and $(a_n)_{n\in \mathbb N}$ is an approximate unit for $A$, then the image of $(\phi(a_n))_{n\in \mathbb N}$ in $B_\infty$ induces a unit in $B_\infty\cap \phi(A)' /\mathrm{Ann}(\phi(A))$, so this $C^\ast$-algebra is unital. 

The analogous results hold for $B_\as$ (taking a continuous path $\mathbb R_+ \ni t \mapsto a_t \in A$ which is an approximate unit), and we again use the notation $\mathrm{Ann}(\phi(A))$ for the annihilator of $\phi(A)$ in $B_\as$.
\end{remark}

Considering relative commutants of the above form is inspired by work of Kirchberg in \cite{Kirchberg-Abel}.

The equivalence of $(i)$ and $(ii)$ in the proposition below is a generalisation of the fact that two projections $p,q$ are Murray--von Neumann equivalent if and only if $p \oplus 0$ and $q\oplus 0$ are unitary equivalent. Condition $(iii)$ is a version of a $2\times 2$-matrix trick of Connes \cite{Connes-classtypeIII}, and $(i)\Leftrightarrow (iii)$ below shows that approximate/asymptotic Murray--von Neumann equivalence is exactly the equivalence relation on $\ast$-homomorphisms for which this ``trick'' is applicable. It will be used in the main result of this section Theorem \ref{t:O2HB}.

My inspiration for considering such matrix tricks comes from recent work of Matui and Sato \cite{MatuiSato-decrankUHF} and by Bosa, Brown, Sato, Tikuisis, White and Winter \cite{BBSTWW-2coloured}.

It is not hard to see that one may replace $M_2(B)_\infty$ below with $M_2(B)_\omega$ for some/any free filter $\omega$ on $\mathbb N$. However, in this paper it suffices to work with $M_2(B)_\infty$. 

I would like to thank the referee for suggesting improving item $(ii)$ below to have the unitaries in $M_2(B)^\sim$, which is the optimal version of approximate/asymptotic unitary equivalence.

\begin{proposition}\label{p:MvNeq}
Let $A$ and $B$ be $C^\ast$-algebras with $A$ separable, and let $\phi, \psi \colon A \to B$ be $\ast$-homomorphisms. The following are equivalent.
\begin{itemize}
\item[$(i)$] $\phi$ and $\psi$ are approximately Murray--von Neumann equivalent,
\item[$(ii)$] $\phi\oplus 0, \psi \oplus 0 \colon A \to M_2(B)$ are approximately unitary equivalent (with unitaries in the minimal unitisation $M_2(B)^\sim$),
\item[$(iii)$] The two projections
\[
\left( \begin{array}{cc} 1 & 0 \\ 0 & 0 \end{array} \right) , \left( \begin{array}{cc} 0 & 0 \\ 0 & 1 \end{array} \right) \in \frac{M_2(B)_\infty \cap (\phi \oplus \psi)(A)'}{\mathrm{Ann}(\phi\oplus \psi)(A)}
\]
are Murray--von Neumann equivalent.
\end{itemize}
The above is also true if one replaces ``approximately'' with ``asymptotically'', and ``$M_2(B)_\infty$'' with ``$M_2(B)_\as$''.
\end{proposition}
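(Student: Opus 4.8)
The plan is to prove the cycle $(i) \Rightarrow (iii) \Rightarrow (ii) \Rightarrow (i)$, handling the approximate and asymptotic cases uniformly by working in $D := M_2(B)_\infty$ (resp.\ $D := M_2(B)_\as$) and using the characterisations already established. Throughout I write $e_{11}, e_{22}$ for the two diagonal projections appearing in $(iii)$, regarded as elements of $M_2(B)^\sim \subseteq D$, and $\Theta := \phi \oplus \psi \colon A \to M_2(B)$.

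\medskip

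\noindent\emph{$(i) \Rightarrow (iii)$.} By the Observation following Lemma~\ref{l:MvNcontractive}, approximate (resp.\ asymptotic) Murray--von Neumann equivalence gives a contraction $v \in B_\infty$ (resp.\ $v \in B_\as$) with $v^\ast \phi(-) v = \psi$ and $v\psi(-)v^\ast = \phi$. The natural candidate for a partial isometry witnessing $e_{11} \sim e_{22}$ in the quotient is the off-diagonal element $w := \left(\begin{smallmatrix} 0 & 0 \\ v & 0 \end{smallmatrix}\right) \in D$. Using Lemma~\ref{l:conjv} applied to $v$ (with $D$ there being $B_\infty$ or $B_\as$), I would verify that $w$ commutes with $\Theta(A)$ modulo $\mathrm{Ann}(\Theta(A))$: parts $(i)$--$(iii)$ of that lemma give $vv^\ast \in \phi(A)'$, $v^\ast v \psi(a) = \psi(a)$, and $\phi(a)v = v\psi(a)$, which are exactly the relations needed so that $w^\ast w$ and $ww^\ast$ become $e_{22}$ and $e_{11}$ respectively after passing to $D \cap \Theta(A)'/\mathrm{Ann}(\Theta(A))$. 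The point is that $v^\ast v$ and $vv^\ast$ act as units on $\psi(A)$ and $\phi(A)$ in this quotient, so $1 - v^\ast v$ and $1 - vv^\ast$ land in the annihilator.

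\medskip

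\noindent\emph{$(iii) \Rightarrow (ii)$.} A Murray--von Neumann equivalence $e_{11} \sim e_{22}$ in the unital $C^\ast$-algebra $D \cap \Theta(A)'/\mathrm{Ann}(\Theta(A))$ is implemented by a partial isometry commuting with $\Theta(A)$ modulo the annihilator; since the two projections are complementary and sum to the unit (the image of the approximate unit $(\Theta(a_n))$), this partial isometry can be completed to a \emph{unitary} $U$ in the relative commutant quotient with $U e_{11} U^\ast = e_{22}$. This is the Connes $2\times 2$ rotation trick: conjugating $\phi \oplus 0 = e_{11}\Theta(-)e_{11}$ by such a $U$ produces $e_{22}\Theta(-)e_{22} = 0 \oplus \psi$, which is unitarily equivalent to $\psi \oplus 0$ via the coordinate flip. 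Lifting $U$ to a unitary path (resp.\ sequence) in $M_2(B)^\sim$ and unwinding the definition of $M_2(B)_\infty$ (resp.\ $M_2(B)_\as$) converts this into an approximate (resp.\ asymptotic) unitary equivalence of $\phi \oplus 0$ and $\psi \oplus 0$ with unitaries in $M_2(B)^\sim$. I expect this to be the main obstacle: one must be careful that the annihilator ideal is exactly what allows the non-unital maps to behave like unital ones, and that the lift of the unitary from the quotient of the relative commutant back to $M_2(B)^\sim$ respects the required commutation and flipping relations uniformly in the sequence/path parameter.

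\medskip

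\noindent\emph{$(ii) \Rightarrow (i)$.} This is the easy direction. Given unitaries $U \in M_2(B)^\sim$ with $U(\phi\oplus 0)(a)U^\ast \approx \psi\oplus 0$ on finite sets, write $U$ in $2\times 2$ block form and extract its $(2,1)$ entry $u \in B$. Since $\phi \oplus 0$ is supported in the $(1,1)$-corner and $\psi \oplus 0$ in the $(1,1)$-corner as well, comparing the $(1,1)$-entries of $U(\phi(a)\oplus 0)U^\ast$ with $\psi(a) \oplus 0$ yields $u^\ast \phi(a) u \approx \psi(a)$, and the inverse relation $U^\ast(\psi\oplus 0)U \approx \phi \oplus 0$ yields $u\psi(a)u^\ast \approx \phi(a)$, giving the defining estimates of approximate Murray--von Neumann equivalence (Definition~\ref{d:MvN}); the asymptotic case is identical with the continuous path of unitaries. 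Since the chain of implications involved only the Observation, Lemma~\ref{l:conjv}, and elementary manipulations in $M_2(B)_\infty$ and $M_2(B)_\as$, the proof closes symmetrically in both the approximate and asymptotic settings.
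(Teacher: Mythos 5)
Your proposal follows the same cycle $(i)\Rightarrow(iii)\Rightarrow(ii)\Rightarrow(i)$ as the paper, with the same key tools (the Observation after Lemma \ref{l:MvNcontractive}, Lemma \ref{l:conjv}, and the Connes $2\times 2$ trick), but the step $(iii)\Rightarrow(ii)$ --- which you yourself flag as the main obstacle --- is left with a genuine gap, and it is precisely the point the paper has to work to get around. The problem is the coordinate flip. After conjugating $\phi\oplus 0$ to $0\oplus\psi$ by a lift of $U$, you still need to move $0\oplus\psi$ back to $\psi\oplus 0$ \emph{with a unitary in $M_2(B)^\sim$}. The flip $e_{12}+e_{21}$ is a unitary in $M_2(\widetilde B)$ but not in $M_2(B)^\sim$ (the minimal unitisation adjoins only scalar multiples of $1_{M_2}$), so ``unitarily equivalent via the coordinate flip'' does not by itself yield statement $(ii)$ as formulated. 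The paper resolves this by noting that the flip is a \emph{self-adjoint} unitary in $M_2(\widetilde B)_\infty\cap(\psi\oplus\psi)(A)'/\mathrm{Ann}((\psi\oplus\psi)(A))$, that the inclusion $M_2(B)^\sim\to M_2(\widetilde B)$ induces an isomorphism of these relative-commutant quotients (because the unit is represented by a sequence from $M_2(B)$), and that a self-adjoint unitary therefore lifts to a unitary $w\in(M_2(B)^\sim)_\infty\cap(\psi\oplus\psi)(A)'$; the product $uw$ then implements the equivalence. Relatedly, your ``lifting $U$ to a unitary path (resp.\ sequence) in $M_2(B)^\sim$'' needs justification: arbitrary unitaries in a quotient need not lift to unitaries, and the paper arranges for $U=V+V^\ast$ to be a self-adjoint unitary exactly so that it does lift.

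There are also two small corner/adjoint slips worth fixing. In $(i)\Rightarrow(iii)$ your candidate $w=\bigl(\begin{smallmatrix}0&0\\ v&0\end{smallmatrix}\bigr)=v\otimes e_{21}$ does not commute with $(\phi\oplus\psi)(A)$ modulo the annihilator: Lemma \ref{l:conjv}$(iii)$ gives $\phi(a)v=v\psi(a)$, which is the intertwining relation for $v\otimes e_{12}$ (equivalently for $v^\ast\otimes e_{21}$), not for $v\otimes e_{21}$; with the correct choice one gets $VV^\ast=1\oplus 0$ and $V^\ast V=0\oplus 1$ in the quotient, as in \eqref{eq:partiso}. In $(ii)\Rightarrow(i)$ the relevant entry of the unitary is the $(1,1)$ entry, not the $(2,1)$ entry, and it lies in $\widetilde B$ rather than $B$; one should compress as in the paper by setting $v:=[(\phi(a_n)\oplus 0)_{n}]\,u\,[(\psi(a_n)\oplus 0)_{n}]\in B_\infty$ for an approximate unit $(a_n)$ of $A$. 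These are routine repairs, but the flip issue in $(iii)\Rightarrow(ii)$ is a substantive missing idea rather than a detail.
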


Note that the projections in $(iii)$ can be represented by $(\phi(a_n) \oplus 0)_{n\in \mathbb N}$ and $(0\oplus \psi(a_n))_{n\in \mathbb N}$ respectively where $(a_n)_{n\in \mathbb N}$ is an approximate identity in $A$, and similarly in the asymptotic case. Also, it is crucial that the Murray--von Neumann equivalence in $(iii)$ happens in the specified $C^\ast$-algebra.

\begin{proof}[Proof of Proposition \ref{p:MvNeq}]
The asymptotic version of the proof is identical to the approximate version, so we only prove the approximate one.

$(i)\Rightarrow (iii)$: By Lemma \ref{l:MvNcontractive} we find a $u\in B_\infty$ such that $u^\ast \phi(-) u = \psi$ and $u\psi(-)u^\ast = \phi$. Using Lemma \ref{l:conjv}(i) it follows that
\[
(\phi(a)\oplus \psi(a) ) (u \otimes e_{12}) = \phi(a)u \otimes e_{12} = u\psi(a) \otimes e_{12} = (u\otimes e_{12})(\phi(a) \oplus \psi(a)),
\]
so $u\otimes e_{12} \in M_2(B)_\infty \cap (\phi\oplus \psi)(A)'$. Moreover, by Lemma \ref{l:conjv}(ii) we get
\[
(u \otimes e_{12})(u\otimes e_{12})^\ast (\phi(a) \oplus \psi(a)) = uu^\ast \phi(a) \oplus 0 = \phi(a) \oplus 0 = (1\oplus 0) (\phi(a) \oplus \psi(a)),
\]
and similarly $(u \otimes e_{12})^\ast (u\otimes e_{12}) (\phi(a) \oplus \psi(a)) = (0 \oplus 1) (\phi(a) \oplus \psi(a))$.
Hence
\begin{equation}\label{eq:partiso}
V:= u \otimes e_{12} + \mathrm{Ann}((\phi \oplus \psi)(A)) \in \frac{M_2(B)_\infty \cap (\phi \oplus \psi)(A)'}{\mathrm{Ann}((\phi\oplus \psi)(A))}
\end{equation}
is a partial isometry for which $VV^\ast = 1\oplus 0$ and $V^\ast V= 0\oplus 1$.

$(iii) \Rightarrow (ii)$: Note that the inclusion $M_2(B) \hookrightarrow M_2(B)^\sim$ induces an isomorphism
\[
\frac{M_2(B)_\infty \cap (\phi \oplus \psi)(A)'}{\mathrm{Ann}((\phi\oplus \psi)(A))} \xrightarrow \cong \frac{(M_2(B)^\sim)_\infty \cap (\phi \oplus \psi)(A)'}{\mathrm{Ann}((\phi\oplus \psi)(A))}
\]
as the unit of the left hand side is represented by an element in $M_2(B)_\infty$. Let $V$ be a partial isometry with $VV^\ast = 1\oplus 0$ and $V^\ast V = 0\oplus 1$. Then $U = V + V^\ast$ is a unitary such that $U^\ast (1\oplus 0) U = 0\oplus 1$. 

Since $U$ is a self-adjoint unitary we may lift $U$ to a unitary $u \in (M_2(B)^\sim)_\infty \cap (\phi \oplus \psi)(A)'$. We get
\begin{eqnarray*}
u^\ast (\phi(a) \oplus 0) u &=& u^\ast (1\oplus 0 ) (\phi(a) \oplus \psi(a)) u \\
&=& u^\ast (1\oplus 0 ) u (\phi(a) \oplus \psi(a)) \\
&=& (0\oplus 1) (\phi(a) \oplus \psi(a)) \\
&=& 0 \oplus \psi(a)
\end{eqnarray*}
for all $a \in A$.\footnote{At this point it would be tempting to use a rotation unitary to conclude that $\phi \oplus 0$ and $\psi \oplus 0$ are approximately unitary equivalent. However, then the resulting sequence of unitaries would be in $M_2(\widetilde B)$ and not in $M_2(B)^\sim$.} As the inclusion $M_2(B)^\sim \to M_2(\widetilde B)$ induces an isomorphism
\[
\frac{(M_2(B)^\sim)_\infty \cap (\psi \oplus \psi)(A)'}{\mathrm{Ann}((\psi\oplus \psi)(A))} \xrightarrow \cong \frac{M_2(\widetilde B)_\infty \cap (\psi \oplus \psi)(A)'}{\mathrm{Ann}((\psi\oplus \psi)(A))},\footnote{Note that we use $\psi \oplus \psi$ here and not $\phi \oplus \psi$.}
\]
we may lift the self-adjoint unitary $W = e_{12} + e_{21} \in \frac{M_2(\widetilde B)_\infty \cap (\psi \oplus \psi)(A)'}{\mathrm{Ann}((\psi\oplus \psi)(A))}$ to a unitary $w \in (M_2(B)^\sim)_\infty \cap (\psi \oplus \psi)(A)'$. One checks exactly as above that
\[
w^\ast(0 \oplus \psi(a)) w = \psi(a) \oplus 0
\]
for all $a\in A$. Hence $uw \in (M_2(B)^\sim)_\infty$ is a unitary such that $w^\ast u^\ast (\phi(a) \oplus 0) u w = \psi(a) \oplus 0$.

$(ii)\Rightarrow (i)$: Let $(a_n)_{n\in \mathbb N}$ be an approximate identity in $A$. If $u\in M_2(E)_\infty$ is a unitary such that $u^\ast (\phi \oplus 0) u = \psi \oplus 0$, then 
\[
v := [(\phi(a_n) \oplus 0)_{n\in \mathbb N}] u [(\psi(a_n) \oplus 0)_{n\in \mathbb N}] \in (B \oplus 0)_\infty = B_\infty
\]
 induces an approximate Murray--von Neumann equivalence.
\end{proof}

The following essentially states that any functor used for classification which does not take the class of the unit in $K_0$ or similar into consideration, will not be able to distinguish $\ast$-homomorphisms which are approximately/asymptotically Murray--von Neumann equivalent. Hence, if one wants uniqueness results of (not necessarily unital) $\ast$-homomorphisms, approximate/asymptotic Murray--von Neumann equivalence is a more appropriate equivalence relation than approximate/asymp\-totic unitary equivalence.

\begin{corollary}\label{c:functorinv}
Let $F$ be any functor for which the domain is the category of separable $C^\ast$-algebras. Suppose that $F$ is invariant under approximate/asymptotic unitary equivalence and is $M_2$-stable, i.e.~$F(id_B \oplus 0) \colon F(B) \xrightarrow \cong F(M_2(B))$ is an isomorphism for any $B$. Then $F$ is invariant under approximate/asymptotic Murray--von Neumann equivalence.
\end{corollary}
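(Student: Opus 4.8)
The plan is to reduce everything to Proposition \ref{p:MvNeq}, which already gives the essential bridge between approximate/asymptotic Murray--von Neumann equivalence and approximate/asymptotic unitary equivalence after passing to $2\times 2$-matrices. The key observation is that the hypotheses on $F$ are precisely tailored to exploit this: $F$ does not see Murray--von Neumann equivalence directly, but it does see unitary equivalence, and the $M_2$-stability lets us transport information between $B$ and $M_2(B)$.

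Concretely, suppose $\phi,\psi \colon A \to B$ are approximately (resp.~asymptotically) Murray--von Neumann equivalent, where $A$ is separable. By Proposition \ref{p:MvNeq}, the maps $\phi \oplus 0$ and $\psi \oplus 0 \colon A \to M_2(B)$ are approximately (resp.~asymptotically) unitary equivalent. Since $F$ is invariant under approximate/asymptotic unitary equivalence, I would conclude
\[
F(\phi \oplus 0) = F(\psi \oplus 0) \colon F(A) \to F(M_2(B)).
\]
Next I would use functoriality to factor these through $F$ applied to the corner embedding: writing $\iota_B := id_B \oplus 0 \colon B \to M_2(B)$, one has $\phi \oplus 0 = \iota_B \circ \phi$ and $\psi \oplus 0 = \iota_B \circ \psi$ as $\ast$-homomorphisms, so functoriality gives
\[
F(\iota_B) \circ F(\phi) = F(\phi \oplus 0) = F(\psi \oplus 0) = F(\iota_B) \circ F(\psi).
\]
Finally, $M_2$-stability says $F(\iota_B) = F(id_B \oplus 0)$ is an isomorphism, hence in particular injective (monic), so I may cancel it on the left to obtain $F(\phi) = F(\psi)$. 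This is exactly the assertion that $F$ is invariant under approximate/asymptotic Murray--von Neumann equivalence.

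The only point requiring a little care is the factorisation $\phi \oplus 0 = \iota_B \circ \phi$: I should check that $\iota_B$ is genuinely a $\ast$-homomorphism (it is, being the inclusion into the top-left corner) and that $F$ is applied to morphisms in its domain category, which it is since all maps in sight are $\ast$-homomorphisms between separable $C^\ast$-algebras. I do not anticipate any real obstacle here, as the heavy lifting has already been done in Proposition \ref{p:MvNeq}; the corollary is essentially a formal consequence of combining that proposition with the two stated properties of $F$. The cancellation step merely uses that an isomorphism is in particular a monomorphism in the target category.
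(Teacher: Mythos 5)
Your proposal is correct and follows essentially the same route as the paper: apply Proposition \ref{p:MvNeq} to pass to $\phi\oplus 0$ and $\psi\oplus 0$, use invariance under approximate/asymptotic unitary equivalence, and then cancel $F(id_B\oplus 0)$ via the factorisation $\phi\oplus 0=(id_B\oplus 0)\circ\phi$ and $M_2$-stability. The paper phrases the last step as composing with $F(id_B\oplus 0)^{-1}$ rather than cancelling a monomorphism, but this is the same argument.
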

\begin{proof}
Suppose $\phi,\psi \colon A \to B$ are approximately/asymptotically Murray--von Neumann equivalent. By Proposition \ref{p:MvNeq}, $\phi \oplus 0$ and $\psi\oplus 0$ are approximately/asymptotically unitary equivalent, so
\[
F(\phi) = F(id_B \oplus 0)^{-1} \circ F(\phi \oplus 0) = F(id_B \oplus 0)^{-1} \circ F(\psi \oplus 0 ) = F(\psi). \qedhere
\]
\end{proof}

\begin{lemma}\label{l:stableasym}
Let $\mathcal H$ be a infinite dimensional, separable Hilbert space with orthonormal basis $(\xi_n)_{n\in \mathbb N}$, and let $T_1 \in \mathbb B(\mathcal H)$ be given by $T_1 \xi_n = \xi_{2n-1}$ for $n\in \mathbb N$. Then the two $\ast$-homomorphisms $id_{\mathbb K(\mathcal H)}, T_1(-) T_1^\ast \colon \mathbb K(\mathcal H) \to \mathbb K(\mathcal H)$ are asymptotically unitary equivalent.
\end{lemma}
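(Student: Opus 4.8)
The plan is to reduce the statement to a single Hilbert-space construction. Since both maps land in $\mathbb K(\mathcal H)$, whose multiplier algebra is $\mathbb B(\mathcal H)$, asymptotic unitary equivalence of $id_{\mathbb K(\mathcal H)}$ and $T_1(-)T_1^\ast$ asks for a norm-continuous path $(u_t)_{t\in\mathbb R_+}$ of unitaries in $\mathbb B(\mathcal H)$ with $u_t^\ast k u_t \to T_1 k T_1^\ast$ and $u_t(T_1 k T_1^\ast)u_t^\ast \to k$ for all $k\in\mathbb K(\mathcal H)$. Because conjugation by a unitary is isometric, the identity $\|u_t(T_1kT_1^\ast)u_t^\ast - k\| = \|T_1kT_1^\ast - u_t^\ast k u_t\|$ shows the two convergence requirements are equivalent, so it suffices to arrange the first one. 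Writing $w_t = u_t^\ast$, I would reduce further: if $w_t \to T_1$ strongly, then, using that the product of a strongly convergent bounded net with a fixed compact operator converges in norm, $\|w_t k w_t^\ast - T_1 k T_1^\ast\| \le \|(w_t - T_1)k\| + \|k(w_t^\ast - T_1^\ast)\| \to 0$ for every compact $k$. Thus the whole problem becomes the construction of a norm-continuous path of unitaries on $\mathcal H$ converging strongly to the isometry $T_1$.

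To build this path, I would first produce, for each $N\in\mathbb N$, a unitary $v_N\in\mathbb B(\mathcal H)$ agreeing with $T_1$ on $\mathrm{span}(\xi_1,\dots,\xi_N)$, i.e.\ $v_N\xi_n = \xi_{2n-1}$ for $n\le N$. Such a $v_N$ exists because $T_1$ maps this $N$-dimensional subspace isometrically onto $\mathrm{span}(\xi_1,\xi_3,\dots,\xi_{2N-1})$, and both orthogonal complements are infinite-dimensional and separable, so the partial isometry extends to a unitary. The key observation is that $v_N$ and $v_{N+1}$ agree with $T_1$ (hence with each other) on $\mathrm{span}(\xi_1,\dots,\xi_N)$, so $v_N^\ast v_{N+1}$ fixes this subspace pointwise and therefore decomposes as $1\oplus g$ for a unitary $g$ on the infinite-dimensional complement. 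Since the unitary group of $\mathbb B\big(\mathrm{span}(\xi_1,\dots,\xi_N)^\perp\big)$ is norm-path-connected (every unitary is an exponential $e^{ih}$ with $h$ bounded self-adjoint, via Borel functional calculus, giving the norm-continuous path $e^{\mathrm ith}$ to $1$), I can join $v_N$ to $v_{N+1}$ by a norm-continuous path of unitaries that, crucially, agrees with $T_1$ on $\mathrm{span}(\xi_1,\dots,\xi_N)$ for its entire length.

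Concatenating these segments over $N$ yields a norm-continuous path $(w_t)_{t\ge 0}$ of unitaries on $\mathcal H$ with the property that $w_t\xi_n = \xi_{2n-1} = T_1\xi_n$ for all $t\ge n$; hence $w_t\to T_1$ strongly, and $u_t := w_t^\ast$ implements the asymptotic unitary equivalence by the reduction above. The main obstacle I anticipate — and essentially the only subtlety — is reconciling norm-continuity of the path with strong convergence to $T_1$: a strong limit of unitaries is not in general the strong limit of a norm-continuous path, and it is precisely the ``fix the first $N$ coordinates'' device, made possible by the infinite defect of $T_1$ (so that the relevant complements are always infinite-dimensional), that forces the path to remain asymptotically equal to $T_1$ at every time, not merely at integer times. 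The argument is entirely self-contained and uses no pure-infiniteness or classification input.
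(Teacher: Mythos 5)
Your proof is correct and follows essentially the same strategy as the paper: both arguments build a norm-continuous path of unitaries that agrees with $T_1$ on $\mathrm{span}(\xi_1,\dots,\xi_n)$ for all $t\geq n$ and then deduce norm convergence of the conjugated compacts. The only differences are cosmetic — the paper interpolates by explicit rotations in the two-dimensional subspaces $\mathrm{span}(\xi_k,\xi_{2k-1})$ and checks convergence directly on the rank-one operators $\theta_{\xi_n,\xi_m}$, whereas you invoke norm-path-connectedness of the unitary group via $e^{ith}$ and the general fact that a bounded strongly convergent net multiplied by a compact operator converges in norm.
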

\begin{proof}
We construct a norm-continuous path $(U_t)_{t\in [1,\infty)}$ of unitaries in $\mathbb B(\mathcal H)$ implementing the asymptotic unitary equivalence as follows: Let $U_1 = 1$ and suppose that we have constructed $U_t$ for $t\in [1, k-1]$ for some integer $k\geq 2$. We may fix a continuous path $(V_{k,t})_{t\in [k-1,k]}$ of unitaries such that $V_{k,k-1} = 1$, $V_{k,t}\xi_n = \xi_n$ for $n\neq k, 2k-1$ and all $t\in [k-1,k]$, and such that $V_{k,k}\xi_{k} = \xi_{2k-1}$ and $V_{k,k}\xi_{2k-1} = \xi_{k}$. Let $V_k := V_{k,k}$ and define $U_t := V_{k,t} U_{k-1} = V_{k,t} V_{k-1} \dots V_{1}$ for $t\in [k-1,k]$. This gives a norm-continuous path $(U_t)_{t\in [1,\infty)}$ of unitaries in $\mathbb B(\mathcal H)$.

By how each $V_{l,t}$ is constructed it holds that $V_l\xi_{2n-1} = \xi_{2n-1}$ for $l=1,\dots, n-1$. Hence for $t\geq n$ we get
\[
U_t T_1 \xi_n = V_{\lceil t \rceil, t} V_{\lfloor t \rfloor} \dots V_n \dots V_1 \xi_{2n-1} = V_{\lceil t \rceil, t} V_{\lfloor t \rfloor} \dots V_n \xi_{2n-1} = V_{\lceil t \rceil , t} V_{\lfloor t \rfloor} \dots V_{n+1} \xi_{n} = \xi_n.
\]
Hence if $\theta_{\xi,\xi'}$ for $\xi, \xi' \in \mathcal H$ is the rank 1 operator $\theta_{\xi,\xi'}\eta = \xi \langle \eta , \xi'\rangle$, we get
\[
U_t T_1 \theta_{\xi_n, \xi_m} T_1^\ast U_t^\ast = \theta_{U_t T_1 \xi_n, U_t T_1 \xi_m} = \theta_{\xi_n , \xi_m}, \qquad n,m \in \mathbb N, t \geq \max\{n,m\}.
\]
As $\span_{n,m\in \mathbb N} \theta_{\xi_n,\xi_m}$ is dense in $\mathbb K(\mathcal H)$ it follows that $id_{\mathbb K(\mathcal H)}$ and $T_1(-)T_1^\ast$ are asymptotically unitary equivalent.
\end{proof}

\begin{proposition}\label{p:MvNvsu}
Let $A$ and $B$ be $C^\ast$-algebras with $A$ separable, and let $\phi,\psi \colon A \to B$ be $\ast$-homomorphisms. Consider the following conditions.
\begin{itemize}
\item[$(a)$] $A,B,\phi$ and $\psi$ are all unital,
\item[$(b)$] $B$ is stable,
\item[$(c)$] $B$ has stable rank 1.
\end{itemize}

If $\phi$ and $\psi$ are approximately Murray--von Neumann equivalent and $(a)$, $(b)$ or $(c)$ holds, then $\phi$ and $\psi$ are approximately unitary equivalent.

If $\phi$ and $\psi$ are asymptotically Murray--von Neumann equivalent and $(a)$ or $(b)$ holds, then $\phi$ and $\psi$ are asymptotically unitary equivalent.
\end{proposition}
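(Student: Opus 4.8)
\textbf{Plan for proving Proposition \ref{p:MvNvsu}.}

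The plan is to deduce approximate (resp.\ asymptotic) unitary equivalence from Murray--von Neumann equivalence by, in each of the three cases, upgrading the contraction $v$ (resp.\ path $(v_t)$) implementing the Murray--von Neumann equivalence to a genuine unitary in $\multialg{B}$ (resp.\ a path of such). Throughout I will use the characterisation just after Lemma \ref{l:MvNcontractive}: approximate (resp.\ asymptotic) Murray--von Neumann equivalence of $\phi$ and $\psi$ is witnessed by a contraction $v \in B_\infty$ (resp.\ $v \in B_\as$) with $v^\ast \phi(-)v = \psi$ and $v\psi(-)v^\ast = \phi$. By Lemma \ref{l:conjv} such a $v$ behaves like a partial isometry relative to $\phi,\psi$: the relations $vv^\ast \in \phi(A)'$, $v^\ast v\,\psi = \psi$, and $\phi(a)v = v\psi(a)$ hold. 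The strategy in each case is to produce a unitary $u$ with $u\psi(a) = \phi(a)v = \phi(a)u$ on the relevant approximate unit, so that conjugation by $u$ carries $\psi$ to $\phi$.

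\textbf{Case $(a)$, $A,B,\phi,\psi$ unital.} Here $v \in B_\infty$ satisfies $v^\ast v = \psi(1) = 1$ and $vv^\ast = \phi(1) = 1$, so $v$ is already a unitary in $B_\infty = \multialg{B}_\infty$ (using $1_B$), and $v^\ast\phi(-)v = \psi$ exhibits the approximate (resp.\ asymptotic, working in $B_\as$) unitary equivalence directly. This case is essentially immediate from the remark following Lemma \ref{l:MvNcontractive} that the witness may be taken a partial isometry with $u^\ast u = \psi(1_A)$, $uu^\ast = \phi(1_A)$.

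\textbf{Cases $(b)$ and $(c)$.} When $B$ is stable (or has stable rank $1$) the element $v$ is a partial isometry with source and range projections $v^\ast v, vv^\ast$ lying in $\multialg{B_\infty}$ and dominating $\psi(A)$, $\phi(A)$ respectively on the approximate unit; the obstruction is that $v$ need not be a unitary in $\multialg{B}_\infty$. The idea is to correct $v$ by a unitary absorbing the defect projections $1 - v^\ast v$ and $1 - vv^\ast$. In the stable case $(b)$ one uses that stability provides, via the standard half-shift isometry, room to rotate the orthogonal complements into each other: identifying $B \cong B\otimes \mathbb K$ and invoking Lemma \ref{l:stableasym} (which gives an asymptotic unitary path relating $id_{\mathbb K}$ to $T_1(-)T_1^\ast$, hence produces extra orthogonal ``space'' continuously), one pads $v$ with an isometry onto the complementary subspace to obtain a unitary implementing the equivalence, and the continuity of the path in Lemma \ref{l:stableasym} yields the asymptotic statement as well. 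In the stable rank $1$ case $(c)$ one instead uses that the unitary group of $\multialg{B}$ (or of unitisations) acts transitively enough: every element is approximable by invertibles, so the contraction $v$, viewed in $M_2(B)^\sim$ via Proposition \ref{p:MvNeq}$(ii)$, can be connected to a genuine unitary; since stable rank $1$ is not known to give a continuous path of invertibles with good control, only the approximate conclusion is claimed here. Concretely, I would route cases $(b)$ and $(c)$ through Proposition \ref{p:MvNeq}$(ii)$, which already gives that $\phi\oplus 0$ and $\psi\oplus 0$ are approximately (resp.\ asymptotically) unitary equivalent in $M_2(B)^\sim$, and then use the structural hypothesis on $B$ to collapse the $M_2$ back down to $B$: stability lets one absorb the extra matrix amplification (as $M_2(B) \cong B$ when $B \cong B\otimes\mathbb K$), while stable rank $1$ lets one cancel the trivial summand $0$ up to approximate unitary equivalence in $B$ itself.

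\textbf{Main obstacle.} The hard part will be cases $(b)$ and $(c)$, and specifically converting the ``one-sided'' partial isometry $v$ into a two-sided unitary without disturbing the intertwining relation $\phi(a)v = v\psi(a)$. The delicate point is that the defect projections $1 - v^\ast v$ and $1 - vv^\ast$ live in the relative commutant and must be rotated into each other by a unitary that \emph{also} commutes with the images $\phi(A),\psi(A)$ modulo the annihilator; stability (through the continuous path of Lemma \ref{l:stableasym}) is precisely what supplies such a unitary continuously, which is why the asymptotic conclusion survives in case $(b)$ but, absent such a path, only the approximate conclusion is available in case $(c)$.
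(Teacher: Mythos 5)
Your cases $(a)$ and $(b)$ follow the paper's own route: $(a)$ is immediate because the witness $v$ satisfies $v^\ast v = \psi(1_A) = 1$ and $vv^\ast = \phi(1_A) = 1$, and for $(b)$ the paper does exactly what you propose — pass through Proposition \ref{p:MvNeq}, identify $M_2(B\otimes\mathbb K)\cong B\otimes \mathbb K$ via isometries $s_1,s_2$ with $s_1s_1^\ast + s_2s_2^\ast = 1$ so that $\phi\oplus 0$ corresponds to $s_1\phi(-)s_1^\ast$, and use Lemma \ref{l:stableasym} to see that $\phi$ and $s_1\phi(-)s_1^\ast$ are asymptotically unitary equivalent.

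Case $(c)$ is where your plan has a genuine gap. You propose to go through Proposition \ref{p:MvNeq}$(ii)$ and then ``cancel the trivial summand $0$ up to approximate unitary equivalence in $B$ itself'' using stable rank $1$. But that cancellation statement — $\phi\oplus 0\sim_{\mathrm{au}}\psi\oplus 0$ in $M_2(B)$ implies $\phi\sim_{\mathrm{au}}\psi$ in $B$ — is, in view of Proposition \ref{p:MvNeq}, precisely the implication you are trying to prove, and you give no mechanism for it; the appeal to ``density of invertibles'' is the right ingredient but it is never connected to the conclusion. The paper's argument for $(c)$ avoids $M_2$ entirely: take the contraction $v\in B_\infty$ with $v^\ast\phi(-)v=\psi$ and $v\psi(-)v^\ast=\phi$, use stable rank $1$ to write $v=u|v|$ with $u\in(\widetilde B)_\infty$ a unitary (approximate each representative by an invertible and take its unitary polar part), and then compute
\[
u\,\psi(a)\,u^\ast \;=\; u\,(v^\ast v)^{1/2}\psi(a)(v^\ast v)^{1/2}\,u^\ast \;=\; v\,\psi(a)\,v^\ast \;=\;\phi(a),
\]
where the first equality is Lemma \ref{l:conjv}$(ii)$, i.e.\ $v^\ast v$ (hence $|v|$) acts as a unit on $\psi(A)$. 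This last computation — verifying that replacing the partial isometry $v$ by its unitary polar part does not disturb the intertwining — is the step your proposal identifies as the ``main obstacle'' but never actually carries out, and it is exactly what Lemma \ref{l:conjv}$(ii)$ is for.
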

\begin{proof}
Case $(a)$ is obvious for both approximate and asymptotic equivalences. 

Case $(c)$ follows since if $v\in B_\infty$ implements the approximate Murray--von Neumann equivalence, we may by stable rank 1 decompose $v = u|v|$ with $u\in (\tilde B)_\infty$ a unitary. Hence
\[
u \psi(a) u^\ast \stackrel{\textrm{Lem.}\ref{l:conjv}(ii)}{=} u (v^\ast v)^{1/2} \psi(a) (v^\ast v)^{1/2} u^\ast = v \psi(a) v^\ast = \phi(a)
\]
for all $a\in A$, so $\phi$ and $\psi$ are approximately unitary equivalent.

For case $(b)$ we only prove the asymptotic version, as the approximate version is essentially identical. We may replace $B$ with $B \otimes \mathbb K(\mathcal H)$ where $\mathcal H$ is an infinite dimensional, separable Hilbert space. Let $(\xi_n)_{n\in \mathbb N}$ be an orthonormal basis of $\mathcal H$, and let $T_1,T_2 \in \mathbb B(\mathcal H)$ be the isometries $T_1\xi_n = \xi_{2n-1}$ and $T_2\xi_n = \xi_{2n}$. Let $s_i := 1_{\multialg{B}} \otimes T_i \in \multialg{B\otimes \mathbb K(\mathcal H)}$. By Lemma \ref{l:stableasym} it follows that $\phi = id \circ \phi$ is asymptotically unitary equivalent to $s_1 \phi(-) s_1^\ast$, and that $\psi$ is asymptotically unitary equivalent to $s_1\psi(-)s_1^\ast$. As $s_1,s_2$ are isometries with $s_1s_1^\ast + s_2s_2^\ast=1$, there is an isomorphism $M_2(B \otimes \mathbb K(\mathcal H)) \xrightarrow \cong B\otimes \mathbb K(\mathcal H)$ given by $b \otimes e_{ij} \mapsto s_i b s_j^\ast$. Hence, if $\phi$ and $\psi$ are approximately/asymptotically Murray--von Neumann equivalent, then $s_1\phi(-)s_1^\ast$ and $s_1\psi(-) s_1^\ast$ are approximately/asymptotically unitary equivalent by Proposition \ref{p:MvNeq}. This proves the result in case $(b)$.
\end{proof}

The following corollary shows that approximate Murray--von Neumann equivalence is still a strong enough equivalence relation to get classification up to stable isomorphism.

\begin{corollary}\label{c:MvNstableiso}
Let $A$ and $B$ be separable $C^\ast$-algebras, and let $\phi\colon A \to B$ and $\psi \colon B \to A$ be $\ast$-homomorphisms. If $\psi \circ  \phi$ and $id_A$ are approximately Murray--von Neumann equivalent, and $\phi \circ \psi$ and $id_B$ are approximately Murray--von Neumann equivalent, then $A\otimes \mathbb K \cong B \otimes \mathbb K$.
\end{corollary}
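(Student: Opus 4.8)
The plan is to reduce the statement to the classical two-sided approximate intertwining argument of Elliott, after first upgrading the two approximate Murray--von Neumann equivalences to genuine approximate unitary equivalences by passing to the stabilisations. Accordingly, I would set $A':=A\otimes\mathbb K$, $B':=B\otimes\mathbb K$, $\Phi:=\phi\otimes\mathrm{id}_{\mathbb K}$ and $\Psi:=\psi\otimes\mathrm{id}_{\mathbb K}$; these are $\ast$-homomorphisms between separable, stable $C^\ast$-algebras with $\Psi\circ\Phi=(\psi\circ\phi)\otimes\mathrm{id}_{\mathbb K}$ and $\Phi\circ\Psi=(\phi\circ\psi)\otimes\mathrm{id}_{\mathbb K}$.

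The key preliminary observation is that tensoring with $\mathbb K$ preserves approximate Murray--von Neumann equivalence. Indeed, if $\alpha,\beta\colon A\to A$ are approximately Murray--von Neumann equivalent, witnessed on a finite set by a contraction $v$ (so that $\|v^\ast\alpha(a)v-\beta(a)\|$ and $\|v\beta(a)v^\ast-\alpha(a)\|$ are small), and $(e_\lambda)$ is an approximate unit of $\mathbb K$, then the contractions $v\otimes e_\lambda$ witness the approximate Murray--von Neumann equivalence of $\alpha\otimes\mathrm{id}_{\mathbb K}$ and $\beta\otimes\mathrm{id}_{\mathbb K}$ on any finite set of elementary tensors: one has $(v\otimes e_\lambda)^\ast(\alpha(a)\otimes c)(v\otimes e_\lambda)=v^\ast\alpha(a)v\otimes e_\lambda c e_\lambda\approx\beta(a)\otimes c$, using $\|e_\lambda c e_\lambda-c\|$ small together with submultiplicativity of the tensor norm to control the cross terms (and symmetrically for the reverse relation). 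Applying this to the hypotheses yields that $\Psi\circ\Phi$ is approximately Murray--von Neumann equivalent to $\mathrm{id}_{A'}$ and $\Phi\circ\Psi$ is approximately Murray--von Neumann equivalent to $\mathrm{id}_{B'}$.

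Since $A'$ and $B'$ are stable, Proposition \ref{p:MvNvsu}$(b)$ upgrades these to approximate unitary equivalences, with unitaries in $\multialg{A'}$ and $\multialg{B'}$ respectively. With this in hand I would conclude by the two-sided approximate intertwining argument of Elliott (see e.g.~\cite{Rordam-book-classification}): starting from the relations $\Psi\circ\Phi\approx_{\mathrm{au}}\mathrm{id}_{A'}$ and $\Phi\circ\Psi\approx_{\mathrm{au}}\mathrm{id}_{B'}$, one inductively produces multiplier unitaries correcting $\Phi$ and $\Psi$ along an exhausting sequence of finite subsets of the separable algebras $A'$ and $B'$, so that the resulting corrected maps intertwine the identities with vanishing error; the corrected diagram then converges point-norm to a pair of mutually inverse $\ast$-homomorphisms, giving an isomorphism $A'\cong B'$, i.e.~$A\otimes\mathbb K\cong B\otimes\mathbb K$.

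I expect the main obstacle to lie in this last step, namely running Elliott's intertwining in the non-unital, stable setting with unitaries living in the multiplier algebras rather than in $A'$ and $B'$ themselves, and checking that the approximate unitary equivalences furnished by Proposition \ref{p:MvNvsu}$(b)$ are exactly in the form needed to drive the intertwining to a genuine isomorphism. By contrast, the tensoring lemma of the first step and the invocation of Proposition \ref{p:MvNvsu}$(b)$ are routine.
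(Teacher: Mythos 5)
Your proposal is correct and follows essentially the same route as the paper: tensor everything with $\mathbb K$, upgrade the approximate Murray--von Neumann equivalences to approximate unitary equivalences via Proposition \ref{p:MvNvsu}$(b)$ applied to the now-stable codomains, and finish with Elliott's approximate intertwining (\cite[Corollary 2.3.4]{Rordam-book-classification}). The only difference is that you spell out the (routine, and correctly argued) fact that approximate Murray--von Neumann equivalence is preserved under $-\otimes\mathrm{id}_{\mathbb K}$, a step the paper leaves implicit.
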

\begin{proof}
By Proposition \ref{p:MvNvsu} the maps $(\psi \circ \phi) \otimes id_{\mathbb K} = (\psi \otimes id_{\mathbb K}) \circ (\phi \otimes id_{\mathbb K})$ and $id_A \otimes id_{\mathbb K} = id_{A \otimes \mathbb K}$ are approximately unitary equivalent, and $(\phi \otimes id_{\mathbb K})\circ (\psi \otimes id_{\mathbb K}) $ and $id_{B\otimes \mathbb K}$ are approximately unitary equivalent. Hence $A\otimes \mathbb K \cong B \otimes \mathbb K$ by an intertwining argument a la Elliott, see e.g.~\cite[Corollary 2.3.4]{Rordam-book-classification}.
\end{proof}

\begin{remark}
Let $\phi \colon \mathcal O_2 \to \mathcal O_2\otimes \mathbb K$ and $\psi \colon \mathcal O_2\otimes \mathbb K \to \mathcal O_2$ be injective $\ast$-homomorphisms. By Theorem \ref{t:O2HB} below, $\psi \circ \phi$ and $id_{\mathcal O_2}$, and $\phi \circ \psi$ and $id_{\mathcal O_2\otimes \mathbb K}$ are approximately Murray--von Neumann equivalent. This shows that $A\not \cong B$ in general in Corollary \ref{c:MvNstableiso}.
\end{remark}


\subsection{$\mathcal O_\infty$-stable and $\mathcal O_2$-stable $\ast$-homomorphisms}

\begin{definition}\label{d:Dstable}
Let $\mathcal D$ be either $\mathcal O_2$ or $\mathcal O_\infty$. Let $A$ and $B$ be $C^\ast$-algebras with $A$ separable, and let $\phi \colon A \to B$ be a $\ast$-homomorphism. We say that $\phi$ is 
\begin{itemize}
\item \emph{$\mathcal D$-stable} if $\mathcal D$ embeds unitally in $(B_\infty \cap \phi(A)')/\mathrm{Ann}(\phi(A))$.
\item \emph{strongly $\mathcal D$-stable} if $\mathcal D$ embeds unitally in $(B_\as \cap \phi(A)')/\mathrm{Ann}(\phi(A))$.
\end{itemize}
\end{definition}

\begin{remark}
In Corollary \ref{c:McDuff} it is shown that $\mathcal O_2$- and $\mathcal O_\infty$-stable $\ast$-homo\-morphisms have a McDuff type property, which really is the motivation for why the maps have been given this name.

There is an obvious generalisation of $\mathcal D$-stable maps for any strongly self-absorbing $C^\ast$-algebra $\mathcal D$. However, with this definition it seems unlikely that they satisfy the McDuff type property and therefore I do not believe that this is the correct generalisation of $\mathcal D$-stable $\ast$-homomorphisms for more general strongly self-absorbing $C^\ast$-algebras.
\end{remark}

In \cite[Definition 8.2.4]{Rordam-book-classification} Rørdam introduces $\mathcal O_2$-absorbing and $\mathcal O_\infty$-absorbing $\ast$-homo\-morphisms. These are unital $\ast$-homomorphisms $\phi \colon A \to B$ such that $\mathcal O_2$ (resp.~$\mathcal O_\infty$) embeds unitally in the commutant $B \cap \phi(A)'$. I emphasise that although these notions are closely related to Definition \ref{d:Dstable}, they are \emph{not} the same. For instance, the identity map $id_{\mathcal O_2}$ is $\mathcal O_2$-stable (in the above sense), but not $\mathcal O_2$-absorbing in the sense of Rørdam.

\begin{proposition}\label{p:Dstablealgebras}
Let $A$ and $B$ be $C^\ast$-algebras with $A$ separable, and let $\phi \colon A \to B$ be a $\ast$-homomorphism. Let $\mathcal D$ be either $\mathcal O_2$ or $\mathcal O_\infty$. If either $A$ or $B$ is $\mathcal D$-stable then $\phi$ is strongly $\mathcal D$-stable. 
\end{proposition}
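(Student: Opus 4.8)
The plan is to treat the two hypotheses separately but to funnel both through a single construction inside a \emph{central path algebra}. For a $\sigma$-unital $C^\ast$-algebra $C$ write $F_\as(C) := (C_\as \cap C')/\mathrm{Ann}(C)$, the annihilator being taken for $\mathrm{id}_C$ in $C_\as$; by the Remark preceding the proposition this is unital, with unit represented by a continuous approximate unit of $C$. The whole proposition will reduce to the single input that \emph{for $\mathcal D \in \{\mathcal O_2,\mathcal O_\infty\}$ and any separable $C$ with $C \cong C\otimes \mathcal D$ there is a unital $\ast$-homomorphism $\mathcal D \to F_\as(C)$}: composing such a map with a suitable transport map into $(B_\as \cap \phi(A)')/\mathrm{Ann}(\phi(A))$ yields exactly strong $\mathcal D$-stability of $\phi$.

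First I would build the two transport maps. If $A$ is $\mathcal D$-stable, let $\phi_\as \colon A_\as \to B_\as$ be the pointwise application of $\phi$. For $x\in A_\as\cap A'$ and $a\in A$ one has $[\phi_\as(x),\phi(a)] = \phi_\as([x,a]) = 0$, so $\phi_\as(A_\as\cap A')\subseteq B_\as\cap\phi(A)'$, and similarly $\phi_\as(\mathrm{Ann}(A))\subseteq \mathrm{Ann}(\phi(A))$; hence $\phi_\as$ descends to a unital $\ast$-homomorphism $F_\as(A)\to (B_\as\cap\phi(A)')/\mathrm{Ann}(\phi(A))$, unitality holding because $\phi$ carries an approximate unit of $A$ to one representing the unit on the right. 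If instead $B$ is $\mathcal D$-stable, I would first replace $B$ by a \emph{separable} $\mathcal D$-stable subalgebra $B_0\subseteq B$ containing the separable set $\phi(A)$ (separable inheritability of $\mathcal D$-stability from $B\cong B\otimes\mathcal D$). Since $\phi(A)\subseteq B_0$, commuting with $B_0$ implies commuting with $\phi(A)$ and annihilating $B_0$ implies annihilating $\phi(A)$, and $B_{0,\as}\subseteq B_\as$; thus the inclusion induces a unital $\ast$-homomorphism $F_\as(B_0)\to (B_\as\cap\phi(A)')/\mathrm{Ann}(\phi(A))$.

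It remains to produce the key input. Fixing an isomorphism $C\cong C\otimes\mathcal D^{\otimes\infty}$ (using $\mathcal D\cong\mathcal D^{\otimes\infty}$), let $\kappa_n\colon \mathcal D\to\multialg{C}$ be the embedding as the $n$-th tensor factor. Since every element of $C$ is approximated by elements of $C\otimes\mathcal D^{\otimes k}$, which commute exactly with $\kappa_n(\mathcal D)$ for $n>k$, the sequence $(\kappa_n)$ is asymptotically central in $C$; this already gives a unital $\ast$-homomorphism $\mathcal D\to (C_\infty\cap C')/\mathrm{Ann}(C)$ at the level of the \emph{sequence} algebra, with unit $\kappa_n(1_{\mathcal D})=1_{\multialg C}$.

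The main obstacle — and the only point requiring more than bookkeeping — is upgrading this discrete embedding to a \emph{continuous} one landing in $F_\as(C)$. Here I would exploit that $\mathcal O_2$ and $\mathcal O_\infty$ are strongly self-absorbing, so the two factor embeddings $\mathcal D\to\mathcal D\otimes\mathcal D$ are \emph{asymptotically} (not merely approximately) unitarily equivalent and the flip on $\mathcal D\otimes\mathcal D$ is asymptotically inner (Dadarlat--Winter, Toms--Winter). Tensoring the connecting unitary paths with the identity on the remaining factors gives continuous paths of unitaries in $\multialg{C}$ conjugating $\kappa_n$ to $\kappa_{n+1}$; splicing these over $[n,n+1]$ interpolates the $\kappa_n$ into a genuinely continuous, asymptotically central path, hence a unital $\ast$-homomorphism $\mathcal D\to F_\as(C)$. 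The delicate verifications are that the connecting unitaries stay asymptotically central in $C$ (so conjugation preserves centrality) and that the reparametrisation is continuous at the integer break-points; both follow because each connecting unitary is supported on finitely many tensor factors, whose copies of $\mathcal D$ are themselves asymptotically central in $C$.
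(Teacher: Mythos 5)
Your overall strategy is the paper's: the heart of both arguments is the continuous, asymptotically central path of unital copies of $\mathcal D$ obtained by splicing the tensor-factor embeddings into $\mathcal D^{\otimes\infty}$ along the Dadarlat--Winter asymptotic unitary equivalences between consecutive factor embeddings. The repackaging through $F_\as(C)$ and unital transport homomorphisms is sound, and your verifications that the transport maps are well defined and unital are correct. The only genuine divergence is in the case where $B$ is $\mathcal D$-stable: you pass to a separable $\mathcal D$-stable $B_0\supseteq\phi(A)$, which requires the (true, but not free) separable inheritability of $\mathcal D$-stability as an external input, whereas the paper avoids this entirely by extending the isomorphism $\beta\colon B\otimes\mathcal D\to B$ to multiplier algebras and working with $\multialg{\beta}(1_{\multialg{B}}\otimes\sigma_t(d))$ directly, cut down by $\phi(a_t)$.

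There is one concrete gap in your key construction. The tensor-factor embeddings $\kappa_n$ and the connecting unitaries live in $\multialg{C}$, not in $C$ (you note yourself that $\kappa_n(1_{\mathcal D})=1_{\multialg{C}}$), so the spliced path defines a unital $\ast$-homomorphism into $(\multialg{C}_\as\cap C')/\mathrm{Ann}(C)$ rather than into $F_\as(C)=(C_\as\cap C')/\mathrm{Ann}(C)$ as you defined it --- and your transport maps are built on the latter. The repair is short but must be made explicit: cut the path down by a continuous approximate unit $(c_t)$ of $C$, whose image lies in $C_\as\cap C'$ and represents the unit modulo $\mathrm{Ann}(C)$; equivalently, observe that the inclusion $C_\as\hookrightarrow\multialg{C}_\as$ induces an isomorphism of the two relative-commutant quotients. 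This cutting-down is exactly the role played by the factors $a_t$ and $\phi(a_t)$ in the paper's formulas $\phi(\alpha(a_t\otimes\sigma_t(d)))$ and $\phi(a_t)\multialg{\beta}(1_{\multialg{B}}\otimes\sigma_t(d))$, so once it is inserted your argument goes through.
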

\begin{proof}
Let $\iota_n \colon \mathcal D \to \mathcal D \otimes \mathcal D \otimes \dots =: \mathcal D^{\otimes \infty}$ be the embedding into the $n$'th tensor. By \cite[Theorem 2.2]{DadarlatWinter-KKssa}, $\iota_n$ and $\iota_{n+1}$ are asymptotically unitary equivalent with unitaries in $\mathcal D_{n,n+1} := 1\otimes \dots \otimes 1 \otimes \mathcal D \otimes \mathcal D \otimes 1 \otimes \dots$, where the two $\mathcal D$'s are the $n$'th and $(n+1)$'st tensors. Hence there are unital homotopies $\sigma_t \colon \mathcal D \to C([n,n+1], \mathcal D^{\otimes \infty})$ from $\iota_n$ to $\iota_{n+1}$, such that $\sigma_t(d)\in \mathcal D_{n,n+1}$. We get an induced unital embedding of $\sigma_t \colon \mathcal D \to C_b(\mathbb R_+, \mathcal D)$, as $\mathcal D \cong \mathcal D^{\otimes \infty}$. By construction, $\| [x , \sigma_t(y)]\| \to 0$ for all $x,y\in \mathcal D$.

Let $(a_t)$ be a continuous approximate unit of $A$. If $\alpha \colon A \otimes \mathcal D \xrightarrow \cong A$ is an isomorphism, then $\eta \colon \mathcal D \to C_b(\mathbb R_+,B)$ given by $\eta(d)(t) = \phi(\alpha(a_t \otimes \sigma_t(d)))$ induces strong $\mathcal D$-stability.

Suppose $\beta \colon B \otimes \mathcal D \xrightarrow \cong B$ is an isomorphism. Then $\eta \colon \mathcal D \to C_b(\mathbb R_+,B)$ given by $\eta(d)(t) =\phi(a_t) \multialg{\beta}(1_{\multialg{B}} \otimes \sigma_t(d))$ induces strong $\mathcal D$-stability.
\end{proof}

\begin{proposition}
Let $\mathcal D$ be either $\mathcal O_2$ or $\mathcal O_\infty$. A separable $C^\ast$-algebra $A$ is $\mathcal D$-stable if and only if $id_A$ is $\mathcal D$-stable.
\end{proposition}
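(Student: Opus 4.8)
I would split the statement into its two implications, the forward direction being a short deduction from results already at hand and the converse carrying all the substance. For the \emph{``only if''} direction, suppose $A$ is $\mathcal D$-stable. Applying Proposition~\ref{p:Dstablealgebras} with $B = A$ and $\phi = id_A$ — legitimate since $A$ itself is $\mathcal D$-stable — shows that $id_A$ is \emph{strongly} $\mathcal D$-stable, i.e.\ $\mathcal D$ embeds unitally in $(A_\as \cap A')/\mathrm{Ann}(A)$. It then remains to see that strong $\mathcal D$-stability implies $\mathcal D$-stability, and for this I would use the $*$-homomorphism $A_\as \to A_\infty$ induced by sampling a bounded continuous path at the integers $t = n$. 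This map sends $C_0(\mathbb R_+, A)$ into $\bigoplus_{\mathbb N} A$ and fixes the constant copy of $A$, so it descends to a unital $*$-homomorphism $(A_\as \cap A')/\mathrm{Ann}(A) \to (A_\infty \cap A')/\mathrm{Ann}(A)$ (it respects relative commutants and annihilators and carries a path approximate unit to a sequence approximate unit). Composing with the given unital embedding of $\mathcal D$, and using that $\mathcal D$ is simple, yields a unital, hence injective, $*$-homomorphism $\mathcal D \to (A_\infty \cap A')/\mathrm{Ann}(A)$; that is, $id_A$ is $\mathcal D$-stable.

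For the \emph{``if''} direction one must produce an isomorphism $A \cong A \otimes \mathcal D$ from a unital embedding of $\mathcal D$ into the central sequence algebra $F(A) := (A_\infty \cap A')/\mathrm{Ann}(A)$. This is exactly the $\mathcal D$-absorption criterion for the strongly self-absorbing algebra $\mathcal D$, which in this central-sequence-algebra formulation I would cite from \cite{Kirchberg-Abel}. If one prefers to reprove it, the route is a two-sided approximate intertwining: representatives of the unital map $\mathcal D \to F(A)$ give a sequence $\sigma_n \colon \mathcal D \to A$ that is asymptotically multiplicative, asymptotically central (so $\|[\sigma_n(d), a]\| \to 0$ for $a \in A$, $d \in \mathcal D$), and asymptotically unital against $A$ (so $\|\sigma_n(1_{\mathcal D}) a - a\| \to 0$); the first-factor inclusion $\mu \colon A \to A \otimes \mathcal D$ and the approximate maps $\nu_n \colon A \otimes \mathcal D \to A$, $a \otimes d \mapsto a\, \sigma_n(d)$, are then asymptotically mutually inverse, and the Elliott intertwining scheme — together with the approximate unitary equivalence of the two factor embeddings $\mathcal D \to \mathcal D \otimes \mathcal D$ already used in the proof of Proposition~\ref{p:Dstablealgebras} via \cite[Theorem~2.2]{DadarlatWinter-KKssa} — assembles the isomorphism.

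The main obstacle lies entirely in this converse: converting the approximately central, approximately multiplicative system $(\sigma_n)$ into honest $*$-homomorphisms whose compositions $\nu_n \circ \mu$ and $\mu \circ \nu_n$ can be corrected back to the respective identities up to approximate unitary equivalence. This correction is precisely where strong self-absorption of $\mathcal D$ — the approximate innerness of the half-flip — is indispensable; everything else reduces to bookkeeping in the sequence algebra and a standard reindexing argument. Since this part is a known and delicate result, in the write-up I would make the reduction to a unital embedding $\mathcal D \hookrightarrow F(A)$ explicit and then invoke \cite{Kirchberg-Abel} rather than reproducing the full intertwining.
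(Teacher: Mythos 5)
Your proposal is correct and follows essentially the same route as the paper: the forward direction is Proposition~\ref{p:Dstablealgebras} applied to $id_A$ (you helpfully make explicit the step from strong $\mathcal D$-stability to $\mathcal D$-stability via evaluation at integer times, which the paper leaves implicit), and the converse is the citation of \cite{Kirchberg-Abel}, exactly as in the paper. The only cosmetic difference is in the optional sketch of the converse: the paper's footnote lifts the unital embedding to a map $\overline\sigma\colon \mathcal D \to A_\infty \cap A'$, builds a $\ast$-homomorphism $A\otimes\mathcal D\to A_\infty$ sending $a\otimes 1_{\mathcal D}\mapsto a$, and invokes the one-sided criterion \cite[Theorem 2.3]{TomsWinter-ssa}, whereas you sketch the two-sided approximate intertwining directly; both are standard and both defer the substance to the cited results.
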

\begin{proof}
If $A$ is $\mathcal D$-stable then $id_A$ is $\mathcal D$-stable by Proposition \ref{p:Dstablealgebras}. If $id_A$ is $\mathcal D$-stable then $A$ is $\mathcal D$-stable by \cite[Proposition 4.4(4,5)]{Kirchberg-Abel}.\footnote{Alternatively, let $\sigma \colon \mathcal D \to (A_\infty \cap A')/\mathrm{Ann}(A)$ be a unital $\ast$-homomorphism, and $\overline \sigma \colon \mathcal D \to A_\infty \cap A'$ be any map that lifts $\sigma$. We obtain a $\ast$-homomorphism $A \otimes \mathcal D \to A_\infty$ given on elementary tensors by $a \otimes d \mapsto a \overline \sigma (d)$. In particular, $a\otimes 1_{\mathcal D} \mapsto a$, so \cite[Theorem 2.3]{TomsWinter-ssa} implies that $A \otimes \mathcal D \cong A$.}
\end{proof}

\begin{lemma}\label{l:Dcompose}
Let $A, B$ and $C$ be $C^\ast$-algebras with $A$ separable, and let $\phi \colon A \to B$, $\psi \colon B \to C$ be $\ast$-homo\-morphisms. Let $\mathcal D$ be either $\mathcal O_2$ or $\mathcal O_\infty$. 
\begin{itemize}
 \item[$(i)$] If $\phi$ is (strongly) $\mathcal D$-stable then $\psi \circ \phi$ is (strongly) $\mathcal D$-stable.
 \item[$(ii)$] If $B$ is separable and $\psi$ is (strongly) $\mathcal D$-stable then $\psi \circ \phi$ is (strongly) $\mathcal D$-stable.
 \end{itemize}
\end{lemma}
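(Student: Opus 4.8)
The plan is to work directly from Definition \ref{d:Dstable}, exhibiting in each case a unital copy of $\mathcal D$ inside the relevant relative commutant modulo annihilator for $\psi \circ \phi$. I will treat the approximate (sequence algebra) case; the strong/asymptotic case is verbatim with $C_\infty$ replaced by $C_\as$. The key observation is that a $\ast$-homomorphism $\psi \colon B \to C$ induces, for any separable subalgebra, a natural $\ast$-homomorphism $\psi_\infty \colon B_\infty \to C_\infty$ on sequence algebras (apply $\psi$ coordinatewise), and that this map is compatible with relative commutants and annihilators of the images.

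For part $(i)$, suppose $\phi \colon A \to B$ is $\mathcal D$-stable, so there is a unital $\ast$-homomorphism $\sigma \colon \mathcal D \to (B_\infty \cap \phi(A)')/\mathrm{Ann}(\phi(A))$. First I would lift $\sigma$ to a (not necessarily unital) completely positive map or, better, pick a genuine $\ast$-homomorphism $\overline\sigma \colon \mathcal D \to B_\infty \cap \phi(A)'$ representing it — this is possible since $\mathcal D$ is nuclear and we only need the composite to land correctly modulo the annihilator. Then I would push forward through $\psi_\infty$ to obtain $\psi_\infty \circ \overline\sigma \colon \mathcal D \to C_\infty$. The main point to verify is that $\psi_\infty(\overline\sigma(\mathcal D))$ commutes with $(\psi\circ\phi)(A) = \psi_\infty(\phi(A))$ and that $\psi_\infty$ carries $\mathrm{Ann}(\phi(A))$ into $\mathrm{Ann}((\psi\circ\phi)(A))$: both follow because $\psi_\infty$ is a $\ast$-homomorphism and $\psi_\infty(\phi(a)) = (\psi\circ\phi)(a)$. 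Finally I must check unitality of the descended map, i.e.~that $\psi_\infty(\overline\sigma(1_{\mathcal D}))$ represents the unit of $(C_\infty \cap (\psi\circ\phi)(A)')/\mathrm{Ann}((\psi\circ\phi)(A))$; since $\overline\sigma(1_{\mathcal D})$ represents a unit for $\phi(A)$ and $\psi$ is multiplicative, $\psi_\infty(\overline\sigma(1_{\mathcal D}))\,(\psi\circ\phi)(a) = (\psi\circ\phi)(a)$ modulo the annihilator, as required.

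For part $(ii)$, with $B$ separable and $\psi$ being $\mathcal D$-stable, there is a unital $\sigma \colon \mathcal D \to (C_\infty \cap \psi(B)')/\mathrm{Ann}(\psi(B))$. Here the candidate copy of $\mathcal D$ already sits in $C_\infty$; I would simply observe that since $(\psi\circ\phi)(A) = \psi(\phi(A)) \subseteq \psi(B)$, any element commuting with all of $\psi(B)$ commutes in particular with $(\psi\circ\phi)(A)$, and that $\mathrm{Ann}(\psi(B)) \subseteq \mathrm{Ann}((\psi\circ\phi)(A))$. These two containments induce a natural $\ast$-homomorphism
\[
\frac{C_\infty \cap \psi(B)'}{\mathrm{Ann}(\psi(B))} \longrightarrow \frac{C_\infty \cap (\psi\circ\phi)(A)'}{\mathrm{Ann}((\psi\circ\phi)(A))},
\]
and composing with $\sigma$ gives a unital embedding of $\mathcal D$ into the target. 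The only subtlety is unitality of the composite: the quotient map above need not be unital in general, but since $\sigma(1_{\mathcal D})$ is a unit for $\psi(B)$, it is a fortiori a unit for the smaller algebra $(\psi\circ\phi)(A)$, so its image remains unital in the target quotient.

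I expect the main obstacle to be purely bookkeeping around the annihilators and the unit: verifying that the induced map on the quotient relative commutants is well-defined and genuinely unital, rather than merely sub-unital. Concretely, the subtle step is confirming that the class of $\sigma(1_{\mathcal D})$ (resp.~of $\psi_\infty(\overline\sigma(1_{\mathcal D}))$) acts as the identity modulo $\mathrm{Ann}((\psi\circ\phi)(A))$; this uses the separability of $A$ (resp.~$B$) to invoke an approximate unit $(a_n)$ whose image represents the unit of the quotient, exactly as in the Remark preceding Lemma \ref{l:conjv}. Once that is in place, both parts reduce to the functoriality of the sequence-algebra construction together with the monotonicity of commutants and annihilators under inclusion of images.
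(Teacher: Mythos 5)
Your proposal is correct and follows essentially the same route as the paper: in both parts one observes that $\psi_\infty$ (resp.~the inclusion of commutants) induces a $\ast$-homomorphism between the quotient relative commutants, and the only real content is checking unitality of that induced map via the approximate identities $(\phi(a_n))$ and $(\psi(b_n))$, exactly as you describe. One small remark on part $(i)$: the detour through a lift $\overline\sigma$ of $\sigma$ is unnecessary (the paper simply composes $\sigma$ with the induced map on quotients), and if you did want a genuine $\ast$-homomorphism lift the correct justification would be semiprojectivity of $\mathcal O_2$ and $\mathcal O_\infty$, not nuclearity --- nuclearity only yields a c.p.~lift, which would however still suffice for your argument since the descended map factors through $\sigma$.
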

\begin{proof}
We only do the $\mathcal D$-stable cases, as the strongly $\mathcal D$-stable cases are virtually identical.

$(i)$: It is straightforward to check that $\psi_\infty \colon B_\infty \to C_\infty$ induces a $\ast$-homomorphism
\[
\psi_\infty \colon \frac{B_\infty \cap \phi(A)'}{\mathrm{Ann}(\phi(A))} \to \frac{C_\infty \cap \psi\circ\phi(A)'}{\mathrm{Ann}(\psi\circ\phi(A))}.
\]
Let $(a_n)_{n\in \mathbb N}$ be an approximate identity for $A$. Then the unit of the left (resp.~right) hand side above is represented by $(\phi(a_n))_{n\in \mathbb N})$ (resp.~$(\psi\circ\phi(a_n))_{n\in \mathbb N} = \psi_\infty((\phi(a_n))_{n\in \mathbb N})$). Hence the map above is unital. Thus, if $\mathcal D$ embeds unitally into the left hand side above then it embeds unitally into the right hand side.

$(ii)$: There is an obvious map
\[
\frac{C_\infty \cap \psi(B)'}{\mathrm{Ann}(\psi(B))} \to \frac{C_\infty \cap \psi \circ\phi(A)'}{\mathrm{Ann}(\psi\circ\phi(A))}.
\]
As in $(i)$, it suffices to show that this map is unital. The unit of the left hand side is represented by $(\psi(b_n))_{n\in \mathbb N}$, where $(b_n)$ is an approximate identity in $B$. For any $a\in A$, we have that 
\[
(1-\psi(b_n)) \psi \phi(a) = \psi(\phi(a) - b_n \phi(a)) \to 0 , \quad n \to \infty,
\]
so $(\psi(b_n))_{n\in \mathbb N}$ also induces the unit in the right hand side above, so the map above is unital.
\end{proof}

\begin{remark}
By Proposition \ref{p:Dstablealgebras}, $\mathcal O_2$-stable $\ast$-homomorphisms occur naturally as $\ast$-homo\-morphisms between $\mathcal O_2$-stable $C^\ast$-algebras. However, there are also many interesting examples where this is not the case. A nice example comes from the study of $KK$-theory and $\Ext$-theory. Given any $\ast$-homomorphism $\phi \colon A \to \multialg{B}$ one can form the infinite repeat by $\phi \otimes 1_{\multialg{\mathbb K}} \colon A \to \multialg{B \otimes \mathbb K}$. Since such an infinite repeat factors through $\multialg{B} \otimes \mathcal O_2 \subseteq \multialg{B \otimes \mathbb K}$, such infinite repeats are always $\mathcal O_2$-stable by Lemma \ref{l:Dcompose}. Thus, all types of Weyl--von Neumann--Voiculescu theorems such as those due to Kasparov \cite{Kasparov-Stinespring}, Kirchberg \cite{Kirchberg-simple} and Elliott--Kucerovsky \cite{ElliottKucerovsky-extensions} are essentially about characterising when (full, weakly nuclear) $\ast$-homomorphisms $A \to \multialg{B\otimes \mathbb K}$ are $\mathcal O_2$-stable.
\end{remark}

The following is a Stinespring type theorem for (strongly) $\mathcal O_\infty$-stable maps. Recall that we consider $B$ as a $C^\ast$-subalgebra of $B_\infty$ and of $B_\as$.

\begin{theorem}\label{t:OinftyHB}
Let $A$ and $B$ be $C^\ast$-algebras with $A$ separable and exact, let $\phi \colon A \to B$ be a nuclear $\ast$-homomorphism, and let $\rho \colon A \to B$ be a nuclear c.p.~map such that $\mathcal I(\rho) \leq \mathcal I(\phi)$. 
\begin{itemize}
\item[$(i)$] If $\phi$ is $\mathcal O_\infty$-stable then there is an element $v\in B_\infty$ of norm $\|\rho\|^{1/2}$ such that $v^\ast \phi(a) v = \rho(a)$ for all $a\in A$.
\item[$(ii)$] If $\phi$ is strongly $\mathcal O_\infty$-stable then there is an element $v\in B_\as$ of norm $\| \rho\|^{1/2}$ such that $v^\ast \phi(a) v = \rho(a)$ for all $a\in A$.
\end{itemize}
\end{theorem}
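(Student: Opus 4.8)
The plan is to combine the approximate domination supplied by Theorem \ref{t:HB} with the Cuntz isometries supplied by $\mathcal O_\infty$-stability, and then to upgrade the resulting approximate statements to exact ones by a reindexing argument inside $B_\infty$ (resp.\ $B_\as$). I only describe part $(i)$; part $(ii)$ is formally identical with $B_\infty$ replaced by $B_\as$, the sole difference being that each diagonal step must be carried out \emph{continuously}, which is routine but notationally heavier.

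First I would assemble the two inputs. Since $\phi$ is a $\ast$-homomorphism it is in particular order zero, so the hypothesis $\mathcal I(\rho) \le \mathcal I(\phi)$ together with Theorem \ref{t:HB} shows that $\phi$ approximately dominates $\rho$: for every finite $\mathcal F \subset A$ and $\epsilon>0$ there are $b_1,\dots,b_n \in B$ with $\|\rho(a) - \sum_{k} b_k^\ast \phi(a) b_k\| < \epsilon$ for $a \in \mathcal F$. On the other hand, $\mathcal O_\infty$-stability of $\phi$ provides a unital embedding of $\mathcal O_\infty$ into $(B_\infty \cap \phi(A)')/\mathrm{Ann}(\phi(A))$; lifting the canonical generators to $B_\infty \cap \phi(A)'$ gives genuine elements $\hat s_1, \hat s_2, \dots$, each commuting with $\phi(A)$ in $B_\infty$, whose images in the quotient satisfy $\hat s_j^\ast \hat s_k = \delta_{jk}$. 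The key point is that these relations combine to an \emph{exact compression}: because the unit of the quotient (represented by $(\phi(a_n))_n$ for an approximate unit $(a_n)$ of $A$) acts as the identity on $\phi(A)$, one has $\hat s_j^\ast \hat s_k \phi(a) = \delta_{jk}\phi(a)$ in $B_\infty$, so that $w := \sum_{k=1}^n \hat s_k b_k \in B_\infty$ satisfies $w^\ast \phi(a) w = \sum_k b_k^\ast \phi(a) b_k$ \emph{exactly} for all $a\in A$. Thus $\mathcal O_\infty$ converts the finite sum coming from approximate domination into a single honest conjugation.

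Next I would fix a dense sequence in $A$, increasing finite sets $\mathcal F_1\subseteq \mathcal F_2 \subseteq \cdots$ with dense union, and $\epsilon_m \downarrow 0$, and for each $m$ choose $b^{(m)}_1,\dots,b^{(m)}_{n_m}$ by approximate domination and set $w_m := \sum_k \hat s_k b^{(m)}_k$. The compression identity then gives $\|w_m^\ast \phi(a) w_m - \rho(a)\| < \epsilon_m$ for $a\in \mathcal F_m$, hence $w_m^\ast \phi(a) w_m \to \rho(a)$ for every $a\in A$. To extract a \emph{single} $v\in B_\infty$ with $v^\ast \phi(a) v = \rho(a)$ exactly, I would invoke a standard diagonal/reindexing argument in the sequence algebra: the countably many conditions $\|v^\ast \phi(a) v - \rho(a)\|=0$, with $a$ ranging over the fixed dense sequence, can be realised simultaneously by a diagonal sequence, using separability of $A$.

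Finally I would pin down the norm by folding in a cut-down. Replacing $w_m$ by $\phi(u_{\lambda})^{1/2} w_m$ for a sufficiently large $\lambda=\lambda(m)$ (and enlarging $\mathcal F_m$ to contain $u_\lambda$ and the relevant $u_\lambda^{1/2} a u_\lambda^{1/2}$) preserves the convergence $(\cdot)^\ast \phi(a)(\cdot)\to\rho(a)$, since $u_\lambda^{1/2} a u_\lambda^{1/2}\to a$, while forcing $\|\phi(u_\lambda)^{1/2} w_m\|^2 = \|w_m^\ast \phi(u_\lambda) w_m\| < \|\rho(u_\lambda)\| + \epsilon_m \le \|\rho\| + \epsilon_m$. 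Running the same reindexing now yields $v$ with $\|v\|\le \|\rho\|^{1/2}$ and $v^\ast\phi(a)v=\rho(a)$, and the reverse inequality is automatic from $\|\rho\| = \lim_\lambda \|\rho(u_\lambda)\| = \lim_\lambda\|v^\ast \phi(u_\lambda) v\| \le \|v\|^2$, giving $\|v\| = \|\rho\|^{1/2}$. I expect the main obstacle to lie precisely in this bookkeeping — obtaining \emph{exact} equality and the \emph{exact} norm simultaneously through the reindexing — and, for part $(ii)$, in carrying out the reindexing through continuous paths in $B_\as$ rather than through sequences.
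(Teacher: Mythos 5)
Your part $(i)$ is correct and follows essentially the same route as the paper: Theorem \ref{t:HB} gives approximate domination, the lifted $\mathcal O_\infty$-isometries in $B_\infty \cap \phi(A)'$ collapse the finite sum $\sum_k b_k^\ast \phi(a) b_k$ into a single exact conjugation, the cut-down by $\phi(u_\lambda)$ controls the norm, and a diagonal argument produces $v$. The only organisational difference is that the paper first pushes the isometry trick back down into $B$ (choosing a large index in the representing sequences of the lifts, so that approximate domination is achieved with a \emph{single} element $d_n \in B$ for each tolerance), and then the sequence $(\phi(a_n)d_n)$ directly defines $v \in B_\infty$; you instead keep the approximants $w_m$ in $B_\infty$ and reindex at the end. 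Both work, and your observation that the cut-down is needed even to make the diagonal sequence bounded is correct.

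The gap is in part $(ii)$, which is not ``formally identical with $B_\infty$ replaced by $B_\as$''. A reindexing argument in $B_\as$ must produce a \emph{continuous path}, and the obstruction is precisely how to pass from one approximant to the next: if $w_n, w_{n+1} \in B$ satisfy $w_n^\ast \phi(a) w_n \approx \rho(a) \approx w_{n+1}^\ast \phi(a) w_{n+1}$, the straight-line homotopy $(1-s)w_n + s w_{n+1}$ fails to approximately conjugate $\phi$ to $\rho$ at intermediate times because of the cross terms $w_n^\ast \phi(a) w_{n+1}$, over which you have no control. The paper resolves this by invoking the strong $\mathcal O_\infty$-stability a second time: it chooses paths $s_1, s_2, \dots \in C_b(\mathbb R_+, B)$ inducing isometries with orthogonal ranges in $(B_\as \cap \phi(A)')/\mathrm{Ann}(\phi(A))$, a reparametrisation $r \colon \mathbb R_+ \to \mathbb R_+$ along which the relations $s_i(r(t))^\ast \phi(a) s_j(r(t)) \approx \delta_{ij}\phi(a)$ hold with controlled error, and then interpolates via
\[
v_t = (n+1-t)^{1/2}\, s_n(r(t))\, w_n + (t-n)^{1/2}\, s_{n+1}(r(t))\, w_{n+1}, \qquad t \in [n, n+1],
\]
so that the asymptotic orthogonality of $s_n$ and $s_{n+1}$ kills the cross terms while the square-root weights preserve the convex combination of the two good estimates. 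This orthogonal-isometry interpolation is a genuine additional idea, not bookkeeping, and your proposal as written does not supply it.
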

\begin{proof}
$(i)$: By Theorem \ref{t:HB}, $\phi$ approximately dominates $\rho$. We first show that we may take $n=1$ in the definition of approximate domination, Definition \ref{d:approxdom}. Given $\mathcal F \subset A$ finite and $\epsilon>0$, find $b_1,\dots, b_n$ such that
\[
\| \rho(a) - \sum_{k=1}^n b_k^\ast \phi(a) b_k\| < \epsilon, \qquad a\in \mathcal F.
\]
Passing to $B_\infty$, fix elements $s_1,s_2,\dots$ in $B_\infty \cap \phi(A)'$ such that $s_1 + \mathrm{Ann}(\phi(A)), s_2 + \mathrm{Ann}(\phi(A)),\dots$  are isometries with orthogonal range projections. Then
\[
\sum_{k=1}^n b_k^\ast \phi(a) b_k = \sum_{k=1}^n b_k^\ast s_k^\ast \phi(a) s_k b_k = c^\ast \phi(a) c
\]
where $c= \sum_{k=1}^n s_k b_k$. Let $(s_k^{(m)})_{m=1}^\infty$ be a lift of $s_k$ for each $k$, and let $c_m = \sum_{k=1}^n s_k^{(m)} b_k$. For large $m$ we have
\[
\| \rho(a) - c_m^\ast \phi(a) c_m\| < \epsilon , \qquad a\in \mathcal F.
\]
Hence we obtain approximate domination with $n=1$. By separability of $A$, there is a sequence $(d_n)$ in $B$ such that $d_n^\ast \phi(a) d_n \to \rho(a)$ for all $a\in A$.  Let $(a_n)_{n\in \mathbb N}$ be an approximate identity for $A$. By passing to a subsequence of $(d_n)$ we may assume that $\| d_n^\ast\phi(a_n^2) d_n - \rho(a_n^2)\| \to 0$. Let $v_n = \phi(a_n) d_n$. As $\| \rho(a_n^2)\| \to \| \rho\|$ it follows that $\limsup_{n\to \infty} \|v_n\| = \|\rho\|^{1/2}$, so $(v_n)$ induces an element $v\in B_\infty$ of norm at most $\|\rho\|^{1/2}$ satisfying $v^\ast \phi(a) v = \rho(a)$ for all $a\in A$.

$(ii)$: By part $(i)$ we find a sequence $(w_n)$ in $B$ of norm $\|\rho\|^{1/2}$ such that $w_n^\ast \phi(a) w_n \to \rho(a)$ for all $a\in A$. As $\mathcal O_\infty$ embeds unitally in $(B_\as \cap \phi(A)')/\mathrm{Ann}(\phi(A))$ we may find a sequence of contractions $s_1,s_2,\dots\in C_b(\mathbb R_+,B)$, which induces such a unital copy of $\mathcal O_\infty$ (so that the $s_i$ induce isometries, with orthogonal range projections).

Let $(a_k)_{k\in \mathbb N}$ be a dense sequence in $A$. We will find a continuous function $r \colon \mathbb R_+ \to \mathbb R_+$ such that $\| s_j(r(t))^\ast \phi(a_k) s_j(r(t)) - \phi(a_k)\| < 1/n$ and $\|s_i(r(t))^\ast \phi(a_k) s_j(r(t))\| < 1/n$ for $i,j,k=1,\dots,n+1$, $i\neq j$ and all $t\geq n$. We construct such an $r$ recursively on the intervals $[n-1,n]$. Let $r(0) = 0$ and suppose that we have constructed $r(t)$ on $[0,n-1]$. Pick $r(n-1) \leq R_n \in \mathbb R_+$ such that $\| s_j(x)^\ast \phi(a_k) s_j(x) - \phi(a_k)\| < 1/n$ and $\|s_i(x)^\ast \phi(a_k) s_j(x)\| < 1/n$ for $i,j,k=1,\dots, n+1$, $i\neq j$ and all $x\geq R_n$. Let $r(t) = r(n-1) (n - t) + (t - n+1)R_n$ for $t\in [n-1,n]$. This gives a continuous function $r\colon \mathbb R_+ \to \mathbb R_+$ with the desired property.

Now, define $v_t := (n+1-t)^{1/2} s_n(r(t)) w_n + (t - n)^{1/2} s_{n+1}(r(t)) w_{n+1}$ for $t\in [n,n+1]$, $n\in \mathbb N$. Clearly $\limsup_{t\to \infty} \| v_t \| = \|\rho\|^{1/2}$. For every $k\in \mathbb N$, every $n \geq k$ and every $t\in [n,n+1]$, we have
\begin{eqnarray*}
&& \| v_t^\ast \phi(a_k) v_t - \rho(a_k) \| \\
&\leq & (n+1-t)\Big( \| w_n^\ast \phi(a_k) w_n - \rho(a_k) \| + \|\rho\| \|s_n^\ast(r(t)) \phi(a_k) s_n(r(t)) - \phi(a_k)\| \Big) \\
&& + (t-n) \Big( \| w_{n+1}^\ast \phi(a_k) w_{n+1} - \rho(a_k)\| + \|\rho\| \|s_{n+1}(r(t))^\ast \phi(a_k) s_{n+1}(r(t)) - \phi(a_k) \| \Big) \\
&& + (n+1-t)^{1/2} (t-n)^{1/2} \|\rho\| \| s_n^\ast(r(t)) \phi(a_k) s_{n+1}(r(t)) \| \\
&& +  (n+1-t)^{1/2} (t-n)^{1/2} \| \rho \| \| s_{n+1}^\ast(r(t)) \phi(a_k) s_n \| \\
&\leq&  \| w_n^\ast \phi(a_k) w_n - \rho(a_k) \| + \| w_{n+1}^\ast \phi(a_k) w_{n+1} - \rho(a_k)\| + 4\|\rho \|/n
\end{eqnarray*}
From this it easily follows that $v_t^\ast \phi(a) v_t \to \rho(a)$ for any $a\in A$.
\end{proof}

\begin{theorem}\label{t:O2HB}
Let $A$ and $B$ be $C^\ast$-algebras with $A$ separable and exact, and let $\phi, \psi \colon A \to B$ be nuclear $\ast$-homomorphisms.
\begin{itemize}
\item[$(i)$] If $\phi$ and $\psi$ are $\mathcal O_2$-stable, then $\phi$ and $\psi$ are approximately Murray--von Neumann equivalent if and only if $\mathcal I(\phi) = \mathcal I(\psi)$.
\item[$(ii)$] If $\phi$ and $\psi$ are strongly $\mathcal O_2$-stable, then $\phi$ and $\psi$ are asymptotically Murray--von Neumann equivalent if and only if $\mathcal I(\phi) = \mathcal I(\psi)$.
\end{itemize}
Moreover, if either $A,B, \phi$ and $\psi$ are all unital, or if $B$ is stable, then we may replace ``approximately/asymptotically Murray--von Neumann equivalent'' with ``approximately/asymp\-totically unitary equivalent'' above.
\end{theorem}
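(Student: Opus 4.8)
The plan is to prove each biconditional by splitting it into its two directions and to read off the ``moreover'' clause from Proposition~\ref{p:MvNvsu}. I would treat the approximate case $(i)$ in full, since the asymptotic case $(ii)$ is word-for-word identical once $B_\infty$ is replaced by $B_\as$ throughout and one invokes the asymptotic halves of Proposition~\ref{p:MvNeq} and Theorem~\ref{t:OinftyHB}. The easy direction is direct: if $\phi$ and $\psi$ are approximately Murray--von Neumann equivalent, then for $I\in\mathcal I(A)$ and $a\in I_+$, Lemma~\ref{l:MvNcontractive} supplies contractions $u\in B$ with $u^\ast\phi(a)u$ arbitrarily close to $\psi(a)$; as $u^\ast\phi(a)u\in\overline{B\phi(a)B}\subseteq\mathcal I(\phi)(I)$ and $\mathcal I(\phi)(I)$ is closed, we get $\psi(a)\in\mathcal I(\phi)(I)$, hence $\mathcal I(\psi)\leq\mathcal I(\phi)$. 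By symmetry $\mathcal I(\phi)=\mathcal I(\psi)$.

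For the substantial direction I assume $\mathcal I(\phi)=\mathcal I(\psi)$. First I would observe that $\mathcal O_2$-stability implies $\mathcal O_\infty$-stability, because $\mathcal O_\infty$ embeds unitally in $\mathcal O_2$, so the unital copy of $\mathcal O_2$ in the relative commutant quotient of Definition~\ref{d:Dstable} contains a unital $\mathcal O_\infty$; thus Theorem~\ref{t:OinftyHB}$(i)$ is available for both maps. Applying it to $(\phi,\psi)$ (using $\mathcal I(\psi)\leq\mathcal I(\phi)$) yields a contraction $v\in B_\infty$ with $v^\ast\phi(a)v=\psi(a)$ for all $a$, and applying it to $(\psi,\phi)$ yields $w\in B_\infty$ with $w^\ast\psi(a)w=\phi(a)$. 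I then feed these into the $2\times2$ matrix picture of Proposition~\ref{p:MvNeq}$(iii)$. Writing $D:=\frac{M_2(B)_\infty\cap(\phi\oplus\psi)(A)'}{\mathrm{Ann}((\phi\oplus\psi)(A))}$, $f:=1\oplus0$ and $e:=0\oplus1$, the relations in Lemma~\ref{l:conjv} show, exactly as in the proof of Proposition~\ref{p:MvNeq}, that $V:=v\otimes e_{12}$ and $W:=w\otimes e_{21}$ descend to partial isometries in $D$ with
\[
V^\ast V=e,\quad VV^\ast\leq f,\qquad W^\ast W=f,\quad WW^\ast\leq e .
\]
Writing $p\precsim q$ for ``$p$ is Murray--von Neumann equivalent to a subprojection of $q$'', this gives $e\precsim f$ and $f\precsim e$ in $D$.

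The crux is to upgrade this mutual subordination to an honest equivalence $e\sim f$; once that is done, Proposition~\ref{p:MvNeq}$(iii)\Rightarrow(i)$ concludes. Cutting down by $e$ and $f$ identifies the corners $fDf\cong\frac{B_\infty\cap\phi(A)'}{\mathrm{Ann}(\phi(A))}$ and $eDe\cong\frac{B_\infty\cap\psi(A)'}{\mathrm{Ann}(\psi(A))}$, and by $\mathcal O_2$-stability of $\phi$ and $\psi$ each corner contains a unital copy of $\mathcal O_2$. Hence $e$ and $f$ are properly infinite, and the two isometries generating $\mathcal O_2\subseteq fDf$ give $[f]=2[f]$ in $K_0(D)$, so $[f]=0$; likewise $[e]=0$. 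Thus $e$ and $f$ are properly infinite projections that are mutually subordinate with $[e]=[f]=0$ in $K_0(D)$, and I would invoke Cuntz's classification of properly infinite projections by $K_0$-class to conclude $e\sim f$. I expect this finishing step to be the main obstacle, and it is precisely where $\mathcal O_2$-stability, rather than mere $\mathcal O_\infty$-stability, is indispensable: $\mathcal O_\infty$ alone would only yield proper infiniteness, and the (possibly non-zero) $K_0$-classes of $e$ and $f$ could then obstruct the equivalence. The vanishing of these classes forced by the embedded copies of $\mathcal O_2$ is exactly what must be combined with the mutual subordination extracted from Theorem~\ref{t:OinftyHB}.

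Finally, the ``moreover'' clause is immediate from Proposition~\ref{p:MvNvsu}: if $A,B,\phi,\psi$ are all unital, or if $B$ is stable, then the approximate (resp.\ asymptotic) Murray--von Neumann equivalence just produced upgrades to approximate (resp.\ asymptotic) unitary equivalence via cases $(a)$ and $(b)$ of that proposition.
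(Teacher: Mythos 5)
Your proposal is correct and follows essentially the same route as the paper: the easy direction by direct computation, and the hard direction via Theorem \ref{t:OinftyHB} to produce the partial isometries $V$ and $W$ in the corner algebra $D$, then $\mathcal O_2$-stability of the corners to get proper infiniteness and vanishing $K_0$-classes, Cuntz's theorem to conclude $1\oplus 0\sim 0\oplus 1$ (the mutual subordination together with $e+f=1_D$ gives the fullness Cuntz's result requires, which the paper states explicitly), and finally Proposition \ref{p:MvNeq} and Proposition \ref{p:MvNvsu}. The only cosmetic difference is that you spell out the implication ``$\mathcal O_2$-stable $\Rightarrow$ $\mathcal O_\infty$-stable'' that the paper leaves implicit when invoking Theorem \ref{t:OinftyHB}.
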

\begin{proof}
The proof of the two statements are almost identical simply by interchanging $B_\infty$ and $B_\as$. So we only prove $(i)$.

If $\phi$ and $\psi$ are approximately Murray--von Neumann equivalent, then clearly $\mathcal I(\phi) = \mathcal I(\psi)$. Conversely, suppose $\mathcal I(\phi) = \mathcal I(\psi)$.
Let
\[
D:= \frac{M_2(B)_\infty \cap (\phi\oplus \psi)(A)'}{\mathrm{Ann}(\phi\oplus \psi)(A)}.
\]
By Theorem \ref{t:OinftyHB} there is a $v\in B_\infty$ such that $v^\ast \phi(-) v = \psi$. By Lemma \ref{l:conjv} it follows that $V := v \otimes e_{12} + \mathrm{Ann}(\phi \oplus \psi)(A) \in D$ is well-defined (as $\phi(-)v = v\psi(-)$), that $VV^\ast \leq 1\oplus 0$, and $V^\ast V = 0\oplus 1$ (as $v^\ast v \psi(-) = \psi$). Similarly, $1\oplus 0$ is subequivalent to $0\oplus 1$, so the projections $1\oplus 0$ and $0\oplus 1$ generate the same ideal in $D$. As their sum is $1_D$ it follows that $1\oplus 0$ and $0\oplus 1$ are both full projections in $D$. As
\[
(1\oplus 0) D (1\oplus 0) = \frac{B_\infty \cap \phi(A)'}{\mathrm{Ann}(\phi(A))} \oplus 0,
\]
it follows from $\mathcal O_2$-stability of $\phi$, that $1\oplus 0$ is properly infinite and $[1\oplus 0 ]_0 =0$ in $K_0(D)$. The same holds for $0\oplus 1$.  Thus, by a result of Cuntz \cite{Cuntz-K-theoryI}, $1\oplus 0$ and $0\oplus 1$ are Murray--von Neumann equivalent. Proposition \ref{p:MvNeq} implies that $\phi$ and $\psi$ are approximately Murray--von Neumann equivalent.

The ``moreover'' part follows from Proposition \ref{p:MvNvsu}.
\end{proof}

\begin{remark}
Using the Kirchberg--Phillips theorem and a result of Lin, one can deduce that $\mathcal O_2$-stable $\ast$-homomorphisms are not always strongly $\mathcal O_2$-stable. Let $A,B$ be unital Kirchberg algebras in the UCT class and suppose that $[1_A]_0 = 0\in K_0(A)$ and $[1_B]_0 = 0\in K_0(B)$. Any unital $\ast$-homomorphism $\theta \colon A \to B$ that factors through $\mathcal O_2$ is strongly $\mathcal O_2$-stable and $KK(\theta) =0$. By Theorem \ref{t:O2HB}, any strongly $\mathcal O_2$-stable, unital $\ast$-homomorphism $\phi \colon A \to B$ is asymptotically unitary equivalent to $\theta$, so in particular $KK(\phi) = 0$ (the converse is also true by the uniqueness part in the Kirchberg--Phillips theorem).

Now, if $\mathrm{Pext}(K_\ast(A) , K_{1-\ast}(B)) \neq 0$, apply the UMCT \cite{DadarlatLoring-UMCT} and the Kirchberg--Phillips theorem \cite{Kirchberg-simple}, \cite{Phillips-classification} to find a unital $\ast$-homomorphism $\phi \colon A \to B$ such that $KK(\phi)\neq 0$ but such that $KL(\phi) = 0$.\footnote{The $KL$-groups were originally defined by Rørdam in \cite{Rordam-classsimple} in the presence of a universal coefficient theorem, and were later treated by Dadarlat in \cite{Dadarlat-KKtop} in the general case.} Then $\phi$ is not strongly $\mathcal O_2$-stable, but as $KL(\phi) = KL(\theta)$ it follows from \cite[Theorem 4.10]{Lin-stableapproxuniqueness} that $\phi$ and $\theta$ are approximately unitary equivalent, so $\phi$ is $\mathcal O_2$-stable.
\end{remark}

\begin{question}
Are $\mathcal O_\infty$-stable $\ast$-homomorphisms always strongly $\mathcal O_\infty$-stable?
\end{question}


\section{From approximate morphisms to $\ast$-homomorphisms}\label{s:infty}

Recall that $B_\infty := \prod_\mathbb{N} B / \bigoplus_{\mathbb N} B$ for a $C^\ast$-algebra $B$. In this section a characterisation is given of when a $\ast$-homomorphism $A \to B_\infty$ is unitary equivalent to a constant $\ast$-homomorphism, i.e.~a $\ast$-homomorphism factoring through $B$.

\begin{lemma}\label{l:approxinfty}
Let $A$ and $B$ be $C^\ast$-algebras with $A$ separable and $B$ unital, and let $\phi, \psi \colon A \to B_\infty$ be continuous maps. If $\phi$ and $\psi$ are approximately unitary equivalent, then they are unitary equivalent.
\end{lemma}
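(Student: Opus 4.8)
The plan is a \emph{diagonal (reindexing) argument} that exploits the fact that the target is a sequence algebra; note that \emph{no multiplicativity} of $\phi,\psi$ is used, only their continuity together with the description of the norm on $B_\infty$ as a $\limsup$. First I would fix a dense sequence $(a_k)_{k\in\mathbb N}$ in $A$ (available by separability) and, for each $k$, fix representatives $(\phi(a_k)_m)_m,(\psi(a_k)_m)_m\in\prod_{\mathbb N}B$ of $\phi(a_k),\psi(a_k)\in B_\infty$. Since $B$ is unital, $B_\infty$ is unital and hence equals its own multiplier algebra $\multialg{B_\infty}$, so approximate unitary equivalence applied to $\mathcal F_n=\{a_1,\dots,a_n\}$ and $1/n$ yields, for each $n$, a unitary $w_n\in B_\infty$ with $\|w_n^\ast\phi(a_k)w_n-\psi(a_k)\|<1/n$ for $k\le n$. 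I then lift each $w_n$ to a sequence $(w_{n,m})_m\in\prod_{\mathbb N}B$.

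The key step is to recall that for any $x\in B_\infty$ with representative $(x_m)_m$ one has $\|x\|_{B_\infty}=\limsup_m\|x_m\|$. Applying this to $w_n^\ast\phi(a_k)w_n-\psi(a_k)$ (represented by $(w_{n,m}^\ast\phi(a_k)_m w_{n,m}-\psi(a_k)_m)_m$) and to $w_n^\ast w_n-1,\ w_nw_n^\ast-1$ (which are $0$ in $B_\infty$), I can choose an increasing sequence $M_1<M_2<\cdots$ so that for all $m\ge M_n$,
\[
\|w_{n,m}^\ast\phi(a_k)_m w_{n,m}-\psi(a_k)_m\|<1/n\ (k\le n),\qquad \|w_{n,m}^\ast w_{n,m}-1\|,\ \|w_{n,m}w_{n,m}^\ast-1\|<1/n.
\]
I then define $u_m:=w_{n,m}$ for $M_n\le m<M_{n+1}$ (and $u_m:=1_B$ for $m<M_1$), and set $u:=(u_m)_m+\bigoplus_{\mathbb N}B\in B_\infty$. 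As $m\to\infty$ the corresponding index $n=n(m)$ also tends to $\infty$, so both $\|u_m^\ast u_m-1\|$ and $\|u_mu_m^\ast-1\|$ tend to $0$, whence $u$ is a unitary in $B_\infty$. Moreover, for each fixed $k$ and every $m\ge M_k$ the first estimate gives $\|u_m^\ast\phi(a_k)_m u_m-\psi(a_k)_m\|<1/n(m)\to0$, so $\limsup_m$ of these norms is $0$; that is, $u^\ast\phi(a_k)u=\psi(a_k)$ in $B_\infty$ for every $k$.

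To finish, I would invoke continuity: since $u$ is a fixed unitary, $a\mapsto u^\ast\phi(a)u$ is continuous (isometric on each conjugate), as is $\psi$, and these two continuous maps agree on the dense set $\{a_k\}$, hence on all of $A$; thus $u^\ast\phi(-)u=\psi$, i.e.\ $\phi$ and $\psi$ are unitarily equivalent. The \textbf{main obstacle} is the one genuinely delicate point of the diagonalisation: ensuring that the spliced-together lift $u$ is actually a unitary of $B_\infty$, since the $w_{n,m}$ from different $n$ need not be exactly unitary. This is handled cleanly by folding the two unitarity estimates into the choice of each $M_n$ above, so that asymptotic unitarity of $(u_m)_m$ is automatic; everything else is a routine $\limsup$ bookkeeping exercise.
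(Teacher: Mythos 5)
Your proof is correct and is essentially the paper's argument: the same diagonal/reindexing scheme over a dense sequence, splicing lifts of the approximate unitaries along a suitably increasing sequence of indices and finishing by continuity. The only (cosmetic) difference is that the paper lifts each $w_n$ directly to a sequence of genuine unitaries in $B$ (possible by a polar-decomposition correction, since the unitarity defect of any lift vanishes asymptotically), whereas you take arbitrary lifts and absorb the unitarity defect into the choice of the cut-points $M_n$; both are valid.
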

\begin{proof}
The proof is a standard ``diagonal'' argument. Let $a_1,a_2,\dots$ be a dense sequence in $A$. For each $n\in \mathbb N$ find a sequence $u_n\in B_\infty$ of unitaries such that $u_n^\ast \phi(a_i) u_n \approx_{1/n} \psi(a_i)$ for $i=1,\dots,n$. Let $(\phi_j), (\psi_j) \colon A \to \prod_{\mathbb N} B$ be (set-theoretical) lifts of $\phi$ and $\psi$ respectively, and let $(u_n^{(j)})_{j\in \mathbb N}\in \prod_{\mathbb N} B$ be a unitary lift of $u_n$ for each $n\in \mathbb N$. Let $k_1\in \mathbb N$ be such that
\[
\| u_1^{(k)\ast} \phi_k(a_1) u_1^{(k)} - \psi_k(a_1) \| \leq 1, \qquad k\geq k_1.
\]
Having found $k_{n-1}$ we let $k_{n}>k_{n-1}$ be such that
\[
\| u_{n}^{(k)\ast} \phi_k(a_i) u_n^{(k)} - \psi_k(a_i) \| \leq 1/n, \qquad i=1,\dots, n, \textrm{ and } k\geq k_n.
\]
Let $v_k = 1_B$ for $k < k_1$, and $v_k=u_{n}^{(k)}$ if $k_n \leq k < k_{n+1}$. We let $v$ be the induced unitary in $B_\infty$. Then $v^\ast \phi(a_i) v = \psi(a_i)$ for all $i \in \mathbb N$ so by continuity of $\phi$ and $\psi$, $v^\ast \phi(a) v = \psi(a)$ for all $a\in A$.
\end{proof}

Note that whenever $\eta \colon \mathbb N \to \mathbb N$ is a map for which $\lim_{k\to \infty}\eta(k) = \infty$, then there is an induced $\ast$-endomorphism $\eta^\ast \colon B_\infty \to B_\infty$ given by
\[
\eta^\ast([(b_1,b_2,\dots)]) = [(b_{\eta(1)} , b_{\eta(2)}, \dots)].
\]

\begin{lemma}\label{l:seqlem}
Let $A$ and $B$ be $C^\ast$-algebras with $A$ separable and $B$ unital. Suppose $\phi \colon A \to B_\infty$ is a $\ast$-homomorphism with the following property: for any map $\eta \colon \mathbb N \to \mathbb N$ for which $\lim_{k\to \infty}\eta(k)=\infty$, the maps $\phi$ and $\eta^\ast \circ \phi$ are approximately unitary equivalent as maps into $B_\infty$.

Let $(\phi_n) \colon A \to \prod_{\mathbb N} B$ be any (not necessarily linear) lift of $\phi$. For every finite $\mathcal F \subset A$, every $\epsilon >0$, and every $m\in \mathbb N$, there is an integer $k\geq m$ such that for every integer $n\geq k$ there is a unitary $u\in B$ for which
\[
\| u^\ast \phi_n(a) u - \phi_k(a) \| < \epsilon, \qquad a\in \mathcal F.
\]
\end{lemma}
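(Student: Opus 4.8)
The plan is to argue by contradiction, encoding a hypothetical failure of the conclusion into a single reindexing map $\eta$ and then invoking the hypothesis to reach an absurdity. Suppose the statement fails for some finite $\mathcal F \subset A$, some $\epsilon > 0$, and some $m \in \mathbb N$. Negating the nested quantifiers, for every $k \geq m$ there is an index $n(k) \geq k$ with the property that
\[
\sup_{a \in \mathcal F} \| u^\ast \phi_{n(k)}(a) u - \phi_k(a) \| \geq \epsilon \qquad \text{for every unitary } u \in B.
\]
Since $n(k) \geq k$, setting $\eta(k) := n(k)$ for $k \geq m$ and $\eta(k) := k$ for $k < m$ produces a map $\eta \colon \mathbb N \to \mathbb N$ with $\lim_{k\to\infty}\eta(k) = \infty$.

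By hypothesis, $\phi$ and $\eta^\ast \circ \phi$ are approximately unitary equivalent as maps into $B_\infty$. Applying this with the finite set $\mathcal F$ and tolerance $\epsilon/2$, I obtain a unitary $w \in B_\infty$ with $\|w^\ast \phi(a) w - \eta^\ast\phi(a)\| < \epsilon/2$ for all $a \in \mathcal F$. As $B$ is unital, $B_\infty$ is unital and this unitary lifts to a sequence $(w_j)$ of genuine unitaries in $B$ (lift arbitrarily, then pass to the unitary part of the polar decomposition, which exists for all large $j$ since $\|w_j^\ast w_j - 1\| \to 0$ and $\|w_j w_j^\ast - 1\| \to 0$, and set $w_j := 1$ for the remaining indices). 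Recalling that the norm on $B_\infty$ is $\limsup_j \|\cdot\|$ and that $\mathcal F$ is finite, this translates into
\[
\limsup_{j\to\infty} \; \sup_{a \in \mathcal F} \| w_j^\ast \phi_j(a) w_j - \phi_{\eta(j)}(a) \| \leq \epsilon/2 .
\]

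The contradiction now comes from the bad indices. Fix $j \geq m$ and conjugate by the unitary $w_j$ inside the norm (an isometric operation); using $\eta(j) = n(j)$ this gives
\[
\sup_{a \in \mathcal F} \| w_j^\ast \phi_j(a) w_j - \phi_{n(j)}(a) \| = \sup_{a \in \mathcal F} \| \phi_j(a) - w_j \phi_{n(j)}(a) w_j^\ast \| .
\]
Taking $u := w_j^\ast$ in the first display (valid since $w_j^\ast$ is a unitary in $B$), the right-hand side is exactly $\sup_{a\in\mathcal F}\| u^\ast \phi_{n(j)}(a) u - \phi_j(a)\| \geq \epsilon$. Hence every term of the sequence whose $\limsup$ was bounded by $\epsilon/2$ is in fact $\geq \epsilon$ for all $j \geq m$, forcing the $\limsup$ to be $\geq \epsilon > \epsilon/2$, a contradiction. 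I expect the only genuinely substantive step to be the conceptual one of packaging all the failure witnesses $n(k)$ into a single reindexing $\eta$ to which the hypothesis applies; the remaining ingredients (lifting a unitary out of $B_\infty$, the isometric conjugation, and the $\limsup$ bookkeeping over the finite set $\mathcal F$) are routine.
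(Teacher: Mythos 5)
Your proof is correct and follows essentially the same route as the paper: negate the conclusion, package the failure witnesses $n(k)$ into a single reindexing $\eta$, apply the hypothesis to get a unitary in $B_\infty$, lift it to unitaries in $B$, and contradict the pointwise lower bound via the $\limsup$ description of the norm on $B_\infty$. The only differences are cosmetic (you use tolerance $\epsilon/2$ and the opposite direction of the unitary equivalence, fixed by conjugating, where the paper compares $<\epsilon$ against $\geq\epsilon$ directly).
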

\begin{proof}
Suppose for contradiction that the lemma is false. Then there is a finite set $\mathcal F \subset A$, an $\epsilon >0$ and an $m\in \mathbb N$, such that for any integer $k\geq m$ there exists an integer $n_k \geq k$ for which
\begin{equation}\label{eq:unitaryseq}
\max_{a\in \mathcal F} \| u_k^\ast \phi_{n_k}(a) u_k - \phi_k(a)\|  \geq \epsilon,
\end{equation}
for every unitary $u_k\in B$. Let $\eta \colon \mathbb N \to \mathbb N$ be the map $\eta(k) = n_k$ whenever $k\geq m$ and $\eta (k) = 1$ for $k< m$. As $n_k\geq k$ for $k\geq m$, it follows that $\lim_{k\to \infty} \eta(k) = \infty$. As $\phi$ and $\eta^\ast \circ \phi$ are approximately unitary equivalent in $B_\infty$, there is a unitary $u\in B_\infty$ for which
\[
\| u^\ast (\eta^\ast \circ \phi (a)) u - \phi(a) \| < \epsilon, \qquad a\in \mathcal F.
\]
Let $(u_k)_{k\in \mathbb N} \in \prod_{\mathbb N}B$ be a unitary lift of $u$. It follows that
\[
\limsup_{k\to \infty} \| u_{k}^\ast \phi_{n_k}(a) u_{k} - \phi_{k}(a) \| = \| u^\ast (\eta^\ast \circ \phi (a)) u - \phi(a) \| < \epsilon
\]
for all $a\in \mathcal F$. However, this contradicts \eqref{eq:unitaryseq} so the lemma is true.
\end{proof}

The following is essentially a discrete version of \cite[Proposition 1.3.7]{Phillips-classification} and the proofs are very similar.

\begin{theorem}\label{t:lifthom}
Let $A$ and $B$ be $C^\ast$-algebras with $A$ separable and $B$ unital, and let $B_\infty=\prod_\mathbb{N} B / \bigoplus_{\mathbb N} B$. Suppose $\phi \colon A \to B_\infty$ is a $\ast$-homomorphism. Then $\phi$ is unitary equivalent to a $\ast$-homomorphism $\psi \colon A \to B\subseteq B_\infty$ if and only if for any map $\eta \colon \mathbb N \to \mathbb N$ with $\lim_{k\to \infty}\eta(k)=\infty$, the maps $\phi$ and $\eta^\ast \circ \phi$ are approximately unitary equivalent.
\end{theorem}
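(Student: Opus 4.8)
The plan is to treat the two implications separately, the forward (``only if'') direction being routine and the converse carrying essentially all of the content.

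For the easy direction, suppose $\phi$ is unitary equivalent to a constant $\ast$-homomorphism $\psi \colon A \to B \subseteq B_\infty$, say $u^\ast \phi(-) u = \psi$ for a unitary $u \in B_\infty$ (using that $B$, and hence $B_\infty$, is unital). I would first observe that $\eta^\ast$ fixes constant sequences, so that $\eta^\ast \circ \psi = \psi$ whenever $\lim_k \eta(k) = \infty$. Since $\eta^\ast$ is a unital $\ast$-endomorphism of $B_\infty$, applying it to $\phi = u\psi(-)u^\ast$ yields $\eta^\ast \circ \phi = \eta^\ast(u)\,\psi(-)\,\eta^\ast(u)^\ast$ with $\eta^\ast(u)$ again a unitary. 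Hence $\eta^\ast \circ \phi$ is unitary equivalent to $\psi$, and therefore to $\phi$; in particular $\phi$ and $\eta^\ast \circ \phi$ are approximately unitary equivalent.

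For the converse I would run a one-sided Elliott-type intertwining feeding off the strong conclusion of Lemma \ref{l:seqlem}. Fix a dense sequence $(a_i)$ in $A$, put $\mathcal F_j := \{a_1,\dots,a_j\}$ and $\epsilon_j := 2^{-j}$, and fix a bounded lift $(\phi_n) \colon A \to \prod_{\mathbb N} B$ of $\phi$. Recursively choose $k_1 < k_2 < \cdots$ by applying Lemma \ref{l:seqlem} at stage $j$ with data $(\mathcal F_j, \epsilon_j, m = k_{j-1}+1)$; this produces $k_j$ together with a family of unitaries $(u_{j,n})_{n \ge k_j}$ in $B$ satisfying $\| u_{j,n}^\ast \phi_n(a) u_{j,n} - \phi_{k_j}(a)\| < \epsilon_j$ for $a \in \mathcal F_j$. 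Setting $z_j := u_{j,k_{j+1}}$ and $V_j := z_1 \cdots z_{j-1}$ (with $V_1 = 1$), the elements $b_j(a) := V_j^\ast \phi_{k_j}(a) V_j$ satisfy $\|b_{j+1}(a) - b_j(a)\| < \epsilon_j$ on $\mathcal F_j$, so $(b_j(a))_j$ is norm-Cauchy in $B$ for every $a$ in the dense set; I would define $\psi(a) := \lim_j b_j(a) \in B$ there.

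It remains to exhibit a single unitary conjugating all of $\phi$ onto the constant map $\psi$, and here the crucial feature of Lemma \ref{l:seqlem} — that the comparison holds for \emph{every} $n \ge k_j$, not merely along the chosen subsequence — is what makes the argument work. For $k_j \le n < k_{j+1}$ I would set $U_n := u_{j,n} V_j$ (and $U_n := 1$ for $n < k_1$); then $U_n^\ast \phi_n(a) U_n \approx_{\epsilon_j} b_j(a)$ on $\mathcal F_j$, and since $b_j(a) \to \psi(a)$ this forces $\|U_n^\ast \phi_n(a) U_n - \psi(a)\| \to 0$ for each $a$ in the dense set. Thus $u := [(U_n)_n] \in B_\infty$ is a unitary with $u^\ast \phi(a) u = \psi(a)$ (as a constant sequence) for dense $a$; as $u^\ast \phi(-) u$ is a continuous $\ast$-homomorphism $A \to B_\infty$ agreeing with constant sequences on a dense subset and $B$ is closed, it maps all of $A$ into $B$, giving the desired constant $\ast$-homomorphism unitary equivalent to $\phi$. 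The main obstacle is precisely this last step: a naive diagonal argument would only match a subsequence $\phi_{k_j}$ and could never produce a unitary equivalence of $\phi$ itself, so the ``for every $n$'' quantifier in Lemma \ref{l:seqlem} must be used in an essential way.
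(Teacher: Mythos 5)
Your proof is correct, and it shares the paper's skeleton (the recursive application of Lemma \ref{l:seqlem} to choose $k_1<k_2<\cdots$, the telescoping unitaries, and the norm-Cauchy limit defining $\psi$ on a dense set), but it closes the argument differently. The paper only ever compares $\phi_{k_{j+1}}$ with $\phi_{k_j}$, so after building $\psi$ it must still relate $\psi$ back to the full sequence: it observes that $\psi = v^\ast(\eta^\ast\circ\phi(-))v$ for $\eta(n)=k_n$, invokes the hypothesis a \emph{second} time to get $\psi\sim_{\mathrm{au}}\phi$, and then upgrades to genuine unitary equivalence via the diagonal argument of Lemma \ref{l:approxinfty}. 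You instead exploit the full strength of Lemma \ref{l:seqlem} --- the unitary comparison with $\phi_{k_j}$ holds for \emph{every} $n\geq k_j$, not merely at $n=k_{j+1}$ --- to define a conjugating unitary $U_n$ at every index $n$ and hence a single $u\in B_\infty$ with $u^\ast\phi(-)u=\psi$ directly. This buys you a more self-contained ending (no second appeal to the hypothesis and no appeal to Lemma \ref{l:approxinfty}), at the cost of slightly heavier bookkeeping; the paper's route is shorter because it reuses machinery already in place. One small slip: with $V_j:=z_1\cdots z_{j-1}$ the estimate $\|b_{j+1}(a)-b_j(a)\|<\epsilon_j$ does not telescope, since $V_{j+1}^\ast\phi_{k_{j+1}}(a)V_{j+1}=z_j^\ast\cdots z_1^\ast\,\phi_{k_{j+1}}(a)\,z_1\cdots z_j$ does not place $z_j^\ast(\cdot)z_j$ innermost; you want $V_{j+1}=z_jV_j$, i.e.\ $V_j=z_{j-1}\cdots z_1$ (the paper's $v_n=u_nu_{n-1}\cdots u_1$ has the corresponding order). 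This is purely notational and does not affect the validity of the argument.
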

\begin{proof}
``Only if'': Suppose $u\in B_\infty$ is a unitary such that $\Ad u \circ \psi = \phi$, where $\psi$ factors through $B$. As $\eta^\ast \circ \psi = \psi$ for all $\eta\colon \mathbb N \to \mathbb N$ with $\lim_{n\to \infty}\eta(n) = \infty$, we get
\[
\eta^\ast \circ \phi = \eta^\ast \circ \Ad u \circ \psi = \Ad \eta^\ast(u) \circ \psi = \Ad (\eta^\ast(u) u^\ast) \circ \phi.
\]
``If'': let $\mathcal F_1 \subseteq \mathcal F_2 \subseteq \dots \subseteq A$ be finite sets such that $\bigcup \mathcal F_n$ is dense in $A$. Let $(\phi_k)\colon A \to \prod_\mathbb{N} B$ be any function which is a lift of $\phi$. Pick $k_0 := 1 < k_1<k_2<\dots$ recursively by applying Lemma \ref{l:seqlem} to $\mathcal F = \mathcal F_n$, $\epsilon = 1/2^n$ and $m = k_{n-1}+1$ and then obtain $k = k_n$.
For every $n\in \mathbb N$ we may find a unitary $u_n\in B$ for which
\[
\| u_n^\ast \phi_{k_n}(a) u_n - \phi_{k_{n-1}}(a) \| < 1/2^n, \qquad a \in \mathcal F_n.
\]
Let $v_n = u_n u_{n-1} \cdots u_1$. We claim that $\psi(a) := \lim_{n\to \infty} v_n^\ast \phi_{k_n}(a) v_n$ is a well-defined $\ast$-homomorphism which is approximately unitary equivalent to $\phi$ as maps into $B_\infty$.

To see that the map is well-defined it suffices to check that $(v_n^\ast\phi_{k_n}(a) v_n)_{n\in \mathbb N}$ is a Cauchy sequence for every $a\in A$. Given $\epsilon>0$, pick an $n\in \mathbb N$ and $b\in \mathcal F_n$ such that $\|a-b\| < \epsilon/3$. For $m>l \geq n$ we have
\[
v_m^\ast \phi_{k_m}(b) v_m \approx_{2^{-m}} v_{m-1}^\ast \phi_{k_{m-1}}(b) v_{m-1} \approx_{2^{-m+1}} \dots \approx_{2^{-l-1}} v_l^\ast \phi_{k_l}(b) v_l,
\]
so $\| v_m^\ast \phi_{k_m} (b) v_m - v_l^\ast\phi_{k_l}(b) v_l\| < \sum_{k=l+1}^m 2^{-k} < \sum_{k=l+1}^\infty2^{-k}$. As $\limsup_{k\to \infty} \| \phi_k(a) - \phi_k(b) \| < \epsilon/3$, and since $k_l\to \infty$, we may pick $N\geq n$ such that $\| \phi_{k_l} (a) - \phi_{k_l}(b) \| < \epsilon /3$ and $\sum_{k=l+1}^\infty 2^{-k} < \epsilon /3$ for $l \geq N$. It easily follows that
\[
\| v_m^\ast \phi_{k_m} (a) v_m - v_l^\ast\phi_{k_l}(a) v_l \| < \epsilon
\]
for $m,l\geq N$, so $(v_m^\ast\phi_{k_m}(a) v_m)_{m\in \mathbb N}$ is a Cauchy sequence for every $a\in A$. Hence $\psi\colon A \to B$ is a well-defined map.

To see that $\psi$ is approximately unitary equivalent to $\phi$ in $B_\infty$, let $v \in B_\infty$ be the unitary induced by $(v_1,v_2,\dots)$ and $\eta \colon \mathbb N \to \mathbb N$ be the map $\eta(n) = k_n$. Then as maps into $B_\infty$
\[
\psi = v^\ast (\eta^\ast \circ \phi(-)) v,
\]
and thus $\psi$ and $\phi$ are approximately unitary equivalent in $B_\infty$ by assumption.
By Lemma \ref{l:approxinfty} they are also unitary equivalent. As $\psi$ composed with the embedding into $B_\infty$ is a $\ast$-homomorphism, so is $\psi$.
\end{proof}

The above theorem can be applied to give a nice McDuff type characterisation of $\mathcal O_2$- and $\mathcal O_\infty$-stable $\ast$-homomorphisms. 

Recall from \cite{Cuntz-K-theoryI} that $K_0(\mathcal O_2) \cong K_1(\mathcal O_2) \cong K_1(\mathcal O_\infty) \cong 0$, and that $K_0(\mathcal O_\infty) \cong \mathbb Z$ for which $[1_{\mathcal O_\infty}]_0$ is a generator. This is essentially computed by realising each $\mathcal O_n\otimes \mathbb K$ as a crossed product by an extendible endomorphism on an AF algebra, an argument that also implies that $\mathcal O_n$ satisfies the universal coefficient theorem (UCT) of Rosenberg and Schochet in \cite{RosenbergSchochet-UCT}.
In particular, it follows from the UCT that $\mathcal O_2$ is $KK$-equivalent to zero, and that the unital inclusion $\mathbb C \to \mathcal O_\infty$ is a $KK$-equivalence.

The following well-known consequence of Kirchberg's version of the Kirchberg--Phillips theorem is needed to give the McDuff type characterisation.\footnote{I emphasise that these results will not be needed when proving the classification of $\mathcal O_2$-stable $C^\ast$-algebras, Theorem \ref{t:O2class}. So overall, the proof of this main theorem is still pretty self-contained with the main exception of a result of Kirchberg and Rørdam \cite[Theorem 6.11]{KirchbergRordam-zero} used in Proposition \ref{p:Wliftcp}.}

\begin{proposition}\label{p:Dunitalembedding}
Let $\mathcal D$ be either $\mathcal O_2$ or $\mathcal O_\infty$. Any two unital embeddings of $\mathcal D$ are asymptotically unitary equivalent.
\end{proposition}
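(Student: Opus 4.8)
My plan is to deduce the result from the uniqueness direction of the Kirchberg--Phillips theorem, using the $K$-theoretic facts recalled just above. As noted there, $\mathcal D$ (being $\mathcal O_2$ or $\mathcal O_\infty$) is a separable, nuclear, unital, simple, purely infinite $C^\ast$-algebra satisfying the UCT, i.e.\ a unital Kirchberg algebra in the UCT class. Since $\mathcal D$ is simple, any unital $\ast$-homomorphism out of it is automatically injective, so two unital embeddings are just two unital $\ast$-homomorphisms $\phi,\psi\colon \mathcal D \to \mathcal D$. By the uniqueness part of the Kirchberg--Phillips classification \cite{Kirchberg-simple}, \cite{Phillips-classification}, such $\phi$ and $\psi$ are asymptotically unitary equivalent provided $\kk(\phi) = \kk(\psi)$. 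Thus the whole task reduces to identifying the two $KK$-classes.

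For $\mathcal D = \mathcal O_2$ this is trivial: since $\mathcal O_2$ is $KK$-equivalent to $0$ we have $\kk(\mathcal O_2, \mathcal O_2) = 0$, so $\kk(\phi) = \kk(\psi) = 0$ automatically. For $\mathcal D = \mathcal O_\infty$ I would exploit that the unital inclusion $\iota \colon \mathbb C \to \mathcal O_\infty$ is a $KK$-equivalence. Any unital $\ast$-homomorphism $\phi\colon \mathcal O_\infty \to \mathcal O_\infty$ satisfies $\phi\circ \iota = \iota$, since both send $1 \mapsto 1$; passing to Kasparov products this reads $\kk(\iota)\cdot \kk(\phi) = \kk(\iota)$, and left-cancelling the invertible class $\kk(\iota)$ gives $\kk(\phi) = \kk(id_{\mathcal O_\infty})$. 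The identical computation applies to $\psi$, so $\kk(\phi) = \kk(id_{\mathcal O_\infty}) = \kk(\psi)$, and Kirchberg--Phillips finishes the proof.

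The only genuine input is the (deep) Kirchberg--Phillips uniqueness theorem, which here I treat as a black box; the remaining work is the short $KK$-class identification above, the main subtlety being to get the direction of the Kasparov product right in the $\mathcal O_\infty$ cancellation. I note in passing that the $\mathcal O_2$ case can alternatively be obtained without invoking Kirchberg--Phillips, directly from Theorem \ref{t:O2HB}: since $\mathcal O_2$ is $\mathcal O_2$-stable, Proposition \ref{p:Dstablealgebras} makes $\phi$ and $\psi$ nuclear and strongly $\mathcal O_2$-stable, while $\mathcal I(\phi) = \mathcal I(\psi)$ holds trivially because $\mathcal O_2$ is simple; the genuinely new ingredient is therefore really only needed for $\mathcal O_\infty$, where $K_0(\mathcal O_\infty)\cong\mathbb Z$ obstructs such an elementary argument.
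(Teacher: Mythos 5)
Your $KK$-class identification is exactly the one the paper uses (including the cancellation of the invertible class of $\iota \colon \mathbb C \to \mathcal O_\infty$, and the remark that the $\mathcal O_2$ case admits an alternative proof via Theorem \ref{t:O2HB}), but there is a genuine gap at the very first step: you have read the statement as being about unital endomorphisms $\phi,\psi \colon \mathcal D \to \mathcal D$. The proposition is about two unital embeddings $\phi,\psi \colon \mathcal D \to C$ into an arbitrary (common) unital $C^\ast$-algebra $C$, and this generality is essential for how it is used later: in the proof of Corollary \ref{c:McDuff} it is applied to embeddings of $\mathcal D$ into $(B_\infty \cap \phi(A)')/\mathrm{Ann}(\phi(A))$, which is certainly not $\mathcal D$ and need not be simple, nuclear or purely infinite. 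Your reduction therefore proves only a special case that does not suffice for the applications.

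This matters for the second step as well. The uniqueness half of the Kirchberg--Phillips theorem, as you invoke it, classifies morphisms \emph{between Kirchberg algebras} up to asymptotic unitary equivalence by their $KK$-class; it does not directly apply when the codomain is a general separable unital $C^\ast$-algebra. The paper handles this by first cutting down to $C^\ast(\phi(\mathcal D),\psi(\mathcal D))$ to make the target separable, and then citing Dadarlat's uniqueness results \cite[Proposition 2.8(i) and Theorem 2.9(ii)]{Dadarlat-htpyKirchbergalg} for unital (automatically full and nuclear) $\ast$-homomorphisms from $\mathcal D$ into an arbitrary separable unital $C^\ast$-algebra; these do rest on Kirchberg's version of the Kirchberg--Phillips theorem, but they are a strictly stronger statement than the one you use. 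Your $KK$ computation survives verbatim in this generality (since $\phi \circ \iota = \psi \circ \iota \colon \mathbb C \to C$ is still the unital inclusion), so the fix is to restore the general target and replace the plain Kirchberg--Phillips uniqueness by a general-codomain uniqueness theorem of this type.
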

\begin{proof}
Let $\phi , \psi \colon \mathcal D \to C$ be unital embeddings. By replacing $C$ with $C^\ast(\phi(\mathcal D), \psi(\mathcal D))$ we may assume that $C$ is separable. It holds that $\phi$ and $\psi$ define the same element in $KK$-theory. In fact, the case $\mathcal D = \mathcal O_2$ follows since $\mathcal O_2$ is $KK$-equivalent to zero. In the case $\mathcal D = \mathcal O_\infty$, one uses that the unital inclusion is $\iota \colon \mathbb C \to \mathcal O_\infty$ is a $KK$-equivalence to note that $KK(\phi)= KK(\psi)$ if and only if $KK(\phi \circ \iota) = KK(\psi \circ \iota)$. The latter is obviously true since $\phi \circ \iota = \psi \circ \iota \colon \mathbb C \to C$, so $KK(\phi) = KK(\psi)$.

Hence by \cite[Proposition 2.8(i) and Theorem 2.9(ii)]{Dadarlat-htpyKirchbergalg} (relying on Kirchberg's version of the Kirchberg--Phillips theorem \cite{Kirchberg-simple}), $\phi$ and $\psi$ are asymptotically unitary equivalent.
\end{proof}

In the case $\mathcal D=\mathcal O_2$, Theorem \ref{t:O2HB} provides an alternative proof of the above result.

As a corollary, the following McDuff type characterisation of $\mathcal O_2$- and $\mathcal O_\infty$-stable $\ast$-homomorphisms is obtained.

\begin{corollary}\label{c:McDuff}
Let $A$ and $B$ be $C^\ast$-algebras with $A$ separable and for which $\multialg{B}$ is properly infinite, let $\phi \colon A \to B$ be a $\ast$-homomorphism, and let $\mathcal D$ be either $\mathcal O_2$ or $\mathcal O_\infty$. The following are equivalent.
\begin{itemize}
\item[$(i)$] $\phi$ is $\mathcal D$-stable,
\item[$(ii)$] there exists a $\ast$-homomorphism $\psi \colon A \otimes \mathcal D \to B$ such that $\phi$ and $\psi \circ (id_A \otimes 1_\mathcal D)$ are approximately Murray--von Neumann equivalent.
\end{itemize}
\end{corollary}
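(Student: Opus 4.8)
The plan is to prove the two implications separately. Throughout write $E:=B_\infty\cap\phi(A)'$ and $J:=\mathrm{Ann}(\phi(A))$, so that, by Definition \ref{d:Dstable}, $\phi$ is $\mathcal D$-stable exactly when $\mathcal D$ embeds unitally in the quotient $E/J$, which is unital with unit $[(\phi(a_n))_n]$ for an approximate unit $(a_n)$ of the separable (hence $\sigma$-unital) algebra $A$. I expect $(ii)\Rightarrow(i)$ to need no infiniteness hypothesis, so I would do it first.

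For $(ii)\Rightarrow(i)$, put $\phi':=\psi\circ(id_A\otimes 1_{\mathcal D})$. First I would check that $\phi'$ is $\mathcal D$-stable by writing down the candidate unital embedding $\tau\colon\mathcal D\to(B_\infty\cap\phi'(A)')/\mathrm{Ann}(\phi'(A))$ given by $\tau(d):=[(\psi(a_n\otimes d))_n]$. The two elementary estimates $\|(a_n^2-a_n)a\|\to0$ and $\|a_na-aa_n\|\to0$ (both because $(a_n)$ and $(a_n^2)$ are approximate units) make $\tau$ multiplicative modulo the annihilator and force $\tau(\mathcal D)$ to commute with $\phi'(A)$; unitality is immediate from $\tau(1_{\mathcal D})=[(\phi'(a_n))_n]$ and injectivity from simplicity of $\mathcal D$. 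Then I would transport this embedding across the equivalence: fixing a contraction $v\in B_\infty$ with $v^\ast\phi(-)v=\phi'$ and $v\phi'(-)v^\ast=\phi$, Lemma \ref{l:conjv} shows that $x\mapsto vxv^\ast$ descends to a well-defined unital $\ast$-homomorphism $(B_\infty\cap\phi'(A)')/\mathrm{Ann}(\phi'(A))\to E/J$, whose composition with $\tau$ is a unital embedding of $\mathcal D$ into $E/J$. Hence $\phi$ is $\mathcal D$-stable.

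For $(i)\Rightarrow(ii)$, start from a unital embedding $\sigma\colon\mathcal D\to E/J$. Using semiprojectivity of $\mathcal D$ (together with a reindexing argument) I would lift $\sigma$ to an honest $\ast$-homomorphism $\hat\sigma\colon\mathcal D\to E$, so that $\hat\sigma(\mathcal D)$ commutes with $\phi(A)$ and $\hat\sigma(1_{\mathcal D})$ agrees with the unit of $E/J$ modulo $J$. As $\mathcal D$ is nuclear, $A\otimes\mathcal D=A\otimes_{\max}\mathcal D$, and the commuting pair $(\phi,\hat\sigma)$ assembles into a $\ast$-homomorphism $\Psi\colon A\otimes\mathcal D\to B_\infty$ with $\Psi(a\otimes d)=\phi(a)\hat\sigma(d)$; the point is that $\hat\sigma$ is multiplicative modulo $J$ while $\phi(a)$ annihilates $J$, and in particular $\Psi(a\otimes 1_{\mathcal D})=\phi(a)$. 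Regarding $\Psi$ as a map into $(\multialg B)_\infty$ — legitimate since $\multialg B$ is unital and $B_\infty$ is an ideal in $(\multialg B)_\infty$ — I would apply Theorem \ref{t:lifthom} (with $\multialg B$ in place of $B$) to conclude that $\Psi$ is unitary equivalent, via some unitary $u\in(\multialg B)_\infty$, to a $\ast$-homomorphism $\psi_0$ factoring through $\multialg B$; since $\Psi$ takes values in the ideal $B_\infty$, so does $\psi_0$, and a constant sequence lying in this ideal must come from $B$, so $\psi_0$ factors through $B$. Setting $\psi:=\psi_0$ then gives $\psi\circ(id_A\otimes 1_{\mathcal D})(a)=u^\ast\phi(a)u$, so $\phi$ and $\psi\circ(id_A\otimes 1_{\mathcal D})$ are approximately unitary equivalent, hence approximately Murray--von Neumann equivalent.

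The hard part, and the only place where proper infiniteness of $\multialg B$ is used, is verifying the hypothesis of Theorem \ref{t:lifthom}: that for every $\eta\colon\mathbb N\to\mathbb N$ with $\eta(k)\to\infty$ the maps $\Psi$ and $\eta^\ast\circ\Psi$ are approximately unitary equivalent. Since $\eta^\ast$ fixes constant sequences, these two maps agree on $A\otimes 1_{\mathcal D}$ and differ only through the unital embeddings $\hat\sigma$ and $\eta^\ast\hat\sigma$ of $\mathcal D$ into the relative commutant; Proposition \ref{p:Dunitalembedding} makes these two embeddings approximately unitary equivalent, and because every element of $\Psi(A\otimes\mathcal D)$ carries a factor $\phi(a)$ killing $J$, an approximate equivalence measured in the quotient $E/J$ upgrades to one measured in $(\multialg B)_\infty$. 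The genuine obstacle is that the conjugating unitary must be produced inside $(\multialg B)_\infty$ itself rather than in the corona-type quotient: one must lift the unitary supplied by Proposition \ref{p:Dunitalembedding} along the quotient map onto the relative commutant modulo its annihilator, and proper infiniteness of $\multialg B$ is exactly what guarantees that the relevant relative commutant is properly infinite and hence that such unitaries lift. The remaining verifications are routine sequence-algebra bookkeeping.
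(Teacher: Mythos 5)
Your direction $(ii)\Rightarrow(i)$ is essentially the paper's argument (transport a unital copy of $\mathcal D$ commuting with $\psi(A\otimes 1_{\mathcal D})$ across the equivalence using Lemma \ref{l:conjv}) and is fine. The problem is in $(i)\Rightarrow(ii)$, at precisely the point you yourself flag as the genuine obstacle. You need, for every $\eta$, an approximate unitary equivalence between $\Psi$ and $\eta^\ast\circ\Psi$ with unitaries in $(\multialg{B})_\infty$, and you propose to get it by lifting the unitaries supplied by Proposition \ref{p:Dunitalembedding} from $(B_\infty\cap\phi(A)')/\mathrm{Ann}(\phi(A))$ up to $(\multialg{B})_\infty$, asserting that proper infiniteness of $\multialg{B}$ guarantees such lifts. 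That is not a valid principle: unitaries do not lift along quotient maps of properly infinite $C^\ast$-algebras in general (the obstructions are the component group of the unitary group and the index map into $K_0$ of the kernel, and $K_1$ of a properly infinite unital algebra is already realised by unitaries in the algebra, so nontrivial classes genuinely occur). Nothing in Proposition \ref{p:Dunitalembedding} places the unitaries in the connected component of the identity, and there is no reason the relevant relative commutant inherits proper infiniteness from $\multialg{B}$ in the first place. So the central step of your argument fails as stated.

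The paper's proof circumvents the lifting problem rather than solving it: it lifts the unitaries only as contractions $v_n\in B_\infty\cap\phi(A)'$, which yields merely an approximate Murray--von Neumann equivalence between $\phi\times\theta$ and $\eta^\ast\circ(\phi\times\theta)$; Proposition \ref{p:MvNeq} (the $2\times 2$ matrix trick) then converts this into genuine approximate unitary equivalence of $(\phi\times\theta)\oplus 0$ and $\eta^\ast\circ((\phi\times\theta)\oplus 0)$ with unitaries in $(M_2(B)^\sim)_\infty$, so that Theorem \ref{t:lifthom} applies over the unital algebra $M_2(B)^\sim$ and produces $\psi\colon A\otimes\mathcal D\to M_2(B)$. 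Proper infiniteness of $\multialg{B}$ is then used for a different purpose than the one you assign to it: to fix isometries $s_1,s_2\in\multialg{B}$ identifying $M_2(B)$ with a corner $pBp\subseteq B$ in such a way that $\phi\oplus 0$ corresponds to $s_1\phi(-)s_1^\ast$, which is approximately Murray--von Neumann equivalent to $\phi$; this turns $\psi$ into the required map into $B$. (A minor further point: your lifting of $\sigma$ to an honest $\ast$-homomorphism via semiprojectivity is more than is needed and not easy to justify for the ideal $\mathrm{Ann}(\phi(A))$; an arbitrary c.p.~lift suffices, since $\phi(a)$ annihilates the multiplicativity error, as you essentially observe.)
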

\begin{proof}
$(ii)\Rightarrow (i)$: Let $v\in B_\infty$ be such that $v^\ast \phi(a) v = \psi(a\otimes 1)$ and $v\psi(a\otimes 1) v^\ast = \phi(a)$. As $v\psi(a\otimes 1) = \phi(a) v$ for all $a\in A$ by Lemma \ref{l:conjv} it easily follows that the c.p.~map $v^\ast(-) v \colon B_\infty \to B_\infty$ restricts to $B_\infty \cap \phi(A)' \to B_\infty \cap \psi(A \otimes 1)'$, which in turn descends to an isomorphism
\[
 v^\ast(-) v \colon \frac{B_\infty \cap \phi(A)'}{\mathrm{Ann}(\phi(A))} \xrightarrow \cong \frac{B_\infty \cap \psi(A \otimes 1)'}{\mathrm{Ann}(\psi(A\otimes 1))}.
\]
The map $\psi \circ (1_A \otimes id_\mathcal D)$ induces a unital embedding of $\mathcal D$ in $B_\infty \cap \psi(A \otimes 1)'/\mathrm{Ann}(\psi(A \otimes 1))$, so it follows that $\phi$ is $\mathcal D$-stable.

$(i)\Rightarrow (ii)$: Fix isometries $s_1, s_2\in \multialg{B}$ with orthogonal range projections and $p= s_1s_1^\ast + s_2 s_2^\ast$. Then $s_1,s_2$ implement an isomorphism $pBp \cong M_2(B)$ such that $s_1 \phi(-) s_1^\ast \colon A \to pBp$ corresponds to the map $\phi \oplus 0$ via this identification. As $\phi$ and $s_1 \phi(-) s_1^\ast$ are approximately Murray--von Neumann equivalent, it suffices to construct $\psi \colon A \otimes \mathcal D \to M_2(B)$ such that $\phi \oplus 0$ and $\psi\circ (id_A \otimes 1)$ are approximately Murray--von Neumann equivalent.

Let $\theta \colon \mathcal D \to \frac{B_\infty \cap \phi(A)'}{\mathrm{Ann}(\phi(A))}$ be a unital embedding, and let $\eta \colon \mathbb N \to \mathbb N$ be a map such that $\lim_{n\to \infty} \eta(n) = \infty$. Then $\eta^\ast \colon B_\infty \to B_\infty$ induces a $\ast$-homomorphism
\[
 \eta^\ast \colon \frac{B_\infty \cap \phi(A)'}{\mathrm{Ann}(\phi(A))} \to \frac{B_\infty \cap \phi(A)'}{\mathrm{Ann}(\phi(A))}.
\]
By Proposition \ref{p:Dunitalembedding}, $\theta$ and $\eta^\ast \circ \theta$ are asymptotically unitary equivalent, so let $u_n \in \frac{B_\infty \cap \phi(A)'}{\mathrm{Ann}(\phi(A))}$ be a sequence of unitaries such that $u_n^\ast \theta(d) u_n \to \eta^\ast \circ \theta(d)$ for all $d\in \mathcal D$. Let $\overline \theta \colon A \to B_\infty \cap \phi(A)'$ be any map that lifts $\theta$, and $v_n \in B_\infty \cap \phi(A)'$ be a lift of $u_n$ for each $n$.

Let $\phi \times \theta \colon A \otimes \mathcal D \to B_\infty$ be the induced $\ast$-homomorphism given on elementary tensors by $\phi \times \theta(a\otimes x) = \phi(a) \overline \theta(x)$. Then 
\[
v_n^\ast (\phi\times \theta)(a \otimes d) v_n = \phi(a)v_n^\ast \overline \theta(d) v_n \to \phi(a) \eta^\ast(\overline \theta(d)) = \eta^\ast \circ (\phi \times \theta)(a\otimes d)
\]
for all $a\in A$ and $d\in \mathcal D$. Thus $\phi \times \theta$ and $\eta^\ast \circ (\phi \times \theta)$ are approximately Murray--von Neumann equivalent. By Proposition \ref{p:MvNeq}, $(\phi \times \theta) \oplus 0$ and 
\[
(\eta^\ast \circ ( \phi \times \theta)) \oplus 0 = \eta^\ast \circ ((\phi \times \theta) \oplus 0)
\]
are approximately unitary equivalent with unitaries in $(M_2( B)_\infty)^\sim \subseteq (M_2(B)^\sim)_\infty$. By Theorem \ref{t:lifthom} there is a $\ast$-homomorphism $\psi \colon A \otimes \mathcal D \to M_2(B)$ which is approximately unitary equivalent to $(\phi \times \theta) \oplus 0$. Thus the result follows.
\end{proof}

If $\phi \colon A \to B$ is $\mathcal O_\infty$-stable it is not hard to see that for any $a\in A_+$ the element $\phi(a)$ is properly infinite in $B$ in the sense of Kirchberg and Rørdam \cite{KirchbergRordam-purelyinf}. The following is a special case of \cite[Question 3.4]{KirchbergRordam-purelyinf}. An affirmative answer would imply that the requirement that $\multialg{B}$ is properly infinite is not needed in the above corollary.

\begin{question}
Let $A$ be separable and let $\phi \colon A \to B$ be an $\mathcal O_\infty$-stable $\ast$-homomorphism. Is $\multialg{\overline{\phi(A)B\phi(A)}}$ properly infinite?
\end{question}


\section{Existence results for generalised $Cu$-morphisms}\label{s:W}
 
 In this section existence results are produced for lifting generalised $Cu$-morphisms on ideal lattices of $C^\ast$-algebras to c.p.~maps. The main idea is to use Michael's selection theorem to produce large enough c.p.~maps $\phi$ into commutative $C^\ast$-algebras which preserve the ideal structure. This general method was first used by Blanchard \cite{Blanchard-deformations} and has later been used by Harnisch and Kirchberg \cite{HarnischKirchberg-primitive} to produce similar results.
 
 Basically everything in this section is contained in \cite{Gabe-cplifting} but in the language of actions on $C^\ast$-algebras instead of generalised $Cu$-morphisms. I include self-contained proofs here for the sake of completeness.
 
 An important tool is a version of Michael's selection theorems \cite[Theorem 1.2]{Michael-aselectionthm} which is a slight variation of his iconic selection theorem \cite[Theorem 3.2'']{Michael-selection}. I will recall the statement in the special case which will be needed, as well as the required terminology. 

Let $Y$ and $Z$ be topological spaces. A \emph{carrier} from $Y$ to $Z$ is a map $\Gamma \colon Y \to 2^Z$ where $2^Z$ denotes the set of non-empty subsets of $Z$. The purpose of Michael's selection theorems is to find \emph{continuous selections} of carriers $\Gamma$, i.e.~continuous maps $\gamma \colon Y \to Z$ such that $\gamma(y) \in \Gamma(y)$ for all $y\in Y$. 
 
A carrier $\Gamma$ is called \emph{lower semicontinuous} if for every $U \subseteq Z$ open, the set
\[
\{ y \in Y : \Gamma(y) \cap U \neq \emptyset\}
\]
is an open subset of $Y$. The following is a special case of \cite[Theorem 1.2]{Michael-aselectionthm}.\footnote{This follows immediately since the closed unit ball of the dual of a separable Banach space is compact and metrisable in the weak$^\ast$ topology.}

\begin{theorem}[Michael's selection theorem]\label{t:Michael}
Let $Y$ be a compact Hausdorff space, let $A$ be a separable Banach space and equip the dual space $A^\ast$ with the weak$^\ast$ topology. Let $\Gamma \colon Y \to 2^{A^\ast}$ be a lower semicontinuous carrier such that $\Gamma(y)$ is a closed convex subset of the closed unit ball of $A^\ast$ for each $y\in Y$. Then there exists a continuous selection of $\Gamma$, i.e.~there is a weak$^\ast$ continuous map $\gamma \colon Y \to A^\ast$ such that $\gamma(y) \in \Gamma(y)$ for all $y\in Y$.
\end{theorem}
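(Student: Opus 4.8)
The plan is to reprove Michael's selection theorem directly in this setting, exploiting that the closed unit ball $K$ of $A^\ast$ is weak$^\ast$-compact (Banach--Alaoglu) and metrisable (separability of $A$). The single most important preliminary is the \emph{choice} of metric: fixing a sequence $(a_n)$ dense in the unit ball of $A$, I would metrise the weak$^\ast$ topology on $K$ by $d(\mu,\nu) = \sum_n 2^{-n}|(\mu-\nu)(a_n)|$. This $d$ is translation invariant and induced by the seminorm $\|\lambda\|_\ast := \sum_n 2^{-n}|\lambda(a_n)|$ on $A^\ast$, so that $d$-balls, and more generally $\epsilon$-neighbourhoods of convex sets, are again convex. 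This compatibility of the metric with the linear structure is exactly what lets the classical partition-of-unity argument run; an arbitrary metric inducing the weak$^\ast$ topology need not have convex balls, so this choice is essential.

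The first key step is an \emph{approximate selection lemma}: for any lower semicontinuous carrier $\Phi \colon Y \to 2^K$ with convex (not necessarily closed) values and any $\epsilon>0$, there is a weak$^\ast$-continuous $f\colon Y \to K$ with $d(f(y),\Phi(y))<\epsilon$ for all $y$. To prove it, for each $z\in K$ set $U_z := \{ y \in Y : \Phi(y) \cap B_d(z,\epsilon) \neq \emptyset\}$, which is open by the very definition of lower semicontinuity in the excerpt (taking $U = B_d(z,\epsilon)$), and note that the $U_z$ cover $Y$ since the values are non-empty. As $Y$ is compact Hausdorff (hence normal) I extract a finite subcover $U_{z_1},\dots,U_{z_m}$ and a subordinate finite partition of unity $p_1,\dots,p_m$, and put $f := \sum_i p_i z_i$. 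Then $f$ is weak$^\ast$-continuous (for each $a\in A$, $y\mapsto\sum_i p_i(y)z_i(a)$ is a finite sum of continuous scalars), it lands in $K$ by convexity of $K$, and at each $y$ it is a convex combination of points $z_i$ lying in the convex set $\{z : d(z,\Phi(y))<\epsilon\}=\Phi(y)+\{w:\|w\|_\ast<\epsilon\}$, whence $d(f(y),\Phi(y))<\epsilon$.

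The second ingredient is the \emph{intersection lemma}: if $\Phi$ is lower semicontinuous with convex values, $g\colon Y \to K$ is weak$^\ast$-continuous, and $\Phi(y) \cap B_d(g(y),r) \neq \emptyset$ for all $y$, then $y \mapsto \Phi(y) \cap B_d(g(y),r)$ is again lower semicontinuous with convex values (convexity being where the choice of $d$ is used again). With both lemmas I would run the standard iteration: set $\Gamma_1 := \Gamma$, and given weak$^\ast$-continuous $f_n$ with $d(f_n(y),\Gamma(y))<2^{-n}$, apply the intersection lemma to $y \mapsto \Gamma(y) \cap B_d(f_n(y),2^{-n})$ (non-empty by the distance bound, convex by the choice of $d$) followed by the approximate selection lemma with $\epsilon = 2^{-(n+1)}$ to obtain $f_{n+1}$ satisfying
\[
d(f_{n+1}(y),\Gamma(y))<2^{-(n+1)}, \qquad d(f_{n+1}(y),f_n(y))<2^{-n}+2^{-(n+1)}.
\]
Hence $(f_n)$ is uniformly $d$-Cauchy and, as $K$ is complete, converges uniformly to a weak$^\ast$-continuous $\gamma\colon Y \to K$ with $d(\gamma(y),\Gamma(y))=0$; since each $\Gamma(y)$ is closed this forces $\gamma(y)\in\Gamma(y)$, the desired selection.

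The step I expect to be the main obstacle is the intersection lemma, i.e.\ checking that intersecting a lower semicontinuous carrier with a \emph{moving} open ball centred at a continuous function preserves lower semicontinuity. This is the genuinely delicate inductive point in Michael's argument and the place where the recursive construction could fail; the convexity bookkeeping is routine once $d$ is taken to come from the seminorm $\|\cdot\|_\ast$, and the passage to the limit is a soft uniform-convergence argument.
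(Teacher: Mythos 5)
Your proposal is correct, but note that the paper does not prove this statement at all: it is quoted verbatim as a special case of \cite[Theorem 1.2]{Michael-aselectionthm}, with only a footnote observing that the closed unit ball of $A^\ast$ is weak$^\ast$-compact and metrisable when $A$ is separable. What you have written is a self-contained reproof along the classical lines of Michael's argument, and it goes through. The one point you flag as a potential obstacle, the intersection lemma, does hold by the standard shrinking-radius argument: if $x\in \Phi(y_0)\cap B_d(g(y_0),r)\cap W$, choose $\delta>0$ with $d(x,g(y_0))<r-2\delta$; lower semicontinuity of $\Phi$ applied to the relatively open set $B_d(x,\delta)\cap W$ together with continuity of $g$ yields a neighbourhood of $y_0$ on which $\Phi(y)\cap B_d(g(y),r)\cap W\neq\emptyset$, by the triangle inequality. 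Two minor points worth tightening. First, a $d$-ball $B_d(z,\epsilon)$ is relatively open in the unit ball $K$ but need not be weak$^\ast$-open in $A^\ast$, whereas the lower semicontinuity hypothesis is stated for open subsets of $A^\ast$; since your carriers take values in $K$, you should note that $\Phi(y)\cap(U\cap K)=\Phi(y)\cap U$, so lower semicontinuity relative to $K$ follows and the sets $U_z$ are indeed open. Second, your metric is genuinely a norm-induced metric (not merely seminorm-induced) once $(a_n)$ is dense in the unit ball of $A$, and the only property you actually use is convexity of its balls and of $\epsilon$-neighbourhoods of convex sets, which you correctly isolate. Compared with the paper, your route buys self-containedness at the cost of length; the paper's citation buys brevity but leaves the reader to consult Michael's 1966 note and to check the metrisability reduction in the footnote.
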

 
The sets $P(A) \subseteq QS(A) \subseteq A^\ast$ are the sets of pure states, quasi-states\footnote{A \emph{quasi-state} is a positive linear functional of norm at most $1$.} and the dual space of $A$ respectively, all equipped with the weak$^\ast$ topology.
 
 \begin{lemma}\label{l:lsc}
 Let $A$ be a $C^\ast$-algebra, let $Y$ be a compact Hausdorff space, and let $\Phi \colon \mathcal I(A) \to \mathcal I(C(Y))$ be a generalised $Cu$-morphism. For every $y\in Y$, define 
 \[
  I_{\Phi,y} := \overline{\sum_{I \in \mathcal I(A): \Phi(I) \subseteq C_0(Y\setminus\{y\})} I} \in \mathcal I(A).
 \]
The carrier $\Gamma \colon Y \to 2^{A^\ast}$ given by
\[
  \Gamma(y) = \{ \eta \in QS(A) : \eta(I_{\Phi,y}) = 0\}, \qquad y\in Y,
\]
is lower semicontinuous.
 \end{lemma}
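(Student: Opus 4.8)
The plan is to translate everything into open subsets of $Y$ and then reduce lower semicontinuity to a statement that can be settled by excising pure states. Since the ideals of $C(Y)$ are exactly the $C_0(W)$ for $W\subseteq Y$ open, I write $\Phi(\overline{AaA}) = C_0(W_a)$ for each $a\in A_+$, with $W_a\subseteq Y$ open. The first step records the key dictionary: because $\Phi$ preserves suprema (Remark \ref{r:sup}) and the family $\{I : \Phi(I)\subseteq C_0(Y\setminus\{y\})\}$ is upward directed, one gets $\Phi(I_{\Phi,y}) = \overline{\sum_I \Phi(I)}\subseteq C_0(Y\setminus\{y\})$, whence for $a\in A_+$,
\[
a\in I_{\Phi,y}\iff \overline{AaA}\subseteq I_{\Phi,y}\iff y\notin W_a .
\]
In particular $\{y : a\notin I_{\Phi,y}\} = W_a$ is open for every $a\in A_+$. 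Combined with the fact that functional calculus commutes with the quotient map $\pi_y\colon A\to A/I_{\Phi,y}$, this yields the crucial openness statement
\[
\{y : \| \pi_y(d)\| > \lambda\} = \{y : (d-\lambda)_+\notin I_{\Phi,y}\} = W_{(d-\lambda)_+}
\]
for every $d\in A_+$ and $\lambda>0$.

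Next I would rewrite $\Gamma(y) = \{\eta\in QS(A) : \eta(I_{\Phi,y}) = 0\}$ as the quasi-state space $QS(A/I_{\Phi,y})$ sitting weak$^\ast$-closed and convex inside $QS(A)$, and reduce lower semicontinuity to the following: for a fixed $y_0$, a fixed $\eta_0\in\Gamma(y_0)$, a finite set $a_1,\dots,a_m\in A$ and an $\epsilon>0$, find an open neighbourhood $V\ni y_0$ such that every $y\in V$ carries some $\eta\in\Gamma(y)$ with $|\eta(a_j)-\eta_0(a_j)|<\epsilon$ for all $j$. By Krein--Milman, $QS(A/I_{\Phi,y_0})$ is the weak$^\ast$-closed convex hull of $\{0\}\cup P(A/I_{\Phi,y_0})$, so after approximating $\eta_0$ within $\epsilon/2$ by a finite convex combination $\sum_k t_k\omega_k$ of pure states of $A/I_{\Phi,y_0}$ (equivalently, pure states of $A$ annihilating $I_{\Phi,y_0}$), it suffices to treat a single pure state and then intersect the finitely many resulting neighbourhoods.

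The heart of the argument is the single--pure--state case, which I would handle by excision. Given a pure state $\omega$ of $A$ with $\omega(I_{\Phi,y_0}) = 0$, excision produces positive contractions $d\in A$ with $\omega(d)$ close to $1$ and $\|d a_j d - \omega(a_j)d^2\|$ arbitrarily small for $j=1,\dots,m$. Since $\omega$ kills $I_{\Phi,y_0}$ and $\omega(d)\approx1$, we have $\|\pi_{y_0}(d)\|>\tfrac12$, so by the openness statement of the first step the set $V := \{y : (d-\tfrac12)_+\notin I_{\Phi,y}\}$ is an open neighbourhood of $y_0$ on which $\|\pi_y(d)\|>\tfrac12$. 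For $y\in V$ I pick a state $\tau$ on $A/I_{\Phi,y}$ norming $\pi_y(d)^2$, so that $\tau(\pi_y(d)^2) = \|\pi_y(d)\|^2>\tfrac14$, and define $\omega_y(a) := \tau(\pi_y(d a d))/\tau(\pi_y(d)^2)$. As $\omega_y$ factors through $\pi_y$ it is a quasi-state lying in $\Gamma(y)$, and the excision estimate gives $|\omega_y(a_j)-\omega(a_j)|\le 4\|d a_j d-\omega(a_j)d^2\|<\epsilon$ once $d$ is chosen well. Recombining the pure-state neighbourhoods by convex combination then produces the desired $V$ and $\eta$.

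The main obstacle, and the place where genuine content enters, is precisely this transport of a functional across the varying quotients $A/I_{\Phi,y}$: the naive attempts fail, since keeping $\eta_0$ fixed does not respect the new annihilation condition, and cutting $\eta_0$ down by an approximate unit of $I_{\Phi,y}$ tends to annihilate it (already visible in the commutative model, where $I_{\Phi,y}$ is the maximal ideal at a point $f(y)$ and $\eta_0$ has full norm on $I_{\Phi,y}$ whenever $f(y)\neq f(y_0)$). One must genuinely \emph{move} the functional, and excision is what makes the move possible while controlling the finitely many test values. The only subtlety to watch is the lower bound on the normalising denominator $\tau(\pi_y(d)^2)$, which is exactly what the openness of $\{y : (d-\lambda)_+\notin I_{\Phi,y}\}$ from the first step guarantees.
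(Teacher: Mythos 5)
Your proof is correct, but it takes a genuinely different route from the paper's. Both arguments hinge on the same dictionary — $I_{\Phi,y}$ is the largest ideal sent into $C_0(Y\setminus\{y\})$, so $\{y : a\notin I_{\Phi,y}\}$ equals the open set $W_a$ with $\Phi(\overline{AaA})=C_0(W_a)$ — but they diverge after that. The paper first replaces $\Gamma$ by the extreme-point carrier $\Gamma'(y)=\{\eta\in P(A)\cup\{0\}:\eta(I_{\Phi,y})=0\}$, invoking \cite[Propositions 2.3 and 2.6]{Michael-selection} to say that passing to closed convex hulls preserves lower semicontinuity, and then identifies $\{y:\Gamma'(y)\cap U\neq\emptyset\}$ outright as the image of $U\cap P(A)$ under the composite $\mathbb O(P(A))\to\mathbb O(\Prim A)\cong\mathcal I(A)\xrightarrow{\Phi}\mathbb O(Y)$, using the openness of the map $P(A)\to\Prim A$, $\eta\mapsto\ker\pi_\eta$ \cite[Theorem 4.3.3]{Pedersen-book-automorphism}. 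You instead verify the pointwise lsc criterion directly: Krein--Milman does the job of Michael's convex-hull reduction, and the excision theorem for pure states does the job of the open-map theorem, by physically transporting a pure state killing $I_{\Phi,y_0}$ to states $\tau(\pi_y(d\cdot d))/\tau(\pi_y(d)^2)$ killing $I_{\Phi,y}$ for $y$ in the open set $W_{(d-1/2)_+}$. The paper's version is shorter given its two citations and yields a clean closed formula for $\{y:\Gamma'(y)\cap U\neq\emptyset\}$; yours trades those citations for excision and makes the quantitative mechanism (lower semicontinuity of $y\mapsto\|\pi_y(d)\|$ and the $4\|da_jd-\omega(a_j)d^2\|$ estimate) completely explicit. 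The degenerate cases $\eta_0=0$ or $I_{\Phi,y_0}=A$ are handled by taking $\eta=0$, as you implicitly do by allowing $\sum_k t_k\le 1$.
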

 \begin{proof}
 Define the carrier $\Gamma' \colon Y \to 2^{A^\ast}$ by
 \[
 \Gamma' (y) = \{ \eta \in P(A) \cup \{0\} : \eta(I_{\Phi,y}) = 0\} , \qquad y\in Y.
 \]
Note that $\Gamma(y)$ (resp.~$\Gamma'(y)$) can be naturally identified with the set of states (resp.~pure states) in the forced unitisation $(A/I_{\Phi,y})^\dagger$. Hence $\Gamma(y)$ is the weak$^\ast$ closure of the convex hull of $\Gamma'(y)$ for each $y\in Y$. By \cite[Propositions 2.3 and 2.6]{Michael-selection} it thus follows that $\Gamma$ is lower semicontinuous if $\Gamma'$ is lower semicontinuous. Thus it suffices (in order to finish the proof) to show that $\Gamma'$ is lower semicontinuous.
 
Let $U\subseteq A^\ast$ be open. Suppose that $0\in U$. As $0\in \Gamma'(y)$ for every $y$, it follows that
\[
\{ y \in Y : \Gamma'(y) \cap U \neq \emptyset\} = Y
\]
which is open. So suppose that $0 \notin U$. By \cite[Theorem 4.3.3]{Pedersen-book-automorphism}, the continuous map $F \colon P(A) \to \Prim A$, $F(\eta) = \ker \pi_ \eta$ is open where $\pi_\eta$ is the GNS representation of $\eta$. So there is an induced map $\mathbb O(P(A)) \xrightarrow{F} \mathbb O(\Prim A)$. Also, there is a canonical order isomorphism $\mathbb O(\Prim A) \cong \mathcal I(A)$ given by $U \mapsto \bigcap_{\mathfrak p \in \Prim A\setminus U} \mathfrak p$. The inverse map is given by $I \mapsto \{ \mathfrak p \in \Prim A : I \nsubseteq \mathfrak p\}$. Let $\Psi$ be the composition
\[
\mathbb O(P(A)) \xrightarrow{F} \mathbb O(\Prim A) \xrightarrow \cong \mathcal I(A) \xrightarrow \Phi \mathcal I(C(Y)) \cong \mathbb O(Y).
\]
We claim that
\[
 \{ y \in Y : \Gamma'(y) \cap U \neq \emptyset\} = \Psi ( U \cap P(A)),
\]
which will imply that $\Gamma'$ is lower semicontinuous as $\Psi(U \cap P(A)) \subseteq Y$ is open. 

As generalised $Cu$-morphisms of ideal lattices preserve arbitrary suprema by Remark \ref{r:sup}, it follows that $\Phi(I_{\Phi,y}) \subseteq C_0(Y\setminus\{y\})$, and that $I_{\Phi,y}$ is the \emph{largest} ideal in $A$ with this property. Hence, for any $I\in \mathcal I(A)$ we have $I \subseteq I_{\Phi,y}$ if and only if $\Phi(I) \subseteq C_0(Y\setminus \{y\})$. Thus we get the following chain of reasoning for $y\in Y$:
\begin{eqnarray*}
y \notin \Psi(U\cap P(A)) &\Leftrightarrow& \Phi\left( \bigcap_{\mathfrak p \in \Prim A \setminus F(U\cap P(A))} \mathfrak p \right) \subseteq C_0(Y\setminus \{y\}) \\
&\Leftrightarrow&  \bigcap_{\mathfrak p \in \Prim A \setminus F(U\cap P(A))} \mathfrak p \subseteq I_{\Phi,y} \\
&\Leftrightarrow& F(U\cap P(A)) \subseteq \{ \mathfrak p\in \Prim A : I_{\Phi,y} \nsubseteq \mathfrak p \} \\
& \Leftrightarrow & \textrm{ for every }\eta\in U\cap P(A) \textrm{ we have } I_{\Phi,y} \nsubseteq \ker \pi_\eta \\
& \Leftrightarrow & \textrm{ for every }\eta\in U\cap P(A) \textrm{ we have } \eta(I_{\Phi,y}) \neq 0 \\
&\Leftrightarrow& \Gamma'(y) \cap U \cap P(A) = \emptyset \\
&\Leftrightarrow& \Gamma'(y) \cap U = \emptyset.
\end{eqnarray*}
This means that
\[
\{ y\in Y : \Gamma'(y) \cap U \neq \emptyset \} = \Psi(U \cap P(A)).
\]
As $\Psi(U \cap P(A))$ is open, $\Gamma'$ is lower semicontinuous. This finishes the proof since implies that $\Gamma$ is lower semicontinuous (as seen early on in the proof).
\end{proof}

\begin{lemma}\label{l:selcpc}
With the same setup as in Lemma \ref{l:lsc}, suppose that $\gamma\colon Y \to A^\ast$ is a continuous selection of $\Gamma$. Then the map $\phi \colon A \to C(Y)$ given by
\[
\phi(a)(y) = \gamma(y)(a), \qquad a\in A, y\in Y,
\]
is a contractive c.p.~map satisfying $\mathcal I(\phi) \leq \Phi$.
\end{lemma}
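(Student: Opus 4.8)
The plan is to verify in turn that $\phi$ is well-defined and contractive, that it is completely positive, and finally the substantive claim $\mathcal I(\phi) \leq \Phi$.

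First I would observe that $\phi$ is well-defined: since $\gamma$ is weak$^*$ continuous and evaluation at a fixed $a \in A$ is a weak$^*$ continuous functional on $A^*$, the function $y \mapsto \gamma(y)(a)$ is continuous, so $\phi(a) \in C(Y)$. Linearity of $\phi$ is inherited from that of each $\gamma(y)$, and positivity of the quasi-states $\gamma(y)$ gives $\phi(a) \geq 0$ whenever $a \geq 0$; thus $\phi$ is positive. As the target $C(Y)$ is commutative, positivity automatically upgrades to complete positivity by the standard fact that positive maps into abelian $C^*$-algebras are completely positive. Contractivity follows since each $\gamma(y)$ lies in the closed unit ball of $A^*$, giving $|\phi(a)(y)| = |\gamma(y)(a)| \leq \|a\|$ for all $y$, and hence $\|\phi(a)\| \leq \|a\|$.

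For the inequality $\mathcal I(\phi) \leq \Phi$, I would fix $I \in \mathcal I(A)$. By Lemma \ref{l:idealmorphisms}$(i)$, $\mathcal I(\phi)(I)$ is the ideal of $C(Y)$ generated by $\phi(I_+)$, and since $\Phi(I)$ is an ideal it suffices to show $\phi(a) \in \Phi(I)$ for every $a \in I_+$. Identifying $\Phi(I) = C_0(U_I)$ with an open set $U_I \subseteq Y$ via the usual ideal--open-set correspondence for $C(Y)$, this membership is equivalent to the pointwise condition $\phi(a)(y) = 0$ for all $y \in Y \setminus U_I$. So I would fix $a \in I_+$ and $y \notin U_I$; the latter says exactly that every function in $C_0(U_I)$ vanishes at $y$, i.e. $\Phi(I) \subseteq C_0(Y \setminus \{y\})$.

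The crucial input, extracted from the proof of Lemma \ref{l:lsc}, is that $I_{\Phi,y}$ is the largest ideal $J$ of $A$ with $\Phi(J) \subseteq C_0(Y \setminus \{y\})$; equivalently $I \subseteq I_{\Phi,y} \iff \Phi(I) \subseteq C_0(Y \setminus \{y\})$. Thus $\Phi(I) \subseteq C_0(Y \setminus \{y\})$ forces $I \subseteq I_{\Phi,y}$, so in particular $a \in I_{\Phi,y}$; and since $\gamma(y) \in \Gamma(y)$ annihilates $I_{\Phi,y}$ by the definition of the carrier, $\phi(a)(y) = \gamma(y)(a) = 0$. As this holds for every $y \notin U_I$, we conclude $\phi(a) \in \Phi(I)$, completing the argument. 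The only genuine content lies in this final step — converting the abstract ideal containment into pointwise vanishing through the ideal--open-set dictionary for $C(Y)$, and then feeding in the maximality characterisation of $I_{\Phi,y}$; everything else is routine.
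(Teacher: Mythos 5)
Your proof is correct and follows essentially the same route as the paper's: establish contractivity and complete positivity from the quasi-state properties of $\gamma(y)$ and commutativity of $C(Y)$, then reduce $\phi(I) \subseteq \Phi(I) = C_0(U_I)$ to pointwise vanishing at each $y \notin U_I$ via the containment $I \subseteq I_{\Phi,y}$ (which, as you note, follows directly from the construction of $I_{\Phi,y}$) and the fact that $\gamma(y) \in \Gamma(y)$ annihilates $I_{\Phi,y}$. The only cosmetic difference is your reduction to positive elements via Lemma \ref{l:idealmorphisms}$(i)$, which the paper does not need since $\gamma(y)$ vanishes on all of $I_{\Phi,y}$.
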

\begin{proof}
Recall that positive $\ast$-linear maps into $C(Y)$ are completely positive.\footnote{This follows as an element in $C(Y,M_n)$ is positive exactly when its evaluation in $y$ is positive for every $y\in Y$, and since positive linear functionals are completely positive, see \cite[Example 1.5.2]{BrownOzawa-book-approx}.} As $\gamma$ is weak$^\ast$ continuous and each $\gamma(y)$ has norm at most $1$, it follows that $y \mapsto \gamma(y)(a)$ is a continuous map of norm at most $\| a\|$ for every $a\in A$, and thus the map $\hat \phi \colon A \to C(Y)$ given by $\hat \phi(a) (y) = \gamma(y)(a)$ is a contractive c.p.~map. 

To see that $\mathcal I(\phi) \leq \Phi$, fix $I\in \mathcal I(A)$. We want to show that $\phi(I) \subseteq \Phi(I)$. Let $U_I \subseteq Y$ be the open subset such that $\Phi(I) = C_0(U_I)$. As $\Phi(I) = \bigcap_{y\notin U_I} C_0(Y\setminus \{y\})$, it suffices to show that $\phi(I) \subseteq C_0(Y\setminus\{y\})$ for any $y \in Y\setminus U_I$. Fix such a $y$.

As $\Phi(I) \subseteq C_0(Y\setminus\{y\})$ it follows that $I \subseteq I_{\Phi , y}$ by how $I_{\Phi,y}$ was constructed. Thus, for any $a\in I \subseteq I_{\Phi,y}$ we have $\phi(a)(y) = \gamma(y)(a) = 0$. Hence $\phi(I) \subseteq C_0(Y\setminus \{y\})$, so $\mathcal I(\phi) \leq \Phi$.
\end{proof}

\begin{lemma}\label{l:selcom}
Let $A$ be a separable $C^\ast$-algebra, let $C$ be a separable, commutative $C^\ast$-algebra and let $\Phi \colon \mathcal I(A) \to \mathcal I(C)$ be a generalised $Cu$-morphism. Then there exists a c.p.~map $\phi \colon A \to C$ such that $\mathcal I(\phi) = \Phi$.
\end{lemma}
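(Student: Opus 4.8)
The plan is to realise $\Phi$ by a single c.p.~map obtained as a countable convex combination of continuous selections of the carrier $\Gamma$ from Lemma \ref{l:lsc}, where the individual selections are chosen large enough to witness every ideal. First I would reduce to the case where the spectrum of $C$ is compact: writing $C = C_0(X)$ and letting $Y = X$ if $X$ is compact and $Y = X^+$ (one-point compactification) otherwise, one has $C(Y) = C^\sim$ and $C$ is the ideal of functions vanishing at the added point $\infty$. Post-composing $\Phi$ with the inclusion $\mathcal I(C) \hookrightarrow \mathcal I(C(Y))$ gives a generalised $Cu$-morphism into $\mathcal I(C(Y))$, and since every $\Phi(I)$ lies in $C = C_0(Y\setminus\{\infty\})$ we have $I_{\Phi,\infty} = A$, so $\Gamma(\infty) = \{0\}$; thus any selection, and hence the resulting map, vanishes at $\infty$ and takes values in $C$. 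From now on $Y$ is compact metrisable and $\Gamma$ is the lower semicontinuous, closed-convex-valued carrier of Lemma \ref{l:lsc}.

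By Lemma \ref{l:selcpc} any continuous selection of $\Gamma$ induces a contractive c.p.~map $A \to C(Y)$ with $\mathcal I(\phi) \leq \Phi$, so the content is to force the reverse inequality. Fix a dense sequence $(a_m)$ in $A_+$. For $(m,n,l)\in\mathbb N^3$ put $b_{m,n} := (a_m - 1/n)_+$ and let $O_{m,n,l}\subseteq Y$ be the set $\{y : \|b_{m,n} + I_{\Phi,y}\| > 1/l\}$, which coincides with the support of $\Phi(\overline{A(b_{m,n}-1/l)_+A})$ and is therefore open. For $y\in O_{m,n,l}$ the weak$^\ast$-open convex set $V_{m,n,l} = \{\eta : \eta(b_{m,n}) > 1/l\}$ meets $\Gamma(y)$, so $y \mapsto \overline{\Gamma(y)\cap V_{m,n,l}}$ is a lower semicontinuous carrier with nonempty closed convex values in the unit ball. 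Exhausting $O_{m,n,l}$ by compact sets $K_{m,n,l,j}\nearrow O_{m,n,l}$, I would apply Michael's selection theorem (Theorem \ref{t:Michael}) on a compact neighbourhood of each $K_{m,n,l,j}$ inside $O_{m,n,l}$ and then multiply by a Urysohn function equal to $1$ on $K_{m,n,l,j}$ and supported in that neighbourhood; since $0\in\Gamma(y)$ and $\Gamma(y)$ is convex, the product remains a selection of $\Gamma$ on all of $Y$. This produces a continuous selection $\gamma_{m,n,l,j}$ of $\Gamma$ with $\gamma_{m,n,l,j}(y)(b_{m,n}) \geq 1/l$ for $y\in K_{m,n,l,j}$, and a corresponding contractive c.p.~map $\phi_{m,n,l,j}$ with $\mathcal I(\phi_{m,n,l,j}) \leq \Phi$.

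I would then set $\phi := \sum 2^{-t}\phi_{t}$ over a bijection $t \leftrightarrow (m,n,l,j)$; this is a contractive c.p.~map into $C$. Because $C$ is commutative and each $\phi_t$ is positive, for $a\in A_+$ the function $\phi(a)$ vanishes at $y$ exactly when every $\phi_t(a)$ does, so $\mathcal I(\phi) = \sup_t \mathcal I(\phi_t) \leq \Phi$. For equality it suffices to show $\mathrm{supp}\,\Phi(I) \subseteq \mathrm{supp}\,\phi(I)$ for every $I$. Given $I$ and $y\in\mathrm{supp}\,\Phi(I)$ one has $I\nsubseteq I_{\Phi,y}$, so there is $a\in I_+$ with $r := \|a + I_{\Phi,y}\| > 0$; then $c := (a-r/2)_+\in I_+$ has quotient norm $r/2$ modulo $I_{\Phi,y}$. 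Choosing $a_m$ within $1/(2n)$ of $c$ with $n$ large, the element $b_{m,n} = (a_m-1/n)_+$ lies in $\overline{AcA}\subseteq I$ by the containment established in the proof of Lemma \ref{l:cpct}, while still having positive quotient norm modulo $I_{\Phi,y}$; picking $l$ with $1/l$ below that norm and $j$ with $y\in K_{m,n,l,j}$ gives $\phi_{m,n,l,j}(b_{m,n})(y) \geq 1/l > 0$ with $b_{m,n}\in I_+$, so $y\in\mathrm{supp}\,\phi(I)$. Hence $\mathcal I(\phi) = \Phi$.

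The main obstacle is the second step: arranging the selections to be uniformly bounded away from zero on the correct sets so that the countable sum becomes \emph{faithful modulo $I_{\Phi,y}$} at every $y$. This forces the fourfold bookkeeping — the index $n$ to push the witnesses $(a_m-1/n)_+$ into the prescribed ideals (via the $\delta$-comparison in the proof of Lemma \ref{l:cpct}), the index $l$ to convert the pointwise non-vanishing of $\Phi$ into a uniform lower bound making the truncated carrier $\overline{\Gamma(\cdot)\cap V_{m,n,l}}$ lower semicontinuous, and the exhaustion index $j$ to compensate for the non-compactness of $O_{m,n,l}$ — together with the verification that intersecting $\Gamma$ with the open half-space preserves lower semicontinuity and that the Urysohn truncation remains inside $\Gamma$.
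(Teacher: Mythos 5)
Your argument is correct and follows the same overall strategy as the paper's proof: reduce to compact spectrum, produce countably many Michael selections of the carrier $\Gamma$ of Lemma \ref{l:lsc} (each automatically giving $\mathcal I(\phi_t)\leq \Phi$ by Lemma \ref{l:selcpc}), force each one to be non-vanishing on a prescribed witness, and sum with weights $2^{-t}$. Where you genuinely diverge is in the mechanism for making the selections large. The paper fixes a countable basis $(I_m)$ of $\mathcal I(A)$ (via \cite[Corollary 4.3.4]{Pedersen-book-automorphism}) and, for each $y$ in the support of $\Phi(I_m)$, modifies $\Gamma$ at the \emph{single point} $y$ to take the singleton value $\{\eta_{y,m}\}$, where $\eta_{y,m}$ attains the quotient norm of a chosen $a_{y,m}\in I_m$ modulo $I_{\Phi,y}$; lower semicontinuity of the modified carrier is \cite[Example 1.3*]{Michael-selection}, and countability of the family is only recovered at the end through $\sigma$-compactness of the supports $U_m$. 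You instead index by a dense sequence in $A_+$ together with cut-off and exhaustion parameters, intersect $\Gamma$ with the open convex half-space $\{\eta:\eta(b_{m,n})>1/l\}$ over the whole open set $O_{m,n,l}$ (correctly identified with the support of $\Phi(\overline{A(b_{m,n}-1/l)_+A})$, hence open), and then globalise by a Urysohn truncation, which stays inside $\Gamma$ because $0\in\Gamma(y)$ and $\Gamma(y)$ is convex. For the lower semicontinuity of $y\mapsto\overline{\Gamma(y)\cap V_{m,n,l}}$ you should cite \cite[Propositions 2.3 and 2.5]{Michael-selection}, the same toolbox the paper uses for the convex-hull step in Lemma \ref{l:lsc}. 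Your version buys a uniform lower bound $1/l$ on compact sets rather than positivity at a single point, and replaces the basis of ideals by the comparison $(a_m-1/n)_+\in\overline{AcA}$ already embedded in the proof of Lemma \ref{l:cpct}; the paper's version avoids the compact exhaustion and Urysohn bookkeeping at the cost of an uncountable family that must be thinned afterwards. Both routes rest on exactly the same two inputs, Lemma \ref{l:lsc} and Theorem \ref{t:Michael}, and I see no gap in yours.
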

\begin{proof}
If $C$ is not unital, let $\iota \colon C \to \widetilde C$ be the inclusion. If we can find a c.p.~map $\overline \phi \colon A \to \widetilde C$ such that $\mathcal I(\phi) = \mathcal I(\iota) \circ \Phi$, then $\phi$ corestricts to a c.p.~map $\phi \colon A \to C$ such that $\mathcal I(\phi) = \Phi$. Hence we may assume that $C$ is unital. We may assume that $C = C(Y)$ for a compact, metrisable space $Y$. 

We use the same notation as in Lemma \ref{l:lsc}. Since $A$ is separable, it follows from \cite[Corollary 4.3.4]{Pedersen-book-automorphism} that $\mathcal I(A)$ has a countable basis $(I_m)_{m\in \mathbb N}$. Let $U_m$ be the open subset of $Y$ such that $\Phi(I_m) = C_0(U_m)$. Note that $I_m \not \subseteq I_{\Phi, y}$ whenever $y\in U_m$. In fact, as $\Phi$ preserves suprema, it follows from the definition of $I_{\Phi,y}$ that $\Phi(I_{\Phi,y}) \subseteq C_0(Y\setminus\{y\})$. Thus, if $I_m \subseteq I_{\Phi, y}$ then 
\[
\Phi(I_m) \subseteq C_0(U_m) \cap C_0(Y \setminus \{y\}) = C_0(U_m \setminus \{y\})
\]
which is a contradiction if $y\in U_m$. 

Hence, for every pair $(y,m)$ such that $\Phi(I_m) \neq 0$ and $y\in U_m$, we may pick a positive contraction $a_{y,m} \in I_m$ such that $\| a_{y,m} + I_{\Phi,y} \| = 1$. Moreover, we may fix a quasi-state $\eta_{y,m} \in \Gamma(y)$ such that $\eta_{y,m}(a_{y,m}) = 1$. 

For each such pair $(y,m)$ where $\Phi(I_m) \neq 0$ and $y\in U_m$, we construct carriers $\Gamma_{y,m} \colon Y \to 2^{A^\ast}$ by
\[
\Gamma_{y,m}(z) = \left\{ \begin{array}{ll} \{ \eta_{y,m}\} , \textrm{ if }z = y \\ \Gamma(z) , \textrm{ otherwise.} \end{array} \right.
\]
As $\Gamma$ is lower semicontinuous by Lemma \ref{l:lsc}, it follows from \cite[Example 1.3*]{Michael-selection} that $\Gamma_{y,m}$ is lower semicontinuous. By Michael's selection theorem, Theorem \ref{t:Michael}, there are continuous maps $\gamma_{y,m} \colon Y \to A^\ast$ for each $y,m\in \mathbb N$ such that $\gamma_{y,m}(z) \in \Gamma_{y,m}(z)$ for every $z\in Y$. As each $\gamma_{y,m}$ is also a selection for $\Gamma$, it follows from Lemma \ref{l:selcpc} that we may construct contractive c.p.~maps $\phi_{y,m} \colon A \to C(Y)$ by
\[
\phi_{y,m}(a)(z) = \gamma_{y,m}(z)(a), \qquad a\in A, z\in Y,
\]
which satisfy $\mathcal I(\phi_{y,m}) \leq \Phi$. Note that $\phi_{y,m}(a)(y) = \eta_{y,m}(a)$ for every $a\in A$.

For each $(y,m)$ as above, the set $V_{y,m} := \{ x\in Y :  \phi_{y,m}(a_{y,m})(x) > 0 \}$ is an open neighbourhood of $y$ by construction. If $x \notin U_m$, then $\Phi(I_m) = C_0(U_m) \subseteq C_0(Y\setminus \{x\})$, so $I_m \subseteq I_{\Phi, x}$ by definition of $I_{\Phi,x}$. Hence $a_{y,m} \in I_{\Phi,x}$, so $\phi_{y,m}(a_{y,m})(x) = 0$ whenever $x\notin U_m$. Thus $V_{y,m} \subseteq U_m$, so $(V_{y,m})_{y\in U_m}$ is an open cover of $U_m$.

As $U_m$ is $\sigma$-compact, we may find a countable sequence $(y_{(n,m)})_{n\in \mathbb N}$ in $U_m$ such that $(V_{y_{(n,m)},m})_{n\in \mathbb N}$ is an open cover of $U_m$.

Let 
\[
\phi = \sum_{n,m\in \mathbb N, \Phi(I_m) \neq 0} 2^{-n-m} \phi_{n,m}  \qquad \textrm{(point-wise convergence)}.
\]
 We claim that $\mathcal I(\phi) = \Phi$. As $\mathcal I(\phi_{n,m}) \leq \Phi$ it easily follows that $\mathcal I(\phi) \leq \Phi$, so it remains to prove the other inequality.

As $\Phi$ and $\mathcal I(\phi)$ are both generalised $Cu$-morphisms they both preserve suprema, so it suffices to show that $\Phi(I_m) = C_0(U_m) \subseteq \mathcal I(\phi)(I_m)$ for each $m\in \mathbb N$.

If $\Phi(I_m) = 0$ this is obvious, so we assume that $\Phi(I_m) \neq 0$. Let $W_m \subseteq Y$ be the open set such that $\mathcal I(\phi)(I_m) = C_0(W_m)$, and recall that
\[
 W_m = \{ y \in Y : f(y) \neq 0 \textrm{ for some } f\in \mathcal I(\phi)(I_m)\}.
\]
For $y\in U_m$ we will find $f\in \mathcal I(\phi)(I_m)$ such that $f(y) \neq 0$ which implies $y\in W_m$. Pick an $n$ such that $y \in V_{y_{(n,m)},m}$. In particular, $a_{y_{(n,m)} , m}\in I_m$ satisfies
\[
\phi(a_{y_{(n,m)} , m})(y) \geq 2^{-n-m} \phi_{n,m} (a_{y_{(n,m)} , m})(y) > 0.
\]
Hence $f = \phi(a_{y_{(n,m)}, m}) \in \mathcal I(\phi)(I_m)$ satisfies $f(y) \neq 0$, so $y\in W_m$. This implies that $U_m \subseteq W_m$ and thus $\Phi(I_m) = C_0(U_m) \subseteq C_0(W_m) =\mathcal I(\phi)(I_m)$, which completes the proof.
\end{proof}

\begin{proposition}\label{p:Wliftcp}
Let $A$ and $B$ be separable $C^\ast$-algebras with $B$ nuclear, and let $\Phi \colon \mathcal I(A) \to \mathcal I(B)$ be a generalised $Cu$-morphism. Then there exists a c.p.~map $\phi \colon A \to B$ such that $\mathcal I(\phi) = \Phi$.
\end{proposition}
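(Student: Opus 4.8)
The plan is to reduce to the commutative case already settled in Lemma \ref{l:selcom}, using $\mathcal O_2$-stability as a bridge. Since $B$ itself need not contain a large commutative subalgebra, I would first pass to $B \otimes \mathcal O_2$, which is separable, nuclear and $\mathcal O_2$-stable. As $\mathcal O_2$ is simple and nuclear there is a canonical order isomorphism $c \colon \mathcal I(B) \xrightarrow{\cong} \mathcal I(B \otimes \mathcal O_2)$, $J \mapsto J \otimes \mathcal O_2$, and I would transport $\Phi$ to the generalised $Cu$-morphism $c \circ \Phi \colon \mathcal I(A) \to \mathcal I(B \otimes \mathcal O_2)$. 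The point of this detour is that a faithful state $\eta$ on $\mathcal O_2$ yields a slice map $\lambda_\eta \colon B \otimes \mathcal O_2 \to B$ with $\mathcal I(\lambda_\eta) = c^{-1}$: indeed $\lambda_\eta(J \otimes \mathcal O_2) = J$, so $\mathcal I(\lambda_\eta)(J \otimes \mathcal O_2) = \overline{BJB} = J$. Hence if I can produce a c.p.\ map $\phi' \colon A \to B \otimes \mathcal O_2$ with $\mathcal I(\phi') = c \circ \Phi$, then Lemma \ref{l:slice} (applicable since $\mathcal O_2$ is exact and $\eta$ is faithful) gives $\mathcal I(\lambda_\eta \circ \phi') = \mathcal I(\lambda_\eta) \circ \mathcal I(\phi') = c^{-1} \circ c \circ \Phi = \Phi$, so that $\phi := \lambda_\eta \circ \phi'$ is the required map.

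To build $\phi'$ I would feed $B \otimes \mathcal O_2$ into the Kirchberg--Rørdam structure theorem \cite[Theorem 6.11]{KirchbergRordam-zero}, which for a separable, nuclear, $\mathcal O_2$-stable $C^\ast$-algebra supplies a well-behaved commutative $C^\ast$-subalgebra $C = C_0(Y) \subseteq B \otimes \mathcal O_2$ whose inclusion $\iota$ induces a \emph{surjective} generalised $Cu$-morphism $\mathcal I(\iota) \colon \mathcal I(C) \to \mathcal I(B \otimes \mathcal O_2)$. From the construction I would extract a section $s \colon \mathcal I(B \otimes \mathcal O_2) \to \mathcal I(C)$ with $\mathcal I(\iota) \circ s = \mathrm{id}$; morally this is pullback of open sets along the accompanying continuous open surjection $Y \to \Prim(B \otimes \mathcal O_2)$, and such a pullback is automatically a generalised $Cu$-morphism since it preserves $0$, inclusions, and arbitrary suprema. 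Note that $C$ is automatically separable as $B \otimes \mathcal O_2$ is.

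With this in hand, set $\Psi := s \circ c \circ \Phi \colon \mathcal I(A) \to \mathcal I(C)$, a composition of generalised $Cu$-morphisms and hence one itself. Since $A$ and $C$ are separable and $C$ is commutative, Lemma \ref{l:selcom} produces a c.p.\ map $\psi \colon A \to C$ with $\mathcal I(\psi) = \Psi$. Putting $\phi' := \iota \circ \psi$ and using that $C$ is commutative, Lemma \ref{l:idealcom} yields $\mathcal I(\phi') = \mathcal I(\iota) \circ \mathcal I(\psi) = \mathcal I(\iota) \circ s \circ c \circ \Phi = c \circ \Phi$, which is exactly what the first paragraph requires. Composing with $\lambda_\eta$ then finishes the construction.

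The routine verifications---that $c$ is an order isomorphism, that $\mathcal I(\lambda_\eta) = c^{-1}$, and that $s$ is a generalised $Cu$-morphism---are all elementary. The genuine content, and the only non-elementary ingredient, is the existence of the well-behaved commutative subalgebra $C$ of $B \otimes \mathcal O_2$: this is precisely the Kirchberg--Rørdam theorem, and it is the sole place where $\mathcal O_2$-stability, manufactured here by tensoring with $\mathcal O_2$, is exploited. I expect the main obstacle to be arranging the factorisation $c \circ \Phi = \mathcal I(\iota) \circ \Psi$, i.e.\ confirming that the ideal-lattice surjection coming from \cite[Theorem 6.11]{KirchbergRordam-zero} genuinely admits a section in the category of generalised $Cu$-morphisms.
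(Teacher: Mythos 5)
Your proposal is correct and follows essentially the same route as the paper: reduce to the commutative case of Lemma \ref{l:selcom} by passing to $B\otimes\mathcal O_2$, invoking the Kirchberg--R{\o}rdam subalgebra from \cite[Theorem 6.11]{KirchbergRordam-zero}, and slicing back to $B$ via Lemma \ref{l:slice}. The one point you flag as a potential obstacle is resolved in the paper exactly as you would hope: the section is simply $J\mapsto J\cap C$, and its additivity (the only non-automatic axiom) is precisely the condition $(I\cap C)+(J\cap C)=(I+J)\cap C$ supplied by that theorem.
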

\begin{proof}
Say that $B$ has \emph{Property $(\Diamond)$} if the following holds: \emph{there exist a separable, commutative $C^\ast$-algebra $C$, a c.p.~map $\psi \colon C \to B$, and a generalised $Cu$-morphism $\Psi \colon \mathcal I(B) \to \mathcal I(C)$, such that $\mathcal I(\psi) \circ \Psi = id_{\mathcal I(B)}$}.

 We will finish the proof assuming $B$ has Property $(\Diamond)$ and afterwards apply a result of Kirchberg and Rørdam \cite[Theorem 6.11]{KirchbergRordam-zero} to conclude that any separable, nuclear $C^\ast$-algebra $B$ has this property.

So suppose that $B$ has Property $(\Diamond)$ and let $C$, $\psi$ and $\Psi$ be given as above. As $\Psi \circ \Phi \colon \mathcal I(A) \to \mathcal I(C)$ is a generalised $Cu$-morphism we may apply Lemma \ref{l:selcom} to obtain a c.p.~map $\phi_0 \colon A \to C$ such that $\mathcal I(\phi_0) = \Psi \circ \Phi$. Let $\phi = \psi \circ \phi_0$. By Lemma \ref{l:idealcom} we get
\[
\mathcal I(\phi) = \mathcal I(\psi) \circ \mathcal I(\phi_0) = \mathcal I(\psi) \circ \Psi \circ \Phi = \Phi.
\]
It remains to show that any separable, nuclear $B$, has Property $(\Diamond)$. By \cite[Theorem 6.11]{KirchbergRordam-zero}, $B\otimes \mathcal O_2$ contains a commutative $C^\ast$-subalgebra which separates the ideals of $B\otimes \mathcal O_2$ and such that $(I\cap C) + (J\cap C) = (I+J)\cap C$ for every $I,J\in \mathcal I(B\otimes \mathcal O_2)$. It follows that the map $\Psi' \colon \mathcal I(B\otimes \mathcal O_2) \to \mathcal I(C)$ given by $\Psi'(J) = J\cap C$ is a generalised $Cu$-morphism.\footnote{For any $C^\ast$-subalgebra $C\subseteq B$, the map $J \mapsto J \cap C$ will preserve zero, order and increasing suprema, but it will in general not be additive.} Let $\iota \colon C \hookrightarrow B \otimes \mathcal O_2$ be the inclusion. As $C$ separates the ideals it follows that $\mathcal I(\iota) \circ \Psi' = id_{\mathcal I(B\otimes \mathcal O_2)}$. 

Let $\eta$ be a faithful state on $\mathcal O_2$ and $\lambda_\eta \colon B\otimes \mathcal O_2 \to B$ be the induced slice map. Clearly $\mathcal I(\lambda_\eta)$ is the inverse of the isomorphism $\Theta \colon \mathcal I(B) \xrightarrow \cong \mathcal I(B\otimes \mathcal O_2)$, $\Theta(I) = I\otimes \mathcal O_2$. Let $\Psi := \Psi' \circ \Theta$ and $\psi := \lambda_\eta \circ \iota$. Then
\[
\mathcal I(\psi) \circ \Psi \stackrel{\textrm{Lem.}~\ref{l:slice}}{=} \mathcal I(\lambda_\eta) \circ \mathcal I(\iota) \circ \Psi' \circ \Theta = id_{\mathcal I(B)}.
\]
Hence $B$ has Property $(\Diamond)$ thus finishing the proof.
\end{proof}

\begin{remark}
It would be desirable to have a more elementary proof of the above proposition without having to go through the deep structural result of Kirchberg and Rørdam. As emphasised in the proof, $B$ does not need to be separable and nuclear for the proof to work. What is really needed is that $B$ has Property $(\Diamond)$ as defined in the proof above.

By the Dauns--Hofmann theorem any $\sigma$-unital $B$ with $\Prim B$ second countable, Hausdorff has Property $(\Diamond)$. This easily follows by letting $C = C_0(\Prim B)$, $\psi = h \iota (-) h$, and $\Psi = \mathcal I(\psi)^{-1}$, where $\iota \colon  C_0(\Prim B) \to \multialg{B}$ is the canonical $\ast$-homomorphism coming from the Dauns-Hofmann theorem, and where $h\in B$ is a strictly positive element.

Similarly, if $\Prim B$ is second countable and zero dimensional (i.e.~has a basis of compact open sets), then it is not hard to apply the construction of \cite{BratteliElliott-structurespacesII} to find an AF $C^\ast$-subalgebra $D \subseteq B \otimes \mathcal O_2$ such that the inclusion induces an isomorphism $\mathcal I(D) \cong \mathcal I(B\otimes \mathcal O_2)$. One easily sees that AF algebras have Property $(\Diamond)$ so by slicing away $\mathcal O_2$ using Lemma \ref{l:slice} it follows that any such $B$ has Property $(\Diamond)$. Hence, in these cases one does not have to rely on the result of Kirchberg and Rørdam.
\end{remark}

For the final result of this section, recall (a possible construction of) the Kasparov--Stinespring dilation, cf.~\cite[Theorem 3]{Kasparov-Stinespring}. Given a contractive c.p.~map $\phi \colon A \to B$, where $A$ is separable and $B$ is $\sigma$-unital and stable, one constructs the (countably generated, right) Hilbert $B$-module $E:=\widetilde A \otimes_{\widetilde \phi} B$ where $\widetilde \phi \colon \widetilde A \to \multialg{B}$ is the minimal unitisation. There is an induced $\ast$-homomorphism $\phi'_0 \colon A \to \mathbb B(E) \subseteq \mathbb B(E\oplus B)$ given by multiplication on the left tensor of $E$. As $B$ is stable, $B \otimes \ell^2(\mathbb N) \cong B$ as Hilbert $B$-modules. Thus, by Kasparov's stabilisation theorem \cite[Theorem 2]{Kasparov-Stinespring}, there is a unitary $u\in \mathbb B(B, E\oplus B)$. Define $\phi_0 \colon A \to \mathbb B(B) = \multialg{B}$ by $\phi_0 := u^\ast \phi'_0(-) u$ which is a $\ast$-homomorphism. By letting $W\in \mathbb B(B, E\oplus B)$ be given by $W (b) = (1\otimes b,0)$, and $V:= u^\ast W\in \multialg{B}$, one obtains $V^\ast \phi_0(-) V = \phi$. I will refer to $(\phi_0, V)$ as a \emph{Kasparov--Stinespring dilation} of $\phi$. 

\begin{corollary}\label{c:multimorphism}
Let $A$ and $B$ be separable $C^\ast$-algebras with $B$ nuclear and stable, and let $\Phi \colon \mathcal I(A) \to \mathcal I(B)$ be a generalised $Cu$-morphism. Then there exists a $\ast$-homomorphism $\phi_0 \colon A \to \multialg{B}$ such that $\Phi(I) = \overline{B \phi_0(I) B}$ for all $I\in \mathcal I(A)$.
\end{corollary}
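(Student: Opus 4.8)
The plan is to combine the c.p.\ lifting result Proposition~\ref{p:Wliftcp} with the Kasparov--Stinespring dilation recalled above, and then to check that dilating does not change the induced map on ideal lattices. First I would apply Proposition~\ref{p:Wliftcp} to the generalised $Cu$-morphism $\Phi$ to obtain a c.p.\ map $\phi\colon A\to B$ with $\mathcal I(\phi)=\Phi$. Replacing $\phi$ by $\phi/\|\phi\|$ (which alters neither complete positivity nor $\mathcal I(\phi)$, since rescaling an image by a positive scalar does not change the ideal it generates), and disposing of the trivial case $\Phi=0$ by taking $\phi_0=0$, I may assume $\phi$ is a non-zero contraction. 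As $B$ is separable, hence $\sigma$-unital, and stable, I may then form a Kasparov--Stinespring dilation $(\phi_0,V)$ of $\phi$, so that $\phi_0\colon A\to\multialg B$ is a $\ast$-homomorphism, $V\in\multialg B$, and $V^\ast\phi_0(-)V=\phi$. Everything then reduces to the claim
\[
\overline{B\phi_0(I)B}=\overline{B\phi(I)B}=\mathcal I(\phi)(I)=\Phi(I),\qquad I\in\mathcal I(A).
\]

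The inclusion $\overline{B\phi(I)B}\subseteq\overline{B\phi_0(I)B}$ is immediate: for $a\in I$ one has $\phi(a)=V^\ast\phi_0(a)V$, and since $BV^\ast\subseteq B$ and $VB\subseteq B$ this gives $B\phi(a)B\subseteq B\phi_0(a)B$.

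The reverse inclusion is the heart of the matter, and the step I expect to be the main obstacle, as it requires unwinding the module construction. Recall $E=\widetilde A\otimes_{\widetilde\phi}B$ with $\phi_0'(a)(x\otimes b)=ax\otimes b$, regarded inside $\mathbb B(E\oplus B)$ as acting by zero on the summand $B$, and $\phi_0=u^\ast\phi_0'(-)u$ for the stabilisation unitary $u\in\mathbb B(B,E\oplus B)$. Since $u$ is a unitary of Hilbert $B$-modules, $\operatorname{Ad}(u^\ast)$ carries $\mathbb K(E\oplus B)$ isomorphically onto $B=\mathbb K(B)$, whence $\overline{B\phi_0(I)B}=\operatorname{Ad}(u^\ast)(\mathcal J)$, where $\mathcal J$ is the ideal of $\mathbb K(E\oplus B)$ generated by $\{\phi_0'(a):a\in I\}$. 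Expanding $\mathcal J$ on rank-one operators $\theta_{\zeta,\eta}(\zeta')=\zeta\langle\eta,\zeta'\rangle$, one computes $\theta_{\zeta_1,\zeta_2}\,\phi_0'(a)\,\theta_{\zeta_3,\zeta_4}=\theta_{\zeta_1 c,\zeta_4}$ with $c=\langle\zeta_2,\phi_0'(a)\zeta_3\rangle\in B$; as $\phi_0'(a)$ kills the $B$-summand, only the $E$-components contribute, and on elementary tensors $\langle x'\otimes b',\phi_0'(a)(x\otimes b)\rangle=b'^\ast\phi(x'^\ast a x)b$, which lies in $B\phi(I)B\subseteq\mathcal I(\phi)(I)$ because $x'^\ast a x\in I$. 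Finally, using again that $u^\ast$ is a module unitary, $\operatorname{Ad}(u^\ast)$ sends $\theta_{\zeta_1 c,\zeta_4}$ to $(u^\ast\zeta_1)\,c\,(u^\ast\zeta_4)^\ast\in B\,\mathcal I(\phi)(I)\,B\subseteq\mathcal I(\phi)(I)$. Taking closed linear spans yields $\overline{B\phi_0(I)B}\subseteq\mathcal I(\phi)(I)$, which together with the previous paragraph proves the claim and hence the corollary.
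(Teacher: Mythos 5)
Your proposal is correct and follows essentially the same route as the paper: apply Proposition~\ref{p:Wliftcp} to get a contractive c.p.\ map $\phi$ with $\mathcal I(\phi)=\Phi$, form the Kasparov--Stinespring dilation $(\phi_0,V)$, and verify $\overline{B\phi_0(I)B}\subseteq\Phi(I)$ by reducing to the elementary-tensor computation $\langle x'\otimes b',\phi_0'(a)(x\otimes b)\rangle=b'^{\ast}\phi(x'^{\ast}ax)b\in B\phi(I)B$. The only (cosmetic) difference is that you package the reverse inclusion via rank-one operators and $\operatorname{Ad}(u^{\ast})\colon\mathbb K(E\oplus B)\to B$, whereas the paper uses the identity $\overline{B\phi_0(I)B}=\langle\overline{\phi_0(I)B},\overline{\phi_0(I)B}\rangle$ directly; both hinge on the same inner-product calculation.
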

\begin{proof}
As $B$ is separable and nuclear, Proposition \ref{p:Wliftcp} implies the existence of a c.p.~map $\phi \colon A \to B$ for which $\mathcal I(\phi) = \Phi$.\footnote{This is the only place where separability and nuclearity of $B$ is used, although we do need $\sigma$-unitality to apply the Kasparov--Stinespring dilation. So if one can construct a c.p.~map $\phi \colon A \to B$ for which $\mathcal I(\phi) = \Phi$, separability and nuclearity of $B$ in the statement of the corollary can be replaced with $\sigma$-unitality of $B$.} We may assume that $\phi$ is contractive. Let $(\phi_0,V)$ be a Kasparov--Stinespring dilation as described above. For $I\in \mathcal I(A)$ we have
\[
\Phi(I) = \overline{B \phi(I) B} = \overline{BV^\ast\phi_0(I)VB} \subseteq \overline{B \phi_0(I) B}.
\]
We wish to show that $\overline{B \phi_0(I) B} \subseteq \Phi(I)$. As the canonical inner product on $\overline{\phi_0(I) B}$ is given by $\langle a,b\rangle = a^\ast b$, we have $\overline{B \phi_0(I) B} = \langle \overline{\phi_0(I)B}, \overline{\phi_0(I)B} \rangle$, so it suffices to show that $\langle \overline{\phi_0(I)B}, \overline{\phi_0(I)B} \rangle \subseteq \Phi(I)$. Since
\[
\langle \overline{\phi_0(I)B}, \overline{\phi_0(I) B} \rangle = \langle \overline{\phi_0'(I) E}\oplus 0, \overline{\phi_0'(I) E} \oplus 0\rangle,
\]
it suffices to check that $\langle y \oplus 0, \phi_0'(x) y \oplus 0 \rangle \in \Phi(I)$ for any $x\in I$, and any $y\in E$ induced from the algebraic tensor product $\widetilde A \otimes_\mathbb{C} B$. So let $a_1,\dots, a_n\in \widetilde A$ and $b_1,\dots, b_n\in B$ be such that $y$ is induced by $\sum a_i \otimes b_i$. Then
\[
\langle y\oplus 0, \phi_0'(x) y \oplus 0\rangle = \sum_{i,j=1}^n b_i^\ast \phi(a_i x a_j) b_j \in \overline{B \phi(I) B} = \mathcal I(\phi)(I) = \Phi(I).\qedhere
\]
\end{proof}


\section{An ideal related $\mathcal O_2$-embedding theorem}\label{s:O2}

The main goal of this section is to prove the following ideal related $\mathcal O_2$-embedding result of Kirchberg, which essentially should be considered as an existence result.

\begin{theorem}\label{t:O2embedding}
Let $A$ be a separable, exact $C^\ast$-algebra, let $B$ be a separable, nuclear, $\mathcal O_\infty$-stable $C^\ast$-algebra, and let $\Phi \colon \mathcal I(A) \to \mathcal I(B)$ be a $Cu$-morphism. Then there exists a strongly $\mathcal O_2$-stable $\ast$-homomorphism $\phi \colon A \to B$ such that $\mathcal I(\phi) = \Phi$.
\end{theorem}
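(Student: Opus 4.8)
The plan is to separate constructing a $\ast$-homomorphism realizing $\Phi$ from making it $\mathcal O_2$-stable, handling the latter by a cheap pre-tensoring trick. Concretely, it suffices to prove the bare \emph{existence} statement: for every separable exact $A'$ and every $Cu$-morphism $\Phi'\colon \mathcal I(A')\to\mathcal I(B)$ there is a nuclear $\ast$-homomorphism realizing $\Phi'$ on ideals. Granting this, I would apply it to $A' = A\otimes\mathcal O_2$ (still separable and exact) and to $\Phi$ transported along the canonical isomorphism $\mathcal I(A\otimes\mathcal O_2)\cong\mathcal I(A)$, obtaining a nuclear $\tilde\psi\colon A\otimes\mathcal O_2\to B$, and then set $\phi := \tilde\psi\circ(\mathrm{id}_A\otimes 1_{\mathcal O_2})$. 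Since $A\otimes\mathcal O_2$ is $\mathcal O_2$-stable, the embedding $\mathrm{id}_A\otimes 1_{\mathcal O_2}$ is strongly $\mathcal O_2$-stable by Proposition \ref{p:Dstablealgebras}, so $\phi$ is strongly $\mathcal O_2$-stable by Lemma \ref{l:Dcompose}(i); and $\mathcal I(\phi)=\Phi$ by functoriality of $\mathcal I(-)$ for $\ast$-homomorphisms (Proposition \ref{p:ozfunctorial}).

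For the existence statement, first invoke Proposition \ref{p:Wliftcp} to produce a nuclear contractive c.p.\ map $\rho\colon A'\to B$ with $\mathcal I(\rho)=\Phi'$ (any c.p.\ map into the nuclear algebra $B$ is automatically nuclear). The heart of the matter is to \emph{straighten} $\rho$ into a genuine nuclear $\ast$-homomorphism. The idea is to build an asymptotically multiplicative sequence of nuclear c.p.\ maps $\rho_n\colon A'\to B\otimes\mathbb K$ with $\mathcal I(\rho_n)$ approximating $\Phi'$, inducing a nuclear $\ast$-homomorphism $\Psi\colon A'\to (B\otimes\mathbb K)_\infty$. This is exactly where the hypothesis that $\Phi'$ is a \emph{$Cu$-morphism}, and not just a generalised one, is used: preservation of compact containment, via Lemma \ref{l:cpct} and Corollary \ref{c:Cuntzsglemma}, lets one convert the compactly contained pieces of each ideal into full \emph{properly infinite} projections inside $B$, using that $B$ is $\mathcal O_\infty$-stable, and these projections are what allow a c.p.\ map to be corrected to an approximately multiplicative one without destroying the ideal data, the abundance of $\mathcal O_\infty$-isometries in $(B\otimes\mathbb K)_\infty$ (as exploited in the proof of Theorem \ref{t:OinftyHB}) absorbing the multiplicative defect.

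Once $\Psi$ is in hand, I would apply the lifting theorem (Theorem \ref{t:lifthom}, with the unital algebra $(B\otimes\mathbb K)^\sim$) to descend $\Psi$ to a constant $\ast$-homomorphism, that is, one factoring through $B\otimes\mathbb K\subseteq (B\otimes\mathbb K)_\infty$; since $\Psi$ takes values in the ideal $(B\otimes\mathbb K)_\infty$, so does its lift. The hypothesis of Theorem \ref{t:lifthom}, namely that $\Psi$ and $\eta^\ast\circ\Psi$ be approximately unitarily equivalent for every $\eta\colon\mathbb N\to\mathbb N$ with $\eta\to\infty$, is where the uniqueness machinery enters: both maps are nuclear, induce the same map on ideals, and can be arranged to be $\mathcal O_2$-stable (by performing the straightening inside an infinite repeat, as in the remark following Lemma \ref{l:Dcompose}), so Theorem \ref{t:O2HB} together with Proposition \ref{p:MvNvsu} (stable target) yields the required approximate unitary equivalence; here exactness of $A'$ is essential. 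Finally I would de-stabilize by composing with a $\ast$-homomorphism $B\otimes\mathbb K\hookrightarrow B$ coming from a full embedding $\mathbb K\hookrightarrow\mathcal O_\infty$ and the isomorphism $B\cong B\otimes\mathcal O_\infty$, which induces the canonical inverse of the stabilization isomorphism on ideal lattices, so that the resulting $\ast$-homomorphism $A'\to B$ still realizes $\Phi'$ (Proposition \ref{p:ozfunctorial}).

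The main obstacle is the straightening step: producing a \emph{nuclear} (not merely completely positive) $\ast$-homomorphism into the sequence algebra whose induced ideal map is exactly $\Phi'$. Naively compressing a Stinespring dilation of $\rho$ by a quasicentral approximate unit produces an asymptotically multiplicative sequence but destroys nuclearity, so the construction must instead keep the nuclear matrix factorizations of $\rho$ under control while using compact containment to install the properly infinite projections that force multiplicativity in the limit. Verifying simultaneously that (i) the limit is multiplicative, (ii) its induced ideal map is $\Phi'$ rather than something smaller, and (iii) it is $\mathcal O_2$-stable, is the technical core of the argument.
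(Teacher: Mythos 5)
Your outer framework is sound and largely parallels the paper: the reduction to a bare existence statement, obtaining strong $\mathcal O_2$-stability by pre-tensoring with $\mathcal O_2$ (the paper instead tensors $\mathcal O_2\otimes\mathbb K$ onto $B$, but via Proposition \ref{p:Dstablealgebras} and Lemma \ref{l:Dcompose} either works), the final descent from the sequence algebra via Theorem \ref{t:lifthom} combined with the uniqueness theorem \ref{t:O2HB}, and the de-stabilization at the end are all legitimate and match the paper's strategy. The problem is that the one step you yourself flag as ``the technical core'' --- straightening the c.p.\ map $\rho$ from Proposition \ref{p:Wliftcp} into a $\ast$-homomorphism $\Psi\colon A'\to (B\otimes\mathbb K)_\infty$ with $\mathcal I(\Psi)$ equal to (the image in the sequence algebra of) $\Phi'$ --- is not actually carried out. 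The assertion that compact containment produces full properly infinite projections which ``allow a c.p.\ map to be corrected to an approximately multiplicative one'' is not an argument: having properly infinite projections in the right ideals gives no mechanism for forcing asymptotic multiplicativity of a completely positive map, and no such direct correction procedure is known. This is precisely the difficulty the whole construction is designed to circumvent, so the proposal has a genuine gap at its centre.

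The paper's resolution is quite different from what you sketch. Rather than correcting $\rho$, one dilates it: Corollary \ref{c:multimorphism} (Kasparov--Stinespring) produces an honest $\ast$-homomorphism $\phi_0\colon A\to\multialg{B}$ with $\overline{B\phi_0(I)B}=\Phi(I)$, so multiplicativity is free but the values land in the multiplier algebra rather than in $B$. The real work is then getting back into $B_\infty$: a quasi-central approximate unit $b$ and an $h\in\mathcal O_2$ with spectrum $[0,1]$ yield a $\ast$-homomorphism $\Psi\colon C_0(0,1)\otimes A\to B_\infty$, $f\otimes a\mapsto f(b\otimes h)\phi_0(a)$, whose fullness properties are controlled using Proposition \ref{p:idealultra} --- this is where preservation of compact containment by $\Phi$ enters, not through projections. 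Since the domain is now $C_0(0,1)\otimes A$ rather than $A$, one recovers $A$ by the Kirchberg--Phillips crossed-product trick: the uniqueness theorem \ref{t:O2HB} provides a unitary in $\multialg{B}_\infty$ implementing the shift automorphism $\beta=\alpha\otimes id_A$, giving a $\ast$-homomorphism out of $(C_0(0,1)\otimes A)\rtimes_\beta\mathbb Z\cong C(\mathbb T)\otimes\mathbb K\otimes A$, and cutting down by a full projection in $C(\mathbb T)\otimes\mathbb K$ produces the desired $\psi\colon A\to B_\infty$. Your outline never reaches this mechanism, and without it (or a genuine substitute) the proof does not go through.
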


\begin{remark}
If in the above theorem one takes $B= \mathcal O_2$ and $\Phi$ to be the map $\Phi(I) = \mathcal O_2$ for $I\neq 0$ and $\Phi(0) = 0$, then one obtains Kirchberg's classical $\mathcal O_2$-embedding theorem: for any separable, exact $C^\ast$-algebra $A$ there is an injective $\ast$-homomorphism $A \hookrightarrow \mathcal O_2$.
\end{remark}

\begin{remark}
Kirchberg and Rørdam have shown in \cite{KirchbergRordam-absorbingOinfty} that a separable, nuclear $C^\ast$-algebra is $\mathcal O_\infty$-stable if and only if it is \emph{strongly purely infinite}.\footnote{Technically one should also use \cite[Proposition 4.4(4,5)]{Kirchberg-Abel} or \cite[Corollary 3.2]{TomsWinter-ssa} to reduce from the stable case to the general case.} As strong pure infiniteness in general is weaker than $\mathcal O_\infty$-stability, it would be more natural to replace $\mathcal O_\infty$-stability in the above theorem with strong pure infiniteness. However, to keep the proof more self contained the results will be stated with $\mathcal O_\infty$-stability instead.
\end{remark}

\begin{remark}
Here is the main idea of how the proof of Theorem \ref{t:O2embedding} goes: 

Apply Corollary \ref{c:multimorphism} to find a $\ast$-homomorphism $\phi_0 \colon A \to \multialg{B}$ such that $\overline{B\phi_0(I) B} = \Phi(I)$ for all $I\in \mathcal I(A)$. We pick a suitably well-behaved positive element in $B_\infty \cap \phi_0(A)'$ with spectrum $[0,1]$. By considering $\multialg{B} \subseteq \multialg{B}_\infty$ and $B_\infty \subseteq \multialg{B}_\infty$, there is an induced $\ast$-homo\-morphism $C_0(0,1) \otimes A \to \multialg{B}_\infty$ which factors through $B_\infty$. The uniqueness result Theorem \ref{t:O2HB} will be used to show that there is a unitary in $\multialg{B}_\infty$ implementing a certain automorphism $\alpha$ on $C_0(0,1) \otimes A$, so there is an induced $\ast$-homomorphism $(C_0(0,1) \otimes A)\rtimes_\alpha \mathbb Z \to B_\infty$. The automorphism $\alpha$ is chosen such that the crossed product is isomorphic to $C(\mathbb T) \otimes \mathbb K \otimes A$. As $A$ embeds into $C(\mathbb T)\otimes \mathbb K \otimes A$, this will produce a $\ast$-homomorphism $\psi \colon A\to B_\infty$.\footnote{This is the same trick as in the proof of the $\mathcal O_2$-embedding theorem due to Kirchberg and Phillips in \cite{KirchbergPhillips-embedding}.} One now combines the uniqueness result Theorem \ref{t:O2HB} with Theorem \ref{t:lifthom} to produce a $\ast$-homomorphism $\phi \colon A \to B$ which is unitary equivalent to $\psi$ in $B_\infty$. This $\phi$ will satisfy $\mathcal I(\phi) = \Phi$.
\end{remark}

Obviously compact containment of ideals must play an important part in the proof of Theorem \ref{t:O2embedding}, as any generalised $Cu$-morphism which lifts to a $\ast$-homomorphism must necessarily preserve compact containment by Lemma \ref{l:idealmorphisms}$(iii)$. The following proposition says that one can compute certain ideals in $B_\infty$ by using compact containment whenever $B$ is weakly purely infinite. 

Recall from \cite{BlanchardKirchberg-Hausdorff},\footnote{This differs slightly from the definition of $n$-purely infinite in \cite{KirchbergRordam-absorbingOinfty}, but the definitions of weakly purely infinite are still the same by \cite[Proposition 4.12]{BlanchardKirchberg-Hausdorff}.} that a $C^\ast$-algebra $B$ is \emph{$n$-purely infinite} for $n\in \mathbb N$ if $\ell^{\infty}(B)$ has no quotients of dimension $\leq n^2$, and if for any positive $a,b\in B$ such that $a\in \overline{BbB}$, and any $\epsilon >0$, there are $d_1,\dots, d_n \in B$ such that $\| a - \sum_{k=1}^n d_k^\ast b d_k \| < \epsilon$. A $C^\ast$-algebra is \emph{weakly purely infinite} if it is $n$-purely infinite for some $n$, and is \emph{purely infinite} if it is $1$-purely infinite. 

For any $C^\ast$-algebra $B$, $B\otimes \mathcal O_2$ and $B\otimes \mathcal O_\infty$ are purely infinite by \cite[Theorem 5.11]{KirchbergRordam-purelyinf}.

A similar statement as the one below (with virtually the same proof) holds for the sequence algebra $B_\omega$ for any free filter $\omega$ on $\mathbb N$.

\begin{proposition}\label{p:idealultra}
Let $B$ be a weakly purely infinite $C^\ast$-algebra and let $I$ be an ideal of $B$. Then
\[
\overline{B_\infty I B_\infty} = \overline{\bigcup_{J \cpct I} J_\infty}
\]
where the union above is taken over ideals $J$ in $B$ which are compactly contained in $I$.
\end{proposition}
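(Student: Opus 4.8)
The plan is to prove the two inclusions separately, with the inclusion $\supseteq$ carrying all the weight; the inclusion $\subseteq$ uses no hypothesis on $B$, so weak pure infiniteness will enter in exactly one place.

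For $\subseteq$ I would first note that for each ideal $J$ of $B$ the set $J_\infty \subseteq B_\infty$ is again an ideal, and that the family $\{J_\infty : J \cpct I\}$ is upwards directed: if $J_1, J_2 \cpct I$ then $J_1 + J_2 \cpct I$, since by Lemma~\ref{l:cpct} the ideals $\overline{B(a-\epsilon)_+B}$ (for $a \in I_+$, $\epsilon > 0$) are upwards directed. Hence $\overline{\bigcup_{J \cpct I} J_\infty}$ is an ideal of $B_\infty$. It contains $I$: for $a \in I_+$ and $\epsilon > 0$ one has $(a-\epsilon)_+ \in \overline{B(a-\epsilon)_+B} =: J_\epsilon$, and $J_\epsilon \cpct \overline{BaB} \subseteq I$ (by the ``in particular'' part of Lemma~\ref{l:cpct} and monotonicity of compact containment), while $\|a - (a-\epsilon)_+\| \le \epsilon$. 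Since $\overline{B_\infty I B_\infty}$ is the smallest ideal of $B_\infty$ containing $I$, this gives $\overline{B_\infty I B_\infty} \subseteq \overline{\bigcup_{J\cpct I} J_\infty}$.

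For $\supseteq$ it suffices to show $J_\infty \subseteq \overline{B_\infty I B_\infty}$ for a fixed $J \cpct I$, and since $J_\infty$ is the closed linear span of its positive contractions, to treat a positive contraction $Y \in J_\infty$ lifted to positive contractions $(y_n)$ in $J$. By Lemma~\ref{l:cpct} there are $a \in I_+$ (a contraction, after rescaling) and $\epsilon > 0$ with $J \subseteq \overline{B(a-\epsilon)_+B}$; set $b := (a-\epsilon)_+$, so $y_n \in \overline{BbB}$ for all $n$. Here weak pure infiniteness enters: assuming $B$ is $m$-purely infinite, the defining comparison property applied to the pair $(y_n, b)$ with tolerance $1/n$ yields $d_{1,n},\dots,d_{m,n} \in B$ with
\[
\Big\| y_n - \sum_{k=1}^m d_{k,n}^\ast\, b\, d_{k,n} \Big\| < \tfrac1n .
\]

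The main obstacle is that the $d_{k,n}$ are a priori unbounded in $n$ and so cannot be assembled directly into elements of $B_\infty$; the fixed number $m$ of terms and the compact-containment margin must be combined to repair this. First, positivity of the summands forces $\|b^{1/2}d_{k,n}\|^2 = \|d_{k,n}^\ast b\, d_{k,n}\| \le \|y_n\| + 1/n \le 2$, so the elements $w_{k,n} := b^{1/2}d_{k,n}$ are uniformly bounded. Second, using the margin, fix a continuous $\phi\colon [0,\infty)\to[0,1]$ with $\phi=0$ on $[0,\epsilon/2]$ and $\phi=1$ on $[\epsilon,\infty)$; then $\phi(a)b^{1/2}=b^{1/2}$, hence $\phi(a)w_{k,n}=w_{k,n}$ and
\[
d_{k,n}^\ast\, b\, d_{k,n} = w_{k,n}^\ast w_{k,n} = w_{k,n}^\ast\, \phi(a)^2\, w_{k,n}.
\]
Putting $c := \phi(a)^2 \in C^\ast(a) \subseteq I$ (as $\phi(0)=0$) and $W_k := (w_{k,n})_n \in B_\infty$ (bounded), the estimate $\|y_n - \sum_{k} w_{k,n}^\ast c\, w_{k,n}\| < 1/n \to 0$ shows the sequences agree in $B_\infty$, so $Y = \sum_{k=1}^m W_k^\ast\, c\, W_k \in \overline{B_\infty c\, B_\infty} \subseteq \overline{B_\infty I\, B_\infty}$. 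This gives $J_\infty \subseteq \overline{B_\infty I B_\infty}$ and hence the inclusion $\supseteq$. I expect the isolated step — converting the point-wise, unbounded approximations from $m$-pure infiniteness into a single finite sum with coefficients in $B_\infty$ — to be the only delicate point: weak pure infiniteness caps the number of terms at a fixed $m$, and compact containment supplies the function $\phi(a)$ acting as a local unit on $b^{1/2}$, and both are needed for the boundedness and the assembly to succeed. (The remaining clause of the definition, that $\ell^\infty(B)$ has no quotient of dimension $\le m^2$, is not needed for this computation.)
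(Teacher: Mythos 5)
Your proof is correct and follows essentially the same route as the paper's: the $\subseteq$ inclusion via $(a-\epsilon)_+ \in \overline{B(a-\epsilon)_+B} \cpct \overline{BaB}$, and the $\supseteq$ inclusion by applying $m$-pure infiniteness to a positive contractive lift, absorbing $b^{1/2}$ into the coefficients to get uniformly bounded sequences, and inserting a local unit from $C^\ast(a)\subseteq I$ (the paper uses an abstract $c_0$ with $c_0(a-\epsilon)_+=(a-\epsilon)_+$ where you use $\phi(a)^2$, which is the same device). Your explicit verification that the family $\{J_\infty : J\cpct I\}$ is upwards directed is a detail the paper leaves implicit, but nothing of substance differs.
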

\begin{proof}
``$\subseteq$'': To show $\overline{B_\infty I B_\infty} \subseteq \overline{\bigcup_{J \cpct I} J_\infty}$ it suffices to check that whenever $c\in I$ is positive and $\epsilon >0$, then $(c-\epsilon)_+ \in \overline{\bigcup_{J \cpct I} J_\infty}$. However, this is obvious as $\overline{B(c-\epsilon)_+ B} \cpct \overline{BcB} \subseteq I$ by Lemma \ref{l:cpct}, so
\[
(c-\epsilon)_+ \in (\overline{B (c-\epsilon)_+ B})_\infty \subseteq \overline{\bigcup_{J \cpct I} J_\infty}.
\]

``$\supseteq$'': To show $\overline{B_\infty I B_\infty} \supseteq \overline{\bigcup_{J \cpct I} J_\infty}$, it suffices to check that $J_\infty \subseteq \overline{B_\infty I B_\infty}$ for every ideal $J \cpct I$. Fix such a $J$. By Lemma \ref{l:cpct} there is a positive $c\in I$ and $\epsilon >0$, such that $J \subseteq \overline{B(c-\epsilon)_+ B}$. 
Let $x\in J_\infty$ be a positive contraction and let $(x_n)_{n\in \mathbb N} \in \prod_{\mathbb N} J$ be a lift of $x$ for which each $x_n$ is a positive contraction. As $B$ is weakly purely infinite it is $m$-purely infinite for some $m\in \mathbb N$. Thus, as each $x_n \in \overline{B(c-\epsilon)_+B}$, we may find $d_n^{(1)},\dots, d_n^{(m)}\in B$ such that
\[
\| x_n - \sum_{k=1}^m d_n^{(k)\ast} (c-\epsilon)_+ d_n^{(k)}\| < 1/n, \qquad \textrm{ for all }n\in \mathbb N.
\]
Let $y_n^{(k)} = (c-\epsilon)_+^{1/2} d_n^{(k)}$ for $n\in \mathbb N$, $k=1,\dots,m$. These elements are clearly bounded, as $\|x_n\| \leq 1$ and $\sum_{k=1}^m y_n^{(k)\ast}y_n^{(k)}$ is $1/n$-close to $x_k$. Let $y^{(k)}$ be the image of $(y_n^{(k)})_{n\in \mathbb N}$ in $B_\infty$. Pick an element $c_0 \in C^\ast(c)$ such that $c_0 (c-\epsilon)_+ = (c-\epsilon)_+$. Then
\[
x = \sum_{k=1}^m y^{(k)\ast} y^{(k)} = \sum_{k=1}^m y^{(k)\ast} c_0 y^{(k)} \in \overline{B_\infty c B_\infty} \subseteq \overline{B_\infty I B_\infty}.
\]
As $x\in J_\infty$ was an arbitrary positive contraction it follows that $J_\infty \subseteq \overline{B_\infty I B_\infty}$.
\end{proof}

\begin{lemma}\label{l:fullmulti}
Let $A$ and $B$ be $C^\ast$-algebras with $B$ stable, let $\phi \colon A \to B_\infty$ be a contractive c.p.~map, and let $\tilde \phi \colon \tilde A \to \multialg{B}_\infty$ be the induced unital c.p.~map. For $I\in \mathcal I(\tilde A)$ we have
\[
\mathcal I(\tilde \phi)(I) = \left\{ \begin{array}{ll} \mathcal I(\phi)(I) , & \text{ if } I \subseteq A \\ \multialg{B}_\infty, & \text{ otherwise.} \end{array}\right.
\]
\end{lemma}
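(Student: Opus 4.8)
The plan is to split according to the standard dichotomy for ideals of $\tilde A$: either $I \subseteq A$, or $I \not\subseteq A$ (equivalently $I + A = \tilde A$). The observation underlying both cases is that $B_\infty$ is an \emph{ideal} in $\multialg{B}_\infty$: since $B \trianglelefteq \multialg{B}$, the product $\prod_{\mathbb N} B$ is an ideal in $\prod_{\mathbb N}\multialg{B}$, and this passes to the quotient. Consequently $\tilde\phi(I) \subseteq B_\infty$ forces $\mathcal I(\tilde\phi)(I) \subseteq B_\infty$ whenever $I \subseteq A$.

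For the case $I \subseteq A$ I would use the elementary fact that for a closed ideal $J \trianglelefteq D$ and a positive $y \in J$, the ideal of $D$ generated by $y$ coincides with the ideal of $J$ generated by $y$: writing $dyd' = (dy^{1/4})\,y^{1/2}\,(y^{1/4}d')$ shows $\overline{DyD} \subseteq \overline{Jy^{1/2}J} = \overline{JyJ}$ (the last equality since $y$ and $y^{1/2}$ generate the same ideal of $J$), and the reverse inclusion is trivial. Applying this with $D = \multialg{B}_\infty$, $J = B_\infty$, $y = \phi(a)$ for $a \in I_+$, and summing over $a$ via Lemma \ref{l:idealmorphisms}$(i)$, yields $\mathcal I(\tilde\phi)(I) = \overline{\multialg{B}_\infty \phi(I_+)\multialg{B}_\infty} = \overline{B_\infty \phi(I_+) B_\infty} = \mathcal I(\phi)(I)$. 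This case needs no stability.

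For the case $I \not\subseteq A$, choose $x \in I$ with nonzero image under the quotient $q_A \colon \tilde A \to \tilde A/A \cong \mathbb C$; rescaling $x^\ast x \in I_+$ I may assume there is a positive $c \in I$ with $q_A(c) = 1$, so that $c' := c - 1_{\tilde A} \in A$ and $h := \tilde\phi(c) = 1 + \phi(c')$ is a positive element of $\mathcal I(\tilde\phi)(I)$ whose image in the corona $\multialg{B}_\infty/B_\infty$ is the unit. It then suffices to show that any such $h$ is full in $\multialg{B}_\infty$, for then $\mathcal I(\tilde\phi)(I) \supseteq \overline{\multialg{B}_\infty h \multialg{B}_\infty} = \multialg{B}_\infty$.

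This fullness statement is the heart of the matter and the only place stability is used. Writing $r := 1 - h \in B_\infty$ and lifting to a bounded sequence $(r_n)$ in $B$, I would exploit an isomorphism $B \cong B \otimes \mathbb K$: taking isometries $T_i \in \mathbb B(\ell^2)$ with pairwise orthogonal ranges, the isometries $s_i := 1 \otimes T_i \in \multialg{B}$ satisfy $\|s_i^\ast b s_i\| \to 0$ as $i \to \infty$ for every $b \in B$ (check on elementary tensors $d \otimes k$, where $\|T_i^\ast k T_i\| \le \|k T_i\| \to 0$). For each $n$ pick $i(n)$ with $\|s_{i(n)}^\ast r_n s_{i(n)}\| < 1/n$; then $w := [(s_{i(n)})_n]$ is an isometry in $\multialg{B}_\infty$ with $w^\ast r w = 0$, whence $w^\ast h w = w^\ast(1-r)w = 1$ and so $1 \in \overline{\multialg{B}_\infty h \multialg{B}_\infty}$. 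The main obstacle is exactly this fullness claim: a direct attempt only gives $\mathcal I(\tilde\phi)(I) + B_\infty = \multialg{B}_\infty$, and stability is precisely what allows one to conjugate the $B_\infty$-part $r$ to zero and thereby absorb all of $B_\infty$ into the ideal.
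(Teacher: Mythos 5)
Your proof is correct and follows essentially the same route as the paper: for $I \not\subseteq A$ the paper uses exactly the same trick of picking an element $1-a \in I$ with $a \in A$ and using stability to produce isometries $v_n \in \multialg{B}$ compressing a bounded lift of $\phi(a)$ to norm $< 1/n$, so that the induced isometry $w \in \multialg{B}_\infty$ conjugates $\tilde\phi(1-a)$ to $1$, proving fullness. For $I \subseteq A$ the paper simply says ``clearly''; your factorisation argument showing that a positive element of the ideal $B_\infty \trianglelefteq \multialg{B}_\infty$ generates the same closed ideal in both algebras is a correct way of supplying that omitted detail.
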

\begin{proof}
Clearly $\mathcal I(\tilde \phi)(I) = \mathcal I(\phi)(I)$ whenever $I\subseteq A$. If $I \nsubseteq A$, then $I$ contains an element of the form $1-a$ with $a\in A$. Lift $\phi(a)$ to a bounded sequence $(b_n)_{n\in \mathbb N} \in \prod_\mathbb{N} B$. As $B$ is stable we may for each $n$ find an isometry $v_n\in \multialg{B}$ such that $\|v_n^\ast b_n v_n\| < 1/n$. Let $v$ be the isometry in $\multialg{B}_\infty$ induced by $(v_n)$. Then $v^\ast \phi(a) v=0$, so $v^\ast\tilde \phi(1-a) v = 1$, and thus $\tilde \phi(1-a)$ is full. Hence $\mathcal I(\tilde \phi)(I) = \multialg{B}_\infty$.
\end{proof}

\begin{lemma}\label{l:O2stableseq}
Let $A$ and $B$ be $C^\ast$-algebras with $A$ separable, and suppose that $B$ is $\mathcal O_2$-stable. For any $\ast$-homomorphism $\phi \colon A \to B_\infty$, the unitisation $\tilde \phi \colon \tilde A \to \multialg{B}_\infty$ is $\mathcal O_2$-stable.
\end{lemma}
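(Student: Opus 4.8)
The plan is to unwind the definition of $\mathcal O_2$-stability and reduce everything to producing a single unital copy of $\mathcal O_2$ inside $\multialg{B}_\infty \cap \phi(A)'$. Since $\tilde A$ is unital and $\tilde\phi$ is its unital extension, $\tilde\phi(1_{\tilde A}) = 1_{\multialg{B}_\infty}$; hence $\mathrm{Ann}(\tilde\phi(\tilde A)) = 0$ (anything annihilating a unital subalgebra is $0$) and $\tilde\phi(\tilde A)' = \phi(A)'$. By Definition \ref{d:Dstable} applied to $\tilde\phi$, whose target is $C := \multialg{B}_\infty$, the map $\tilde\phi$ is $\mathcal O_2$-stable exactly when $\mathcal O_2$ embeds unitally in $(\multialg{B}_\infty)_\infty \cap \phi(A)'$. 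Because the constant-sequence inclusion $\multialg{B}_\infty \hookrightarrow (\multialg{B}_\infty)_\infty$ is a unital $*$-homomorphism that carries elements commuting with $\phi(A)$ to elements commuting with $\phi(A)$ (viewed as constants), it suffices to exhibit a unital embedding $\Sigma \colon \mathcal O_2 \hookrightarrow \multialg{B}_\infty \cap \phi(A)'$.

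Next I would manufacture asymptotically central, unital copies of $\mathcal O_2$ in $\multialg{B}$. Using $B \cong B \otimes \mathcal O_2 \cong B \otimes \mathcal O_2^{\otimes \infty}$, fix an isomorphism $\theta \colon B \to B \otimes \mathcal O_2^{\otimes \infty}$ and let $\mu_k \colon \mathcal O_2 \to \mathcal O_2^{\otimes \infty}$ be the unital embedding onto the $k$th tensor factor. Composing with the canonical unital embedding $\mathcal O_2^{\otimes \infty} \to \multialg{B \otimes \mathcal O_2^{\otimes\infty}}$, $d \mapsto 1 \otimes d$, and with $\multialg{\theta}^{-1}$, I obtain unital $*$-homomorphisms $\sigma_k := \multialg{\theta}^{-1}(1 \otimes \mu_k(-)) \colon \mathcal O_2 \to \multialg{B}$. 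This is precisely the tensor-shift construction used in the proof of Proposition \ref{p:Dstablealgebras}: approximating $\theta(b)$ by finite sums of elementary tensors supported on the first $N$ factors shows that $1 \otimes \mu_k(d)$ commutes with $\theta(b)$ up to an arbitrarily small error once $k > N$, so that $\|[\sigma_k(d), b]\| \to 0$ as $k \to \infty$ for every fixed $b \in B$ and $d \in \mathcal O_2$.

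Finally I would diagonalise over a countable dense set. Fix generators $s_1, s_2$ of $\mathcal O_2$, a dense sequence $(a_j)$ in $A$, and a bounded lift $(\phi_n)$ of $\phi$ so that $\phi(a) = [(\phi_n(a))_n]$. For each $n$ the set $\{\phi_n(a_j) : j \le n\}$ is finite, so by the asymptotic centrality just established I can choose $k(1) < k(2) < \cdots$ with $\|[\sigma_{k(n)}(s_i), \phi_n(a_j)]\| < 1/n$ for all $j \le n$ and $i \in \{1,2\}$. Then $\Sigma(d) := [(\sigma_{k(n)}(d))_n]$ defines a unital $*$-homomorphism $\mathcal O_2 \to \multialg{B}_\infty$, which is an embedding since $\mathcal O_2$ is simple. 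For each $j$ the commutator $[\Sigma(s_i), \phi(a_j)]$ is represented by the sequence $([\sigma_{k(n)}(s_i), \phi_n(a_j)])_n$, which is eventually of norm $< 1/n$ and hence vanishes in $\multialg{B}_\infty$; as $s_1, s_2$ generate $\mathcal O_2$ and $(a_j)$ is dense, $\Sigma(\mathcal O_2)$ commutes with all of $\phi(A)$. This yields the required unital embedding $\mathcal O_2 \hookrightarrow \multialg{B}_\infty \cap \phi(A)'$ and, via the reduction above, completes the proof.

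The main obstacle I expect is that $\phi(A)$ lives inside the sequence algebra $B_\infty$, so its elements are classes of sequences in $B$ rather than fixed elements of $B$; the asymptotic centrality of the $\sigma_k$ over individual elements of $B$ therefore does not, on its own, yield a copy of $\mathcal O_2$ commuting with $\phi(A)$. The reindexing argument in the last step---choosing $k(n) \to \infty$ fast enough to defeat a fixed bounded lift of a countable dense subset---is exactly what bridges this gap, and once it is in place the remaining verifications are routine.
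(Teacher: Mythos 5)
Your proof is correct and follows essentially the same route as the paper: lift $\phi$, pick a dense sequence in $A$, choose for each $n$ a unital copy of $\mathcal O_2$ in $\multialg{B}$ approximately commuting with finitely many $\phi_n(a_j)$, and assemble these into a unital embedding $\mathcal O_2 \to \multialg{B}_\infty \cap \phi(A)'$, which suffices since $\tilde\phi$ is unital. The only difference is that you spell out (via the tensor-shift construction from Proposition \ref{p:Dstablealgebras}) the existence of the approximately central unital copies of $\mathcal O_2$ in $\multialg{B}$, which the paper asserts in one line from $\mathcal O_2$-stability of $B$.
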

\begin{proof}
Lift $\phi$ to a map $(\phi_n)_{n\in \mathbb N} \colon A \to \prod_{\mathbb N} B$, and let $a_1,a_2,\dots \in A$ be dense. As $B$ is $\mathcal O_2$-stable, we may find unital $\ast$-homomorphisms $\psi_n\colon \mathcal O_2 \to \multialg{B}$ such that $\| [\psi_n(s_i), \phi_n(a_j)]\| < 1/n$ for $i=1,2$ and $j=1,\dots, n$. The induced unital $\ast$-homomorphism $\psi \colon \mathcal O_2 \to \multialg{B}_\infty$ commutes with $\phi(A)$ and thus also with $\tilde \phi(\tilde A)$, so $\tilde \phi$ is $\mathcal O_2$-stable.
\end{proof}

For convenience of the reader I include a few lemmas on nuclear maps which should be well-known to experts. These are not stated in their most general forms, but just in a form which is applicable in the proof of Theorem \ref{t:O2embedding} and easy to prove using only well-known results.

\begin{lemma}\label{l:nucmulti}
Let $A$ be a separable, exact $C^\ast$-algebra, and let $B$ be a separable, nuclear, stable $C^\ast$-algebra. 
Then any c.p.~map $A \to \multialg{B}$ is nuclear.
\end{lemma}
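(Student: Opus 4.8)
\emph{Proof plan.} The strategy is to run a Hahn--Banach convex-cone argument of exactly the type used in the proof of Theorem \ref{t:HB}. Let $\mathscr N$ be the set of nuclear c.p.\ maps $A \to \multialg{B}$, i.e.\ the point-norm closure of the maps that factor through matrix algebras by c.p.\ maps. Since the matrix-factorable maps are closed under sums and scalar multiples (a factorization through $M_{n_1}$ and one through $M_{n_2}$ combine to a factorization through $M_{n_1+n_2}$), $\mathscr N$ is a point-norm closed convex cone. As in Theorem \ref{t:HB}, a Hahn--Banach separation argument then shows that $\mathscr N$ is also closed in the topology of pointwise weak convergence. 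Hence it suffices to exhibit $\phi$ as a limit of elements of $\mathscr N$ in this weaker topology.

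To produce such approximants I would use nuclearity of $B$: the bidual $B^{\ast\ast}$ is then semidiscrete (injective), so $id_{B^{\ast\ast}}$ is a point-$\sigma$-weak limit of c.c.p.\ maps $\gamma_i \circ \delta_i$ factoring through matrix algebras $M_{n_i}$, with $\delta_i \colon B^{\ast\ast} \to M_{n_i}$ and $\gamma_i \colon M_{n_i} \to B^{\ast\ast}$. Viewing $\phi$ as a c.p.\ map into $\multialg{B} \subseteq B^{\ast\ast}$, the composites $\gamma_i \circ (\delta_i \circ \phi)$ factor through matrices and converge to $\phi$ point-$\sigma$-weakly. Because $B$ is separable, $\multialg{B}$ is $\sigma$-weakly dense in $B^{\ast\ast}$, so I would perturb the finitely many images $\gamma_i(e_{jk})$ of matrix units back into $\multialg{B}$ to obtain c.p.\ maps $\tilde\gamma_i \colon M_{n_i} \to \multialg{B}$ with $\tilde\gamma_i \circ \delta_i \circ \phi \in \mathscr N$ still converging to $\phi$ against the normal functionals $B_\ast \subseteq \multialg{B}^\ast$.

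The remaining and most delicate point is that the convergence just obtained is only $\sigma$-weak and only against $B_\ast$, whereas the closure of $\mathscr N$ requires accumulation in the full pointwise weak topology, and one must also guarantee that complete positivity survives the perturbation into $\multialg{B}$. This is exactly where exactness of $A$ enters: I would invoke Kirchberg's theorem that exact $C^\ast$-algebras are locally reflexive, which lets one upgrade the $\sigma$-weak matrix factorizations of $\phi$ to genuine matrix factorizations valued in $\multialg{B}$ and control them against arbitrary functionals, with separability of $A$ and $B$ ensuring the relevant nets can be taken sequential. I expect this final topology- and target-matching step --- turning the semidiscrete weak factorizations of the \emph{nuclear} algebra $B$ into honest factorizations of $\phi$ through matrices into the \emph{nonnuclear} algebra $\multialg{B}$ --- to be the main obstacle, and it is the one point where all three hypotheses (exactness of $A$, and nuclearity together with separability of $B$) are used simultaneously.
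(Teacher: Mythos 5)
Your plan has a genuine gap at exactly the step you flag as ``the most delicate point,'' and the tool you propose there does not close it. The Hahn--Banach argument shows that the cone $\mathscr N$ of nuclear maps $A \to \multialg{B}$ is closed under pointwise convergence in the weak topology $\sigma(\multialg{B}, \multialg{B}^\ast)$, i.e.\ against \emph{all} bounded functionals on $\multialg{B}$. What semidiscreteness of $B^{\ast\ast}$ plus the density of $\multialg{B}$ in $B^{\ast\ast}$ gives you is convergence only against the normal functionals $B_\ast$, and on $\multialg{B}$ the topology $\sigma(\multialg{B}, B_\ast)$ is strictly weaker: for an approximate unit $(b_n)$ of $B$ one has $b_n \to 1$ against $B_\ast$ but not against a state of $\multialg{B}$ vanishing on $B$. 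So a net in $\mathscr N$ converging to $\phi$ only against $B_\ast$ need not have $\phi$ in the point-weak closure of $\mathscr N$; the singular part of $\multialg{B}^\ast$ is precisely where the obstruction lives. Local reflexivity of $A$ is a statement about approximating finite-dimensional subspaces of $A^{\ast\ast}$ by maps back into $A$ -- it acts on the domain side -- and you give no mechanism by which it controls singular functionals on the codomain $\multialg{B}$. Note also that your argument never uses stability of $B$, which is a hypothesis of the lemma and is used essentially in the paper's proof; and the lemma is only nontrivial when $\multialg{B}$ is non-nuclear (otherwise every c.p.\ map into it is nuclear for free), which is exactly the stable situation.

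The paper's proof sidesteps all of this. After reducing to the unital case, it uses exactness of $A$ in the form of Kirchberg's nuclear embeddability: there is a unital embedding $A \hookrightarrow \multialg{\mathbb K} = \mathbb B(\mathcal H)$ with $A \cap \mathbb K = \{0\}$ which is a nuclear map. Stability of $B$ gives $\multialg{B} \cong \multialg{B \otimes \mathbb K}$, so composing with $1_{\multialg{B}} \otimes id_{\multialg{\mathbb K}}$ produces a unital \emph{nuclear} $\ast$-homomorphism $\theta \colon A \to \multialg{B}$. Kasparov's Weyl--von Neumann--Voiculescu theorem (this is where separability and nuclearity of $B$ enter) says that $\theta$ approximately $1$-dominates the given c.p.\ map $\eta$, i.e.\ $\eta$ is a point-norm limit of maps $v^\ast \theta(-) v$; each of these is nuclear because $\theta$ is, and point-norm limits of nuclear maps are nuclear. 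If you want to salvage a convexity-style proof, you would need an honest argument handling the singular functionals on $\multialg{B}$; as written, your proposal asserts rather than proves the key step.
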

\begin{proof}
Let $\eta\colon A \to \multialg{B}$ be a c.p.~map. By normalising and unitising we may assume $A$ and $\eta$ are unital. Pick a unital embedding $A \hookrightarrow \multialg{\mathbb K}$ such that $A \cap \mathbb K = \{0\}$. As $B$ is stable we may find unital embeddings $A \hookrightarrow \multialg{\mathbb K} \xrightarrow{1_{\multialg{B}} \otimes id_{\multialg{\mathbb K}}} \multialg{B \otimes \mathbb K} \cong \multialg{B}$. As $A$ is exact it is nuclearly embeddable, see for instance \cite[Theorem 3.9.1]{BrownOzawa-book-approx}, so the composition of these maps is nuclear. By Kasparov's Weyl--von Neumann--Voiculescu type theorem \cite[Theorem 5]{Kasparov-Stinespring}, this composition approximately 1-dominates $\eta$. Hence $\eta$ is nuclear.
\end{proof}

\begin{lemma}\label{l:nuctensor}
Let $A$, $B$ and $C$ be $C^\ast$-algebras with $C$ nuclear, and let $\phi\colon A \to B$ and $\psi \colon C \to B$ be c.p.~maps with commuting images. If $\phi$ is nuclear then the induced c.p.~map $\phi \times \psi \colon A\otimes C \to B$ is nuclear.
\end{lemma}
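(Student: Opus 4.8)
The plan is to approximate $\phi \times \psi$ in point-norm by nuclear maps, exploiting that the domain $C$ of $\psi$ is nuclear (so that $\psi$ itself is nuclear and factors through matrix algebras), while $\phi$ supplies nuclearity in the other leg. First I would record two preliminary points: since $C$ is nuclear, $A \otimes_{\max} C = A \otimes C$, so $\phi \times \psi$ is genuinely defined on the minimal tensor product; and since $C$ is nuclear, the corestriction $\psi \colon C \to B_2 := \overline{C^\ast(\psi(C))}$ is nuclear (nuclearity is inherited from the nuclear domain, so this is \emph{not} a corestriction-of-a-nuclear-map issue). Crucially, because the images commute, $B_2$ lies in the relative commutant $\phi(A)' \cap B$.

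Using nuclearity of $\psi \colon C \to B_2$, fix for each $l$ c.p.~maps $\gamma_l \colon C \to M_{k_l}$ and $\delta_l \colon M_{k_l} \to B_2$ with $\delta_l \circ \gamma_l \to \psi$ in point-norm. The heart of the argument is to reassemble $\phi$ and $\delta_l$ into a single nuclear map. The naive reassembly $M_{k_l}(B) \to B$, $(b_{ij}) \mapsto \sum_{ij} b_{ij}\delta_l(e_{ij})$, is not completely positive in general; the fix is to take the positive Choi matrix $[\delta_l(e_{ij})] \in M_{k_l}(B_2)$ and extract a square root inside $M_{k_l}(B_2)$, writing $\delta_l(e_{ij}) = \sum_t s_{ti}^\ast s_{tj}$ with all $s_{ti} \in B_2$. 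Then
\[
\Xi_l \colon M_{k_l}(B) \to B, \qquad \Xi_l(X) = \sum_t \sum_{i,j} s_{ti}^\ast\, X_{ij}\, s_{tj},
\]
is manifestly completely positive, and because the $s_{ti}$ lie in $B_2 \subseteq \phi(A)'$ it recovers the desired multiplication on elements whose entries lie in $\phi(A)$.

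Next I would set $R_l := \Xi_l \circ (\phi \otimes id_{M_{k_l}}) \circ (id_A \otimes \gamma_l) \colon A \otimes C \to B$ and check on elementary tensors, using $B_2 \subseteq \phi(A)'$, that $R_l(a \otimes c) = \phi(a)\,\delta_l\gamma_l(c)$, which converges to $\phi(a)\psi(c) = (\phi\times\psi)(a\otimes c)$; uniform boundedness of the $R_l$ together with density of the algebraic tensor product then upgrades this to point-norm convergence $R_l \to \phi\times\psi$ on all of $A\otimes C$. Finally, each $R_l$ is nuclear: the middle map $\phi \otimes id_{M_{k_l}} \colon A \otimes M_{k_l} \to M_{k_l}(B)$ is a tensor product of the nuclear map $\phi$ with $id_{M_{k_l}}$, hence nuclear, and it is pre- and post-composed with the c.p.~maps $id_A \otimes \gamma_l$ and $\Xi_l$, which preserves nuclearity. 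Since a point-norm limit of nuclear maps is nuclear, $\phi \times \psi$ is nuclear.

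The main obstacle, and the step that makes the lemma more than a formality, is precisely the construction of the completely positive reassembly map $\Xi_l$. One cannot simply corestrict $\phi$ to $\overline{C^\ast(\phi(A))}$ (corestriction can destroy nuclearity) nor compose the nuclear map $\phi\otimes\psi$ with the multiplication map (which is unbounded, respectively not completely positive). The Choi-square-root trick, together with the commutation $B_2 \subseteq \phi(A)'$, is what lets the multiplication be realised as an honest completely positive map into $B$ while keeping $\phi$ in the leg where it is known to be nuclear.
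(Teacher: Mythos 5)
Your proof is correct, but it takes a genuinely different route from the one in the paper. The paper's argument is short and abstract: it invokes the characterisation of nuclearity from \cite[Corollary 3.8.8]{BrownOzawa-book-approx} (a c.p.\ map $\theta$ is nuclear iff $id_D \otimes_{\mathrm{alg}} \theta$ is $\|\cdot\|_{\min}$-continuous into $D \otimes_{\max} B$ for every unital $D$), uses nuclearity of $C$ to identify $(D\otimes A)\otimes_{\max} C$ with $D \otimes (A\otimes C)$, and then observes that the commuting product $\eta = (id_D\otimes\phi)\times(1_D\otimes\psi)$ is a c.p.\ extension of $id_D\otimes_{\mathrm{alg}}(\phi\times\psi)$. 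You instead build explicit c.p.\ factorisations through matrix algebras: you use nuclearity of $C$ to factor $\psi$ as $\delta_l\circ\gamma_l$, and the Choi-matrix square root of $\delta_l$ (whose entries land in $C^\ast(\psi(C))\subseteq \phi(A)'$) to realise the multiplication as the manifestly c.p.\ map $\Xi_l$, keeping the nuclearity of $\phi$ in the matrix amplification $\phi\otimes id_{M_{k_l}}$. Both proofs use exactly the same two hypotheses (nuclearity of $C$ and commutation of the images) at the same pressure points, but your version is more elementary and self-contained, and it has the side benefit of actually \emph{constructing} $\phi\times\psi$ as a point-norm limit of c.p.\ maps, hence establishing its complete positivity rather than presupposing the induced c.p.\ map as the paper's statement does (the paper's own $\eta$ quietly relies on the same kind of commuting-product fact). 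The price is length, and one small point you should make explicit: the $\gamma_l$ and $\delta_l$ must be chosen uniformly bounded (e.g.\ contractive, as the standard definition of nuclearity permits) so that $\|\Xi_l\|_{cb} = \|\delta_l(1)\|$ and hence $\|R_l\|$ are uniformly bounded, which is what your density argument needs.
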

\begin{proof}
Let $D$ be a unital $C^\ast$-algebra. As in \cite[Corollary 3.8.8]{BrownOzawa-book-approx} it suffices to show that $id_D \otimes_{\mathrm{alg}} (\phi \times \psi) \colon D \otimes_{\mathrm{alg}} (A\otimes C) \to D \otimes_{\max} B$ is $\|\cdot \|_{\min}$-continuous, i.e.~continuous with respect to the minimal tensor product norm. By nuclearity of $\phi$, the c.p.~map $id_D \otimes \phi \colon D \otimes A \to D \otimes_{\max} B$ is well-defined, and as $1_D \otimes \psi \colon C \to D \otimes_{\max} B$ commutes with the image of $id_D \otimes \phi$ there is an induced c.p.~map $\eta := (id_D \otimes \phi) \times (1_D \otimes \psi) \colon (D\otimes A) \otimes C \to D \otimes_{\max} B$. Under the canonical identification $(D\otimes A) \otimes C \cong D \otimes (A\otimes C)$, $\eta$ is a c.p.~extension of $id_D \otimes_{\mathrm{alg}} (\phi \times \psi)$ so this map is $\| \cdot \|_{\min}$-continuous.
\end{proof}

\begin{lemma}\label{l:nuccrossed}
Let $G$ be a countable, discrete, amenable group, let $A$ be a unital $G$-$C^\ast$-algebra, let $B$ be a $C^\ast$-algebra, and let $\eta \colon A \rtimes G \to B$ be a $\ast$-homomorphism.\footnote{An application of \eqref{eq:bimodule} implies that the result also holds for order zero maps.} If $\eta|_A$ is nuclear, then $\eta$ is nuclear.
\end{lemma}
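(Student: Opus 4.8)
The plan is to realise $\eta$ as a point-norm limit of nuclear maps manufactured from $\eta|_A$ together with the standard Følner completely positive approximation of the identity on the crossed product. Since $G$ is amenable, the full and reduced crossed products coincide, so I may work with $A\rtimes G$ equipped with its canonical faithful conditional expectation $E\colon A\rtimes G\to A$, the canonical unitaries $(u_g)_{g\in G}$, and the action $\alpha_g=\Ad u_g$. Fix a Følner sequence $(F_n)$ of finite subsets of $G$. For each finite $F\subseteq G$ I would introduce the two unital completely positive maps
\[
\Phi_F\colon A\rtimes G\to M_F(A),\qquad \Phi_F(x)=\big(E(u_s^\ast x u_t)\big)_{s,t\in F},
\]
\[
\Psi_F\colon M_F(A)\to A\rtimes G,\qquad \Psi_F\big((b_{s,t})\big)=\frac{1}{|F|}\sum_{s,t\in F}u_s b_{s,t}u_t^\ast,
\]
where $M_F(A)\cong M_{|F|}\otimes A$. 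Both are completely positive: $\Phi_F$ is the compression $x\mapsto R^\ast(1\otimes x)R$ with $R=\sum_{t\in F}e_{1,t}\otimes u_t$ followed by the entrywise expectation $\mathrm{id}_{M_F}\otimes E$ (so its values automatically lie in $M_F(A)$, as $E$ has range $A$), while $\Psi_F$ is conjugation by $\tfrac{1}{\sqrt{|F|}}\sum_{s\in F}e_{1,s}\otimes u_s$; one checks directly that $\Phi_F(1)=1$ and $\Psi_F(1)=1$.

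Next I would record the Følner estimate. A direct computation, using $E(u_s^\ast x u_t)=\alpha_{s^{-1}}\big(E(x u_{st^{-1}}^\ast)\big)$, shows that for a finite sum $x=\sum_g a_g u_g$,
\[
\Psi_F\big(\Phi_F(x)\big)=\sum_{g\in G}\frac{|F\cap g^{-1}F|}{|F|}\,a_g u_g,
\]
so that $\Psi_{F_n}\circ\Phi_{F_n}(x)\to x$ by the Følner property $|F_n\cap g^{-1}F_n|/|F_n|\to 1$. Contractivity of the (unital, completely positive) maps $\Psi_{F_n}\circ\Phi_{F_n}$ together with density of the finite sums then upgrade this to $\Psi_{F_n}\circ\Phi_{F_n}\to \mathrm{id}_{A\rtimes G}$ in point-norm. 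Since $\eta$ is contractive, this gives $\eta=\lim_n\eta\circ\Psi_{F_n}\circ\Phi_{F_n}$ in point-norm. As a point-norm limit of nuclear maps is nuclear, and as $\Phi_{F_n}$ is completely positive and composing with a completely positive map preserves nuclearity, it suffices to prove that $\eta\circ\Psi_{F_n}\colon M_{F_n}(A)\to B$ is nuclear for each $n$.

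Finally, writing $U_g:=\eta(u_g)\in B$ and using that $\eta$ is a $\ast$-homomorphism, I would compute for $b=(b_{s,t})\in M_{F_n}(A)$
\[
\eta\circ\Psi_{F_n}(b)=\frac{1}{|F_n|}\sum_{s,t\in F_n}U_s\,\eta(b_{s,t})\,U_t^\ast=\frac{1}{|F_n|}\sum_{s,t\in F_n}U_s\,\rho(b_{s,t})\,U_t^\ast,
\]
where $\rho:=\eta|_A$ and the last equality uses $b_{s,t}\in A$. Thus $\eta\circ\Psi_{F_n}=\Lambda_n\circ(\mathrm{id}_{M_{F_n}}\otimes\rho)$, where $\Lambda_n\colon M_{F_n}(B)\to B$ is conjugation by $\tfrac{1}{\sqrt{|F_n|}}\sum_{s\in F_n}e_{1,s}\otimes U_s$, hence completely positive and genuinely valued in $B$ because each $U_g\in B$. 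Since $\rho$ is nuclear, its amplification $\mathrm{id}_{M_{F_n}}\otimes\rho$ is nuclear, and post-composing with the completely positive $\Lambda_n$ keeps it nuclear; therefore $\eta\circ\Psi_{F_n}$, and with it $\eta$, is nuclear. I expect the only genuine input beyond bookkeeping to be the Følner completely positive approximation of the identity (the standard mechanism behind nuclearity of crossed products by amenable groups); the key conceptual point is that this approximation interacts with $\eta$ solely through $\eta|_A$, since $\Phi_F$ takes values in $M_F(A)$ and the unitaries $U_g$ lie in $B$.
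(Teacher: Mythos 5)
Your proposal is correct and follows essentially the same route as the paper: the paper invokes the F{\o}lner completely positive approximation $\psi_{F_n}\circ\phi_{F_n}\to \mathrm{id}_{A\rtimes G}$ from \cite[Lemma 4.2.3]{BrownOzawa-book-approx} and then observes, exactly as you do, that $\eta\circ\psi_F$ is the conjugate $\tfrac{1}{|F|}(\eta(\lambda_g))_{g\in F}\,(\eta|_A)^{(F)}(-)\,(\eta(\lambda_g))_{g\in F}^\ast$ of the nuclear amplification of $\eta|_A$. The only difference is that you construct and verify the F{\o}lner maps explicitly rather than citing them, which is harmless.
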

\begin{proof}
We use \cite[Lemma 4.2.3]{BrownOzawa-book-approx}. For any finite set $F\subset G$ they construct c.p.~maps $\phi_F \colon A \rtimes G \to A \otimes M_F(\mathbb C)$ and $\psi_F \colon A \otimes M_F(\mathbb C) \to A\rtimes G$ such that $\psi_{F_n} \circ \phi_{F_n} \to id$ point-norm for any Følner sequence $(F_n)$ for $G$. Let $\alpha$ denote the $G$-action on $A$ and $\lambda_g$ be the canonical unitaries in $A\rtimes G$ for $g\in G$. The map $\psi_F$ is given by
\[
\psi_F(a \otimes e_{g,h}) = \tfrac{1}{|F|} \alpha_g(a) \lambda_{gh^{-1}} = \tfrac{1}{|F|} \lambda_g a \lambda_h^\ast, \qquad a\in A, g,h\in F.
 \]
Thus, under the canonical identification of $A \otimes M_F(\mathbb C) \cong M_F(A)$, one has
\[
\psi_F = \tfrac{1}{|F|} (\lambda_g)_{g\in F} (-) (\lambda_g)_{g\in F}^\ast \colon M_F(A) \to A\rtimes \Gamma
\]
where $(\lambda_g)_{g\in F}$ is considered as a row vector. The amplification $(\eta|_A)^{(F)} \colon M_F(A) \to M_F(B)$ of $\eta|_A$ is nuclear, so $\eta \circ \psi_F = \tfrac{1}{|F|} (\eta(\lambda_g))_{g\in F} (\eta|_A)^{(F)}(-) (\eta(\lambda_g))^\ast_{g\in F}$ is nuclear. For any Følner sequence $(F_n)$, $\eta \circ \psi_{F_n} \circ \phi_{F_n}$ is thus nuclear and converges point-norm to $\eta$ which is therefore nuclear.
\end{proof}

\begin{proof}[Proof of Theorem \ref{t:O2embedding}]
By Proposition \ref{p:Dstablealgebras} any $\ast$-homomorphism into $B\otimes \mathcal O_2 \otimes \mathbb K$ is strongly $\mathcal O_2$-stable. As the composition of a strongly $\mathcal O_2$-stable $\ast$-homomorphism with any $\ast$-homo\-morphism is again strongly $\mathcal O_2$-stable by Lemma \ref{l:Dcompose}, and as any inclusion $\mathcal O_2\otimes \mathbb K \hookrightarrow \mathcal O_\infty$ induces an isomorphism of ideal lattices 
\[
\mathcal I(B\otimes \mathcal O_2\otimes \mathbb K) \xrightarrow \cong \mathcal I(B\otimes \mathcal O_\infty) \cong \mathcal I(B)
\]
we may assume that $B$ is stable and $\mathcal O_2$-stable. We write $B = B_1 \otimes \mathcal O_2$ with $B_1 \cong B$. Let $\Phi_1 \colon \mathcal I(A) \to \mathcal I(B_1)$ be the $Cu$-morphism induced by $\Phi$ and the obvious identification $\mathcal I(B_1) \cong \mathcal I(B)$, i.e.~$\Phi(I) = \Phi_1(I) \otimes \mathcal O_2$ for all $I \in \mathcal I(A)$. By Corollary \ref{c:multimorphism} we pick a $\ast$-homomorphism $\phi_1 \colon A \to \multialg{B_1}$ such that $\overline{B_1\phi_1(I) B_1} = \Phi_1(I)$ for all $I\in \mathcal I(A)$. Let $\phi_0 = \phi_1 \otimes 1_{\mathcal O_2} \colon A \to \multialg{B_1 \otimes \mathcal O_2} = \multialg{B}$. Clearly $\overline{B \phi_0(I) B} = \Phi(I)$ for $I\in \mathcal I(A)$. By Lemma \ref{l:nucmulti}, $\phi_0$ is nuclear. Let $(b_n)_{n\in \mathbb N}$ be a countable approximate identity (of positive contractions) for $B_1$ which is quasi-central with respect to $\phi_1(A)$, and let $h\in \mathcal O_2$ be a positive element with spectrum $[0,1]$. Let $b = \pi_\infty((b_n)) \in (B_1)_\infty$. As $b\otimes h\in (B_1)_\infty \otimes \mathcal O_2 \subseteq B_\infty$ is a positive contraction which commutes with the image of $\phi_1 \otimes 1_{\mathcal O_2} = \phi_0$, there is a $\ast$-homomorphism $\Psi \colon C_0(0,1) \otimes A \to \multialg{B}_\infty$ given on elementary tensors by
\[
\Psi(f\otimes a) =  f(b \otimes h)\phi_0(a), \qquad f \in C_0(0,1),  a \in A.
\]
The map $\Psi$ clearly factors through $B_\infty$ as each $f(b\otimes h) \in B_\infty$, and $\Psi$ is nuclear by Lemma \ref{l:nuctensor}. Let $\iota \colon B \hookrightarrow B_\infty$ be the canonical inclusion and let $\hat \Phi = \mathcal I(\iota)\circ \Phi$, i.e.~$\hat \Phi(I) = \overline{B_\infty \Phi(I) B_\infty}$.

\textbf{Claim:} $\Psi(f \otimes a)$ is full in $\hat \Phi(\overline{AaA})$ for all positive $a\in A$ and all positive, non-zero $f\in C_0(0,1)$.\footnote{This implies that $\mathcal I(\Psi) = \hat \Phi \circ \mathcal I(\rho_\mu)$, where $\rho_\mu \colon C_0(0,1) \otimes A \to A$ is the right slice map with respect to a faithful state $\mu$ on $C_0(0,1)$.} So fix such $a,f$.

To see that $\Psi(f\otimes a)\in \hat \Phi(\overline{AaA})$, it suffices to check that $\Psi(f \otimes (a-\epsilon)_+) \in \hat \Phi(\overline{AaA})$ for any $\epsilon>0$. Fix such an $\epsilon$. As $B$ is purely infinite, it follows from Proposition \ref{p:idealultra} that 
\begin{equation}\label{eq:B0ideal}
\hat \Phi(\overline{AaA}) = \overline{\bigcup_{J \cpct \Phi(\overline{AaA})} J_\infty}.
\end{equation}
Recall that $\overline{B\phi_0(\overline{A(a-\epsilon)_+A})B} = \Phi(\overline{A(a-\epsilon)_+A})$, and that $\overline{A(a-\epsilon)_+A} \cpct \overline{AaA}$ by Lemma \ref{l:cpct}. As $\Phi$ preserves compact containment it follows that $\overline{B\phi_0(\overline{A(a-\epsilon)_+A})B} \cpct \Phi(\overline{AaA})$. Thus, as $f(b\otimes h) \in B_\infty$, we have
\[
\Psi(f \otimes (a-\epsilon)_+) = f(b \otimes h) \phi_0(a-\epsilon)_+ \in \left( \overline{B \phi_0(\overline{A(a-\epsilon)_+A})B} \right)_\infty \stackrel{\eqref{eq:B0ideal}}{\subseteq} \hat \Phi(\overline{AaA}).
\]
So 
\begin{equation}\label{eq:psiinhat}
\Psi(f\otimes a) \in \hat \Phi(\overline{AaA}).
\end{equation}

Next we show that $\hat \Phi(\overline{AaA}) \subseteq \overline{B_\infty \Psi(f\otimes a) B_\infty}$, which finishes the proof of the claim. As $B_1$ is separable, $\Phi_1(\overline{AaA})$ contains a full, positive element, say $c$.\footnote{Separability of $B_1$ is actually not needed to conclude that $\Phi_1(\overline{AaA})$ has a full element. In fact, an easy consequence of Corollary \ref{c:Cuntzsglemma} is that $Cu$-morphisms map ideals with full elements to ideals with full elements.} In particular, as $\mathcal O_2$ is simple, $f$ is non-zero, and $h$ has spectrum $[0,1]$, $c\otimes f(h)$ is full in $\Phi_1(\overline{AaA}) \otimes \mathcal O_2 = \Phi(\overline{AaA})$ and thus also full in $\hat \Phi(\overline{AaA})$. So it suffices to show that $(c-\epsilon)_+ \otimes f(h)$ is in the ideal generated by $\Psi(f\otimes a)$ for every $\epsilon>0$. Fix such an $\epsilon$.

Recall that $B = B_1 \otimes \mathcal O_2$. Note that $b\otimes 1$ and $1\otimes h$ are in the relative commutant $\multialg{B}_\infty \cap (B_1 \otimes 1_{\mathcal O_2})'$, and that $b\otimes 1 + \mathrm{Ann}(B _1 \otimes 1_{\mathcal O_2})$ is the unit of $(\multialg{B}_\infty \cap (B_1 \otimes 1)')/\mathrm{Ann}(B_1 \otimes 1)$. Thus 
\[
b \otimes h + \mathrm{Ann}(B_1 \otimes 1_{\mathcal O_2}) = 1 \otimes h + \mathrm{Ann}(B_1 \otimes 1_{\mathcal O_2}),
\]
so in particular
\[
f(b \otimes h) + \mathrm{Ann}(B_1 \otimes 1_{\mathcal O_2}) = 1 \otimes f(h) + \mathrm{Ann}(B_1 \otimes 1_{\mathcal O_2}).
\]
Hence, for any $d\in B_1$ we have 
\begin{equation}\label{eq:f(bh)}
f(b\otimes h)(d\otimes 1) = (d \otimes 1) f(b\otimes h) =  (d \otimes 1)(1 \otimes f(h)).
\end{equation}
As $c \in \overline{B_1\phi_1(a)B_1}$ we may find $d_1,\dots,d_n\in B$ such that $(c-\epsilon)_+ = \sum_{k=1}^n d_k^\ast \phi_1(a) d_k$. It follows that
\begin{eqnarray*}
(c-\epsilon)_+  \otimes f(h) &=& \sum_{k=1}^n (d_k^\ast \otimes 1) (\phi_1(a)  \otimes 1)( d_k  \otimes 1)(1 \otimes f(h)) \\
&\stackrel{\eqref{eq:f(bh)}}{=} &  \sum_{k=1}^n (d_k^\ast \otimes 1) \phi_0(a)f(b\otimes h)( d_k \otimes 1) \\
&=& \sum_{k=1}^n (d_k^\ast \otimes 1) \Psi(f\otimes a) ( d_k \otimes 1).
\end{eqnarray*}
As $\epsilon>0$ was arbitrary, it follows that $c\otimes f(h)$ is in the ideal generated by $\Psi(f\otimes a)$, so it follows that 
\begin{equation}\label{eq:hatinpsi}
\hat \Phi(\overline{AaA}) \subseteq \overline{B_\infty \Psi(f\otimes a) B_\infty}.
\end{equation}
Combining this with equation \eqref{eq:psiinhat} proves our claim above.

Let $\alpha$ be an automorphism on $C_0(0,1)$ such that $C_0(0,1) \rtimes_\alpha \mathbb Z \cong C(\mathbb T) \otimes \mathbb K$,\footnote{For instance, the automorphism $\sigma$ on $C_0(\mathbb R)$ which shifts the variable by $1$, satisfies $C_0(\mathbb R)\rtimes_{\sigma} \mathbb Z \cong C(\mathbb T) \otimes \mathbb K$.} let $\beta = \alpha \otimes id_A$ be the induced automorphism on $C_0(0,1) \otimes A$, and let $\tilde \beta$ be the unitisation which is an automorphism on $(C_0(0,1) \otimes A)^\sim$.

For any $a\in A_+$ and any non-zero $f\in C_0(0,1)_+$ it follows from what we showed above that both $\Psi(f\otimes a)$ and $\Psi \circ \beta (f\otimes a) = \Psi(\alpha(f) \otimes a)$ are full in $\hat \Phi(\overline{AaA})$. Hence $\mathcal I(\Psi)$ and $\mathcal I(\Psi \circ \beta)$ agree on all ideals generated by elements of the form $f\otimes a$. As such ideals form a basis for $\mathcal I(C_0(0,1) \otimes A)$, and as $\mathcal I(\Psi)$ and $\mathcal I(\Psi \circ \beta)$ preserve suprema by Lemma \ref{l:idealmorphisms} and Remark \ref{r:sup}, it follows that $\mathcal I(\Psi) = \mathcal I(\Psi \circ \beta)$. 

Consider the unital extension $\tilde \Psi \colon (C_0(0,1)\otimes A)^\sim \to \multialg{B}_\infty$ of $\Psi$. As unitisations of nuclear maps are nuclear (cf.~\cite[Proposition 2.2.4]{BrownOzawa-book-approx}), $\tilde \Psi$ is nuclear. Moreover, as $\Psi$ factors through $B_\infty$, it follows from Lemma \ref{l:O2stableseq} that $\tilde \Psi$ is $\mathcal O_2$-stable. 
Also, as $\Psi$ takes values in $B_\infty$, and as $\mathcal I(\Psi) = \mathcal I(\Psi \circ \beta)$, it follows from Lemma \ref{l:fullmulti} that $\mathcal I(\tilde \Psi) = \mathcal I(\tilde \Psi \circ \tilde \beta)$. 

As $\tilde \Psi$ is unital, nuclear and $\mathcal O_2$-stable, so is $\tilde \Psi \circ \tilde \beta$. Thus by Theorem \ref{t:O2HB}, $\tilde \Psi \circ \tilde \beta$ and $\tilde \Psi$ are approximately unitary equivalent. By Lemma \ref{l:approxinfty} there exists a unitary $u\in \multialg{B}_\infty$ such that $u \tilde \Psi (-) u^\ast = \tilde \Psi \circ \tilde \beta$. Thus there is an induced $\ast$-homomorphism
\[
\psi_1 \colon (C_0(0,1) \otimes A)^\sim \rtimes_{\tilde \beta} \mathbb Z \to \multialg{B}_\infty
\]
given by $\psi_1(x v^n) = \tilde \Psi(x) u^n$ for $x\in (C_0(0,1)\otimes A)^\sim$ and $n\in \mathbb Z$, where $v$ is the canonical unitary in the crossed product. As the restriction of $\psi_1$ to $(C_0(0,1) \otimes A)^\sim$ is $\tilde \Psi$ which is nuclear, it follows from Lemma \ref{l:nuccrossed} that $\psi_1$ is nuclear. As $\psi_1(xv^n) = \Psi(x) v^n \in B_\infty$ for any $x\in C_0(0,1) \otimes A$ and $n\in \mathbb Z$, it follows that $\psi_1$ restricts to a $\ast$-homomorphism 
\[
\psi_0 \colon (C_0(0,1) \otimes A)\rtimes_\beta \mathbb Z \to B_\infty.
\]
As $\psi_0$ is nuclear when considered as a map into $\multialg{B}_\infty$ (as this is just the restriction of $\psi_1$), and as $B_\infty$ is an ideal in $\multialg{B}_\infty$, it follows that $\psi_0$ is nuclear.

Since $\beta = \alpha \otimes id_A$ we have a natural isomorphism
\[
 \theta \colon (C_0(0,1)\rtimes_\alpha \mathbb Z) \otimes A \xrightarrow \cong (C_0(0,1) \otimes A)\rtimes_\beta \mathbb Z.
\]
As $C_0(0,1)\rtimes_\alpha \mathbb Z \cong C(\mathbb T) \otimes \mathbb K$ we may fix a full projection $p\in C_0(0,1) \rtimes_\alpha \mathbb Z$. We get an induced $\ast$-homomorphism 
\[
\psi \colon A \to B_\infty, \qquad \psi(a) = \psi_0(\theta(p \otimes a))
\]
for $a\in A$. As $\psi_0$ is nuclear, so is $\psi$. We will show that $\psi(a)$ is full in $\hat \Phi(\overline{AaA})$ for every positive $a\in A_+$, so fix such an $a$.

Let $w\in \multialg{C_0(0,1)\rtimes_\alpha \mathbb Z}$ be the canonical unitary.  Then
\[
\psi_0(\theta(fw^n \otimes a)) = \Psi(f \otimes a) u^n \stackrel{\eqref{eq:psiinhat}}{\in} \hat \Phi(\overline{AaA}), \qquad f\in C_0(0,1), n \in \mathbb Z.
\]
It follows that
\[
\psi(a) = \psi_0(\theta(p\otimes a)) \in \hat \Phi(\overline{AaA}).
\]
Let $f\in C_0(0,1)$ be positive and non-zero. As $p$ is full in $C_0(0,1) \rtimes_\alpha \mathbb Z$, it follows that
\[
\hat \Phi(\overline{AaA}) \stackrel{\eqref{eq:hatinpsi}}{\subseteq} \overline{B_\infty \Psi(f \otimes a) B_\infty} = \overline{B_\infty \psi_1(\theta(f \otimes a)) B_\infty} \subseteq \overline{B_\infty \psi_1(\theta(p\otimes a)) B_\infty} = \overline{B_\infty \psi(a) B_\infty}.
\]
Thus $\psi(a)$ is full in $\hat \Phi(\overline{AaA})$ for all positive $a\in A$, so $\mathcal I(\psi) = \hat \Phi$.

 We wish to apply Theorem \ref{t:lifthom}, so let $\eta \colon \mathbb N \to \mathbb N$ be a map such that $\lim_{n\to \infty}\eta(n) = \infty$, and let $\eta^\ast \colon \multialg{B}_\infty \to \multialg{B}_\infty$ be the induced $\ast$-homomorphism. Let $I\in \mathcal I(A)$, and let $c\in B_+$ be such that $\hat \Phi(I) = \overline{B_\infty c B_\infty}$. We get
\[
\mathcal I(\eta^\ast)(\hat \Phi(I)) = \overline{\multialg{B}_\infty \eta^\ast(c) \multialg{B}_\infty} = \overline{B_\infty c B_\infty} = \hat \Phi(I).
\]
It follows, as $\mathcal I(\psi) = \hat \Phi$, that 
\[
\mathcal I(\eta^\ast \circ \psi) \stackrel{\textrm{Prop }\ref{p:ozfunctorial}}{=} \mathcal I(\eta^\ast) \circ \mathcal I(\psi) = \mathcal I(\eta^\ast) \circ \hat \Phi = \hat \Phi = \mathcal I(\psi).
\]
Let $\tilde \psi \colon \tilde A \to \multialg{B}_\infty$ be the unital extension of $\psi$ which is nuclear. Then $\eta^\ast \circ \tilde \psi$ is the unital extension of $\eta^\ast \circ \psi$ which is also nuclear. It follows from Lemma \ref{l:O2stableseq} that both $\tilde \psi$ and $\eta^\ast \circ \tilde \psi$ are $\mathcal O_2$-stable, and by Lemma \ref{l:fullmulti}, $\mathcal I(\tilde \psi) = \mathcal I(\eta^\ast \circ \tilde \psi)$. So by our uniqueness result Theorem \ref{t:O2HB}, $\tilde \psi$ and $\eta^\ast \circ \tilde \psi$ are approximately unitary equivalent. By Theorem \ref{t:lifthom} it follows that there is a $\ast$-homomorphism $\tilde \phi \colon \tilde A \to \multialg{B}$ such that $\tilde \phi$ and $\tilde \psi$ are unitary equivalent as maps into $\multialg{B}_\infty$. 

It follows that 
\[
\mathcal I(\iota) \circ \mathcal I(\phi) = \mathcal I(\iota \circ \phi) = \mathcal I(\psi) = \hat \Phi = \mathcal I(\iota) \circ \Phi,
\]
where $\iota \colon B \hookrightarrow B_\infty$ is the constant inclusion. Obviously the map $\mathcal I(\iota)$ is injective, so it follows that $\mathcal I(\phi) = \Phi$, thus finishing the proof.
\end{proof}


\subsection{Applications}

Let $\Hom_{\mathcal O_2}(A,B)$ and $\Hom_{s\mathcal O_2}(A,B)$ denote the sets of $\mathcal O_2$-stable and strongly $\mathcal O_2$-stable $\ast$-homomorphisms from $A$ to $B$ respectively (Definition \ref{d:Dstable}), and let $\sim_\mathrm{aMvN}$ and $\sim_\mathrm{asMvN}$ denote approximate and asymptotic Murray--von Neumann equivalence respectively (Definition \ref{d:MvN}). Let $\mathbf{Cu}(\mathcal I(A), \mathcal I(B))$ denote the semigroup of $Cu$-morphisms from $\mathcal I(A) \to \mathcal I(B)$ (Definition \ref{d:Cumorphism}). 

\begin{corollary}\label{c:classhom}
Let $A$ be a separable, exact $C^\ast$-algebra and let $B$ be a separable, nuclear, $\mathcal O_\infty$-stable $C^\ast$-algebra. Then the natural maps
\[
\Hom_{s\mathcal O_2}(A,B)/\!\!\sim_\mathrm{asMvN} \; \to \Hom_{\mathcal O_2}(A,B)/\!\!\sim_\mathrm{aMvN} \; \to \mathbf{Cu}(\mathcal I(A), \mathcal I(B)) \\
\]
are both bijective.

In particular, if either $A$ or $B$ is $\mathcal O_2$-stable, then the natural maps
\[
\Hom(A,B)/\!\!\sim_\mathrm{asMvN} \; \to \Hom(A,B)/\!\!\sim_\mathrm{aMvN} \; \to \mathbf{Cu}(\mathcal I(A), \mathcal I(B)) \\
\]
are both bijective.

Moreover, if $B$ is also stable, then $\sim_\mathrm{asMvN}$ and $\sim_\mathrm{aMvN}$ may be replaced with asymptotic and approximate unitary equivalence respectively (with unitaries in $\multialg{B}$).
\end{corollary}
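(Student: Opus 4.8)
The plan is to treat this corollary as a purely formal consequence of the existence result Theorem \ref{t:O2embedding} and the uniqueness result Theorem \ref{t:O2HB}; the two displayed chains are the maps $[\phi]_{\mathrm{asMvN}} \mapsto [\phi]_{\mathrm{aMvN}}$ and $[\phi]_{\mathrm{aMvN}} \mapsto \mathcal I(\phi)$, and the only genuine preliminary is a nuclearity remark. First I would record that since $B$ is nuclear, \emph{every} $\ast$-homomorphism $A \to B$ is automatically nuclear (write $\phi = \mathrm{id}_B \circ \phi$ and use that $\mathrm{id}_B$ is nuclear), so Theorem \ref{t:O2HB} applies to every map in sight. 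The first map is well-defined because asymptotic Murray--von Neumann equivalence implies the approximate version, and because every strongly $\mathcal O_2$-stable map is $\mathcal O_2$-stable: restriction to integers induces a unital $\ast$-homomorphism $(B_\as \cap \phi(A)')/\mathrm{Ann}(\phi(A)) \to (B_\infty \cap \phi(A)')/\mathrm{Ann}(\phi(A))$, and composing a unital embedding of $\mathcal O_2$ with it remains an embedding by simplicity of $\mathcal O_2$. The second map is well-defined and lands in $\mathbf{Cu}(\mathcal I(A), \mathcal I(B))$ by Lemma \ref{l:idealmorphisms}$(iii)$.

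For injectivity I would work through the composite $[\phi]_{\mathrm{asMvN}} \mapsto \mathcal I(\phi)$. If $\phi, \psi \in \Hom_{s\mathcal O_2}(A,B)$ satisfy $\mathcal I(\phi) = \mathcal I(\psi)$, then Theorem \ref{t:O2HB}$(ii)$ gives $\phi \sim_{\mathrm{asMvN}} \psi$, so the composite is injective; as it factors through both displayed maps, each of them is injective too. (Injectivity of the second map is likewise immediate from Theorem \ref{t:O2HB}$(i)$.) For surjectivity, Theorem \ref{t:O2embedding} produces, for every $Cu$-morphism $\Phi$, a strongly $\mathcal O_2$-stable $\phi$ with $\mathcal I(\phi) = \Phi$, so the composite is onto and hence so is the second map. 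Surjectivity of the first map then follows formally: given $[\psi]_{\mathrm{aMvN}}$, use the composite to find a strongly $\mathcal O_2$-stable $\phi$ with $\mathcal I(\phi) = \mathcal I(\psi)$, and deduce $[\phi]_{\mathrm{aMvN}} = [\psi]_{\mathrm{aMvN}}$ from injectivity of the second map. This settles the first assertion.

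For the ``in particular'' statement I would invoke Proposition \ref{p:Dstablealgebras}: if $A$ or $B$ is $\mathcal O_2$-stable then every $\ast$-homomorphism $A \to B$ is strongly $\mathcal O_2$-stable, so the three sets $\Hom(A,B)$, $\Hom_{\mathcal O_2}(A,B)$ and $\Hom_{s\mathcal O_2}(A,B)$ coincide and the first part applies verbatim. Finally, when $B$ is also stable, Proposition \ref{p:MvNvsu}(b) shows that approximate (resp.\ asymptotic) Murray--von Neumann equivalence \emph{implies} approximate (resp.\ asymptotic) unitary equivalence with unitaries in $\multialg{B}$; the reverse implication is routine, since conjugating an implementing unitary $u$ by $\phi(a_n)$ for an approximate unit $(a_n)$ of $A$ moves it into $B_\infty$ (resp.\ $B_\as$) while preserving the equivalence. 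Hence the relations coincide on $\Hom(A,B)$, the quotient sets are unchanged, and the bijections persist. All the substance being carried by Theorems \ref{t:O2HB} and \ref{t:O2embedding}, the only mild obstacle is the bookkeeping that extracts bijectivity of each individual map from bijectivity of the composite together with injectivity of the second map.
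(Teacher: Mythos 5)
Your proposal is correct and follows exactly the paper's route: the paper's own proof is the two-line citation ``injectivity is Theorem \ref{t:O2HB}, surjectivity is Theorem \ref{t:O2embedding}, and the `in particular' part is Proposition \ref{p:Dstablealgebras}'', and you have simply filled in the routine bookkeeping (automatic nuclearity of maps into the nuclear $B$, strong $\mathcal O_2$-stability implying $\mathcal O_2$-stability via restriction to $\mathbb N$, and Proposition \ref{p:MvNvsu} together with the approximate-unit trick for the ``moreover'' clause), all of which checks out. The only cosmetic quibble is that in the last step one multiplies the unitary by $\phi(a_n)$ rather than conjugates it, but the argument is the intended one.
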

\begin{proof}
Injectivity is Theorem \ref{t:O2HB}, and surjectivity is Theorem \ref{t:O2embedding}. The ``in particular'' part follows from Proposition \ref{p:Dstablealgebras}.
\end{proof}

Similarly, suppose $A$ and $B$ are unital. Let $\Hom_{((s)\mathcal O_2)}(A,B)_1$ denote the set of all ((strongly) $\mathcal O_2$-stable) unital $\ast$-homomorphisms, let $\sim_\mathrm{asu}$ and $\sim_\mathrm{au}$ denote asymptotic and approximate unitary equivalence respectively, and let $\mathbf{Cu}(\mathcal I(A) , \mathcal I(B))_1$ denote the set of $Cu$-morphisms $\Phi \in \mathbf{Cu}(\mathcal I(A), \mathcal I(B))$ such that $\Phi(A) = B$.

\begin{corollary}\label{c:classhom1}
Let $A$ be a separable, exact, unital $C^\ast$-algebra and let $B$ be a separable, nuclear, unital, $\mathcal O_\infty$-stable $C^\ast$-algebra such that $[1_B]_0 = 0 \in K_0(B)$. Then the natural maps
\[
\Hom_{s\mathcal O_2}(A,B)_1/\!\!\sim_\mathrm{asu} \; \to \Hom_{\mathcal O_2}(A,B)_1/\!\!\sim_\mathrm{au} \; \to \mathbf{Cu}(\mathcal I(A), \mathcal I(B))_1 \\
\]
are both bijective.

In particular, if either $A$ or $B$ is $\mathcal O_2$-stable, then the natural maps
\[
\Hom(A,B)_1/\!\!\sim_\mathrm{asu} \; \to \Hom(A,B)_1/\!\!\sim_\mathrm{au} \; \to \mathbf{Cu}(\mathcal I(A), \mathcal I(B))_1 \\
\]
are both bijective.
\end{corollary}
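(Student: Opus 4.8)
The plan is to mirror the proof of Corollary \ref{c:classhom}, isolating the one genuinely new ingredient: promoting a (possibly non-unital) strongly $\mathcal O_2$-stable lift to a \emph{unital} one, which is where the hypothesis $[1_B]_0 = 0$ enters. As in the non-unital case, I would first observe that the two claimed bijections follow by a formal diagram chase from (a) injectivity of the map $\Hom_{\mathcal O_2}(A,B)_1/\!\!\sim_\mathrm{au} \to \mathbf{Cu}(\mathcal I(A),\mathcal I(B))_1$ and (b) bijectivity of the composite $\Hom_{s\mathcal O_2}(A,B)_1/\!\!\sim_\mathrm{asu} \to \mathbf{Cu}(\mathcal I(A),\mathcal I(B))_1$. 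Well-definedness is clear (unitary equivalence preserves $\mathcal I$, and unitality of $\phi$ forces $\mathcal I(\phi)(A)=\overline{B\phi(1_A)B}=B$, so the image lands in $\mathbf{Cu}(\mathcal I(A),\mathcal I(B))_1$), and every $\ast$-homomorphism into the nuclear algebra $B$ is nuclear. The injectivity statements are then immediate from Theorem \ref{t:O2HB}: for unital $\mathcal O_2$-stable (resp.\ strongly $\mathcal O_2$-stable) $\phi,\psi$ with $\mathcal I(\phi)=\mathcal I(\psi)$, the ``moreover'' clause (all of $A,B,\phi,\psi$ unital) upgrades approximate (resp.\ asymptotic) Murray--von Neumann equivalence to approximate (resp.\ asymptotic) unitary equivalence.

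The substance is surjectivity of the composite. Starting from Theorem \ref{t:O2embedding}, I would produce a strongly $\mathcal O_2$-stable $\phi_0\colon A\to B$ with $\mathcal I(\phi_0)=\Phi$; since $\Phi(A)=B$, the projection $p:=\phi_0(1_A)$ is full. If I can show $p$ is Murray--von Neumann equivalent to $1_B$, say $v^\ast v=p$ and $vv^\ast=1_B$ with $v\in B$, then $\phi:=v\phi_0(-)v^\ast$ is the desired unital lift. Indeed $\phi$ is a unital $\ast$-homomorphism, and from $v^\ast\phi(a)v=\phi_0(a)$, $v\phi_0(a)v^\ast=\phi(a)$ one reads off both inclusions $\mathcal I(\phi)\subseteq\Phi$ and $\Phi=\mathcal I(\phi_0)\subseteq\mathcal I(\phi)$, so $\mathcal I(\phi)=\Phi$. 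Finally $\phi=\operatorname{Ad}(v)\circ\phi_0$ with $\operatorname{Ad}(v)\colon pBp\xrightarrow{\ \cong\ }B$ a $\ast$-isomorphism, and $\phi_0\colon A\to pBp$ is still strongly $\mathcal O_2$-stable (the relevant relative-commutant subquotient has unit $p$ and is unchanged by the corestriction), so $\phi$ is strongly $\mathcal O_2$-stable by Lemma \ref{l:Dcompose}$(i)$.

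The crux---and the only place the $K$-theoretic hypothesis is used---is the comparison $p\sim 1_B$ in $B$. Both projections are full, and both are properly infinite: $1_B$ because $B$ is $\mathcal O_\infty$-stable, and $p=\phi_0(1_A)$ by the remark following Corollary \ref{c:McDuff}, as $\phi_0$ is $\mathcal O_\infty$-stable. By the result of Cuntz \cite{Cuntz-K-theoryI} already invoked in Theorem \ref{t:O2HB}, full properly infinite projections with equal class in $K_0$ are Murray--von Neumann equivalent, so it suffices to prove $[p]_0=[1_B]_0=0\in K_0(B)$. Here $[1_B]_0=0$ is the hypothesis, while for $p$ I would exploit that $\phi_0$ is $\mathcal O_2$-stable: since $\multialg{B}=B$ is properly infinite, Corollary \ref{c:McDuff} supplies a $\ast$-homomorphism $\psi\colon A\otimes\mathcal O_2\to B$ with $\phi_0\sim_\mathrm{aMvN}\psi\circ(id_A\otimes 1_{\mathcal O_2})$. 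As $K_0$ is invariant under approximate Murray--von Neumann equivalence (Corollary \ref{c:functorinv}) and $K_0(A\otimes\mathcal O_2)=0$ (because $\mathcal O_2$ is $KK$-equivalent to $0$), the induced map $\phi_{0\ast}$ on $K_0$ factors through $0$; hence $[p]_0=\phi_{0\ast}[1_A]_0=0$. This yields $p\sim 1_B$ and completes surjectivity.

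The ``in particular'' statement then follows at once, since by Proposition \ref{p:Dstablealgebras} $\mathcal O_2$-stability of $A$ or $B$ forces every $\ast$-homomorphism $A\to B$ to be strongly $\mathcal O_2$-stable, so all three $\Hom$-sets coincide. I expect the main obstacle to be precisely the projection comparison $p\sim 1_B$: proper infiniteness lifts painlessly, but pinning down $[p]_0=0$ in $K_0(B)$---rather than merely in $K_0(B_\infty)$, where the presence of a unital copy of $\mathcal O_2$ in $pB_\infty p$ makes it transparent but where the map $K_0(B)\to K_0(B_\infty)$ need not be injective---is what genuinely requires the factorization through $A\otimes\mathcal O_2$ together with the Murray--von Neumann invariance of $K_0$, and the hypothesis $[1_B]_0=0$ is then indispensable for matching the two classes.
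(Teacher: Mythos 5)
Your proof is correct, and its skeleton coincides with the paper's: injectivity from Theorem \ref{t:O2HB}, surjectivity by applying Theorem \ref{t:O2embedding} to get a strongly $\mathcal O_2$-stable $\phi_0$ with $p=\phi_0(1_A)$ full, then invoking Cuntz's comparison of full properly infinite projections to find $v$ with $v^\ast v=p$, $vv^\ast=1_B$ and conjugating. The one place you genuinely diverge is the verification that $p$ is properly infinite with $[p]_0=0\in K_0(B)$. You route this through Corollary \ref{c:McDuff}: factor $\phi_0$ up to $\sim_{\mathrm{aMvN}}$ through $A\otimes\mathcal O_2$, then use Corollary \ref{c:functorinv} and $K_0(A\otimes\mathcal O_2)=0$ to kill $K_0(\phi_0)$. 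The paper instead observes that $\mathcal O_2$-stability of $\phi_0$ gives a unital embedding $\mathcal O_2\to(pBp)_\infty\cap\phi_0(A)'$ and uses \emph{semiprojectivity of $\mathcal O_2$} to lift it to a unital embedding $\mathcal O_2\to pBp$, which yields proper infiniteness and $[p]_0=0$ in $K_0(B)$ in one stroke --- this is precisely the resolution of the worry you raise at the end about $K_0(B)\to K_0(B_\infty)$ failing to be injective. Both arguments work, but the paper's is lighter on prerequisites: Corollary \ref{c:McDuff} rests on Proposition \ref{p:Dunitalembedding}, whose stated proof uses the Kirchberg--Phillips theorem, and the author explicitly arranges (see the footnote before that proposition) for the classification Theorem \ref{t:O2class} --- which consumes the present corollary --- not to depend on that machinery. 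Your route can be repaired on this point, since for $\mathcal D=\mathcal O_2$ the paper notes that Theorem \ref{t:O2HB} gives an independent proof of Proposition \ref{p:Dunitalembedding}, but as written it trades a one-line semiprojectivity argument for a detour through the McDuff characterisation, $K$-theory of $A\otimes\mathcal O_2$, and the functoriality Corollary \ref{c:functorinv}.
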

\begin{proof}
Injectivity again follows from Theorem \ref{t:O2HB}.

For surjectivity let $\Phi \in \mathbf{Cu}(\mathcal I(A), \mathcal I(B))_1$. Use Theorem \ref{t:O2embedding} to construct a strongly $\mathcal O_2$-stable $\ast$-homo\-morphism $\phi_0 \colon A \to B$ such that $\mathcal I(\phi_0) = \Phi$. As $\Phi(A) = B$, $p:=\phi_0(1_A)$ is a full projection in $B$. As $\phi_0$ is $\mathcal O_2$-stable, $\mathcal O_2$ embeds unitally in $(pBp)_\infty \cap \phi_0(A)'$. By semiprojectivity of $\mathcal O_2$, this embedding lifts to a unital $\ast$-homomorphism $\mathcal O_2 \to (pBp)_\infty$, so $\mathcal O_2$ embeds unitally in $pBp$. Hence $p\in B$ is a full, properly infinite projection with $[p]_0 =0 \in K_0(B)$. Thus, a result of Cuntz \cite{Cuntz-K-theoryI} implies that $1_B$ and $\phi_0(1_A)$ are Murray--von Neumann equivalent. Let $v\in B$ be an isometry with $v v^\ast = \phi_0(1_A)$. Then $\phi := v \phi_0(-) v^\ast \colon A \to B$ is unital, strongly $\mathcal O_2$-stable and satisfies $\mathcal I(\phi) = \Phi$.

``In particular'' is again Proposition \ref{p:Dstablealgebras}.
\end{proof}

The main application is the following strong classification result which was originally proved by Kirchberg, see \cite{Kirchberg-non-simple}.

\begin{theorem}\label{t:O2class}
Let $A$ and $B$ be separable, nuclear, $\mathcal O_2$-stable $C^\ast$-algebras which are either both stable or both unital. 
\begin{itemize}
\item[$(a)$] If $\Phi \colon \mathcal I(A) \xrightarrow \cong \mathcal I(B)$ is an order isomorphism, then there exists an isomorphism $\phi \colon A \xrightarrow \cong B$ such that $\mathcal I(\phi) = \Phi$, i.e.~such that $\phi(I) = \Phi(I)$ for all $I\in \mathcal I(A)$.
\item[$(b)$] If $f \colon \Prim A \xrightarrow \cong \Prim B$ is a homeomorphism, then there exists an isomorphism $\phi \colon A \xrightarrow \cong B$ such that $\phi(I) = f(I)$ for all $I \in \Prim A$.
\end{itemize}
\end{theorem}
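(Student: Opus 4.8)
The plan is to deduce both parts from the existence and uniqueness results of the previous subsection, combined with a two-sided Elliott intertwining, reducing $(b)$ to $(a)$ via the standard dictionary between ideal lattices and primitive ideal spaces. First I would set up $(a)$. Since $A$ and $B$ are $\mathcal O_2$-stable they are also $\mathcal O_\infty$-stable (as $\mathcal O_2 \cong \mathcal O_2 \otimes \mathcal O_\infty$), nuclear hence exact, and $KK$-equivalent to zero, so $K_0(A) = K_0(B) = 0$ and in particular $[1_A]_0 = [1_B]_0 = 0$ in the unital case. An order isomorphism $\Phi \colon \mathcal I(A) \to \mathcal I(B)$ automatically preserves addition (which is the join $I + J = I \vee J$), the zero ideal (the bottom element), arbitrary suprema and compact containment — all purely order-theoretic notions — so both $\Phi$ and $\Phi^{-1}$ are $Cu$-morphisms, and moreover $\Phi(A) = B$, $\Phi^{-1}(B) = A$ since order isomorphisms preserve top elements. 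Applying Corollary \ref{c:classhom} (stable case) or Corollary \ref{c:classhom1} (unital case) to $\Phi$ and to $\Phi^{-1}$, I obtain $\ast$-homomorphisms $\phi \colon A \to B$ and $\psi \colon B \to A$, unital in the unital case, with $\mathcal I(\phi) = \Phi$ and $\mathcal I(\psi) = \Phi^{-1}$, each unique up to the appropriate approximate equivalence.

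Next I would form the composites. By functoriality of $\mathcal I(-)$ on $\ast$-homomorphisms (Proposition \ref{p:ozfunctorial}), $\mathcal I(\psi \circ \phi) = \Phi^{-1} \circ \Phi = \mathrm{id}_{\mathcal I(A)} = \mathcal I(\mathrm{id}_A)$, and likewise $\mathcal I(\phi \circ \psi) = \mathcal I(\mathrm{id}_B)$. The injectivity half of Corollary \ref{c:classhom}, resp.\ Corollary \ref{c:classhom1}, then shows that $\psi \circ \phi$ and $\phi \circ \psi$ are approximately Murray--von Neumann equivalent, resp.\ approximately unitarily equivalent, to the respective identity maps; in the stable case Proposition \ref{p:MvNvsu}$(b)$ upgrades these to approximate unitary equivalences with unitaries in $\multialg{A}$ and $\multialg{B}$. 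A two-sided approximate intertwining à la Elliott (\cite[Corollary 2.3.4]{Rordam-book-classification}) now produces an isomorphism $\Theta \colon A \xrightarrow{\cong} B$ that is approximately unitarily equivalent to $\phi$. Since approximate unitary equivalence preserves the induced ideal-lattice map (as noted at the start of the proof of Theorem \ref{t:O2HB}), $\mathcal I(\Theta) = \mathcal I(\phi) = \Phi$; and because $\Theta$ is an isomorphism, $\mathcal I(\Theta)(I) = \overline{B\Theta(I)B} = \Theta(I)$ for every $I \in \mathcal I(A)$. Hence $\Theta(I) = \Phi(I)$ for all $I$, which proves $(a)$.

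For $(b)$ I would pass through the canonical order isomorphism $\mathcal I(A) \cong \mathbb O(\Prim A)$, $I \mapsto \{\mathfrak p : I \nsubseteq \mathfrak p\}$, already used in Lemma \ref{l:lsc}. A homeomorphism $f \colon \Prim A \to \Prim B$ induces a frame isomorphism $\mathbb O(\Prim A) \cong \mathbb O(\Prim B)$ and hence an order isomorphism $\Phi \colon \mathcal I(A) \to \mathcal I(B)$; applying $(a)$ gives an isomorphism $\phi \colon A \xrightarrow{\cong} B$ with $\mathcal I(\phi) = \Phi$. It then remains to check that the homeomorphism $\Prim A \to \Prim B$, $\mathfrak p \mapsto \phi(\mathfrak p)$, induced by $\phi$ coincides with $f$. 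Both this map and $f$ induce the same frame isomorphism $\Phi$ on open sets, and since primitive ideal spaces are sober a homeomorphism is determined by its action on opens; hence the two agree, giving $\phi(\mathfrak p) = f(\mathfrak p)$ for every $\mathfrak p \in \Prim A$.

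The genuinely substantive content is packaged in Corollaries \ref{c:classhom} and \ref{c:classhom1}, so the points requiring care are bookkeeping rather than analytic. The first is verifying that a mere \emph{order} isomorphism is automatically a $Cu$-isomorphism sending top to top, so that the corollaries apply simultaneously to $\Phi$ and $\Phi^{-1}$. The second, and the step I expect to be the main obstacle, is ensuring that the isomorphism produced by the intertwining actually \emph{induces} $\Phi$ rather than merely existing: this rests on the facts that approximate unitary equivalence preserves $\mathcal I(-)$ and that $\mathcal I(\Theta)(I) = \Theta(I)$ for an isomorphism. For $(b)$ the only non-formal input is the soberness of $\Prim A$ and $\Prim B$, which makes the passage from the ideal-lattice isomorphism back to the homeomorphism canonical.
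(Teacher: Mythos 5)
Your proposal is correct and follows essentially the same route as the paper: part $(a)$ is deduced from Corollary \ref{c:classhom} (resp.\ Corollary \ref{c:classhom1}) together with a two-sided Elliott intertwining as in \cite[Corollary 2.3.4]{Rordam-book-classification}, and part $(b)$ reduces to $(a)$ via the canonical order isomorphism $\mathcal I(A) \cong \mathbb O(\Prim A)$ from \cite[Theorem 4.1.3]{Pedersen-book-automorphism}. The details you spell out (that an order isomorphism of ideal lattices is automatically a unit-preserving $Cu$-isomorphism, that the composites induce the identity on ideal lattices, and that the intertwined isomorphism still induces $\Phi$) are exactly the bookkeeping the paper leaves implicit.
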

\begin{proof}
$(a)$: The stable (resp.~unital) case follows from Corollary \ref{c:classhom} (resp.~Corollary \ref{c:classhom1}) and an intertwining argument a la Elliott, see \cite[Corollary 2.3.4]{Rordam-book-classification}.

$(b)$: By \cite[Theorem 4.1.3]{Pedersen-book-automorphism} there is an induced order isomorphism $\Phi \colon \mathcal I(A) \xrightarrow \cong \mathcal I(B)$ such that $\Phi(I) = f(I)$ for all $I\in \Prim A$. Hence the result follows from part $(a)$. 
\end{proof}

\begin{corollary}
Let $A$ and $B$ be separable, nuclear $C^\ast$-algebras which are either both stable or both unital. The following are equivalent.
\begin{itemize}
\item[$(i)$] $A\otimes \mathcal O_2$ and $B\otimes \mathcal O_2$ are isomorphic,
\item[$(ii)$] $\mathcal I(A)$ and $\mathcal I(B)$ are order isomorphic,
\item[$(iii)$] $\Prim A$ and $\Prim B$ are homeomorphic.
\end{itemize}
\end{corollary}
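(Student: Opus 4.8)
The plan is to establish $(i)\Leftrightarrow(ii)$ using the classification Theorem~\ref{t:O2class}, and to treat $(ii)\Leftrightarrow(iii)$ as a formal lattice-theoretic fact. The single reusable ingredient I would isolate first is that $I\mapsto I\otimes\mathcal O_2$ defines an order isomorphism $\Theta_A\colon\mathcal I(A)\xrightarrow{\cong}\mathcal I(A\otimes\mathcal O_2)$, and similarly $\Theta_B$ for $B$. This holds because $\mathcal O_2$ is simple and nuclear, so every ideal of $A\otimes\mathcal O_2$ has the form $I\otimes\mathcal O_2$; this is exactly the argument in the proof of Proposition~\ref{p:Csg} (with $\mathcal O_2$ in place of $\mathcal O_\infty\otimes\mathbb K$), and $\Theta_B$ is the map $\Theta$ appearing in the proof of Proposition~\ref{p:Wliftcp}.

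For $(i)\Rightarrow(ii)$ I would note that an isomorphism $A\otimes\mathcal O_2\cong B\otimes\mathcal O_2$ induces an order isomorphism $\mathcal I(A\otimes\mathcal O_2)\cong\mathcal I(B\otimes\mathcal O_2)$ (its $\mathcal I(-)$ is a $Cu$-morphism by Lemma~\ref{l:idealmorphisms}, and it is invertible), and then compose with $\Theta_A$ and $\Theta_B^{-1}$ to obtain $\mathcal I(A)\cong\mathcal I(B)$. For $(ii)\Rightarrow(i)$ I would transport a given order isomorphism $\mathcal I(A)\cong\mathcal I(B)$ through $\Theta_A$ and $\Theta_B$ to an order isomorphism $\Phi\colon\mathcal I(A\otimes\mathcal O_2)\xrightarrow{\cong}\mathcal I(B\otimes\mathcal O_2)$. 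Here $A\otimes\mathcal O_2$ and $B\otimes\mathcal O_2$ are separable, nuclear, and $\mathcal O_2$-stable (since $\mathcal O_2\otimes\mathcal O_2\cong\mathcal O_2$), and they are both stable or both unital because $A$ and $B$ are; so Theorem~\ref{t:O2class}$(a)$ lifts $\Phi$ to an isomorphism $A\otimes\mathcal O_2\cong B\otimes\mathcal O_2$. This closes $(i)\Leftrightarrow(ii)$, and the entire nontrivial content is concentrated in this single appeal to Theorem~\ref{t:O2class}.

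For $(ii)\Leftrightarrow(iii)$ I would use the canonical order isomorphism $\mathcal I(A)\cong\mathbb O(\Prim A)$ onto the lattice of open subsets of the primitive ideal space, given by $U\mapsto\bigcap_{\mathfrak p\notin U}\mathfrak p$ as recorded in the proof of Lemma~\ref{l:lsc}. A homeomorphism $\Prim A\cong\Prim B$ then gives an order isomorphism of these open-set lattices, hence $(iii)\Rightarrow(ii)$; this is \cite[Theorem~4.1.3]{Pedersen-book-automorphism}, already invoked in Theorem~\ref{t:O2class}$(b)$. Conversely, for separable $A$ the space $\Prim A$ is a sober $T_0$-space (its closed sets correspond to ideals, its irreducible closed sets to prime ideals, and prime $=$ primitive in the separable case), and a sober space is recovered up to homeomorphism from its frame of open sets as the set of its prime elements; thus an order isomorphism $\mathbb O(\Prim A)\cong\mathbb O(\Prim B)$ yields a homeomorphism $\Prim A\cong\Prim B$, giving $(ii)\Rightarrow(iii)$.

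The hard part is entirely delegated to Theorem~\ref{t:O2class} in the step $(ii)\Rightarrow(i)$; everything else is formal. The only subtlety I would flag is the sobriety of $\Prim A$ used in $(ii)\Rightarrow(iii)$, which relies on the separable coincidence of prime and primitive ideals, and the routine verification that $\Theta_A$ is genuinely an order isomorphism rather than merely order preserving.
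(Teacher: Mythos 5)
Your proposal is correct and follows essentially the same route as the paper: $(ii)\Leftrightarrow(iii)$ via sobriety of $\Prim$ and the order isomorphism $\mathcal I(A)\cong\mathbb O(\Prim A)$, $(i)\Rightarrow(ii)$ via the order isomorphism $I\mapsto I\otimes\mathcal O_2$, and $(ii)\Rightarrow(i)$ by transporting the order isomorphism to $\mathcal I(A\otimes\mathcal O_2)\cong\mathcal I(B\otimes\mathcal O_2)$ and invoking Theorem~\ref{t:O2class}$(a)$. The extra details you supply (stability/unitality of $A\otimes\mathcal O_2$, $\mathcal O_2\otimes\mathcal O_2\cong\mathcal O_2$, simplicity and nuclearity of $\mathcal O_2$ for the ideal-lattice identification) are all sound and merely make explicit what the paper leaves implicit.
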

\begin{proof}
$(ii) \Leftrightarrow (iii)$: For any separable $C^\ast$-algebra $C$, $\Prim C$ is sober\footnote{A topological space $X$ is \emph{sober} (or spectral, or point complete) if the prime open subsets $U$ are exactly the sets of the form $X\setminus\overline{\{x\}}$ for a unique $x\in X$. An open set $U$ is prime if whenever $V,W$ are open and $V \cap W \subseteq U$, then $V\subseteq U$ or $W\subseteq U$.} by \cite[Proposition 4.3.6]{Pedersen-book-automorphism}. It is well-known (and easily verified) that two sober spaces are homeomorphic
if and only if their lattices of open subsets are order isomorphic. Since $\mathcal I(C)$ is order
isomorphic to the lattice of open subsets of $\Prim C$ by \cite[Theorem 4.1.3]{Pedersen-book-automorphism}, we get that $\mathcal I(A) \cong \mathcal I(B)$ if and only if $\Prim A \cong \Prim B$.

$(i) \Rightarrow (ii)$: This follows since $I \mapsto I \otimes \mathcal O_2$ is an isomorphism $\mathcal I(C) \cong \mathcal I(C\otimes \mathcal O_2)$.

$(ii) \Rightarrow (i)$: This follows from Theorem \ref{t:O2class}.
\end{proof}

\newcommand{\etalchar}[1]{$^{#1}$}

\end{document}